\pdfoutput=1
\documentclass[11pt]{article}
\usepackage[left=1in,right=1in,top=1in,bottom=1in]{geometry}
\usepackage{times}
\usepackage{expl3}
\usepackage{cite}
\usepackage[table]{xcolor}
\usepackage{multirow}
\usepackage{stackengine} 
\usepackage{hhline}
\usepackage{lipsum}
\usepackage{titlesec}
\usepackage[all]{xy}
\usepackage{wrapfig}
\usepackage{enumerate}
\usepackage{epsfig}
\usepackage{tikz-cd}
\usepackage{amsmath}
\usepackage{tabularx}
\usepackage{array}
\usepackage{booktabs}
\usepackage{enumitem}
\usepackage{bbm}
\usepackage{calc}
\usepackage{graphicx}
\usepackage{amsmath}
\usepackage[title]{appendix}
\usepackage{amssymb}
\usepackage{epstopdf}
\usepackage{boldline}
\usepackage{arydshln}
\usepackage{calligra}
\usepackage{bm}
\usepackage{url}
\usepackage{blindtext}
\usepackage{accents}
\usepackage{amsthm}
\usepackage{amscd}

\newtheorem{definition}{Definition}

\newtheorem{theorem}{Theorem}
\newtheorem{proposition}{Proposition}
\newtheorem{corollary}{Corollary}
\newtheorem{lemma}{Lemma}

\newtheorem{example}{Example}
\newtheorem{remark}{Remark}
\usepackage{mathtools}
\usepackage{epstopdf}
\usepackage{balance}
\usepackage{thmtools}
\usepackage{thm-restate}
\usepackage{hyperref}
\usepackage{cleveref}
\usepackage[mathscr]{euscript}

\usepackage[ruled,vlined]{algorithm2e}
\newcommand{\ostar}{\mathbin{\mathpalette\make@circled\star}}

\makeatletter
\newcommand{\removelatexerror}{\let\@latex@error\@gobble}
\makeatother
\makeatletter
\newcommand*{\rom}[1]{\expandafter\@slowromancap\romannumeral #1@}
\makeatother

\ExplSyntaxOn
\newcommand\latinabbrev[1]{
  \peek_meaning:NTF . {
    #1\@}%
  { \peek_catcode:NTF a {
      #1.\@ }%
    {#1.\@}}}
\ExplSyntaxOff

\titleclass{\subsubsubsection}{straight}[\subsubsection]

\begin{document}
\vspace{1cm}
\title{Spectral Operadic Calculus: Norm-Analytic Functor Calculus}
\vspace{1.8cm}
\author{Shih-Yu~Chang
\thanks{Shih-Yu Chang is with the Department of Applied Data Science,
San Jose State University, San Jose, CA, U. S. A. (e-mail: {\tt
shihyu.chang@sjsu.edu})
}}

\maketitle

\begin{abstract}
Classical spectral theory provides powerful tools for analyzing linear operators, 
but does not extend naturally to nonlinear or compositional settings. In particular, 
there is no general way to transport spectral invariants in a functorial manner 
across structured categories. In earlier work, we showed that this failure is 
fundamental and introduced an operadic notion of spectrum that provides a 
canonical replacement. In this paper, we develop the analytic consequences of this construction and 
show that the operadic spectrum acts as a control parameter for a calculus of 
functors. We establish a criterion for polynomial behavior based on higher 
cross-effects, and prove convergence results for the associated Taylor tower, 
including explicit exponential error bounds. We further show that the derivatives of a functor form a structured algebraic 
object with symmetric and operadic features, and satisfy a chain rule governed 
by a natural composition operation (operadic plethysm). This leads to a 
reconstruction theorem, showing that analytic functors are completely determined 
by their derivative data, and hence to a classification in terms of algebraic 
structures. Compared with classical Goodwillie calculus, which is governed by homotopy-theoretic 
conditions, the present framework is analytic and quantitative in nature, providing 
explicit control over convergence and approximation. These results place functor 
calculus in a setting that combines spectral ideas, analytic methods, and operadic 
algebra, and suggest further connections with deformation theory and geometry.
\end{abstract}

\tableofcontents

\section{Introduction}
\label{sec:intro}

Classical spectral theory provides one of the most powerful tools for analyzing linear operators, encoding essential structural information through eigenvalues, resolvents, and functional calculus. However, when one attempts to extend spectral concepts beyond linear settings—in particular to structured objects such as algebras over operads—a fundamental obstruction arises: classical spectral invariants are not compatible with composition or base change in higher-algebraic contexts.

\medskip

\noindent
This work continues a program initiated in our recent work~\cite{ChangSOC1}, where we established that any naive attempt to transport spectral invariants along operadic or monoidal transformations fails in general. More precisely, the \textbf{No-Go Theorem} of Spectral Operadic Calculus~I shows that there is no functorial procedure extending classical spectra while preserving their expected structural properties. This identifies a fundamental limitation of classical spectral theory in nonlinear and compositional settings.

\medskip

\noindent
To resolve this obstruction, we introduced the \emph{operadic residue object} $\mathcal{O}_P^{\mathrm{res}}$, constructed as a universal correction term, and defined the \emph{operadic spectrum}
\[
\sigma_P(A) \;:=\; \mathrm{Hoch}_{\mathcal{M}}(A) \otimes_P \mathcal{O}_P^{\mathrm{res}}.
\]
This invariant is canonical, functorial, and recovers the classical spectrum in the trivial operad case. It therefore provides a minimal and universal extension of spectral theory to operadic contexts.

\medskip

\noindent
\textbf{Central thesis.}
The present work develops the analytic consequences of this construction. Our central thesis is that the operadic spectrum $\sigma_P(A)$ is not merely an invariant, but the \emph{fundamental control mechanism} governing a quantitative calculus of functors. In particular, $\sigma_P(A)$ determines both the analytic behavior and the algebraic structure of functors in a precise and computable way. We show that this perspective leads to a fully developed calculus, analogous to classical analytic calculus, but formulated in an operadic and categorical setting.

\medskip

\noindent
\textbf{Primary contribution.}
The main contribution of this work is the analytic and quantitative classification of functors via spectral derivative data. Unlike classical Goodwillie calculus, which provides qualitative homotopy-theoretic approximations, the present framework yields explicit exponential convergence bounds, a well-defined radius of convergence, and a complete algebraic classification of analytic functors.

\medskip

\noindent
\textbf{Related work.}
This paper builds on several independent lines of research. The calculus of functors was pioneered by Goodwillie~\cite{Goodwillie1991, Goodwillie1992, Goodwillie2003}, who introduced polynomial approximations and Taylor towers for homotopy functors. The operadic refinement of Goodwillie calculus was developed by Arone and Ching~\cite{AroneChing2011}, who established a chain rule for the derivatives of analytic functors in terms of operadic composition. Our work complements these results by introducing a norm-enriched, analytic framework controlled by spectral invariants.

On the spectral side, the classical Gelfand theory~\cite{Gelfand1941} provides a functional calculus for Banach algebras, while the noncommutative spectral theory of operator algebras~\cite{Blackadar1986, Murphy1990} extends these ideas to C*-algebras. The Hochschild homology and cohomology of algebras~\cite{Hochschild1945, Loday1992} provides algebraic invariants that capture deformation and trace data, which play a central role in our construction of the operadic spectrum.

Deformation theory~\cite{Gerstenhaber1964, Kontsevich2003} studies infinitesimal variations of algebraic structures, with obstructions controlled by cohomology. Our deformation theory of spectrally analytic functors (Section~\ref{sec:moduli}) generalizes these ideas to the functorial setting, with the tangent space given by $H^1(\partial^{\mathrm{spec}}F)$.

Higher category theory and $\infty$-operads~\cite{Lurie2009, Lurie2017} provide a foundational framework for operadic composition and homotopy-coherent structures. Our outlook toward derived geometry (Section~\ref{subsec:outlook}) anticipates an $\infty$-categorical enhancement of SOC, connecting to these developments.

Finally, the theory of topological recursion~\cite{EynardOrantin2007, Borot2014} and its connections to moduli spaces of curves share structural analogies with our spectral Taylor expansion, suggesting potential applications in mathematical physics.

\medskip

\noindent
\textbf{Main results.}
The main contributions of this work can be summarized as follows.

\medskip

\noindent
\emph{(Theorem A: Spectral Criterion for Norm-Excision).}
We prove that an admissible functor $F$ is norm-$n$-excisive if and only if its $(n+1)$-st cross-effect $\mathrm{cr}_{n+1}F$ is spectrally negligible, i.e.,
\[
\sigma_P\big(\mathrm{cr}_{n+1}F(A_1,\dots,A_{n+1})\big) = \{0\}.
\]
This establishes a direct bridge between analytic control (norm estimates) and spectral invariants (Theorem~\ref{thm:spectral-excision}).

\medskip

\noindent
\emph{(Theorem B: Quantitative Convergence).}
For spectrally analytic functors, the spectral Taylor tower converges with explicit exponential bounds:
\[
\|F(A) - P_n^{\mathrm{spec}}F(A)\| \;\leq\; C \,\rho^{\,n} \,\|\sigma_P(A)\|^{\,n},
\]
whenever $\|\sigma_P(A)\| < R_F$. Moreover, the radius of convergence is given by the Cauchy–Hadamard formula
\[
R_F^{-1} = \limsup_{n \to \infty} \|\partial_n^{\mathrm{spec}}F\|^{1/n},
\]
establishing a sharp convergence/divergence dichotomy (Theorems~\ref{thm:spectral-radius} and~\ref{thm:quantitative-convergence}).

\medskip

\noindent
\emph{(Theorem C: Operadic Faà di Bruno Chain Rule).}
The spectral derivatives of a composite functor satisfy
\[
\partial^{\mathrm{spec}}(F \circ G) \;\cong\; \partial^{\mathrm{spec}}F \;\circ_{\mathrm{op}}\; \partial^{\mathrm{spec}}G,
\]
where $\circ_{\mathrm{op}}$ denotes operadic plethysm. This is the precise operadic analogue of the classical Faà di Bruno formula and shows that spectral derivatives are closed under composition and obey a precise algebraic structure (Theorem~\ref{thm:chain-rule}).

\medskip

\noindent
\emph{(Theorem D: Reconstruction and Classification).}
A spectrally analytic functor $F$ is uniquely determined (up to natural isomorphism) by its spectral derivative data $\{\partial_n^{\mathrm{spec}}F\}_{n\ge 0}$ together with the compatible operadic module structure. Consequently, there is an equivalence of categories
\[
\mathsf{SpecAn} \;\simeq\; \mathsf{DerAlg}_{\mathrm{int}},
\]
where $\mathsf{SpecAn}$ is the category of spectrally analytic functors and $\mathsf{DerAlg}_{\mathrm{int}}$ is the category of integrable derivative algebras (Theorem~\ref{thm:equivalence}). Thus, the spectral Taylor expansion provides a full classification of functors.

\medskip

\noindent
\textbf{Comparison with Goodwillie calculus.}
Our work is closely related in spirit to Goodwillie calculus~\cite{Goodwillie2003, AroneChing2011}, which also approximates functors by polynomial objects. However, the two theories differ fundamentally in their control mechanisms:
\begin{itemize}
    \item Goodwillie calculus is governed by homotopy-theoretic invariants and excision properties; its approximations are qualitative and homotopy-invariant.
    \item Spectral Operadic Calculus is governed by the operadic spectrum $\sigma_P(A)$ and its norm; its approximations are quantitative and norm-sensitive.
\end{itemize}
As a result, Goodwillie calculus is primarily qualitative and topological, while the present framework is analytic and quantitative, providing explicit convergence rates, a well-defined radius of convergence, and full algebraic classification. Moreover, the operadic structure of derivatives leads to a classification theorem that has no direct analogue in the classical setting. The exponential functor, which is not analytic in Goodwillie calculus, is shown to be entire in SOC, demonstrating the power of spectral control.

\medskip

\noindent
\textbf{Structure of the paper.}
The paper is organized as follows.

\begin{itemize}
    \item \textbf{Section~\ref{sec:spectral-control}} establishes the role of $\sigma_P(A)$ as the controlling invariant for analytic behavior, introducing normed categories, admissible functors, cross-effects, and the spectral criterion for norm-excision (Theorem A).
    
    \item \textbf{Section~\ref{sec:tower}} constructs the universal spectral Taylor tower, proves its universality property, and introduces spectral polynomials.
    
    \item \textbf{Section~\ref{sec:convergence}} develops the quantitative convergence theory: spectral analyticity, homogeneous layer estimates, radius of convergence, and the exponential convergence theorem (Theorem B).
    
    \item \textbf{Section~\ref{sec:derivatives-algebra}} identifies spectral derivatives as cross-effects and establishes their algebraic structure, showing that they form symmetric sequences and, under operadic compatibility, right $P$-modules.
    
    \item \textbf{Section~\ref{sec:chain-rule}} proves the operadic Faà di Bruno chain rule (Theorem C) and the stability of spectral analyticity under composition.
    
    \item \textbf{Section~\ref{sec:reconstruction}} establishes the reconstruction theorem and the equivalence of categories $\mathsf{SpecAn} \simeq \mathsf{DerAlg}_{\mathrm{int}}$ (Theorem D).
    
    \item \textbf{Section~\ref{sec:moduli}} places SOC within a geometric framework, developing the moduli space $\mathcal{M}_{\mathrm{spec}}$ of spectrally analytic functors, deformation theory with tangent space $T_{[F]}\mathcal{M}_{\mathrm{spec}} \cong H^1(\partial^{\mathrm{spec}}F)$, and an outlook toward derived geometry (SOC III).
    
    \item \textbf{Section~\ref{sec:examples}} illustrates the theory through representative examples: the identity functor (linear), the quadratic functor (curvature), the exponential functor (entire analytic), and the operadic spectrum functor (self-interaction), concluding with a systematic comparison to Goodwillie calculus.
\end{itemize}

\medskip

\noindent
\textbf{Concluding remarks.}
Taken together, these results show that Spectral Operadic Calculus provides a unified framework connecting spectral theory, operadic algebra, and functor calculus. It replaces homotopy-theoretic control with spectral control, resolves the fundamental obstruction to compositional spectral invariants, and lifts functor calculus to a quantitative, analytic, and geometric setting. In this sense, SOC is not merely an approximation theory but a genuine differential calculus for compositional systems, with connections to operator theory, deformation theory, and higher algebra, and with potential applications to multi-component and noncommutative systems.

\begin{remark}
\noindent
\begin{enumerate}
\item The author is solely responsible for the mathematical insights and theoretical directions proposed in this work. AI tools, including OpenAI's ChatGPT and DeepSeek models, were used only for verification, reference organization, and exposition consistency~\cite{chatgpt2025,deepseek2025}. 
\item In this work, ``Part I'' refers to our recent work~\cite{ChangSOC1}.
\end{enumerate}
\end{remark}

\section{Spectral Control of Analytic Behavior}
\label{sec:spectral-control}

We now isolate the first conceptual principle of Spectral Operadic Calculus: the operadic spectrum is not merely an invariant attached to a $P$-algebra, but the unique quantity that controls its analytic behavior. In Part~I, the operadic spectrum
\[
\sigma_P(A)
\;=\;
\mathrm{Hoch}_{\mathcal{M}}(A)\otimes_P \mathcal{O}_P^{\mathrm{res}}
\]
was constructed as the minimal functorial extension of classical spectral data compatible with operadic composition and base change. The purpose of the present section is to show that this same object governs approximation theory in the normed setting: admissibility, excision, and ultimately polynomial approximation are all determined by the spectral behavior of cross-effects.

The guiding philosophy is that, in the operadic setting, analytic control should be expressed in terms of $\sigma_P(-)$ rather than in terms of external coordinate estimates or purely formal homotopical vanishing conditions. Accordingly, we work in normed symmetric monoidal categories and consider functors whose growth is bounded by the spectral size of the input. This leads to a notion of admissible functor for which the operadic spectrum serves as the universal analytic bound. Once this framework is in place, the higher cross-effects of a functor measure the failure of polynomiality, and their operadic spectra detect precisely whether that failure is analytically significant.

The main result of this section is a spectral criterion for norm-excision: a functor is norm-$n$-excisive if and only if its $(n+1)$-st cross-effect is spectrally negligible (Theorem~\ref{thm:excision-criterion}). This establishes the first bridge between the invariant constructed in Part~I and the calculus developed in the present paper. In particular, it shows that polynomial approximation in Spectral Operadic Calculus is governed entirely by operadic spectral data. Thus, this section provides the foundational passage from \emph{spectral invariants} to \emph{spectral control}, and prepares the construction of the spectral Taylor tower in Section~\ref{sec:tower}.

\subsection{Normed Categories and Admissible Functors}
\label{subsec:normed-categories}

We begin by specifying the analytic framework in which Spectral Operadic Calculus is developed. 
Throughout this work, we assume that the ambient symmetric monoidal category $\mathcal{M}$ is 
\emph{normed} in the following sense.

\begin{definition}[Normed Symmetric Monoidal Category]
\label{def:normed-category}
A symmetric monoidal category $(\mathcal{M}, \otimes, \mathbf{1})$ is called a \emph{normed symmetric monoidal category} if it satisfies the following conditions:

\begin{enumerate}
    \item \textbf{(Enrichment over normed spaces)} For all objects $X, Y \in \mathcal{M}$, the hom-set $\mathrm{Hom}_{\mathcal{M}}(X,Y)$ is a \emph{normed vector space} over $\mathbb{C}$ (or $\mathbb{R}$), with norm denoted $\|\cdot\|_{X,Y}$.
    
    \item \textbf{(Bounded composition)} Composition is a bilinear map
    \[
    \circ : \mathrm{Hom}_{\mathcal{M}}(Y,Z) \times \mathrm{Hom}_{\mathcal{M}}(X,Y) \longrightarrow \mathrm{Hom}_{\mathcal{M}}(X,Z)
    \]
    that is \emph{submultiplicative}:
    \[
    \|g \circ f\|_{X,Z} \leq \|g\|_{Y,Z} \, \|f\|_{X,Y}
    \]
    for all $f \in \mathrm{Hom}_{\mathcal{M}}(X,Y)$, $g \in \mathrm{Hom}_{\mathcal{M}}(Y,Z)$.
    
    \item \textbf{(Tensor compatibility)} The tensor product induces a bilinear map
    \[
    \otimes : \mathrm{Hom}_{\mathcal{M}}(X_1, Y_1) \times \mathrm{Hom}_{\mathcal{M}}(X_2, Y_2) \longrightarrow \mathrm{Hom}_{\mathcal{M}}(X_1 \otimes X_2, Y_1 \otimes Y_2)
    \]
    that is \emph{jointly continuous} and satisfies
    \[
    \|f \otimes g\|_{X_1 \otimes X_2, Y_1 \otimes Y_2} \leq \|f\|_{X_1, Y_1} \, \|g\|_{X_2, Y_2}.
    \]
    
    \item \textbf{(Completeness)} Each hom-set $\mathrm{Hom}_{\mathcal{M}}(X,Y)$ is a \emph{Banach space}, i.e., complete with respect to its norm.
\end{enumerate}
\end{definition}

\begin{remark}
A normed symmetric monoidal category is equivalently a \emph{symmetric monoidal category enriched over the category of Banach spaces} $\mathsf{Ban}$ (with the projective tensor product as the monoidal structure on $\mathsf{Ban}$). This enrichment perspective provides a clean categorical foundation for analytic functional analysis, spectral theory, and operator algebras within the SOC framework.
\end{remark}

Typical examples include categories of Banach spaces, operator algebras (C*-algebras, von Neumann algebras), and Hilbert spaces, each equipped with the projective or spatial tensor product. In such categories, it becomes meaningful to discuss growth, boundedness, and convergence of functors.

\medskip

Let $P$ be a colored operad in $\mathcal{M}$ and let $A$ be a $P$-algebra. 
Recall from Part~I that the \emph{operadic spectrum} $\sigma_P(A)$ provides a canonical 
and functorial invariant capturing the spectral content of $A$ in a manner compatible 
with operadic composition and base change.

\medskip

The central principle of this work is that $\sigma_P(A)$ serves as the \emph{universal analytic control parameter} 
for functors defined on $P$-algebras. We now formalize this idea.

\begin{definition}[Spectral Size]
\label{def:spectral-size}
Let $A$ be a $P$-algebra. The \emph{spectral size} of $A$ is defined as
\[
\|\sigma_P(A)\| := \sup \{ |\lambda| : \lambda \in \sigma_P(A) \},
\]
whenever $\sigma_P(A) \subseteq \mathbb{C}$ admits such a bound. In the general case, $\|\sigma_P(A)\|$ is interpreted as the minimal bound controlling the norm of $\sigma_P(A)$ under all admissible realizations.
\end{definition}

This quantity generalizes the spectral radius in classical operator theory and provides 
a canonical notion of magnitude intrinsic to the operadic structure. In the classical case where $P = \mathbb{I}$ and $\sigma_{\mathbb{I}}(A) \cong A$, we have $\|\sigma_{\mathbb{I}}(A)\| = \|A\|_{\mathrm{op}}$, the ordinary operator norm.

\medskip

\begin{definition}[Admissible Functor]
\label{def:admissible-functor}
Let $\mathcal{C}$ be a category of $P$-algebras in $\mathcal{M}$, and let 
$F : \mathcal{C} \to \mathcal{M}$ be a functor. We say that $F$ is \emph{(spectrally) admissible} 
if there exists a non-decreasing function $\Phi : [0,\infty) \to [0,\infty)$ such that for all $A \in \mathcal{C}$,
\[
\|F(A)\| \leq \Phi\big( \|\sigma_P(A)\| \big).
\]
When $\Phi$ can be taken as a linear function $\Phi(r) = C_F \cdot r$, we say $F$ is \emph{linearly admissible}. The smallest such constant $C_F$ is called the \emph{admissibility constant}.
\end{definition}

\noindent
\textbf{Interpretation and conceptual significance.}
In other words, the growth of $F$ is controlled entirely by the spectral size of the input, replacing classical coordinate-dependent bounds with an intrinsic operadic invariant. In particular, if $\sigma_P(A)$ is empty (i.e., $A$ is spectrally trivial), then any admissible functor must satisfy $F(A) \cong 0$, as a direct consequence of the Minimal Extension Theorem from Part~I. Conceptually, the admissibility condition ensures that analytic behavior is governed solely by $\sigma_P(-)$, rather than by any extrinsic representation of $A$. As a result, the framework is invariant under operadic base change by functoriality of $\sigma_P$, isolates the minimal spectral data necessary for analytic control, and lays the foundation for a spectral formulation of polynomial approximation.

\medskip

\noindent
\textbf{Remark.}
In classical settings, admissibility reduces to familiar boundedness conditions. 
For instance, if $\mathcal{M}$ is the category of bounded operators on a Banach space and 
$P$ is the trivial operad, then $\|\sigma_P(A)\|$ coincides with the spectral radius, 
and admissible functors are precisely those whose growth is controlled by this radius.

\medskip

\noindent
\textbf{Minimal Control Principle.}
The operadic spectrum provides the \emph{minimal} quantity controlling admissible functors: any weaker invariant fails to bound general functorial growth (as demonstrated by the No-Go Theorem, \cite[Thm.~0.1]{ChangSOC1}), while any stronger invariant factors through $\sigma_P(A)$ by the universality established in Part~I. This principle is the analytic manifestation of the universal property of the operadic residue $\mathcal{O}_P^{\mathrm{res}}$.


\begin{example}[The Identity Functor: Normal or Spectrally Controlled Case]
\label{ex:identity-admissible}
The identity functor $\mathrm{Id}: \mathsf{Alg}_P(\mathcal{M}) \to \mathcal{M}$ is admissible 
only on spectrally controlled classes of objects. 

For instance, if the objects under consideration are normal operators in a $C^*$-setting, 
then the norm is determined by the spectral radius, and one has
\[
\|A\| = r(A) \le C \|\sigma_P(A)\|.
\]
In this restricted setting, $\mathrm{Id}$ is admissible. 
For the trivial operad $P = \mathbb{I}$, this reduces to the classical equality $\|A\| = r(A)$ for normal operators.

In general, however, the identity functor need not be admissible, 
since non-normal or nilpotent components may have small spectrum but large norm 
(e.g., a nilpotent Jordan block has spectrum $\{0\}$ but non-zero norm).
\end{example}


\begin{example}[The Operadic Spectrum Functor]
\label{ex:spectrum-admissible}
Assume that the operadic spectrum is equipped with the spectral size functional
\[
\|\sigma_P(A)\| := \sup\{|\lambda| : \lambda \in \sigma_P(A)\},
\]
where $\sigma_P(A)$ is interpreted as a subset of $\mathbb{C}$ under the analytic realization 
(via the Gelfand-type construction or the spectral radius of its components).

Then the functor $A \mapsto \sigma_P(A)$ is admissible with $\Phi(r) = r$, since
\[
\|\sigma_P(A)\| = \sup\{|\lambda| : \lambda \in \sigma_P(A)\}.
\]
This tautological admissibility makes $\sigma_P(-)$ the natural control object for the calculus.
\end{example}


\begin{example}[Polynomial Functors]
\label{ex:polynomial-admissible}
Let $F$ be a polynomial functor built from tensor products, direct sums, 
and applications of the $P$-algebra structure maps, with scalar coefficients. 
Assume that all structure maps involved are uniformly bounded and that $\mathcal{M}$ 
admits biproducts (or direct sums) compatible with the norm.

Then there exist constants $c_k \ge 0$ such that
\[
\|F(A)\| \le \sum_{k=0}^d c_k \|A\|^k.
\]
Hence, on spectrally controlled classes where $\|A\| \le C \|\sigma_P(A)\|$, 
the functor is admissible with
\[
\Phi(r) = \sum_{k=0}^d c_k C^k r^k.
\]
Thus polynomial functors are admissible with polynomial, rather than necessarily linear, growth control.
\end{example}


\begin{example}[Analytic Functors]
\label{ex:analytic-admissible}
Let
\[
F(A) = \sum_{k=0}^\infty a_k A^{\otimes k}
\]
be an analytic functor with radius of convergence $R > 0$ 
(in the sense of the spectral radius). 

On a spectrally controlled class satisfying $\|A\| \le C \|\sigma_P(A)\|$, 
the series converges absolutely whenever $C \|\sigma_P(A)\| < R$, and one obtains the bound
\[
\|F(A)\| \le \sum_{k=0}^\infty |a_k| C^k \|\sigma_P(A)\|^k.
\]
Thus $F$ is admissible with growth function
\[
\Phi(r) = \sum_{k=0}^\infty |a_k| C^k r^k
\]
on its spectral domain of convergence.

For example, the exponential functor $\exp(A) = \sum_{k=0}^\infty A^{\otimes k}/k!$ satisfies
\[
\|\exp(A)\| \le e^{\|A\|} \le e^{C \|\sigma_P(A)\|},
\]
so $\Phi(r) = e^{C r}$. This is not linear in $r$, so global linear admissibility should not be expected. 
This distinction motivates the later notion of \emph{spectral analyticity} and Taylor tower convergence.
\end{example}

\medskip

\noindent
\textbf{Why Admissibility Is Necessary.}
Without a spectral boundedness condition, a functor may grow arbitrarily fast even when the spectral data is small, 
making approximation by a Taylor tower unstable or non-convergent. 
Admissibility ensures that the behavior of $F$ is \emph{spectrally controlled}, 
which is precisely the setting in which the norm-analytic calculus applies. 

When the admissibility function $\Phi$ satisfies $\Phi(0)=0$ and is continuous at zero, 
this control also implies vanishing and continuity at spectrally trivial inputs. 
The following items summarizes the hierarchy of control:

\noindent
\textbf{Hierarchy of spectral control.}
\begin{itemize}
    \item If $\|\sigma_P(A)\|=0$ and $\Phi(0)=0$, then $\|F(A)\|=0$; 
    spectrally trivial algebras vanish in norm.
    
    \item If $\|\sigma_P(A)\|<\infty$, then 
    $\|F(A)\|\le \Phi(\|\sigma_P(A)\|)$; 
    this is the admissibility condition providing control by spectral size.
    
    \item If $\|\sigma_P(A)\|\to 0$ and $\Phi(r)\to 0$ as $r\to 0$, then 
    $\|F(A)\|\to 0$; 
    this yields continuity at zero.
\end{itemize}

\begin{remark}[Distinction between norm vanishing and object vanishing]
The implication $\|F(A)\| = 0$ does not necessarily imply $F(A) \cong 0$ (isomorphic to the zero object) 
unless the ambient category is \emph{semisimple} or the norm is \emph{faithful} (i.e., $\|X\|=0 \Rightarrow X \cong 0$). 
In general normed categories (e.g., Banach spaces with bounded linear maps), 
a non-zero object may have zero norm if the norm is not faithful. 
Therefore, we state the consequence as $\|F(A)\| = 0$ rather than $F(A) \cong 0$.
\end{remark}

\begin{remark}[On the necessity of $\Phi(0)=0$]
The condition $\Phi(0)=0$ is not automatic from admissibility. 
For example, a constant functor $F(A)=c \neq 0$ satisfies $\|F(A)\| = |c| \le \Phi(\|\sigma_P(A)\|)$ 
with $\Phi(r) = |c|$ (constant function). This $\Phi$ satisfies $\Phi(0)=|c| > 0$, 
so $\|\sigma_P(A)\| = 0$ does not force $\|F(A)\| = 0$. 
Such functors are admissible but do not vanish on spectrally trivial inputs. 
The additional vanishing and continuity properties require $\Phi(0)=0$ and continuity of $\Phi$ at $0$, respectively.
\end{remark}

\medskip

This framework allows us to formulate higher-order analytic properties of functors 
in purely spectral terms. In the next subsection, we introduce cross-effects and 
show how their operadic spectra detect deviations from polynomial behavior.

\subsection{Cross-Effects and Spectral Negligibility}
\label{subsec:spectral-negligibility}

We now introduce the higher-order invariants that measure the deviation of a functor 
from polynomial behavior. In classical functor calculus, this role is played by 
cross-effects, which encode the failure of additivity and its higher analogues. 
In the present operadic setting, we refine this notion by incorporating spectral data, 
thereby obtaining a criterion that is intrinsic and invariant under operadic structure.

\medskip

Assume that $\mathcal{C} = \mathsf{Alg}_P(\mathcal{M})$ is \textbf{pointed} (has a zero object) 
and admits finite coproducts, and that $\mathcal{M}$ admits the relevant homotopy limits.

\begin{definition}[Cross-Effects via Total Homotopy Fiber]
\label{def:cross-effects}
For $A_1, \dots, A_n \in \mathcal{C}$, define an $n$-cube
\[
\mathcal{X}_{A_1,\dots,A_n} : \mathcal{P}(\{1,\dots,n\}) \longrightarrow \mathcal{M}
\]
by
\[
\mathcal{X}_{A_1,\dots,A_n}(S) = F\!\left(\bigoplus_{i \in S} A_i\right),
\]
where $\bigoplus$ denotes the coproduct in $\mathsf{Alg}_P(\mathcal{M})$, and the empty coproduct $(S = \varnothing)$ is the zero $P$-algebra.

The $n$-th \emph{cross-effect} of $F$ is defined as the \textbf{total homotopy fiber} of this cube:
\[
\mathrm{cr}_n F(A_1,\dots,A_n) := \operatorname{tfib} \mathcal{X}_{A_1,\dots,A_n}.
\]

In a \emph{stable} or \emph{additive} setting, this object is represented formally by the inclusion-exclusion expression
\[
\mathrm{cr}_n F(A_1,\dots,A_n) \simeq \sum_{S \subseteq \{1,\dots,n\}} (-1)^{n-|S|} F\!\left(\bigoplus_{i \in S} A_i\right),
\]
where the right-hand side is understood in the stable category or in the Grothendieck group.
\end{definition}

\noindent
Intuitively, $\mathrm{cr}_n F$ measures the $n$-fold interaction among the inputs. 
It is symmetric in its variables and reduced in each argument. 
In additive or stable settings, it recovers the usual inclusion-exclusion cross-effect. 
Under the standard hypotheses of Eilenberg--Mac Lane or Goodwillie functor calculus, 
the vanishing of $\mathrm{cr}_{n+1}F$ characterizes functors of polynomial degree at most $n$.

The cross-effects satisfy the following properties:
\begin{itemize}
    \item \textbf{Symmetry:} $\mathrm{cr}_n F$ is symmetric in its $n$ arguments.
    \item \textbf{Multi-reducedness:} $\mathrm{cr}_n F$ vanishes whenever one of its inputs is the zero $P$-algebra.
    \item \textbf{Multilinearity under hypotheses:} In additive or homogeneous settings, $\mathrm{cr}_n F$ becomes multilinear in the appropriate derived sense.
    \item \textbf{Polynomial degree:} Under the standard hypotheses of cubical functor calculus, $F$ is a polynomial functor of degree $\le n$ if and only if $\mathrm{cr}_{n+1}F \equiv 0$.
\end{itemize}

\begin{definition}[Operadic Cross-Effect Control]
\label{def:operadic-cross-control}
The cross-effect $\mathrm{cr}_n F$ is said to be \emph{operadically controlled} if there exists a growth function $\Phi_n$ such that
\[
\|\mathrm{cr}_n F(A_1,\dots,A_n)\| \le \Phi_n\!\left(\max_i \|\sigma_P(A_i)\|\right).
\]
This condition ensures that higher-order interactions are analytically negligible when any input has sufficiently small spectrum.
\end{definition}

\medskip

In Spectral Operadic Calculus, we further refine this notion by evaluating the operadic spectrum 
of the cross-effects themselves. This leads to the following key concept.

\begin{definition}[Spectral Negligibility]
\label{def:spectral-negligible}
Let $F$ be an admissible functor. We say that the $n$-th cross-effect $\mathrm{cr}_n F$ 
is \emph{spectrally negligible} if for all inputs $(A_1,\dots,A_n)$,
\[
\sigma_P\!\big(\mathrm{cr}_n F(A_1,\dots,A_n)\big) = \{0\},
\]
equivalently $\|\sigma_P(\mathrm{cr}_n F(\cdots))\| = 0$.
\end{definition}

\noindent
This condition expresses that the higher-order interaction encoded by $\mathrm{cr}_n F$ 
is \emph{invisible at the level of operadic spectral data}. 
In particular, it is strictly weaker than requiring $\mathrm{cr}_n F$ to vanish as an object, 
and instead isolates the spectrally trivial component of the interaction.

\begin{remark}
The relationship between the two control notions is as follows:
\begin{itemize}
    \item \textbf{Operadic control} (Definition~\ref{def:operadic-cross-control}) bounds the norm of $\mathrm{cr}_n F$ by a function of the spectral sizes of its inputs. 
          If $\Phi_n(0)=0$ and $\Phi_n$ is continuous at $0$, then operadic control implies norm-based continuity.
    \item \textbf{Spectral negligibility} (Definition~\ref{def:spectral-negligible}) only requires that the operadic spectrum of $\mathrm{cr}_n F$ be $\{0\}$, 
          which is a strictly weaker condition (e.g., a non-zero nilpotent operator has zero spectrum but non-zero norm).
\end{itemize}

By the Recovery Theorem (\cite[Thm.~0.4]{ChangSOC1}), for the trivial operad $\mathbb{I}$, 
$\sigma_{\mathbb{I}}(X) \cong X$, so spectral negligibility reduces to $X \cong 0$. 
In general, however, a functor may have nontrivial norm (e.g., a nilpotent operator with non-zero norm) 
but its operadic spectrum may still vanish if all spectral values are zero. 
Thus spectral negligibility captures the idea that $\mathrm{cr}_n F(A_1,\dots,A_n)$ is 
\emph{spectrally invisible} — it contributes nothing to the spectral invariant, 
even if it is analytically non-trivial.
\end{remark}

\medskip

\noindent
\textbf{Conceptual interpretation.}
Spectral negligibility replaces classical vanishing conditions by a more flexible 
and intrinsic notion. While $\mathrm{cr}_n F = 0$ enforces strict polynomiality, 
the condition $\sigma_P(\mathrm{cr}_n F) = 0$ allows for nontrivial higher interactions 
that are nevertheless analytically insignificant. This distinction is essential in 
normed settings, where small but nonzero effects may still admit controlled approximations.

\medskip

\noindent
\textbf{Functoriality and invariance.}
Since the operadic spectrum is functorial under morphisms of $P$-algebras, 
spectral negligibility is preserved under operadic base change (Part~I, 
Theorem~0.5). Thus, it provides a coordinate-free criterion that depends only 
on intrinsic operadic structure.

\medskip

\noindent
\textbf{Relation to admissibility.}
If $F$ is admissible in the sense of Definition~\ref{def:admissible-functor}, 
then the spectral size of $\mathrm{cr}_n F$ is controlled by that of the inputs:
\[
\|\sigma_P(\mathrm{cr}_n F(A_1,\dots,A_n))\|
\;\leq\;
\Phi\big(\max_i \|\sigma_P(A_i)\|\big),
\]
for some control function $\Phi$. In particular, spectral negligibility is compatible 
with admissibility and forms a natural refinement of boundedness conditions.

\medskip

The following technical lemmas will be essential for proving the main theorem. 
(Their proofs are analogous to those in \cite{ChangSOC1} and are sketched here for completeness.)

\begin{lemma}[Spectral Scaling Lemma]
\label{lem:spectral-scaling}
Let $\mathcal{M}$ be a $\mathbb{C}$-linear normed symmetric monoidal category in which the operadic functional calculus (Section~8.1, \cite{ChangSOC1}) is defined. 
Let $P$ be a $C$-colored operad in $\mathcal{M}$, let $A$ be a $P$-algebra, and let $t \in \mathbb{C}$. 
Denote by $tA$ the $P$-algebra obtained by scaling the structure maps of $A$ by $t$ (i.e., the image under the holomorphic functional calculus of the function $f(z) = tz$).

Then, under the analytic realization that identifies $\sigma_P(A)$ with a subset of $\mathbb{C}$,
\[
\sigma_P(tA) = t \cdot \sigma_P(A),
\]
where $t \cdot \sigma_P(A) = \{ t\lambda : \lambda \in \sigma_P(A) \}$.

Moreover, if $G: \mathcal{C}^d \to \mathcal{M}$ is a homogeneous functor of degree $d$ in the sense that there exists a natural isomorphism
\[
G(tA_1, \dots, tA_d) \cong t^d G(A_1, \dots, A_d)
\]
for all $t \in \mathbb{C}$ and all $A_1, \dots, A_d$, then
\[
\sigma_P\big(G(tA_1, \dots, tA_d)\big) = t^d \cdot \sigma_P\big(G(A_1, \dots, A_d)\big).
\]
\end{lemma}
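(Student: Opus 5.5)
The plan has two parts: prove the scaling law $\sigma_P(tA) = t\cdot\sigma_P(A)$, then deduce the homogeneous statement from it together with functoriality of $\sigma_P$.

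\textbf{First claim.} I would work through the definition
\[
\sigma_P(A) = \mathrm{Hoch}_{\mathcal{M}}(A)\otimes_P \mathcal{O}_P^{\mathrm{res}}
\]
and its analytic realization as a subset of $\mathbb{C}$. By hypothesis, $tA$ is the image of $A$ under the holomorphic functional calculus applied to $f(z)=tz$. So the first step is to invoke the spectral mapping property of the operadic functional calculus (Section~8.1 of \cite{ChangSOC1}): for $f$ holomorphic on a neighbourhood of $\sigma_P(A)$, one has $\sigma_P(f(A)) = f(\sigma_P(A))$. Since $f(z)=tz$ is entire, this applies with no domain restriction and gives $\sigma_P(tA)=\{t\lambda:\lambda\in\sigma_P(A)\}$.

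For the case $t=0$, scaling produces a spectrally trivial algebra whose spectrum is $\{0\}$. This is consistent with the formula, provided $\sigma_P(A)$ is nonempty. If $\sigma_P(A)$ is empty, the formula holds trivially as the identity $\varnothing=\varnothing$.

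If one prefers not to rely on spectral mapping, there is a direct alternative at the level of components:
\begin{itemize}
\item Scaling the structure maps of $A$ by $t$ scales each Hochschild differential component linearly, since the realization is through spectral radii of linear components.
\item The operadic residue $\mathcal{O}_P^{\mathrm{res}}$ is independent of $A$, so tensoring over $P$ with it commutes with this scaling.
\item For invertible $t$ one checks both inclusions: $\lambda - tx$ is invertible if and only if $t^{-1}\lambda - x$ is invertible, by factoring out the unit $t$.
\end{itemize}

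\textbf{Second claim.} Here I use functoriality of $\sigma_P$ (Part~I, Theorem~0.5). A natural isomorphism $G(tA_1,\dots,tA_d)\cong t^dG(A_1,\dots,A_d)$ induces an isomorphism of operadic spectra, because $\sigma_P$ sends isomorphisms to equalities of realized subsets. This gives
\[
\sigma_P(G(tA_\bullet)) = \sigma_P\bigl(t^d G(A_\bullet)\bigr).
\]
I then apply the first claim with the scalar $s=t^d$ to the $P$-algebra $G(A_\bullet)$, which yields $t^d\cdot\sigma_P(G(A_\bullet))$. This requires interpreting $t^dG(A_\bullet)$ as the scaled algebra in the sense of the statement, i.e. the image of $G(A_\bullet)$ under $f(z)=t^dz$. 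I would make this explicit as the meaning of the scalar action in the homogeneity hypothesis.

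\textbf{Main obstacle.} I expect the main difficulty to be justifying that the scalar structure used in the homogeneity hypothesis coincides with the functional-calculus scaling of the first claim. This step also needs $G$ to land in a setting where $\sigma_P$ is defined. I would handle it by imposing, as part of the hypotheses, that the natural isomorphism is compatible with the $\mathbb{C}$-linear enrichment of Definition~\ref{def:normed-category}. Bilinearity of composition then identifies multiplication by $t^d$ on hom-spaces with the functional calculus of $z\mapsto t^dz$.
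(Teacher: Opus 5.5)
Your proposal follows the paper's proof: the first claim is obtained from the operadic spectral mapping theorem of Part~I applied to the entire function $f(z)=tz$. The second claim is obtained by transporting $\sigma_P$ along the homogeneity isomorphism and then applying the first claim with scalar $t^d$ to $G(A_1,\dots,A_d)$. Your step identifying the $t^d$ in the homogeneity hypothesis with the functional-calculus scaling, with $G$ landing where $\sigma_P$ is defined, makes explicit an assumption the paper uses silently. Your $t=0$ aside is slightly off: when $\sigma_P(A)=\varnothing$ you cannot have both $\sigma_P(0\cdot A)=\{0\}$ and $\varnothing=\varnothing$. It is unnecessary anyway, since the spectral mapping step already covers $t=0$.
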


\begin{proof}[Sketch]
The first assertion follows from the Operadic Spectral Mapping Theorem (Theorem~9, \cite{ChangSOC1}) applied to the holomorphic function $f(z) = tz$. 
Since $f$ is holomorphic on $\mathbb{C}$, the theorem gives
\[
\sigma_P(tA) = \sigma_P(f(A)) \cong f(\sigma_P(A)) = t \cdot \sigma_P(A),
\]
where the isomorphism is understood under the analytic realization that identifies $\sigma_P(-)$ with a subset of $\mathbb{C}$.

For the second assertion, the homogeneity assumption provides a natural isomorphism
\[
G(tA_1, \dots, tA_d) \cong t^d G(A_1, \dots, A_d).
\]
Applying the first part of the lemma with scalar $t^d$ to the object $G(A_1, \dots, A_d)$ yields
\[
\sigma_P\big(G(tA_1, \dots, tA_d)\big) \cong \sigma_P\big(t^d G(A_1, \dots, A_d)\big) = t^d \cdot \sigma_P\big(G(A_1, \dots, A_d)\big),
\]
as required.
\end{proof}

\begin{remark}
The lemma is stated under the analytic realization that identifies $\sigma_P(A)$ with a subset of $\mathbb{C}$ (see the discussion following Definition~9 in \cite{ChangSOC1}). 
At the categorical level (without analytic realization), the isomorphism $\sigma_P(tA) \cong \sigma_P(A)$ holds as objects in $\mathcal{M}$, but the numerical scaling $t \cdot \sigma_P(A)$ requires the analytic interpretation.
\end{remark}

\begin{lemma}[Spectral Vanishing under Zero Control]
\label{lem:spectral-control}
Let $\mathcal{M}$ be a Banach-enriched symmetric monoidal category with a faithful norm 
(i.e., $\|Y\| = 0 \Rightarrow Y \cong 0$ for all $Y \in \mathcal{M}$). 
Let $P$ be a $C$-colored operad in $\mathcal{M}$, and let $F: \mathsf{Alg}_P(\mathcal{M}) \to \mathcal{M}$ be a 
\emph{spectrally admissible} functor with control function $\Phi: [0,\infty) \to [0,\infty)$ satisfying $\Phi(0) = 0$, 
meaning that for every $P$-algebra $A$,
\[
\|F(A)\| \le \Phi\bigl(\|\sigma_P(A)\|\bigr),
\]
where $\|\sigma_P(A)\|$ denotes the spectral size (i.e., the supremum of absolute values of the analytic realization of $\sigma_P(A)$, or, at the categorical level, the norm of the object $\sigma_P(A)$ under the analytic realization).

Assume that $X$ is a $P$-algebra such that $\|\sigma_P(X)\| = 0$ (equivalently, under the analytic realization, $\sigma_P(X)$ is spectrally trivial: $R(\sigma_P(X)) = \{0\}$). Then
\[
\|F(X)\| = 0.
\]
If, in addition, the norm on $\mathcal{M}$ is faithful, we obtain $F(X) \cong 0$.

In particular, for the identity functor $\mathrm{Id}$, this yields $\|X\| = 0$, and faithfulness gives $X \cong 0$ in $\mathcal{M}$.
\end{lemma}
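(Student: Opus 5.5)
The argument is a direct chain of inequalities from the admissibility bound, followed by an appeal to faithfulness; there is no analytic estimate to carry out. The plan is as follows.

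\textbf{Step 1 (norm vanishing).} Apply the admissibility inequality of Definition~\ref{def:admissible-functor} to the particular $P$-algebra $X$:
\[
0 \le \|F(X)\| \le \Phi\bigl(\|\sigma_P(X)\|\bigr) = \Phi(0) = 0.
\]
This uses only the hypothesis $\|\sigma_P(X)\| = 0$ and the normalization $\Phi(0)=0$. Continuity of $\Phi$ and its monotonicity are not needed here, because we evaluate $\Phi$ exactly at $0$ rather than taking a limit. Hence $\|F(X)\| = 0$. This is the first item of the ``hierarchy of spectral control'' listed after Example~\ref{ex:analytic-admissible}, now applied to a single object.

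\textbf{Step 2 (reconciling the two formulations of the hypothesis).} The statement phrases the hypothesis either as $\|\sigma_P(X)\|=0$ or, under the analytic realization, as $R(\sigma_P(X)) = \{0\}$. I would check that these agree via Definition~\ref{def:spectral-size}: the supremum of $|\lambda|$ over $\{0\}$ is $0$. Conversely, a realization with supremum $0$ is contained in $\{0\}$, and it is nonempty whenever the realization is nonempty. The one delicate point is the empty-spectrum convention, where the supremum of the empty set should be read as $0$. I would adopt that convention explicitly so that Step~1 still applies.

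\textbf{Step 3 (object vanishing).} Under the faithfulness hypothesis $\|Y\| = 0 \Rightarrow Y \cong 0$, applied to $Y = F(X)$, Step~1 gives $F(X) \cong 0$. I expect the main obstacle to be conceptual rather than technical. One must make sure that ``$\|F(X)\|$'' means the same norm on objects of $\mathcal{M}$ for which faithfulness is assumed. The paper uses an object norm here while Definition~\ref{def:normed-category} only norms hom-spaces. I would fix this by interpreting $\|Y\|$ as $\|\mathrm{id}_Y\|_{Y,Y}$ (or as whatever object norm the admissibility condition uses) and state faithfulness for that same quantity. The remark on norm versus object vanishing indicates this is the intended reading.

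\textbf{Step 4 (identity functor).} For the specialization to $\mathrm{Id}$, I would use Example~\ref{ex:identity-admissible} to treat $\mathrm{Id}$ as admissible with $\Phi(r) = Cr$, which satisfies $\Phi(0)=0$. This holds on a spectrally controlled class. Steps~1 and~3 then give $\|X\|=0$ and $X \cong 0$. I would stress that admissibility of $\mathrm{Id}$ is an assumption, not a general fact: the nilpotent Jordan block in that example has spectrum $\{0\}$ but nonzero norm, which shows the conclusion genuinely fails outside the admissible class.
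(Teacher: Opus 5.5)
Your proposal is correct and is essentially the paper's own proof: admissibility at $X$ gives $\|F(X)\| \le \Phi(\|\sigma_P(X)\|) = \Phi(0) = 0$, and faithfulness then upgrades this to $F(X)\cong 0$. Your Steps 2 and 4 add care that the paper's proof omits. These are the empty-spectrum convention, and the point that the identity-functor clause only follows when $\mathrm{Id}$ is itself admissible with $\Phi(0)=0$, e.g.\ on a spectrally controlled class; the paper states that consequence without verifying this hypothesis.
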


\begin{proof}[Sketch]
By spectral admissibility,
\[
\|F(X)\| \le \Phi\bigl(\|\sigma_P(X)\|\bigr).
\]
The hypothesis $\|\sigma_P(X)\| = 0$ together with $\Phi(0) = 0$ forces $\|F(X)\| = 0$. 
If the norm is faithful, $\|F(X)\| = 0$ implies $F(X) \cong 0$ in $\mathcal{M}$.

The role of $\sigma_P(-)$ as the canonical residue-corrected spectral invariant is justified by the universal property of the operadic spectrum (see \cite[Thm.~7]{ChangSOC1}), which ensures that $\sigma_P$ is the minimal invariant compatible with functoriality, base change, and classical recovery. The spectral admissibility condition encodes the idea that $F$ is controlled by this universal invariant.
\end{proof}

\begin{remark}
The lemma does \emph{not} assert that $\sigma_P(X) = \{0\}$ (as a subset of $\mathbb{C}$) implies $X \cong 0$ in $\mathcal{M}$. 
A counterexample is provided by a non-zero nilpotent operator on a Banach space: its classical spectrum is $\{0\}$, yet the operator itself is not the zero object. 
Our statement instead requires the spectral \emph{size} $\|\sigma_P(X)\|$ to be zero and the control function to satisfy $\Phi(0)=0$, which is a stricter condition.

In practice, many admissible functors (e.g., polynomial functors without constant term) satisfy $\Phi(0)=0$. 
Constant functors or analytic functors with non-zero constant term do not satisfy $\Phi(0)=0$, and they are not covered by the lemma.
\end{remark}

\begin{lemma}[Norm--Spectral Comparison on Spectrally Controlled Classes]
\label{lem:norm-spectral-comparison}
Let $\mathcal{C}_0 \subseteq \mathsf{Alg}_P(\mathcal{M})$ be a class of $P$-algebras on which the identity functor is spectrally admissible. 
That is, assume there exists a constant $C_{\mathrm{Id}} > 0$ such that
\[
\|A\| \le C_{\mathrm{Id}} \,\|\sigma_P(A)\|
\]
for all $A \in \mathcal{C}_0$.

Then, for all $A \in \mathcal{C}_0$,
\[
\|\sigma_P(A)\| \le \|A\| \le C_{\mathrm{Id}} \,\|\sigma_P(A)\|.
\]
\end{lemma}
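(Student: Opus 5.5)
The upper bound $\|A\| \le C_{\mathrm{Id}}\|\sigma_P(A)\|$ is exactly the hypothesis, i.e. admissibility of $\mathrm{Id}$ on $\mathcal{C}_0$ with linear control function. So the content of Lemma~\ref{lem:norm-spectral-comparison} is the lower bound $\|\sigma_P(A)\| \le \|A\|$, and I plan to prove it as an abstract spectral-radius inequality.

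I first unwind Definition~\ref{def:spectral-size}. Under the analytic realization, $\|\sigma_P(A)\| = \sup\{|\lambda| : \lambda \in \sigma_P(A)\}$. The claim is then that every $\lambda$ in the realized operadic spectrum satisfies $|\lambda| \le \|A\|$. This is the operadic analogue of the classical inequality $r(T) \le \|T\|$ in a Banach algebra, which comes from the Neumann series: for $|\lambda| > \|T\|$, the series $\sum_k \lambda^{-k-1}T^k$ converges in the Banach space $\mathrm{Hom}_{\mathcal{M}}$. Convergence uses completeness and submultiplicativity, conditions (2) and (4) of Definition~\ref{def:normed-category}, and the limit inverts $\lambda - T$.

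To transport this to $\sigma_P$, I use the operadic functional calculus of Part~I (\cite[Sect.~8.1]{ChangSOC1}), as in the proof of Lemma~\ref{lem:spectral-scaling}. Fix $\lambda$ with $|\lambda| > \|A\|$. The function $f(z) = (\lambda - z)^{-1}$ is holomorphic on the closed disc of radius $\|A\|$. Its power series has coefficients $\lambda^{-k-1}$, so the operadic series $\sum_k \lambda^{-k-1}A^{\otimes k}$ is dominated by $\sum_k |\lambda|^{-k-1}\|A\|^k < \infty$. This uses tensor submultiplicativity, condition (3), exactly as in Example~\ref{ex:analytic-admissible}. Hence $f(A)$ is defined, and by the Operadic Spectral Mapping Theorem (\cite[Thm.~9]{ChangSOC1}) we get $\sigma_P(f(A)) = f(\sigma_P(A))$. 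For this to make sense, no point of $\sigma_P(A)$ may be a pole of $f$, so $\lambda \notin \sigma_P(A)$. Every spectral value therefore lies in the closed disc of radius $\|A\|$, and taking the supremum gives $\|\sigma_P(A)\| \le \|A\|$.

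The main obstacle is this resolvent step. I must check that the functional calculus of Part~I is defined on a neighbourhood of the disc $\{|z| \le \|A\|\}$ rather than only on a neighbourhood of $\sigma_P(A)$; otherwise the argument is circular. If that is not available, I would fall back on the Recovery Theorem (\cite[Thm.~0.4]{ChangSOC1}) and the universality of $\sigma_P$ (\cite[Thm.~7]{ChangSOC1}). The realization of $\sigma_P(A)$ factors through a classical spectrum of a component of $\mathrm{Hoch}_{\mathcal{M}}(A)$ tensored with the residue object. Contractivity of the structure maps, conditions (2) and (3), then bounds that component's norm by $\|A\|$, and the classical $r \le \|\cdot\|$ finishes the argument. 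Combining the lower bound with the hypothesis yields the two-sided estimate.
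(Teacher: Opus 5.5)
Your upper bound matches the paper, but the main argument you give for the lower bound has a genuine gap, in two places.

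First, the series $\sum_k \lambda^{-k-1}A^{\otimes k}$ uses tensor powers, not composition powers. It converges to \emph{some} object, as in Example~\ref{ex:analytic-admissible}, but that object is not an inverse of anything. The telescoping that makes the Neumann series invert $\lambda-T$ relies on $T\circ T^{k}=T^{k+1}$, and nothing like this holds for $A^{\otimes k}$. Tensor powers are not even compatible with a spectral mapping theorem: for $T=\mathrm{diag}(1,2)$ one has $\sigma(T\otimes T)=\{1,2,4\}$, whereas $\{\mu^2:\mu\in\sigma(T)\}=\{1,4\}$.

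Second, the inference ``$\sigma_P(f(A))=f(\sigma_P(A))$ only makes sense if $\lambda\notin\sigma_P(A)$, hence $\lambda\notin\sigma_P(A)$'' reverses hypothesis and conclusion. The Operadic Spectral Mapping Theorem applies to $f$ holomorphic on a neighbourhood of $\sigma_P(A)$. If $\lambda\in\sigma_P(A)$, the theorem simply does not apply, and no contradiction arises. This is the circularity you anticipated. Asking for a calculus on a neighbourhood of the disc $\{|z|\le\|A\|\}$ does not remove it, because having such a calculus already presupposes $\sigma_P(A)\subseteq\{|z|\le\|A\|\}$, which is the claim.

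Your fallback is the right argument, and it is essentially the paper's proof. The paper obtains $\|\sigma_P(A)\|\le\|A\|$ in two steps. It interprets $\sigma_P(A)$ through its analytic realization, which recovers a classical spectrum (Proposition~6 and the Recovery Theorem of Part~I). It then applies $r(A)\le\|A\|$.

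The Neumann series belongs in the proof of that classical inequality, with composition powers in the Banach algebra $\mathrm{Hom}_{\mathcal{M}}(X,X)$, using conditions (2) and (4) of Definition~\ref{def:normed-category}. There it gives an honest two-sided inverse of $\lambda\,\mathrm{id}-A$ for $|\lambda|>\|A\|$, and no spectral mapping theorem is needed.

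One caution about the fallback. Conditions (2) and (3) are submultiplicativity estimates; they do not make any particular structure map contractive. Since $\mathrm{Hoch}_{\mathcal{M}}(A)$ involves tensor powers of $A$, these conditions do not by themselves bound the realized component's norm by $\|A\|$. The paper avoids this by taking the realization to recover the classical spectrum of $A$ itself. You should state that identification, or the needed norm bound, as an explicit input rather than derive it from (2)--(3).
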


\begin{proof}[Sketch]
The left inequality $\|\sigma_P(A)\| \le \|A\|$ follows from the classical spectral radius bound $r(A) \le \|A\|$, interpreted through the analytic realization of $\sigma_P(A)$ that recovers the classical spectrum (see Proposition~6 and the Recovery Theorem in \cite{ChangSOC1}).

The right inequality is precisely the admissibility assumption for the identity functor on the spectrally controlled class $\mathcal{C}_0$.
\end{proof}

\begin{remark}
The assumption that the identity functor is admissible on $\mathcal{C}_0$ is essential. 
Without it, the estimate fails even in the classical setting, as shown by a non-zero nilpotent operator $N$ on a Banach space:
\[
\sigma(N) = \{0\},\qquad r(N) = 0,\qquad \|N\| > 0,
\]
so no constant $C_{\mathrm{Id}}$ can satisfy $\|N\| \le C_{\mathrm{Id}} \cdot 0$.
Thus the lemma applies only to spectrally controlled classes such as normal operators in a $C^*$-algebra, where $\|A\| = r(A)$.
\end{remark}

\medskip

The notion of spectral negligibility provides the precise bridge between analytic control 
and operadic invariants. The following theorem—the first main result of Spectral Operadic Calculus—shows that it completely characterizes norm-excisive behavior of functors.

\begin{theorem}[Quantitative Cross-Effect Criterion for Norm-Excision]
\label{thm:excision-criterion}
Let $\mathcal{M}$ be a Banach-enriched stable symmetric monoidal category and let
$F: \mathsf{Alg}_P(\mathcal{M}) \to \mathcal{M}$ be an admissible functor 
(Definition~\ref{def:admissible-functor}). 
Assume that the $(n+1)$-st cross-effect $\mathrm{cr}_{n+1}F$ is \emph{spectrally controlled} 
in the sense that there exists a nondecreasing function $\Phi_{n+1}: [0,\infty) \to [0,\infty)$ 
with $\Phi_{n+1}(0) = 0$ such that for all inputs $A_1,\dots,A_{n+1}$,
\[
\|\mathrm{cr}_{n+1}F(A_1,\dots,A_{n+1})\|
\;\le\;
\Phi_{n+1}\!\left(
\|\sigma_P(\mathrm{cr}_{n+1}F(A_1,\dots,A_{n+1}))\|
\right).
\]

Then the following hold:
\begin{enumerate}
    \item \textbf{Quantitative estimate:} If 
          $\|\sigma_P(\mathrm{cr}_{n+1}F(A_1,\dots,A_{n+1}))\| \le \delta$, then
          \[
          \|\mathrm{cr}_{n+1}F(A_1,\dots,A_{n+1})\|
          \;\le\;
          \Phi_{n+1}(\delta).
          \]
    \item \textbf{Strict excision:} If $\mathrm{cr}_{n+1}F$ is \emph{spectrally negligible}, i.e.,
          \[
          \sigma_P\!\big(\mathrm{cr}_{n+1}F(A_1,\dots,A_{n+1})\big) = \{0\}
          \qquad \text{for all } A_1,\dots,A_{n+1},
          \]
          then $F$ is \emph{strictly norm-$n$-excisive}:
          \[
          \|\mathrm{cr}_{n+1}F(A_1,\dots,A_{n+1})\| = 0
          \qquad \text{for all } A_1,\dots,A_{n+1}.
          \]
\end{enumerate}
\end{theorem}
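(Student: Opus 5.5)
The argument is essentially a direct unpacking of the spectral-control hypothesis, in the same spirit as the proof of Lemma~\ref{lem:spectral-control}. Fix inputs $A_1,\dots,A_{n+1}$ and abbreviate $X := \mathrm{cr}_{n+1}F(A_1,\dots,A_{n+1})$, the total homotopy fiber from Definition~\ref{def:cross-effects}. Stability of $\mathcal{M}$ guarantees that $X$ is a well-defined object of $\mathcal{M}$. Its norm and spectral size are therefore meaningful: we use the norm inherited from the Banach enrichment and the spectral size of Definition~\ref{def:spectral-size}. The hypothesis gives
\[
\|X\| \le \Phi_{n+1}\bigl(\|\sigma_P(X)\|\bigr).
\]

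For the quantitative estimate (1), I would combine this inequality with the assumption $\|\sigma_P(X)\| \le \delta$. Since $\Phi_{n+1}$ is nondecreasing, we have $\Phi_{n+1}(\|\sigma_P(X)\|) \le \Phi_{n+1}(\delta)$, and chaining the two inequalities yields $\|X\| \le \Phi_{n+1}(\delta)$. No further input is required, and in particular admissibility of $F$ itself plays no role in this step. I would remark that admissibility only serves to guarantee that the quantities involved are finite.

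For strict excision (2), I would first translate spectral negligibility into vanishing spectral size. By Definition~\ref{def:spectral-negligible}, the condition $\sigma_P(X)=\{0\}$ is equivalent to $\|\sigma_P(X)\|=0$, since the supremum of $|\lambda|$ over $\{0\}$ is $0$. Applying part (1) with $\delta=0$ then gives $\|X\| \le \Phi_{n+1}(0) = 0$, so $\|X\|=0$ for every choice of inputs. This is exactly the definition of strict norm-$n$-excisiveness.

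The only point requiring care, and the step I expect to be the main obstacle, is justifying that passage from $\sigma_P(X)=\{0\}$ to $\|\sigma_P(X)\|=0$ when $\sigma_P(X)$ is an object of $\mathcal{M}$ rather than a subset of $\mathbb{C}$. Here I would invoke the analytic realization convention of Definition~\ref{def:spectral-size}, exactly as Lemma~\ref{lem:spectral-control} does. I would also stress the following, in line with the remarks following Lemma~\ref{lem:spectral-control}: the conclusion is norm vanishing, not $X\cong 0$. Upgrading it to $X \cong 0$ would additionally require a faithful norm. The nilpotent example shows that the spectral-control hypothesis on $\mathrm{cr}_{n+1}F$ is genuinely needed and cannot be dropped.
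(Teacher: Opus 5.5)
Your proposal is correct and matches the paper's proof: part (1) chains the spectral-control inequality with the monotonicity of $\Phi_{n+1}$, and part (2) reads spectral negligibility as $\|\sigma_P(\mathrm{cr}_{n+1}F(\cdots))\| = 0$ and applies part (1) with $\delta = 0$ and $\Phi_{n+1}(0)=0$. The step you flagged as the main obstacle, passing from $\sigma_P(X)=\{0\}$ to $\|\sigma_P(X)\|=0$, is already settled by the paper's definition of spectral negligibility, which states this equivalence explicitly.
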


\begin{proof}
We prove each statement in turn.

\medskip

\noindent
\textbf{Part 1 (Quantitative estimate).}
By the spectral control assumption,
\[
\|\mathrm{cr}_{n+1}F(A_1,\dots,A_{n+1})\|
\;\le\;
\Phi_{n+1}\!\left(
\|\sigma_P(\mathrm{cr}_{n+1}F(A_1,\dots,A_{n+1}))\|
\right).
\]
If $\|\sigma_P(\mathrm{cr}_{n+1}F(\cdots))\| \le \delta$, then since $\Phi_{n+1}$ is nondecreasing,
\[
\Phi_{n+1}\!\left(
\|\sigma_P(\mathrm{cr}_{n+1}F(\cdots))\|
\right)
\;\le\;
\Phi_{n+1}(\delta).
\]
Combining the two inequalities yields the desired estimate.

\medskip

\noindent
\textbf{Part 2 (Strict excision from spectral negligibility).}
If $\mathrm{cr}_{n+1}F$ is spectrally negligible, then by definition
$\|\sigma_P(\mathrm{cr}_{n+1}F(\cdots))\| = 0$. 
Substituting $\delta = 0$ into the quantitative estimate and using $\Phi_{n+1}(0) = 0$ gives
\[
\|\mathrm{cr}_{n+1}F(A_1,\dots,A_{n+1})\| = 0
\]
for all inputs. Hence $F$ is strictly norm-$n$-excisive.

\medskip

Thus the theorem is proved.
\end{proof}

\medskip

\noindent
\textbf{Conceptual significance.}
Theorem~\ref{thm:excision-criterion} establishes the first fundamental bridge between analytic and operadic structures in SOC:
\begin{itemize}
    \item It replaces a \emph{norm inequality} (analytic condition) with a \emph{spectral vanishing condition} (structural invariant).
    \item It demonstrates that the operadic spectrum $\sigma_P(-)$ is not merely an invariant but a \emph{control mechanism} for higher-order approximation.
    \item It shows that polynomial approximation in SOC is governed entirely by operadic spectral data.
\end{itemize}

\begin{remark}[Link to Part I]
This theorem crucially depends on the universality of $\mathcal{O}_P^{\mathrm{res}}$ and the minimality of $\sigma_P(-)$ established in Part~I~\cite{ChangSOC1}. These results ensure that spectral vanishing is the strongest possible invariant detecting higher-order degeneracy. Without the residue correction, the classical spectrum would fail to capture the necessary data, as demonstrated by the No-Go Theorem (\cite[Thm.~0.1]{ChangSOC1}).
\end{remark}

\medskip

The following immediate consequences will be essential for the construction of the spectral Taylor tower.

\begin{corollary}[Spectral Obstruction to Polynomiality]
\label{cor:polynomial-spectral}
Let $F: \mathsf{Alg}_P(\mathcal{M}) \to \mathcal{M}$ be an admissible functor.

\begin{enumerate}
    \item \textbf{(Necessary condition)} 
    If $F$ is polynomial of degree $\le n$ in the strict sense that 
    $\mathrm{cr}_{n+1}F \cong 0$, then
    \[
    \sigma_P\!\big( \mathrm{cr}_{n+1}F(A_1,\dots,A_{n+1}) \big) = \{0\}
    \quad \text{for all } A_1,\dots,A_{n+1}.
    \]
    
    \item \textbf{(Sufficient condition under spectral zero-control)} 
    Conversely, assume that $\mathrm{cr}_{n+1}F$ is \emph{spectrally zero-controlled}, 
    meaning that there exists a nondecreasing function $\Phi_{n+1}$ with $\Phi_{n+1}(0)=0$ such that
    \[
    \|\mathrm{cr}_{n+1}F(A_1,\dots,A_{n+1})\|
    \;\le\;
    \Phi_{n+1}\!\left(
    \|\sigma_P(\mathrm{cr}_{n+1}F(A_1,\dots,A_{n+1}))\|
    \right).
    \]
    If $\mathrm{cr}_{n+1}F$ is spectrally negligible 
    (i.e., $\sigma_P(\mathrm{cr}_{n+1}F(\cdots)) = \{0\}$), then
    \[
    \|\mathrm{cr}_{n+1}F(A_1,\dots,A_{n+1})\| = 0
    \quad \text{for all inputs}.
    \]
    Hence $F$ is strictly polynomial of degree $\le n$ in the norm sense.
\end{enumerate}
\end{corollary}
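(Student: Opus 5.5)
The corollary splits into two independent parts, and I would treat each as a direct consequence of results already in place rather than prove anything new.

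For part (1), the necessary condition, I would use only the functoriality of $\sigma_P$ from Part~I. Suppose $\mathrm{cr}_{n+1}F(A_1,\dots,A_{n+1})\cong 0$ for all inputs. The operadic spectrum is functorial and therefore sends isomorphic objects to isomorphic spectra. It thus suffices to compute $\sigma_P(0)$. The zero object is a $P$-algebra whose structure maps are all zero, and the Spectral Scaling Lemma (Lemma~\ref{lem:spectral-scaling}) applies to it with $t=0$. Writing $0 = 0\cdot X$ for any $X$ gives
\[
\sigma_P(0)=\sigma_P(0\cdot X)=0\cdot\sigma_P(X)=\{0\}
\]
under the analytic realization. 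Alternatively, one can argue that $\sigma_P(0)=\mathrm{Hoch}_{\mathcal M}(0)\otimes_P\mathcal{O}_P^{\mathrm{res}}$ is trivial, whose realization is $\{0\}$.

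The one subtle point, and the step I expect to be the main obstacle, is the convention for the empty spectrum versus $\{0\}$. The analytic realization must assign the zero object spectrum $\{0\}$ rather than $\varnothing$. I would settle this with the scaling argument above, and add the remark that the conclusion only uses $\|\sigma_P(\cdot)\|=0$, which holds under either convention since the supremum over the empty set or over $\{0\}$ is $0$.

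For part (2), the sufficient condition, I would simply specialize Theorem~\ref{thm:excision-criterion}. The spectral zero-control hypothesis is exactly the spectral control assumption of that theorem. Spectral negligibility gives $\|\sigma_P(\mathrm{cr}_{n+1}F(\cdots))\|=0$, so part~2 of the theorem (equivalently, part~1 with $\delta=0$ together with $\Phi_{n+1}(0)=0$) yields $\|\mathrm{cr}_{n+1}F(A_1,\dots,A_{n+1})\|=0$ for all inputs. By definition this is strict norm-$n$-excisiveness, that is, strict polynomiality of degree $\le n$ in the norm sense.

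Finally, I would remark that the conclusion cannot be upgraded to $\mathrm{cr}_{n+1}F\cong 0$ unless the norm is faithful, as in Lemma~\ref{lem:spectral-control}. This is why the statement is phrased in the norm sense.
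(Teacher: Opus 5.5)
Your part (2) is what the paper intends. The paper states this corollary without a separate proof, as an immediate consequence of Theorem~\ref{thm:excision-criterion}. Your specialization (zero-control is the theorem's hypothesis, negligibility gives $\delta=0$, then $\Phi_{n+1}(0)=0$) is that theorem's part~2 verbatim. Your closing remark about faithfulness matches Lemma~\ref{lem:spectral-control}.

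In part (1), drop the scaling-lemma argument and make your ``alternative'' the actual proof.
\begin{itemize}
\item In Lemma~\ref{lem:spectral-scaling}, $tA$ is $A$ with its structure maps rescaled. So $0\cdot X$ still has underlying object $X$ and is not the zero object.
\item The remark after that lemma asserts $\sigma_P(tA)\cong\sigma_P(A)$ at the categorical level. Reading $0\cdot X$ as the zero object would therefore force every $\sigma_P(X)$ to be trivial.
\item Under the operator realization, $0\cdot X$ is the zero operator on a possibly nonzero space. The zero object instead realizes on the zero space, whose spectrum is conventionally empty.
\item The spectral-mapping step with $t=0$ returns $\{0\}$ only when $\sigma_P(X)\neq\varnothing$.
\end{itemize}

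The clean route is as follows. By functoriality, $\sigma_P(\mathrm{cr}_{n+1}F(A_1,\dots,A_{n+1}))\cong\sigma_P(0)$. Moreover $\sigma_P(0)=\mathrm{Hoch}_{\mathcal M}(0)\otimes_P\mathcal{O}_P^{\mathrm{res}}\cong 0$, because the Hochschild complex of the zero object vanishes termwise and $\otimes$ preserves the zero object. This agrees with the paper's convention in the Recovery Theorem remark: for $P=\mathbb I$, negligibility of $X$ means $X\cong 0$.

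Your observation that $\sup\varnothing=\sup\{0\}=0$ then settles the empty-versus-$\{0\}$ issue. It combines with the paper's definition, which treats $\sigma_P(\cdot)=\{0\}$ as equivalent to $\|\sigma_P(\cdot)\|=0$.
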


\begin{corollary}[Quantitative Excision under Spectral Multilinear Control]
\label{cor:quantitative-excision}
Let $F: \mathsf{Alg}_P(\mathcal{M}) \to \mathcal{M}$ be an admissible functor. 
Assume that the $(n+1)$-st cross-effect $\mathrm{cr}_{n+1}F$ satisfies a 
\emph{spectral multilinear estimate}: there exists a constant $C > 0$ 
(depending only on $F$ and $n$) such that for all inputs,
\[
\|\mathrm{cr}_{n+1}F(A_1,\dots,A_{n+1})\|
\;\le\;
C \prod_{i=1}^{n+1} \|\sigma_P(A_i)\|.
\]

Then, if $\|\sigma_P(A_i)\| \le r$ for all $i = 1,\dots,n+1$, we have
\[
\|\mathrm{cr}_{n+1}F(A_1,\dots,A_{n+1})\|
\;\le\;
C \, r^{\,n+1}.
\]

In particular, the $(n+1)$-st cross-effect is small whenever all inputs are spectrally small.
\end{corollary}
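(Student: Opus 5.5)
The statement immediately above is Corollary~\ref{cor:quantitative-excision}. Its conclusion follows from the assumed spectral multilinear estimate by monotonicity of a product of nonnegative numbers. No structural input about $\sigma_P$ is required beyond the fact that the spectral size $\|\sigma_P(A_i)\|$ is a nonnegative real number. This is immediate from Definition~\ref{def:spectral-size}, since it is a supremum of absolute values (or, in the general case, a minimal nonnegative bound). The plan is to write out this chain of inequalities and then read off the ``in particular'' clause.

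\textbf{Key steps.}
\begin{enumerate}
\item Fix inputs $A_1,\dots,A_{n+1}$ with $\|\sigma_P(A_i)\| \le r$ for every $i$. Note that $0 \le \|\sigma_P(A_i)\| \le r$, so in particular $r \ge 0$.
\item Apply the hypothesis to get
\[
\|\mathrm{cr}_{n+1}F(A_1,\dots,A_{n+1})\| \le C \prod_{i=1}^{n+1}\|\sigma_P(A_i)\|.
\]
\item Bound the product factor by factor. For nonnegative reals $0\le a_i\le b_i$ one has $\prod a_i \le \prod b_i$; this is proved by induction on the number of factors, using $a_1a_2 \le b_1 a_2 \le b_1 b_2$. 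Hence $\prod_{i=1}^{n+1}\|\sigma_P(A_i)\| \le r^{n+1}$.
\item Multiply by $C>0$, which preserves the inequality, to obtain the bound $C\,r^{n+1}$.
\end{enumerate}

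For the final assertion, the bound $C r^{n+1}\to 0$ as $r\to 0$, with order $n+1$ in $r$. So the cross-effect is small whenever all inputs are spectrally small. In the language of Definition~\ref{def:operadic-cross-control}, $\mathrm{cr}_{n+1}F$ is operadically controlled with $\Phi_{n+1}(s)=C s^{n+1}$, which is nondecreasing on $[0,\infty)$, continuous, and vanishes at $0$. One may also remark that the bound is consistent with Lemma~\ref{lem:spectral-scaling}: for a degree-$(n+1)$ homogeneous cross-effect, scaling all inputs by $t$ scales the right-hand side by $|t|^{n+1}$.

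The only point needing care is nonnegativity of the factors, which is what makes the termwise product comparison legitimate. In the general (non-numerical) case of Definition~\ref{def:spectral-size}, I would state explicitly that $\|\sigma_P(-)\|$ is interpreted as a value in $[0,\infty)$. The hypothesis is presumably meant only for inputs where these sizes are finite, which is automatic once $\|\sigma_P(A_i)\|\le r<\infty$. There is no genuine obstacle beyond this.
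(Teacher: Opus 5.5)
Your proposal is correct and matches the paper, which states this corollary without a written proof and treats it as immediate from the hypothesis. The intended argument is exactly the one you give: compare the nonnegative factors $\|\sigma_P(A_i)\|\le r$ term by term and multiply by $C>0$. You also make the nonnegativity of the spectral sizes explicit, which is the only point that needs care.
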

\medskip

This criterion serves as the key input for the construction of the operadic Taylor tower, to which we now turn.

\subsection{The Spectral Criterion for Norm-Excision}
\label{subsec:excision-criterion}

We now establish the central result of this section, which identifies the operadic spectrum 
as the precise invariant governing analytic excision. This theorem provides the fundamental 
bridge between norm-controlled behavior and intrinsic spectral data.

\medskip

\begin{definition}[Spectrally Norm-$n$-Excisive Functor]
\label{def:norm-excisive}
Let $F: \mathcal{C} \to \mathcal{M}$ be an admissible functor, where 
$\mathcal{C} = \mathsf{Alg}_P(\mathcal{M})$ and $\mathcal{M}$ is a normed symmetric monoidal category 
(Definition~\ref{def:normed-category}). 

We say that $F$ is \emph{spectrally norm-$n$-excisive near zero} if its $(n+1)$-st cross-effect 
is controlled by the spectral size of the inputs: there exists a nondecreasing control function
\[
\varepsilon: [0,\infty) \to [0,\infty), \qquad \varepsilon(r) \to 0 \ \text{as} \ r \to 0,
\]
such that for all $A_1,\dots,A_{n+1} \in \mathcal{C}$,
\[
\|\mathrm{cr}_{n+1}F(A_1,\dots,A_{n+1})\|
\;\leq\;
\varepsilon\!\left(\max_{1 \le i \le n+1} \|\sigma_P(A_i)\|\right).
\]

We distinguish the following special cases:
\begin{itemize}
    \item If one may take $\varepsilon(r) = C r$ for some constant $C > 0$, 
          we say that $F$ is \emph{linearly spectrally norm-$n$-excisive}.
    \item If one may take $\varepsilon(r) = C r^{n+1}$ for some constant $C > 0$, 
          we say that $F$ is \emph{polynomially spectrally norm-$n$-excisive of order $n+1$}.
    \item If $\|\mathrm{cr}_{n+1}F(A_1,\dots,A_{n+1})\| = 0$ for all inputs, 
          we say that $F$ is \emph{strictly norm-$n$-excisive}.
\end{itemize}
\end{definition}

\medskip

\begin{theorem}[Spectral Criterion for Norm-Excision]
\label{thm:spectral-excision}
Let $\mathcal{M}$ be a Banach-enriched symmetric monoidal category with a faithful norm 
(i.e., $\|X\| = 0 \Rightarrow X \cong 0$). 
Let $F: \mathsf{Alg}_P(\mathcal{M}) \to \mathcal{M}$ be an admissible functor 
(Definition~\ref{def:admissible-functor}). 
Assume that the following two conditions hold:

\begin{enumerate}
    \item \textbf{(Spectral control of cross-effects):} The $(n+1)$-st cross-effect 
          $\mathrm{cr}_{n+1}F$ satisfies the \emph{spectral zero-control} condition: 
          there exists a nondecreasing function $\Psi: [0,\infty) \to [0,\infty)$ 
          with $\Psi(0) = 0$ such that for all inputs $A_1,\dots,A_{n+1}$,
          \[
          \|\mathrm{cr}_{n+1}F(A_1,\dots,A_{n+1})\|
          \;\leq\;
          \Psi\!\left(
          \|\sigma_P(\mathrm{cr}_{n+1}F(A_1,\dots,A_{n+1}))\|
          \right).
          \]
    
    \item \textbf{(Spectral multilinearity):} The cross-effect $\mathrm{cr}_{n+1}F$ is \emph{multilinear} 
          in the sense that for any scalar $t \in \mathbb{C}$,
          \[
          \mathrm{cr}_{n+1}F(tA_1,\dots,tA_{n+1}) = t^{n+1} \,\mathrm{cr}_{n+1}F(A_1,\dots,A_{n+1}).
          \]
\end{enumerate}

Then the following are equivalent:
\begin{enumerate}
    \item $F$ is \emph{strictly norm-$n$-excisive}, i.e.,
          \[
          \|\mathrm{cr}_{n+1}F(A_1,\dots,A_{n+1})\| = 0
          \qquad \text{for all inputs}.
          \]
    \item The $(n+1)$-st cross-effect $\mathrm{cr}_{n+1}F$ is \emph{spectrally negligible}, i.e.,
          \[
          \sigma_P\!\big(\mathrm{cr}_{n+1}F(A_1,\dots,A_{n+1})\big) = \{0\}
          \qquad \text{for all inputs}.
          \]
\end{enumerate}
\end{theorem}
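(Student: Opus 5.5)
Both directions are short consequences of the two standing hypotheses together with the faithfulness of the norm on $\mathcal{M}$. For brevity, write $X := \mathrm{cr}_{n+1}F(A_1,\dots,A_{n+1})$ for a fixed tuple of inputs.

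\textbf{(2) $\Rightarrow$ (1).} This direction is essentially Part~2 of Theorem~\ref{thm:excision-criterion}. Spectral negligibility gives $\sigma_P(X)=\{0\}$, so $\|\sigma_P(X)\| = 0$ by Definition~\ref{def:spectral-size}. The zero-control hypothesis (1) then yields
\[
\|X\| \le \Psi(0) = 0 .
\]
Since the tuple was arbitrary, $F$ is strictly norm-$n$-excisive. This direction uses neither the multilinearity hypothesis nor faithfulness.

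\textbf{(1) $\Rightarrow$ (2).} Assume $\|X\|=0$ for all inputs. Faithfulness of the norm gives $X \cong 0$ in $\mathcal{M}$. The plan is then to show that $\sigma_P(0)=\{0\}$ and to transport this along the isomorphism $X\cong 0$ using functoriality of $\sigma_P$ (Part~I). I would prove $\sigma_P(0)=\{0\}$ with the Spectral Scaling Lemma~\ref{lem:spectral-scaling}. The multilinearity hypothesis (2) gives
\[
X = \mathrm{cr}_{n+1}F(0\cdot A_1,\dots,0\cdot A_{n+1}) = 0^{n+1} X .
\]
The lemma, applied to the degree-$(n+1)$ homogeneous functor $\mathrm{cr}_{n+1}F$ with $t=0$, then gives
\[
\sigma_P(X) = 0^{n+1}\cdot\sigma_P(X) \subseteq \{0\}.
\]
A bonus of this route is that it works directly with $X$ without passing through the zero object. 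A secondary cross-check is multi-reducedness: the cross-effect vanishes when an input is the zero algebra, so $X$ is identified with a cross-effect evaluated at zero inputs.

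\textbf{Main obstacle: nonemptiness.} The inclusion above only gives $\sigma_P(X)\subseteq\{0\}$; one must exclude $\sigma_P(X)=\varnothing$. My first approach is to invoke the Banach-algebra (Gelfand) fact, transported through the analytic realization and the Recovery Theorem of Part~I, that spectra in a nonzero unital setting are nonempty. In the degenerate case where the realization of the zero object has empty spectrum, I would adopt the paper's convention that spectral triviality means $\|\sigma_P\|=0$, as in Definition~\ref{def:spectral-negligible}, so that ``$=\{0\}$'' is read as ``of spectral size zero''. Under that reading, $\|\sigma_P(X)\| = 0$ holds immediately, which completes the equivalence.
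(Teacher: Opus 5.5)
Your direction (2)$\Rightarrow$(1) is the paper's argument verbatim. In (1)$\Rightarrow$(2) your route differs from the paper's, and as written it contains a wrong step. The multilinearity hypothesis gives only $\mathrm{cr}_{n+1}F(0\cdot A_1,\dots,0\cdot A_{n+1})=0^{n+1}X$. It does not give $X=\mathrm{cr}_{n+1}F(0\cdot A_1,\dots,0\cdot A_{n+1})$. If that first equality followed from multilinearity, your chain $X=0^{n+1}X$ would force every multilinear cross-effect to vanish (e.g.\ $\mathrm{cr}_2$ of $A\mapsto A\otimes A$), without using strict norm-excision at all. The identification holds only because faithfulness has already made $X$ a zero object, and the same is true of your multi-reducedness cross-check. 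So the advertised bonus of never passing through the zero object is illusory, and multilinearity plays no genuine role. What actually works is your original plan. Part one of Lemma~\ref{lem:spectral-scaling} at $t=0$ gives $\sigma_P(0)=\sigma_P(0\cdot 0)=0\cdot\sigma_P(0)\subseteq\{0\}$, and you then transport this along $X\cong 0$.

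You were right to flag nonemptiness, but your first remedy cannot work here. Because the norm is faithful, strict norm-excision makes \emph{every} $X$ a zero object. That is exactly where Gelfand's theorem fails: it needs a nonzero unital algebra, and the unique operator on the zero space has empty classical spectrum. So the proof closes only through your fallback, which identifies ``$\sigma_P(X)=\{0\}$'' with ``$\|\sigma_P(X)\|=0$'' as in Definition~\ref{def:spectral-negligible}.

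The paper's proof rests on the same identification, but it obtains $\|\sigma_P(X)\|=0$ in one step, directly from $\|X\|=0$, because the norm dominates the spectral size. The paper phrases this as norm-continuity of $\sigma_P$; the operative fact is the spectral-radius bound $\|\sigma_P(X)\|\le\|X\|$ behind the left inequality of Lemma~\ref{lem:norm-spectral-comparison}. That route needs neither faithfulness, nor multilinearity, nor the scaling lemma; the paper itself remarks that multilinearity is not used. Your repaired route gives the faithfulness hypothesis a role. Otherwise it only adds steps that the shared convention makes unnecessary.
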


\begin{proof}
We prove the equivalence in two directions.

\medskip

\noindent
\textbf{Part 1: $(1) \Rightarrow (2)$ (Strict norm-excision implies spectral negligibility).}

Assume that $\|\mathrm{cr}_{n+1}F(A_1,\dots,A_{n+1})\| = 0$ for all inputs. 
By Lemma~\ref{lem:spectral-control} (Spectral Vanishing under Zero Control), 
or directly from the definition of the operadic spectrum (which is continuous 
with respect to the norm), we have
\[
\|\sigma_P(\mathrm{cr}_{n+1}F(A_1,\dots,A_{n+1}))\| = 0,
\]
which, under the analytic realization, is equivalent to
\[
\sigma_P\!\big(\mathrm{cr}_{n+1}F(A_1,\dots,A_{n+1})\big) = \{0\}.
\]
Thus strict norm-excision implies spectral negligibility.

\medskip

\noindent
\textbf{Part 2: $(2) \Rightarrow (1)$ (Spectral negligibility implies strict norm-excision).}

Assume that $\mathrm{cr}_{n+1}F$ is spectrally negligible, i.e.,
\[
\sigma_P\!\big(\mathrm{cr}_{n+1}F(A_1,\dots,A_{n+1})\big) = \{0\}
\]
for all inputs. Under the analytic realization, this is equivalent to
\[
\|\sigma_P(\mathrm{cr}_{n+1}F(A_1,\dots,A_{n+1}))\| = 0.
\]

By the spectral zero-control assumption (Condition 1), we have
\[
\|\mathrm{cr}_{n+1}F(A_1,\dots,A_{n+1})\|
\;\leq\;
\Psi\!\left(
\|\sigma_P(\mathrm{cr}_{n+1}F(A_1,\dots,A_{n+1}))\|
\right)
= \Psi(0) = 0.
\]
Hence $\|\mathrm{cr}_{n+1}F(A_1,\dots,A_{n+1})\| = 0$ for all inputs. 
Thus $F$ is strictly norm-$n$-excisive.

\medskip

Therefore, the two conditions are equivalent under the stated hypotheses.
\end{proof}

\begin{remark}[On the necessity of the assumptions]
The equivalence relies crucially on both assumptions:

\begin{itemize}
    \item \textbf{Spectral zero-control} ($\Psi(0)=0$) is necessary for direction $(2) \Rightarrow (1)$. 
          Without it, a non-zero nilpotent operator would have $\sigma = \{0\}$ but $\|N\| > 0$, 
          violating the implication.
    
    \item \textbf{Multilinearity} is not directly used in the proof above, but is required for the 
          inclusion of direction $(1) \Rightarrow (2)$ when using Lemma~\ref{lem:spectral-scaling} 
          or for consistency with the definition of spectral derivatives. 
          In practice, multilinearity holds for cross-effects in additive or stable settings 
          (see Definition~\ref{def:cross-effects}).
\end{itemize}

If the spectral control condition holds with $\Psi(r) = C r$ (linear control), then 
spectral negligibility implies strict norm-excision with the stronger bound 
$\|\mathrm{cr}_{n+1}F(\cdots)\| = 0$.
\end{remark}

\medskip

\noindent
\textbf{Conceptual meaning and significance.}
Theorem~\ref{thm:spectral-excision} provides a precise bridge between analytic control and operadic structure. 
It identifies a sufficient condition under which spectral information fully determines higher-order analytic behavior.

The key ingredients are:
\begin{itemize}
    \item \textbf{Spectral negligibility:} 
    $\sigma_P(\mathrm{cr}_{n+1}F(\cdots))=\{0\}$, meaning that the $(n+1)$-st interaction is spectrally invisible.
    
    \item \textbf{Spectral zero-control:} 
    $\|\mathrm{cr}_{n+1}F(\cdots)\|\le \Psi\!\left(\|\sigma_P(\mathrm{cr}_{n+1}F(\cdots))\|\right)$ with $\Psi(0)=0$, ensuring that norm behavior is governed by spectral size.
\end{itemize}

When both conditions hold, the cross-effect must vanish in norm, so $F$ is strictly norm-$n$-excisive. 
In this sense, the operadic spectrum $\sigma_P$ acts as a \emph{control parameter} for higher-order analytic behavior.

More broadly, the theorem shows that, under suitable spectral control hypotheses, 
analytic conditions (norm smallness) can be reduced to structural conditions 
(spectral vanishing). 
Thus, polynomial approximation in SOC is governed by operadic spectral data, 
rather than by external coordinate estimates.

\medskip

\noindent
\textbf{Interpretation.}
In classical functional analysis, small norm often implies small spectrum. 
Here, SOC strengthens this principle:

\[
\boxed{\text{Vanishing operadic spectrum} \;\Longleftrightarrow\; \text{Higher-order excision.}}
\]

This is a strictly stronger and more refined statement than its classical or 
homotopical counterparts. In Goodwillie calculus, excision is a homotopical 
condition; here, it is a \emph{spectral} condition that carries quantitative 
information about convergence rates.

\medskip

The following immediate consequences will be essential for the construction of 
the spectral Taylor tower.

\begin{corollary}[Spectral Characterization of Spectral Polynomial Functors]
\label{cor:polynomial-spectral}
An admissible functor $F$ is a spectral polynomial of degree $\le n$,
meaning that $\mathrm{cr}_{n+1}F$ is spectrally negligible, if and only if
\[
\sigma_P\big( \mathrm{cr}_{n+1}F(A_1,\dots,A_{n+1}) \big)=\{0\}
\]
for all inputs.
\end{corollary}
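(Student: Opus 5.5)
The statement is essentially definitional: a spectral polynomial of degree $\le n$ is \emph{defined} as an admissible functor whose $(n+1)$-st cross-effect is spectrally negligible, and Definition~\ref{def:spectral-negligible} says that $\mathrm{cr}_{n+1}F$ is spectrally negligible exactly when $\sigma_P(\mathrm{cr}_{n+1}F(A_1,\dots,A_{n+1}))=\{0\}$ for all inputs. So the plan is to unwind the two definitions and check that they match. The only substantive point is to relate this to the norm-theoretic notion of polynomiality in Theorem~\ref{thm:spectral-excision}, so that the corollary has content beyond tautology.

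\emph{Forward direction.} Assume $F$ is a spectral polynomial of degree $\le n$. Then $\mathrm{cr}_{n+1}F$ is spectrally negligible, and Definition~\ref{def:spectral-negligible}, applied to each tuple $(A_1,\dots,A_{n+1})$, gives $\sigma_P(\mathrm{cr}_{n+1}F(A_1,\dots,A_{n+1}))=\{0\}$.

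\emph{Reverse direction.} Assume $\sigma_P(\mathrm{cr}_{n+1}F(\cdots))=\{0\}$ for all inputs. Then $\mathrm{cr}_{n+1}F$ is spectrally negligible by the same definition. Hence the admissible functor $F$ is a spectral polynomial of degree $\le n$.

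\emph{Comparison with norm-polynomiality.} I would add a remark rather than prove more here. Suppose in addition that the norm is faithful and $\mathrm{cr}_{n+1}F$ satisfies the spectral zero-control and multilinearity hypotheses of Theorem~\ref{thm:spectral-excision}. Then the theorem identifies spectral polynomiality of degree $\le n$ with strict norm-$n$-excision, and faithfulness upgrades this to $\mathrm{cr}_{n+1}F\cong 0$. The strict polynomials in the sense of the earlier corollary (Necessary condition, part 1) are therefore spectral polynomials, and the converse holds under zero-control.

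The main obstacle is not logical but one of consistency. Under the analytic realization, ``$\sigma_P(X)=\{0\}$'' and ``$\|\sigma_P(X)\|=0$'' must be interchangeable. This requires the realization of $\sigma_P(X)$ to be nonempty. If it could be empty, the convention of Definition~\ref{def:spectral-negligible}, which treats these two conditions as equivalent, must be invoked explicitly. I would state that convention explicitly at the start of the proof.
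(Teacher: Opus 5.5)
Your proof is correct and matches the paper's argument: in both directions the corollary is a direct unwinding of the definition of spectral polynomial (spectral negligibility of $\mathrm{cr}_{n+1}F$) together with Definition~\ref{def:spectral-negligible}. Your concern about treating $\sigma_P(X)=\{0\}$ and $\|\sigma_P(X)\|=0$ as interchangeable is a fair comment on the ``equivalently'' in that definition. It does not affect this corollary, however, because the condition stated here is literally the primary form $\sigma_P(\cdots)=\{0\}$; likewise, your remark on Theorem~\ref{thm:spectral-excision} is optional context rather than part of the proof.
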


\begin{proof}
This is immediate from the definition of a spectral polynomial of degree $\le n$: the latter requires $\mathrm{cr}_{n+1}F$ to be spectrally negligible, and spectral negligibility is by definition the vanishing of $\sigma_P$ on the cross-effect. Conversely, if $\sigma_P(\mathrm{cr}_{n+1}F(\cdots)) = 0$ for all inputs, then $\mathrm{cr}_{n+1}F$ is spectrally negligible, so $F$ is a spectral polynomial of degree $\le n$.
\end{proof}

\begin{corollary}[Strict Excision from Spectral Negligibility]
\label{cor:strict-excision}
Assume that $\mathrm{cr}_{n+1}F$ satisfies the spectral zero-control condition:
\[
\|\mathrm{cr}_{n+1}F(A_1,\dots,A_{n+1})\|
\le
\Psi\!\left(
\|\sigma_P(\mathrm{cr}_{n+1}F(A_1,\dots,A_{n+1}))\|
\right)
\]
for some fixed control function $\Psi$ with $\Psi(0)=0$. 
If $\mathrm{cr}_{n+1}F$ is spectrally negligible, then
\[
\|\mathrm{cr}_{n+1}F(A_1,\dots,A_{n+1})\|=0
\]
for all inputs.
\end{corollary}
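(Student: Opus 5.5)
The statement is Corollary~\ref{cor:strict-excision}. It is essentially a direct specialization of Part~2 of Theorem~\ref{thm:excision-criterion}, equivalently of direction $(2)\Rightarrow(1)$ in Theorem~\ref{thm:spectral-excision}. I would prove it by fixing arbitrary inputs and chaining two facts: spectral negligibility forces the spectral size of the cross-effect to be zero, and the zero-control inequality then forces the norm of the cross-effect to be zero.

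\textbf{Step 1.} Fix $A_1,\dots,A_{n+1}\in\mathsf{Alg}_P(\mathcal{M})$ and write $X:=\mathrm{cr}_{n+1}F(A_1,\dots,A_{n+1})$. Spectral negligibility (Definition~\ref{def:spectral-negligible}) gives $\sigma_P(X)=\{0\}$. By the definition of spectral size (Definition~\ref{def:spectral-size}), this means
\[
\|\sigma_P(X)\|=\sup\{|\lambda|:\lambda\in\{0\}\}=0 .
\]
Definition~\ref{def:spectral-negligible} in fact records this equivalence explicitly.

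\textbf{Step 2.} Apply the hypothesis with $\Psi(0)=0$:
\[
0\le\|X\|\le\Psi\bigl(\|\sigma_P(X)\|\bigr)=\Psi(0)=0 .
\]
Hence $\|X\|=0$. Since the inputs were arbitrary, the conclusion holds for all inputs. No monotonicity of $\Psi$, multilinearity, or faithfulness of the norm is needed. The only property used is the value $\Psi(0)=0$, together with the fact that norms are nonnegative.

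\textbf{Main obstacle.} The only delicate point is Step~1 when $\sigma_P(X)$ is treated at the categorical level rather than as a subset of $\mathbb{C}$. There one must read ``$\sigma_P(X)=\{0\}$'' through the analytic realization, as the paper does throughout, so that the spectral size is well defined and equals $0$. I would make this convention explicit in the proof. I would also remark that the result says nothing about $X\cong 0$ unless the norm is faithful, in line with the remark distinguishing norm vanishing from object vanishing.
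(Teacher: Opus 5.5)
Your proposal is correct and follows essentially the same route as the paper: spectral negligibility makes the spectral size of $\mathrm{cr}_{n+1}F(A_1,\dots,A_{n+1})$ equal to zero, and substituting this into the zero-control inequality gives $\|\mathrm{cr}_{n+1}F(A_1,\dots,A_{n+1})\| \le \Psi(0) = 0$. Like you, the paper's argument uses only $\Psi(0)=0$ and not monotonicity of $\Psi$.
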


\begin{proof}
Spectral negligibility means $\sigma_P(\mathrm{cr}_{n+1}F(A_1,\dots,A_{n+1})) = 0$ for all inputs. Plugging this into the spectral zero-control inequality gives
\[
\|\mathrm{cr}_{n+1}F(A_1,\dots,A_{n+1})\| \le \Psi(0).
\]
Since $\Psi(0)=0$, the norm is zero, as required.
\end{proof}

\begin{corollary}[Stability under Norm-Controlled Base Change]
\label{cor:excision-base-change}
Let $\mathcal{G}:\mathcal{M}\to\mathcal{N}$ be a strong monoidal cocontinuous functor that is norm-controlled, i.e.,
\[
\|\mathcal{G}(Y)\|_{\mathcal{N}}\le L_{\mathcal{G}}\|Y\|_{\mathcal{M}}
\]
for some constant $L_{\mathcal{G}}>0$. 
Assume moreover that $\mathcal{G}$ preserves finite coproducts and the homotopy fibers used to define cross-effects. 
If $F:\mathsf{Alg}_P(\mathcal{M})\to\mathcal{M}$ is norm-$n$-excisive, then the base-changed functor
\[
\mathcal{G}_*F:\mathsf{Alg}_{\mathcal{G}_*P}(\mathcal{N})\to\mathcal{N}
\]
is norm-$n$-excisive.
\end{corollary}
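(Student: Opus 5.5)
The approach is to show that base change along $\mathcal{G}$ commutes with forming cross-effects, and then to transport the norm bound through the constant $L_{\mathcal{G}}$. I read ``norm-$n$-excisive'' in the sense of Definition~\ref{def:norm-excisive}: there is a control function $\varepsilon$ with $\varepsilon(r)\to 0$ as $r\to 0$ and $\|\mathrm{cr}_{n+1}F(A_1,\dots,A_{n+1})\|\le \varepsilon(\max_i\|\sigma_P(A_i)\|)$. The strict case $\varepsilon\equiv 0$ is then a special case of the same argument.

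First I fix the meaning of $\mathcal{G}_*F$. For algebras of the form $\mathcal{G}_*A$ with $A\in\mathsf{Alg}_P(\mathcal{M})$, I set $\mathcal{G}_*F(\mathcal{G}_*A):=\mathcal{G}(F(A))$. Since $\mathcal{G}$ is strong monoidal, it carries $P$-algebras to $\mathcal{G}_*P$-algebras. Since it is cocontinuous and preserves finite coproducts, it sends the coproduct $\bigoplus_{i\in S}A_i$ in $\mathsf{Alg}_P(\mathcal{M})$ to the coproduct $\bigoplus_{i\in S}\mathcal{G}_*A_i$ in $\mathsf{Alg}_{\mathcal{G}_*P}(\mathcal{N})$, and it sends the zero algebra to the zero algebra.

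The key step is to compare the cubes. The cube $\mathcal{X}_{\mathcal{G}_*A_1,\dots,\mathcal{G}_*A_{n+1}}$ for $\mathcal{G}_*F$ is naturally isomorphic to $\mathcal{G}\circ\mathcal{X}_{A_1,\dots,A_{n+1}}$. Because $\mathcal{G}$ preserves the homotopy fibers used in Definition~\ref{def:cross-effects}, it also preserves iterated fibers, and hence total fibers. This gives
\[
\mathrm{cr}_{n+1}(\mathcal{G}_*F)(\mathcal{G}_*A_1,\dots,\mathcal{G}_*A_{n+1})\simeq \mathcal{G}\bigl(\mathrm{cr}_{n+1}F(A_1,\dots,A_{n+1})\bigr).
\]
Norm control then yields
\[
\|\mathrm{cr}_{n+1}(\mathcal{G}_*F)(\cdots)\|\le L_{\mathcal{G}}\,\varepsilon\bigl(\max_i\|\sigma_P(A_i)\|\bigr).
\]

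It remains to express the right-hand side in terms of $\sigma_{\mathcal{G}_*P}(\mathcal{G}_*A_i)$. For this I invoke the base-change compatibility of the operadic spectrum from Part~I (Theorem~0.5 of \cite{ChangSOC1}), which the paper already uses to assert that spectral negligibility is preserved under base change. It gives $\sigma_{\mathcal{G}_*P}(\mathcal{G}_*A)\cong\mathcal{G}(\sigma_P(A))$, so that under the analytic realization the spectral sizes agree: $\|\sigma_{\mathcal{G}_*P}(\mathcal{G}_*A)\|=\|\sigma_P(A)\|$. With this identification, $\varepsilon':=L_{\mathcal{G}}\,\varepsilon$ is a nondecreasing control function with $\varepsilon'(r)\to 0$ as $r\to 0$, so $\mathcal{G}_*F$ is norm-$n$-excisive. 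If $\varepsilon$ is linear, or of order $r^{n+1}$, then so is $\varepsilon'$, and strictness is likewise preserved.

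The main obstacle is this spectral-size comparison. If base change only gives an inequality $\|\sigma_P(A)\|\le K\|\sigma_{\mathcal{G}_*P}(\mathcal{G}_*A)\|$, I would use $\varepsilon'(r)=L_{\mathcal{G}}\,\varepsilon(Kr)$ instead, which still tends to $0$ as $r\to 0$. A second issue is that inputs of $\mathsf{Alg}_{\mathcal{G}_*P}(\mathcal{N})$ need not all lie in the image of $\mathcal{G}_*$. I would therefore state the conclusion on the essential image, which is where $\mathcal{G}_*F$ is canonically defined, or else define $\mathcal{G}_*F$ as a left Kan extension and check that its cross-effects are computed on the image.
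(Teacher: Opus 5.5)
Your proposal follows the paper's proof essentially step for step. You identify $\mathrm{cr}_{n+1}(\mathcal{G}_*F)(\mathcal{G}_*A_1,\dots,\mathcal{G}_*A_{n+1})$ with $\mathcal{G}\bigl(\mathrm{cr}_{n+1}F(A_1,\dots,A_{n+1})\bigr)$ via preservation of coproducts and fibers, apply norm control to pick up the factor $L_{\mathcal{G}}$, and invoke base-change compatibility $\sigma_{\mathcal{G}_*P}(\mathcal{G}_*A)\cong\mathcal{G}(\sigma_P(A))$ to rewrite the control function.

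The two caveats you raise are genuine, and the paper's proof shares both. It also treats only inputs of the form $\mathcal{G}A_i$. It settles the spectral comparison only by appeal to a \emph{modified control function depending on $L_{\mathcal{G}}$ and the spectral-size distortion of $\mathcal{G}$}.

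Your main-line claim $\|\sigma_{\mathcal{G}_*P}(\mathcal{G}_*A)\|=\|\sigma_P(A)\|$ does not follow from the stated hypotheses. Norm control applied to $\sigma_P(A)$ gives only $\|\sigma_{\mathcal{G}_*P}(\mathcal{G}_*A)\|\le L_{\mathcal{G}}\|\sigma_P(A)\|$, which is the wrong direction. So the reverse bound $\|\sigma_P(A)\|\le K\|\sigma_{\mathcal{G}_*P}(\mathcal{G}_*A)\|$ in your fallback, and the paper's ``distortion'' control, is an extra hypothesis that must be added to the statement. With that hypothesis, and on the essential image, your argument is complete.

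The strict case needs no spectral comparison at all, since $L_{\mathcal{G}}\cdot 0=0$.
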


\begin{proof}
Since $\mathcal{G}$ is strong monoidal and cocontinuous, it transports $P$-algebras to $\mathcal{G}_*P$-algebras and is compatible with the operadic spectrum under base change (cf. ChangSOC1). Preservation of finite coproducts and homotopy fibers yields a natural identification
\[
\mathrm{cr}_{n+1}(\mathcal{G}_*F)(\mathcal{G}A_1,\dots,\mathcal{G}A_{n+1})
\cong
\mathcal{G}\big(\mathrm{cr}_{n+1}F(A_1,\dots,A_{n+1})\big).
\]
Taking norms and using norm-control gives
\[
\|\mathrm{cr}_{n+1}(\mathcal{G}_*F)(\mathcal{G}A_1,\dots,\mathcal{G}A_{n+1})\|_{\mathcal{N}}
\le
L_{\mathcal{G}}
\|\mathrm{cr}_{n+1}F(A_1,\dots,A_{n+1})\|_{\mathcal{M}}.
\]
Because $F$ is norm-$n$-excisive, there exists a control function $\varepsilon(r)\to0$ as $r\to0$ such that
\[
\|\mathrm{cr}_{n+1}F(A_1,\dots,A_{n+1})\|_{\mathcal{M}}
\le
\varepsilon\!\left(\max_i\|\sigma_P(A_i)\|_{\mathcal{M}}\right).
\]
Base-change compatibility of the operadic spectrum gives
\[
\sigma_{\mathcal{G}_*P}(\mathcal{G}A_i)\cong \mathcal{G}(\sigma_P(A_i)).
\]
Thus the same estimate carries over to $\mathcal{N}$, with a modified control function depending on $L_{\mathcal{G}}$ and the spectral-size distortion of $\mathcal{G}$. Hence $\mathcal{G}_*F$ is norm-$n$-excisive.
\end{proof}

\medskip

\noindent
\textbf{Remark (Link to Part~I and Outlook).}
The spectral criterion above relies essentially on the residue-corrected operadic spectrum 
$\sigma_P(-)=\mathrm{Hoch}_{\mathcal M}(-)\otimes_P\mathcal O_P^{\mathrm{res}}$ 
and its universal properties established in Part~I~\cite{ChangSOC1}. 
In particular, the universality of $\mathcal O_P^{\mathrm{res}}$ and the minimality of $\sigma_P(-)$ 
ensure that spectral vanishing provides the canonical invariant for detecting higher-order 
degeneracy under admissibility and spectral control hypotheses. 
Without the residue correction, classical spectra fail to capture interaction-level 
information, as formalized by the No-Go Theorem~\cite[Thm.~0.1]{ChangSOC1}.

\medskip

This perspective leads naturally to the construction of a \emph{spectral Taylor tower}. 
For each admissible functor $F$, we will construct a sequence of polynomial approximations 
$P_n^{\mathrm{spec}}F$ whose layers are governed by the $(n+1)$-st cross-effects and their 
spectral behavior. The convergence and quantitative accuracy of this tower will be controlled 
by the operadic spectral radius $\|\sigma_P(A)\|$, providing a fully intrinsic, 
coordinate-free framework for analytic approximation.

\medskip

We now turn to this construction.

\section{The Universal Spectral Taylor Tower}
\label{sec:tower}

We now construct the central object of Spectral Operadic Calculus: the universal spectral Taylor tower. 
The previous section established a fundamental equivalence (Theorem~\ref{thm:excision-criterion}): 
a functor is norm-$n$-excisive if and only if its $(n+1)$-st cross-effect is spectrally negligible. 
This result provides the theoretical foundation for polynomial approximation, but it does not yet 
furnish an explicit approximation scheme. Our goal in this section is to build, for any admissible 
functor $F$, a tower of polynomial approximations $P_n^{\mathrm{spec}}F$ that are \emph{universal} 
with respect to spectral polynomial behavior.

\medskip

\noindent
\textbf{Conceptual overview.}
The construction proceeds in three stages. First, we introduce the notion of a \emph{spectral polynomial} 
of degree $\le n$, defined by the vanishing of higher cross-effects under the operadic spectrum 
(Definition~\ref{def:spectral-polynomial}). This refines the classical notion of polynomial functors 
by incorporating the spectral invariant $\sigma_P(-)$ as the arbiter of higher-order behavior.

Second, we give an explicit coend formula for the $n$-th spectral Taylor approximation $P_n^{\mathrm{spec}}F$, 
built from the cross-effects of $F$ and the operadic residue $\mathcal{O}_P^{\mathrm{res}}$ 
(Definition~\ref{def:spectral-taylor-approx}). The residue plays a crucial role here: 
it corrects the naive polynomial approximation to ensure compatibility with operadic composition and base change.

Third, we prove the universality property of the tower (Theorem~\ref{thm:universal-tower}). 
For any admissible functor $F$, the tower 
$F \to \cdots \to P_n^{\mathrm{spec}}F \to P_{n-1}^{\mathrm{spec}}F \to \cdots \to P_0^{\mathrm{spec}}F$ 
has the property that each $P_n^{\mathrm{spec}}F$ is a spectral polynomial of degree $\le n$, 
and any natural transformation from $F$ to a spectral polynomial $G$ of degree $\le n$ factors uniquely 
through $P_n^{\mathrm{spec}}F$. Thus $P_n^{\mathrm{spec}}F$ is the \emph{best approximation} of $F$ by 
spectral polynomials.

\medskip

\noindent
\textbf{Relation to classical Goodwillie calculus.}
In Goodwillie calculus~\cite{Goodwillie2003}, the Taylor tower is constructed via homotopical methods 
(e.g., homotopy limits and colimits) and its convergence is governed by homotopy excision. 
In contrast, the spectral Taylor tower is governed by the operadic spectrum $\sigma_P(-)$ and the 
residue $\mathcal{O}_P^{\mathrm{res}}$. This yields a tower that is:
\begin{itemize}
    \item \textbf{Quantitative:} convergence rates will be expressed explicitly in terms of 
          $\|\sigma_P(A)\|$ (Section~\ref{sec:convergence}),
    \item \textbf{Functorial under base change:} by the Base Change Theorem (Part~I, Theorem 0.5),
    \item \textbf{Canonical:} the tower is uniquely determined by the spectral data via the universal 
          property of the operadic residue (Theorem~\ref{thm:residue-universality}).
\end{itemize}

\medskip

\noindent
\textbf{Preview of the tower's layers.}
As we will see in Section~\ref{sec:derivatives-algebra}, the homogeneous layers of the tower are precisely the 
\emph{spectral derivatives} $\partial_n^{\mathrm{spec}}F$, and the tower reconstruction takes the form
\[
F(A) \;\sim\; \sum_{n=0}^{\infty} \partial_n^{\mathrm{spec}}F(A,\dots,A),
\]
with convergence controlled by the spectral radius $\|\sigma_P(A)\|$ (Theorem~\ref{thm:quantitative-convergence}). 
Thus the spectral Taylor tower is not merely an approximation device but a genuine analytic expansion 
of functors.

\medskip

We now turn to the detailed construction.

\subsection{Spectral Polynomials}
\label{subsec:spectral-polynomial}

We now introduce the class of polynomial objects in Spectral Operadic Calculus. 
In contrast to classical settings, where polynomiality is defined by strict vanishing 
of higher cross-effects, we adopt a spectral viewpoint in which only the operadic 
spectral content of these higher interactions is required to vanish.

\medskip

\begin{definition}[Spectral Polynomial Functor]
\label{def:spectral-polynomial}
Let $F : \mathcal{C} \to \mathcal{M}$ be an admissible functor. 
We say that $F$ is a \emph{spectral polynomial of degree at most $n$} 
if its $(n+1)$-st cross-effect is spectrally negligible, i.e.,
\[
\sigma_P\!\big(\mathrm{cr}_{n+1}F(A_1,\dots,A_{n+1})\big)
=
\{0\}
\quad
\text{for all } (A_1,\dots,A_{n+1}) \in \mathcal{C}^{n+1}.
\]
\end{definition}

\medskip

\noindent
Under the additional hypothesis of spectral zero-control, 
Theorem~\ref{thm:spectral-excision} shows that spectral polynomiality 
implies strict norm-$n$-excision. 
More generally, spectral polynomial functors provide the invariant 
counterpart of norm-$n$-excisive functors.

\medskip

\noindent
\textbf{Conceptual interpretation.}
A spectral polynomial functor allows nontrivial higher-order interactions, 
but requires that these interactions be invisible at the level of operadic spectral data. 
In this sense, spectral polynomiality relaxes classical polynomiality by replacing 
strict vanishing with spectral triviality. 

Crucially, when combined with spectral control conditions, spectral triviality 
recovers analytic smallness, linking operadic invariants with norm-based behavior. 
This refinement is essential in normed settings, where small but nonzero contributions 
may persist analytically but are negligible at the spectral level.

\medskip

\noindent
\textit{Spectral polynomiality separates structural degeneracy from analytic smallness, 
allowing higher-order interactions to persist while remaining invisible at the 
level of operadic spectral data.}

\begin{remark}[Role of the operadic residue]
The notion of spectral polynomial depends crucially on the operadic residue 
$\mathcal{O}_P^{\mathrm{res}}$, which enters through the definition of $\sigma_P(-)$. 
Without this residue correction, classical spectra fail to capture interaction-level 
information and are not stable under operadic base change, as demonstrated by the 
No-Go Theorem (Part~I, Theorem~0.1). 

In particular, classical spectra are insufficient to detect higher-order degeneracy 
encoded by cross-effects. Thus, spectral polynomiality is a genuinely operadic phenomenon, 
rather than a reformulation of classical polynomiality.
\end{remark}

\medskip

\noindent
\textbf{Filtration by degree.}
The class of spectral polynomial functors forms a natural filtration
\[
\mathrm{Poly}_0^{\mathrm{spec}} \subseteq 
\mathrm{Poly}_1^{\mathrm{spec}} \subseteq 
\cdots \subseteq 
\mathrm{Poly}_n^{\mathrm{spec}} \subseteq \cdots,
\]
where $\mathrm{Poly}_n^{\mathrm{spec}}$ denotes the full subcategory of 
spectral polynomials of degree at most $n$. 

This filtration is compatible with the spectral Taylor tower constructed in 
Section~\ref{sec:convergence}, in the sense that each stage $P_n^{\mathrm{spec}}F$ 
lies in $\mathrm{Poly}_n^{\mathrm{spec}}$, and the tower provides a successive 
approximation of a spectrally analytic functor by spectral polynomials.

\medskip

\begin{proposition}[Stability Properties]
\label{prop:spectral-poly-stability}
Let $\mathrm{Poly}_n^{\mathrm{spec}}$ denote the class of spectral polynomial functors of degree $\le n$. 
Then this class is stable under the following operations:
\begin{enumerate}
    \item \textbf{Finite sums and direct summands:} 
    If $F, G \in \mathrm{Poly}_n^{\mathrm{spec}}$, then $F \oplus G \in \mathrm{Poly}_n^{\mathrm{spec}}$.

    \item \textbf{Composition with linear functors:} 
    If $F \in \mathrm{Poly}_n^{\mathrm{spec}}$ and $H:\mathcal{M}\to\mathcal{N}$ is a linear functor 
    preserving zero objects and the homotopy limits used in the definition of cross-effects, 
    then $H\circ F \in \mathrm{Poly}_n^{\mathrm{spec}}$.

    \item \textbf{Operadic base change:} 
    If $F \in \mathrm{Poly}_n^{\mathrm{spec}}$ and $\Phi:\mathcal{M}\to\mathcal{N}$ is a strong monoidal cocontinuous functor 
    preserving finite coproducts and the relevant homotopy limits, 
    then $\Phi_*F \in \mathrm{Poly}_n^{\mathrm{spec}}$ in $\mathcal{N}$.
\end{enumerate}
\end{proposition}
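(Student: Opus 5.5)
The plan is to reduce each of the three stability claims to a natural identification of the $(n+1)$-st cross-effect of the new functor with an operation applied to the old cross-effects. After that, we check that the operadic spectrum of the result is still $\{0\}$. Throughout, spectral polynomiality means $\sigma_P(\mathrm{cr}_{n+1}F(A_1,\dots,A_{n+1}))=\{0\}$ for all inputs, as in Definition~\ref{def:spectral-polynomial}. Under the analytic realization this is $\|\sigma_P(\cdots)\|=0$, and it is this numerical form we will propagate.

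\textbf{(1) Finite sums.} Total homotopy fibers commute with finite direct sums in the stable/additive setting of Definition~\ref{def:cross-effects}: the cube for $F\oplus G$ is the sum of the two cubes. This gives
\[
\mathrm{cr}_{n+1}(F\oplus G)(A_1,\dots,A_{n+1})\simeq \mathrm{cr}_{n+1}F(A_1,\dots,A_{n+1})\oplus\mathrm{cr}_{n+1}G(A_1,\dots,A_{n+1}).
\]
We then need $\sigma_P(X\oplus Y)=\sigma_P(X)\cup\sigma_P(Y)$ under the analytic realization. This is the operadic analogue of the spectrum of a block-diagonal operator. It should follow because $\mathrm{Hoch}_{\mathcal{M}}$ and $-\otimes_P\mathcal{O}_P^{\mathrm{res}}$ are additive, so the sup defining spectral size is the max of the two. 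Hence the spectrum is $\{0\}\cup\{0\}=\{0\}$. The claim about direct summands is the same argument read backwards: $\sigma_P$ of a summand is contained in $\sigma_P$ of the whole.

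\textbf{(2) Composition with linear $H$.} Since $H$ preserves zero objects and the homotopy limits defining cross-effects, $H$ applied to the cube $\mathcal{X}_{A_1,\dots,A_{n+1}}$ is the cube for $H\circ F$. This gives $\mathrm{cr}_{n+1}(H\circ F)\simeq H(\mathrm{cr}_{n+1}F)$. We then need $\sigma(H(X))\subseteq \{0\}$ whenever $\sigma_P(X)=\{0\}$. For this we invoke functoriality and base-change compatibility of $\sigma_P$ from Part~I, in the form $\sigma(H X)\cong H(\sigma_P X)$, exactly as in the proof of Corollary~\ref{cor:excision-base-change}. We also use that a linear functor sends the spectrally trivial object to a spectrally trivial one. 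If $H$ is additionally norm-bounded, a cleaner route is a spectral-radius estimate $r(HX)\le L_H\, r(X)$, which forces the spectral size to be $0$.

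\textbf{(3) Operadic base change.} This repeats (2) with $\Phi$ strong monoidal cocontinuous. Preservation of finite coproducts identifies the cube for $\Phi_*F$ at inputs $\Phi A_i$ with $\Phi$ of the original cube, and preservation of homotopy limits passes this through the total fiber. The Base Change Theorem of Part~I then gives $\sigma_{\Phi_*P}(\Phi X)\cong\Phi(\sigma_P X)$, and $\Phi(\{0\})=\{0\}$.

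The main obstacle is the subtlety, already implicit in Corollary~\ref{cor:excision-base-change}, that $\Phi_*F$ must be controlled on all $\Phi_*P$-algebras, not only on those in the image of $\Phi$. I would handle this either by defining $\Phi_*F$ as a left Kan extension and using cocontinuity to write its cross-effects as colimits of $\Phi(\mathrm{cr}_{n+1}F)$, or by restricting the statement to the essential image. In the colimit route, I would still need that spectral vanishing is preserved under the relevant colimits; this is the step I expect to require an extra continuity assumption on $\sigma_P$.
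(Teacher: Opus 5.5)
Your three-part structure is the same as the paper's. You identify $\mathrm{cr}_{n+1}$ of the new functor with $\mathrm{cr}_{n+1}F\oplus\mathrm{cr}_{n+1}G$, with $H(\mathrm{cr}_{n+1}F)$, or with $\Phi_*(\mathrm{cr}_{n+1}F)$, and then check that the spectrum stays trivial, using additivity of $\sigma_P$ in (1) and the Part~I Base Change Theorem in (3). Parts (1) and (3) are at the same level of rigor as the paper's argument. Your worry that $\Phi_*F$ must be controlled on $\Phi_*P$-algebras outside the image of $\Phi$ is a fair one, and the paper's proof does not address it: it only compares cross-effects at inputs $\Phi A_i$, as in Corollary~\ref{cor:excision-base-change}.

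The step that does not go through is the spectral part of (2): that $\sigma(HX)$ is trivial whenever $\sigma_P(X)=\{0\}$.
\begin{itemize}
\item The identification $\sigma(HX)\cong H(\sigma_P X)$ from Part~I is a statement about strong monoidal cocontinuous functors. That is why it is available for $\Phi$ in (3) and for $\mathcal{G}$ in Corollary~\ref{cor:excision-base-change}. A merely linear $H$ has no multiplicative structure, so the theorem does not apply.
\item The sentence ``a linear functor sends spectrally trivial objects to spectrally trivial ones'' is just the claim restated.
\item Your fallback estimate $r(HX)\le L_H\,r(X)$ for norm-bounded $H$ is false. A norm bound controls $\|HX\|$, not the spectral radius, and bounded linear maps can create spectrum out of nilpotents. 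For example, $X\mapsto X+X^{\top}$ on $2\times 2$ matrices is linear, has norm at most $2$ and preserves $0$. Yet it sends $E_{12}$ (spectrum $\{0\}$) to $\left(\begin{smallmatrix}0&1\\1&0\end{smallmatrix}\right)$ (spectrum $\{\pm1\}$).
\end{itemize}
Spectra are controlled by multiplicativity: for a unital homomorphism $\phi$ one has $\sigma(\phi(x))\subseteq\sigma(x)$. Linearity, boundedness and preservation of zero do not suffice. The paper's proof gets past this step only by asserting that $H$ ``does not introduce new spectral components'', which is really an extra hypothesis not contained in the statement. To make (2) correct you need to impose such a hypothesis explicitly. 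One option is $\sigma(HX)\subseteq\sigma(X)\cup\{0\}$ in the analytic realization; another is to take $H$ strong monoidal, so that the Part~I Base Change Theorem applies as in (3).
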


\begin{proof}
We verify stability of spectral negligibility of the $(n+1)$-st cross-effect.

\medskip

\noindent
\textit{(1) Finite sums.}
By functoriality of cross-effects,
\[
\mathrm{cr}_{n+1}(F\oplus G)
\;\cong\;
\mathrm{cr}_{n+1}F \oplus \mathrm{cr}_{n+1}G.
\]
Since $\sigma_P$ is compatible with finite coproducts, spectral negligibility 
is preserved, hence $F\oplus G \in \mathrm{Poly}_n^{\mathrm{spec}}$.

\medskip

\noindent
\textit{(2) Composition.}
Because $H$ preserves the homotopy limits defining cross-effects, we have
\[
\mathrm{cr}_{n+1}(H\circ F)
\;\cong\;
H\big(\mathrm{cr}_{n+1}F\big).
\]
If $\mathrm{cr}_{n+1}F$ is spectrally negligible, then so is its image under $H$, 
since $H$ preserves zero objects and does not introduce new spectral components.

\medskip

\noindent
\textit{(3) Base change.}
By the Base Change Theorem of Part~I, the operadic spectrum is compatible with 
strong monoidal cocontinuous functors:
\[
\sigma_{\Phi_*P}(\Phi_*A)\;\cong\;\Phi_*(\sigma_P(A)).
\]
Moreover, $\Phi$ preserves the structures used to define cross-effects, so
\[
\mathrm{cr}_{n+1}(\Phi_*F)
\;\cong\;
\Phi_*\big(\mathrm{cr}_{n+1}F\big).
\]
Thus spectral negligibility is preserved under $\Phi_*$, and 
$\Phi_*F \in \mathrm{Poly}_n^{\mathrm{spec}}$.
\end{proof}

\medskip

\begin{example}[Linear and Identity Functors]
\label{ex:identity-spectral-polynomial}
Assume that $\mathcal C=\mathsf{Alg}_P(\mathcal M)$ is pointed and additive, so that cross-effects are computed with respect to finite coproducts. 
The identity functor $\mathrm{Id}_{\mathcal C}:\mathcal C\to\mathcal C$ preserves finite coproducts, and hence
\[
\mathrm{cr}_2\mathrm{Id}_{\mathcal C}\equiv 0.
\]
Consequently,
\[
\sigma_P(\mathrm{cr}_2\mathrm{Id}_{\mathcal C})=\{0\},
\]
so $\mathrm{Id}_{\mathcal C}$ is a spectral polynomial of degree $\le 1$.
More generally, any reduced linear functor preserving finite coproducts and the relevant homotopy fibers is a spectral polynomial of degree $\le 1$.
\end{example}

\begin{example}[Quadratic Functor]
\label{ex:quadratic-spectral-polynomial}
Assume that $\mathcal C$ is additive and that the tensor product in $\mathcal M$ preserves finite coproducts separately in each variable. 
Consider the functor
\[
F(A)=A\otimes A.
\]
By distributivity of $\otimes$ over finite coproducts, the third cross-effect vanishes:
\[
\mathrm{cr}_3F\equiv 0.
\]
Hence
\[
\sigma_P(\mathrm{cr}_3F)=\{0\},
\]
so $F$ is a spectral polynomial of degree $\le2$.

Moreover, $F$ is generally not of degree $\le1$, since
\[
\mathrm{cr}_2F(A,B)\simeq (A\otimes B)\oplus(B\otimes A)
\]
in the additive setting. Whenever this second cross-effect has nontrivial operadic spectrum, $F$ is not spectral-linear.
\end{example}

\begin{example}[Operadically Nilpotent Interactions]
\label{ex:nilpotent-spectral-polynomial}
Suppose $G$ is a functor whose higher cross-effects are \emph{operadically nilpotent} in the sense that
\[
\sigma_P(\mathrm{cr}_kG(A_1,\dots,A_k))=\{0\}
\]
for all $k\ge2$ and all inputs. 
Then $G$ is a spectral polynomial of degree $\le1$, even though the higher cross-effects need not vanish as objects or functors.

This illustrates the distinction between classical polynomiality and spectral polynomiality: classical polynomiality requires the higher cross-effects to vanish, whereas spectral polynomiality only requires their operadic spectral data to vanish.

A useful analogy comes from operator theory: a nonzero strictly upper-triangular matrix has classical spectrum $\{0\}$ despite being nonzero. In the operadic setting, one must use the residue-corrected spectrum $\sigma_P$ rather than the componentwise classical spectrum; hence the relevant condition is operadic spectral triviality, not merely classical nilpotence.
\end{example}

\medskip

\noindent
\textbf{Relation to classical polynomial functors.}
If $\mathrm{cr}_{n+1}F \equiv 0$, then $F$ is a spectral polynomial of degree $\le n$. 
The converse need not hold: spectral polynomiality allows nonvanishing higher cross-effects 
whose operadic spectra are trivial. Thus,
\[
\{\text{classical polynomials of degree } \le n\}
\;\subseteq\;
\{\text{spectral polynomials of degree } \le n\},
\]
and the inclusion is typically strict.

\medskip

\noindent
\textbf{Trivial operad case.}
When $P=\mathbb I$, the operadic spectrum reduces to the classical spectrum,
\[
\sigma_{\mathbb I}(A)\cong \sigma_{\mathrm{classical}}(A),
\]
and spectral polynomiality requires that higher cross-effects have spectral radius zero. 
This recovers the classical analytic notion while allowing greater flexibility in general operadic settings.

\medskip

\noindent
\textbf{Relation to the spectral Taylor tower.}
Spectral polynomials of degree $\le n$ serve as the target objects for the $n$-th stage 
$P_n^{\mathrm{spec}}F$ of the spectral Taylor tower. The tower is constructed so that each 
$P_n^{\mathrm{spec}}F$ is spectral polynomial and universal among such approximations. 
Thus spectral polynomials play the role of building blocks for approximation, 
analogous to ordinary polynomials in classical calculus.

\medskip

Spectral polynomial functors therefore form the algebraic foundation of Spectral Operadic Calculus. 
In the next subsection, we construct for any admissible functor $F$ a canonical tower of spectral 
polynomial approximations.

\subsection{Construction of the Tower}
\label{subsec:tower-construction}

We now construct the fundamental approximation scheme in Spectral Operadic Calculus. 
Given an admissible functor $F : \mathcal{C} \to \mathcal{M}$, we associate to it 
a canonical tower of spectral polynomial functors which captures its analytic behavior 
at increasing levels of precision.

\medskip

\noindent
\textbf{Guiding principle.}
By Theorem~\ref{thm:spectral-excision}, spectral polynomiality is completely determined 
by the vanishing of higher cross-effects at the level of operadic spectrum. 
Thus, a polynomial approximation of degree $n$ should be obtained by systematically 
eliminating the $(n+1)$-st and higher spectral contributions of $F$.

\medskip

\begin{definition}[Spectral Taylor Approximation]
\label{def:spectral-taylor-approx}
Let $F:\mathcal{C}\to\mathcal{M}$ be an admissible functor. 
For each $n\ge0$, an \emph{$n$-th spectral Taylor approximation} of $F$ 
is a pair $(P_n^{\mathrm{spec}}F,\iota_n)$ consisting of
\begin{itemize}
    \item a spectral polynomial functor $P_n^{\mathrm{spec}}F$ of degree at most $n$, and
    \item a natural transformation $\iota_n:F\to P_n^{\mathrm{spec}}F$,
\end{itemize}
such that for any spectral polynomial functor $G$ of degree at most $n$ 
and any natural transformation $\eta:F\to G$, there exists a unique natural transformation
\[
\phi:P_n^{\mathrm{spec}}F\to G
\]
making the diagram commute:
\[
\eta = \phi \circ \iota_n.
\]

\noindent
In categorical terms, if it exists, the assignment $F\mapsto P_n^{\mathrm{spec}}F$ 
defines a reflector onto the full subcategory $\mathrm{Poly}_n^{\mathrm{spec}}$ 
of spectral polynomial functors.
\end{definition}

\medskip

\noindent
\textbf{Universal characterization.}
The $n$-th spectral Taylor approximation $P_n^{\mathrm{spec}}F$ is characterized,
when it exists, by the following universal property: for every spectral polynomial
functor $G$ of degree at most $n$, composition with the canonical map
$\iota_n:F\to P_n^{\mathrm{spec}}F$ induces a natural bijection
\[
\mathrm{Nat}(P_n^{\mathrm{spec}}F,G)
\;\cong\;
\mathrm{Nat}(F,G).
\]
Equivalently, $P_n^{\mathrm{spec}}F$ is the reflection of $F$ onto the full
subcategory $\mathrm{Poly}_n^{\mathrm{spec}}$ of spectral polynomial functors of
degree at most $n$.

\medskip

\noindent
\textbf{Split model via spectral layers.}
In additive or stable settings where the spectral Taylor tower splits into its
homogeneous layers, one may model the $k$-th spectral layer by
\[
D_k^{\mathrm{spec}}F(A)
\;\simeq\;
\Big(
\mathrm{cr}_kF(A,\dots,A)
\otimes_{P^{\otimes k}}
(\mathcal O_P^{\mathrm{res}})^{\otimes k}
\Big)_{h\Sigma_k},
\]
where $(\cdot)_{h\Sigma_k}$ denotes homotopy coinvariants for the natural
$\Sigma_k$-action. In such a split situation, the $n$-th approximation admits
the formal decomposition
\[
P_n^{\mathrm{spec}}F(A)
\;\simeq\;
\bigoplus_{k=0}^n D_k^{\mathrm{spec}}F(A),
\]
with the convention $D_0^{\mathrm{spec}}F(A)=F(0)$.

\medskip

\noindent
The factor $(\mathcal O_P^{\mathrm{res}})^{\otimes k}$ corrects the naive
cross-effect layer by incorporating operadic interaction data. This correction
is what ensures compatibility with operadic composition and base change. When
$P=\mathbb I$, the residue term becomes trivial,
\[
\mathcal O_{\mathbb I}^{\mathrm{res}}\cong \mathbf 1_{\mathcal M},
\]
and the expression reduces to the usual split cross-effect model for polynomial
approximation in the underlying additive or stable setting.

\medskip

\noindent
\textbf{The spectral Taylor tower.}
These approximations assemble into a tower
\[
F 
\longrightarrow 
P_0^{\mathrm{spec}}F 
\longrightarrow 
P_1^{\mathrm{spec}}F 
\longrightarrow 
\cdots 
\longrightarrow 
P_n^{\mathrm{spec}}F 
\longrightarrow 
\cdots,
\]
which we call the \emph{spectral Taylor tower} of $F$. The maps $P_n^{\mathrm{spec}}F \to P_{n-1}^{\mathrm{spec}}F$ 
are induced by the restriction of the colimit from $k \le n$ to $k \le n-1$.

\begin{proposition}[Spectral Polynomial Property]
\label{prop:tower-polynomial}
Assume that the $n$-th spectral Taylor approximation 
$(P_n^{\mathrm{spec}}F,\iota_n)$ exists in the sense of 
Definition~\ref{def:spectral-taylor-approx}. 
Then $P_n^{\mathrm{spec}}F$ is a spectral polynomial of degree $\le n$.
\end{proposition}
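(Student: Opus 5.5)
The statement is essentially built into Definition~\ref{def:spectral-taylor-approx}: an $n$-th spectral Taylor approximation is, by definition, a pair $(P_n^{\mathrm{spec}}F,\iota_n)$ whose first component is required to be a spectral polynomial functor of degree at most $n$. The plan is therefore to unwind the definitions rather than to compute anything, and then to explain why this is consistent with the split model and the reflector interpretation.

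\textbf{Step 1 (definitional unwinding).} Assume $(P_n^{\mathrm{spec}}F,\iota_n)$ exists. The first bullet of Definition~\ref{def:spectral-taylor-approx} says that $P_n^{\mathrm{spec}}F$ lies in $\mathrm{Poly}_n^{\mathrm{spec}}$. By Definition~\ref{def:spectral-polynomial}, this means
\[
\sigma_P\!\big(\mathrm{cr}_{n+1}P_n^{\mathrm{spec}}F(A_1,\dots,A_{n+1})\big)=\{0\}
\]
for all inputs. Corollary~\ref{cor:polynomial-spectral} then records the equivalence with spectral negligibility of the $(n+1)$-st cross-effect.

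\textbf{Step 2 (consistency with the universal property).} To show that the conclusion is not vacuous, I would also check that membership is preserved under the identifications used later. If $(Q,\iota')$ is another approximation, uniqueness in the universal property yields mutually inverse natural isomorphisms $P_n^{\mathrm{spec}}F\cong Q$. Cross-effects, being total homotopy fibers, are invariant under natural isomorphism, and so is $\sigma_P$ by functoriality (Part~I). Hence spectral negligibility of $\mathrm{cr}_{n+1}$ transfers to every representative. In the same spirit, Proposition~\ref{prop:spectral-poly-stability}(1) shows that, in the split model $P_n^{\mathrm{spec}}F\simeq\bigoplus_{k\le n}D_k^{\mathrm{spec}}F$, it suffices to check that each layer $D_k^{\mathrm{spec}}F$ with $k\le n$ has spectrally negligible $(n+1)$-st cross-effect.

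\textbf{Main obstacle.} The only nontrivial point is this optional sanity check on the split model. One must argue that a layer built from $\mathrm{cr}_kF(A,\dots,A)$ with $k\le n$ has vanishing $(n+1)$-st cross-effect, using multilinearity of $\mathrm{cr}_k$ in the additive setting and the fact that homotopy coinvariants and tensoring with $(\mathcal O_P^{\mathrm{res}})^{\otimes k}$ commute with the finite limits defining $\mathrm{cr}_{n+1}$. I would first attempt this via the homogeneity argument of Lemma~\ref{lem:spectral-scaling}. However, the proposition as stated needs only Step~1, so the proof itself will be the one-line definitional argument.
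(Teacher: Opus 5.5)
Your Step~1 is exactly the paper's argument: membership of $P_n^{\mathrm{spec}}F$ in $\mathrm{Poly}_n^{\mathrm{spec}}$ is built into Definition~\ref{def:spectral-taylor-approx}, and unwinding Definition~\ref{def:spectral-polynomial} gives $\sigma_P\big(\mathrm{cr}_{n+1}P_n^{\mathrm{spec}}F(A_1,\dots,A_{n+1})\big)=\{0\}$ for all inputs. Step~2 and the split-model sanity check are not used in the paper's proof and are not needed for the statement, so you can drop them.
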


\begin{proof}
By definition, $P_n^{\mathrm{spec}}F$ is an object of the full subcategory 
$\mathrm{Poly}_n^{\mathrm{spec}}$ of spectral polynomial functors of degree at most $n$. 
Equivalently,
\[
\sigma_P\!\left(
\mathrm{cr}_{n+1}P_n^{\mathrm{spec}}F(A_1,\dots,A_{n+1})
\right)=\{0\}
\]
for all inputs. 
Thus $P_n^{\mathrm{spec}}F$ is spectral polynomial of degree $\le n$.
\end{proof}

\begin{proposition}[Functoriality of the Spectral Taylor Approximation]
\label{prop:taylor-functorial}
Assume that the spectral Taylor approximations $P_n^{\mathrm{spec}}F$ exist for all admissible functors under consideration. 
Then the assignment
\[
F\longmapsto P_n^{\mathrm{spec}}F
\]
is functorial. In particular, every natural transformation $\alpha:F\to G$ induces a natural transformation
\[
P_n^{\mathrm{spec}}\alpha:
P_n^{\mathrm{spec}}F\to P_n^{\mathrm{spec}}G,
\]
and these maps are compatible with composition and identities. Consequently, the family 
$\{P_n^{\mathrm{spec}}F\}_{n\ge0}$ is functorial in $F$.
\end{proposition}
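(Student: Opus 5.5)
The argument is the standard fact that a reflector onto a full subcategory extends to a functor, with the unit as a natural transformation. Let $\mathrm{Adm}$ denote the category of admissible functors $\mathcal{C}\to\mathcal{M}$ and natural transformations. Let $\mathrm{Poly}_n^{\mathrm{spec}}\subseteq\mathrm{Adm}$ be the full subcategory of spectral polynomials of degree $\le n$. By Definition~\ref{def:spectral-taylor-approx} and Proposition~\ref{prop:tower-polynomial}, each $F$ comes with a pair $(P_n^{\mathrm{spec}}F,\iota_n^F)$ in which $P_n^{\mathrm{spec}}F\in\mathrm{Poly}_n^{\mathrm{spec}}$ and $\iota_n^F$ is initial among maps from $F$ into $\mathrm{Poly}_n^{\mathrm{spec}}$.

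\textbf{Defining $P_n^{\mathrm{spec}}\alpha$.} Given $\alpha:F\to G$, I form the composite
\[
\iota_n^G\circ\alpha : F\to P_n^{\mathrm{spec}}G .
\]
Its target is spectral polynomial of degree $\le n$, by Proposition~\ref{prop:tower-polynomial} applied to $G$. The universal property of $\iota_n^F$ therefore gives a unique natural transformation $P_n^{\mathrm{spec}}\alpha:P_n^{\mathrm{spec}}F\to P_n^{\mathrm{spec}}G$ with
\[
P_n^{\mathrm{spec}}\alpha\circ\iota_n^F=\iota_n^G\circ\alpha .
\]
As a byproduct, this identity says that $\iota_n$ is natural in $F$.

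\textbf{Identities.} Take $\alpha=\mathrm{id}_F$. Both $P_n^{\mathrm{spec}}(\mathrm{id}_F)$ and $\mathrm{id}_{P_n^{\mathrm{spec}}F}$ satisfy $\phi\circ\iota_n^F=\iota_n^F$. The uniqueness clause of Definition~\ref{def:spectral-taylor-approx} then forces them to be equal.

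\textbf{Composition.} Take $\alpha:F\to G$ and $\beta:G\to H$. Both $P_n^{\mathrm{spec}}(\beta\alpha)$ and $P_n^{\mathrm{spec}}\beta\circ P_n^{\mathrm{spec}}\alpha$ satisfy $\phi\circ\iota_n^F=\iota_n^H\circ\beta\alpha$. For the second one this follows by pasting the two defining squares:
\[
P_n^{\mathrm{spec}}\beta\circ P_n^{\mathrm{spec}}\alpha\circ\iota_n^F
=P_n^{\mathrm{spec}}\beta\circ\iota_n^G\circ\alpha
=\iota_n^H\circ\beta\circ\alpha .
\]
Uniqueness again gives equality. Hence $P_n^{\mathrm{spec}}$ is a functor $\mathrm{Adm}\to\mathrm{Poly}_n^{\mathrm{spec}}$, left adjoint to the inclusion.

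\textbf{Functoriality of the whole family.} The tower maps $P_n^{\mathrm{spec}}F\to P_{n-1}^{\mathrm{spec}}F$ should be obtained the same way, as the unique factorization of $\iota_{n-1}^F$ through $\iota_n^F$. This requires $\mathrm{Poly}_{n-1}^{\mathrm{spec}}\subseteq\mathrm{Poly}_n^{\mathrm{spec}}$, which the paper asserts as the filtration by degree. Naturality of these maps in $F$ then follows from the same uniqueness argument, applied to the two composites $F\to P_{n-1}^{\mathrm{spec}}G$.

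\textbf{Main obstacle.} This is not the categorical argument but the hypotheses it rests on. First, $\mathrm{Poly}_n^{\mathrm{spec}}$ must be regarded as a full subcategory of $\mathrm{Adm}$. Second, $P_n^{\mathrm{spec}}G$ must itself be admissible, so that the universal property of $\iota_n^F$ can be applied to it. The second point is built into Definition~\ref{def:spectral-taylor-approx}, where the reflection is taken inside admissible functors. The filtration inclusion $\mathrm{Poly}_{n-1}^{\mathrm{spec}}\subseteq\mathrm{Poly}_n^{\mathrm{spec}}$ is only asserted in the paper, so I would state it explicitly as an assumption rather than try to derive it from the cross-effect definition.
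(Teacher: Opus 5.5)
Your proof is correct and matches the paper's argument: you define $P_n^{\mathrm{spec}}\alpha$ as the unique factorization of $\iota_n^G\circ\alpha$ through $\iota_n^F$, and you obtain identities and composition from the uniqueness clause, checking them explicitly where the paper only says uniqueness ``immediately implies'' them. Your extra paragraph on the tower maps $P_n^{\mathrm{spec}}F\to P_{n-1}^{\mathrm{spec}}F$ goes beyond the paper's proof, which only establishes functoriality level by level, and you are right to state the inclusion $\mathrm{Poly}_{n-1}^{\mathrm{spec}}\subseteq\mathrm{Poly}_n^{\mathrm{spec}}$ as an explicit assumption there.
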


\begin{proof}
Let
\[
\iota_F:F\to P_n^{\mathrm{spec}}F,
\qquad
\iota_G:G\to P_n^{\mathrm{spec}}G
\]
be the universal maps. 
Given a natural transformation $\alpha:F\to G$, the composite
\[
F \xrightarrow{\alpha} G \xrightarrow{\iota_G} P_n^{\mathrm{spec}}G
\]
is a natural transformation from $F$ to a spectral polynomial functor of degree $\le n$. 
By the universal property of $P_n^{\mathrm{spec}}F$, there exists a unique natural transformation
\[
P_n^{\mathrm{spec}}\alpha:
P_n^{\mathrm{spec}}F\to P_n^{\mathrm{spec}}G
\]
such that
\[
P_n^{\mathrm{spec}}\alpha\circ \iota_F
=
\iota_G\circ \alpha.
\]
Uniqueness immediately implies compatibility with identities and composition. 
Hence $F\mapsto P_n^{\mathrm{spec}}F$ is functorial.
\end{proof}

\medskip

\begin{corollary}[Homogeneous layers of the spectral Taylor tower]
\label{cor:homogeneous-layers}
For each $n\geq 1$, define the $n$-th spectral homogeneous layer by
\[
D_n^{\mathrm{spec}}F(A)
:=
\mathrm{cr}_nF(A,\dots,A)
\otimes_{P^{\otimes n}}
(\mathcal{O}_P^{\mathrm{res}})^{\otimes n}.
\]
Then there is a canonical cofiber sequence
\[
P_{n-1}^{\mathrm{spec}}F(A)
\longrightarrow
P_n^{\mathrm{spec}}F(A)
\longrightarrow
D_n^{\mathrm{spec}}F(A).
\]
Equivalently, the tower $\{P_n^{\mathrm{spec}}F(A)\}_{n\geq 0}$ carries a natural
filtration whose associated graded pieces are given by
\[
\operatorname{gr}_n P^{\mathrm{spec}}F(A)
\;\cong\;
D_n^{\mathrm{spec}}F(A).
\]
If the ambient category is additive and these cofiber sequences split (e.g., if
the tower arises from a direct sum of homogeneous functors), then
\[
P_n^{\mathrm{spec}}F(A)
\;\cong\;
\bigoplus_{k=0}^{n} D_k^{\mathrm{spec}}F(A),
\]
where $D_0^{\mathrm{spec}}F(A)=F(0)$.
\end{corollary}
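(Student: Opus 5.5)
The approach is to derive the cofiber sequence from the universal property of the reflections $P_n^{\mathrm{spec}}$ (Definition~\ref{def:spectral-taylor-approx}) together with the stability of the ambient category. We then identify the cofiber with $D_n^{\mathrm{spec}}F$ using the split layer model stated just before the tower.

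\textbf{Step 1: the tower map.} Since $\mathrm{Poly}_{n-1}^{\mathrm{spec}}\subseteq\mathrm{Poly}_n^{\mathrm{spec}}$, the map $\iota_{n-1}:F\to P_{n-1}^{\mathrm{spec}}F$ has target of degree $\le n$. The universal property of $P_n^{\mathrm{spec}}F$ therefore gives a unique comparison map $q_n:P_n^{\mathrm{spec}}F\to P_{n-1}^{\mathrm{spec}}F$ with $q_n\circ\iota_n=\iota_{n-1}$. Functoriality follows as in Proposition~\ref{prop:taylor-functorial}. The reflection on $\mathrm{Poly}_{n-1}^{\mathrm{spec}}$ also gives $P_{n-1}^{\mathrm{spec}}P_n^{\mathrm{spec}}F\simeq P_{n-1}^{\mathrm{spec}}F$, so $q_n$ is itself the $(n-1)$-st reflection of $P_n^{\mathrm{spec}}F$. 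Note that the statement as written has the arrow going $P_{n-1}\to P_n$. In the split additive situation this is the inclusion of the summand $\bigoplus_{k\le n-1}D_k^{\mathrm{spec}}F$, which is a section of $q_n$. I would work with this section and, in the stable setting, pass freely between fiber and cofiber.

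\textbf{Step 2: identify the cofiber.} Let $L_n$ denote the cofiber (equivalently, up to shift, the fiber of $q_n$). It is a spectral polynomial of degree $\le n$ by Proposition~\ref{prop:spectral-poly-stability}(1), using that cross-effects are exact in the stable setting. Its $(n-1)$-st reflection vanishes, so $L_n$ is homogeneous of degree $n$. To compute it, apply $\mathrm{cr}_n$. Because $\mathrm{cr}_nP_{n-1}^{\mathrm{spec}}F$ is spectrally negligible, we get $\mathrm{cr}_nL_n\simeq\mathrm{cr}_nP_n^{\mathrm{spec}}F$ up to spectrally trivial terms. Since $\iota_n$ is universal among degree-$\le n$ targets, this agrees with $\mathrm{cr}_nF$ after applying the residue correction. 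Homogeneous functors are recovered from the diagonal of their $n$-th cross-effect. Combining this with the split model for $D_n^{\mathrm{spec}}$ displayed before the tower yields $L_n(A)\simeq \mathrm{cr}_nF(A,\dots,A)\otimes_{P^{\otimes n}}(\mathcal{O}_P^{\mathrm{res}})^{\otimes n}$. Here I am identifying coinvariants with the stated formula, which the corollary implicitly does by omitting the $h\Sigma_n$.

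\textbf{Step 3: filtration and splitting.} The cofiber sequences give the filtration, and its associated graded pieces are the $D_n^{\mathrm{spec}}F$. If the sequences split additively, induction on $n$ starting from $P_0^{\mathrm{spec}}F=F(0)$ gives $P_n^{\mathrm{spec}}F\cong\bigoplus_{k\le n}D_k^{\mathrm{spec}}F$.

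\textbf{Main obstacle.} The hard part is Step 2, specifically the claim that $\mathrm{cr}_n$ of the $n$-th reflection equals the residue-corrected $\mathrm{cr}_nF$. Spectral polynomiality only constrains $\mathrm{cr}_{n+1}$ modulo spectrally trivial objects, so one cannot expect an equivalence on the nose. I would first try to impose the spectral zero-control hypothesis of Theorem~\ref{thm:spectral-excision} with a faithful norm. This upgrades spectral negligibility to genuine vanishing of cross-effects, and the classical Goodwillie identification of the layers then applies verbatim. The residue factor would be inserted afterwards, using base-change compatibility of $\sigma_P$.
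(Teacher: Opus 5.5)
The gap is in Step 2, and it is not the one you flag at the end. Consider the sentence ``since $\iota_n$ is universal among degree-$\le n$ targets, this agrees with $\mathrm{cr}_nF$ after applying the residue correction.'' Nothing supports it. The reflection property only governs maps \emph{out of} $P_n^{\mathrm{spec}}F$. The induced map $\mathrm{cr}_n(\iota_n)\colon \mathrm{cr}_nF\to \mathrm{cr}_nP_n^{\mathrm{spec}}F$ is not an equivalence once $F$ has components of degree $>n$, because $\mathrm{cr}_nF$ is then not multilinear.

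The paper's own tower example for $F(A)=A\otimes A$ shows this; take $P=\mathbb{I}$, so the residue factor is trivial. There $P_0^{\mathrm{spec}}F\cong P_1^{\mathrm{spec}}F\cong 0$. But the inclusion--exclusion formula of Definition~\ref{def:cross-effects} gives $\mathrm{cr}_1F(A)\simeq F(A)-F(0)=A\otimes A\neq 0$. So $P_0^{\mathrm{spec}}F\to P_1^{\mathrm{spec}}F\to \mathrm{cr}_1F(A)$ cannot be a cofiber sequence. Likewise, the cofiber of $P_0^{\mathrm{spec}}\exp\to P_1^{\mathrm{spec}}\exp$ is $A$, not $\mathrm{cr}_1\exp(A)\simeq\bigoplus_{k\ge 1}A^{\otimes k}/k!$.

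You were right that the $h\Sigma_n$ is missing: without it, $\mathrm{cr}_2F(A,A)\cong (A\otimes A)^{\oplus 2}$ double-counts $F$. But coinvariants do not repair the $n=1$ failure. What your argument actually delivers is $L_n(A)\simeq\bigl(\mathrm{cr}_nP_n^{\mathrm{spec}}F(A,\dots,A)\bigr)_{h\Sigma_n}$. The missing idea is that $\mathrm{cr}_nP_n^{\mathrm{spec}}F$ is the \emph{multilinearization} of $\mathrm{cr}_nF$ (its part of degree one in each variable), not $\mathrm{cr}_nF$ itself. Your proposed fix via spectral zero-control and a faithful norm does not touch this. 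The classical Goodwillie identification you would then invoke yields exactly the multilinearized, $\Sigma_n$-coinvariant formula, i.e.\ a corrected statement rather than the displayed one.

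Two smaller points. First, rotating triangles does not reverse arrows. Your reflection produces $L_n\to P_n^{\mathrm{spec}}F\xrightarrow{q_n}P_{n-1}^{\mathrm{spec}}F$. Its rotations are $P_n^{\mathrm{spec}}F\to P_{n-1}^{\mathrm{spec}}F\to \Sigma L_n$ and $\Omega P_{n-1}^{\mathrm{spec}}F\to L_n\to P_n^{\mathrm{spec}}F$. So a sequence of the stated shape $P_{n-1}\to P_n\to D_n$ exists only when $q_n$ splits; outside that case you should state the Goodwillie-direction sequence instead. Second, in the non-split case, Proposition~\ref{prop:spectral-poly-stability}(1), which covers sums and summands, does not show that $\mathrm{fib}(q_n)$ has degree $\le n$. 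That needs closure of spectral negligibility under fibers.

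For comparison, the paper does not derive the corollary from the universal property at all. It declares that $P_n^{\mathrm{spec}}F$ is, ``by construction,'' $P_{n-1}^{\mathrm{spec}}F$ with the spectralized $n$-fold cross-effect adjoined, and reads off the cofiber. In effect it takes the split-layer model as the definition. That is why it never meets the multilinearization issue that your reflection-based route brings to the surface.
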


\begin{proof}
By construction, $P_n^{\mathrm{spec}}F$ is obtained from
$P_{n-1}^{\mathrm{spec}}F$ by adjoining the contribution of the $n$-fold
cross-effect. The cofiber of this extension is precisely the $n$-homogeneous part
\[
\mathrm{cr}_nF(A,\dots,A)
\otimes_{P^{\otimes n}}
(\mathcal{O}_P^{\mathrm{res}})^{\otimes n},
\]
which gives the stated cofiber sequence and identifies the associated graded piece
with $D_n^{\mathrm{spec}}F(A)$. In an additive setting where each such sequence
splits, iterating the splitting yields the direct-sum decomposition.
\end{proof}

\medskip

\subsection*{Examples of spectral Taylor towers}

\begin{example}[The identity functor]
For $F=\mathrm{Id}$, assuming the reduced convention, we have
\[
\mathrm{cr}_1\mathrm{Id}\cong \mathrm{Id},
\qquad
\mathrm{cr}_k\mathrm{Id}\cong 0
\quad (k\geq 2).
\]
Thus the spectral Taylor tower stabilizes after the first stage:
\[
P_0^{\mathrm{spec}}\mathrm{Id}(A)\cong 0,
\qquad
P_1^{\mathrm{spec}}\mathrm{Id}(A)\cong A,
\qquad
P_n^{\mathrm{spec}}\mathrm{Id}(A)\cong A
\quad (n\geq 1),
\]
up to the canonical residue normalization. Hence the identity functor is
spectrally polynomial of degree $1$.
\end{example}

\begin{example}[The quadratic tensor functor]
Let $F(A)=A\otimes A$ in an additive symmetric monoidal setting, and use the
reduced cross-effect convention. Then
\[
\mathrm{cr}_2F(A_1,A_2)
\cong
(A_1\otimes A_2)\oplus(A_2\otimes A_1),
\]
while
\[
\mathrm{cr}_kF\cong 0
\quad (k\geq 3).
\]
Moreover, the linear part vanishes:
\[
D_1^{\mathrm{spec}}F\cong 0.
\]
Consequently,
\[
P_0^{\mathrm{spec}}F(A)\cong 0,
\qquad
P_1^{\mathrm{spec}}F(A)\cong 0,
\qquad
P_2^{\mathrm{spec}}F(A)\cong F(A),
\qquad
P_n^{\mathrm{spec}}F(A)\cong F(A)
\quad (n\geq 2),
\]
again up to the chosen residue normalization. Thus the tower stabilizes at
level $2$.
\end{example}

\begin{example}[The exponential functor]
Formally let
\[
\exp(A)=\bigoplus_{k=0}^{\infty}\frac{A^{\otimes k}}{k!}.
\]
Then all homogeneous degrees occur, and hence the spectral Taylor tower has
nontrivial layers in every degree. Under the canonical residue normalization,
the $n$-th spectral Taylor approximation is identified with the truncated
series
\[
P_n^{\mathrm{spec}}\exp(A)
\cong
\bigoplus_{k=0}^{n}\frac{A^{\otimes k}}{k!}.
\]
Therefore the tower does not stabilize at any finite level.
\end{example}

\medskip

\noindent
\textbf{Comparison with classical Goodwillie calculus.}
Classical Goodwillie calculus constructs Taylor towers using homotopy-theoretic
notions such as excision, homotopy limits, and connectivity estimates. In contrast,
the spectral Taylor tower developed here is organized through cross-effects together
with operadic spectral data, especially the residue object
$\mathcal{O}_P^{\mathrm{res}}$. This reflects a shift from homotopy-theoretic
control to analytic and spectral control.

\[
\begin{array}{|c|c|}
\hline
\textbf{Goodwillie calculus} & \textbf{Spectral operadic calculus} \\
\hline
\text{Homotopy excision} & \text{Spectral negligibility} \\
\text{Homotopy limits} & \text{Cross-effects and coend/residue formulas} \\
\text{Connectivity estimates} & \text{Norm/spectral convergence estimates} \\
\hline
\end{array}
\]

\medskip

The effectiveness of the spectral Taylor tower depends on its convergence
properties, which we analyze in the next section. There we show that, under
suitable spectral analyticity conditions, the tower converges to $F$ with explicit
rates controlled by $\|\sigma_P(A)\|$.

\subsection{Universality of the Tower}
\label{subsec:tower-universality}

We now establish the fundamental structural property of the spectral Taylor tower: 
its universality among spectral polynomial approximations. This result shows that 
the tower constructed in the previous subsection is not merely one possible approximation 
scheme, but the canonical and essentially unique one compatible with operadic spectral data.

\medskip

\noindent
\textbf{Approximation problem.}
Let $F : \mathcal{C} \to \mathcal{M}$ be an admissible functor. 
We seek a sequence of spectral polynomial functors $\{Q_n F\}_{n \geq 0}$ 
together with natural transformations
\[
F \longrightarrow Q_n F
\]
such that each $Q_n F$ captures the degree-$n$ spectral behavior of $F$.

\medskip

\noindent
\textbf{Recalling the universal property of the operadic residue.}
Before proving the universality of the tower, we recall the universal property of the 
operadic residue $\mathcal{O}_P^{\mathrm{res}}$ established in Part~I. This result is 
essential for understanding why the residue appears in the coend formula and why it 
ensures the uniqueness of the Taylor tower.

\begin{theorem}[Universality of the Operadic Residue (Part~I, Theorem~0.2)]
\label{thm:residue-universality}
Let $P$ be a $C$-colored operad in a cocomplete symmetric monoidal category $\mathcal{M}$.
The operadic residue
\[
\mathcal{O}_P^{\mathrm{res}} \;\cong\; \coprod_{c \in C} P(c;c)
\]
is initial in the category $\mathbf{Corr}_P(\mathcal{M})$ of spectral correctors
(Definition~6 in Part~I). Concretely, for any object $R \in \mathcal{M}$ equipped with
morphisms $\rho_c : P(c;c) \to R$ that are compatible with unary operadic composition
and base change, there exists a unique morphism $\mathcal{O}_P^{\mathrm{res}} \to R$
in $\mathcal{M}$ such that the diagram
\[
\begin{tikzcd}
P(c;c) \arrow[r, "\iota_c"] \arrow[dr, "\rho_c"'] &
\mathcal{O}_P^{\mathrm{res}} \arrow[d, "u_R"] \\
& R
\end{tikzcd}
\]
commutes for every color $c \in C$.
\end{theorem}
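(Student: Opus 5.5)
The statement to prove is that $\mathcal{O}_P^{\mathrm{res}} \cong \coprod_{c\in C} P(c;c)$ is initial in $\mathbf{Corr}_P(\mathcal{M})$. I would argue directly from the universal property of the coproduct in the cocomplete category $\mathcal{M}$, and then check that the resulting morphism is a map of correctors.

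\textbf{Step 1: make $\mathcal{O}_P^{\mathrm{res}}$ a corrector.} Let $\iota_c : P(c;c)\to \mathcal{O}_P^{\mathrm{res}}$ be the coproduct inclusions. I would verify that the family $(\iota_c)_{c\in C}$ is compatible with unary composition and with base change, so that $(\mathcal{O}_P^{\mathrm{res}},\{\iota_c\})$ is an object of $\mathbf{Corr}_P(\mathcal{M})$.

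\begin{itemize}
\item For unary composition: the composition maps $P(c;c)\otimes P(c;c)\to P(c;c)$ stay in the summand indexed by $c$. They therefore assemble, summand by summand, into a map on the coproduct. Compatibility holds by construction.
\item For base change: a strong monoidal cocontinuous $\Phi$ preserves coproducts. This gives
\[
\Phi\big(\mathcal{O}_P^{\mathrm{res}}\big)\cong \coprod_c \Phi P(c;c) = \mathcal{O}^{\mathrm{res}}_{\Phi_*P},
\]
with the inclusions corresponding to each other.
\end{itemize}

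\textbf{Step 2: existence of the comparison map.} Given a corrector $(R,\{\rho_c\})$, the universal property of the coproduct produces a unique morphism $u_R:\mathcal{O}_P^{\mathrm{res}}\to R$ with $u_R\circ\iota_c=\rho_c$ for every $c$. This is exactly the commuting triangle in the statement. It remains to show that $u_R$ is a morphism in $\mathbf{Corr}_P(\mathcal{M})$.

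For this I would use that morphisms into a coproduct, and tensors of such morphisms, are determined on summands. This relies on $\otimes$ distributing over coproducts, which I would assume as part of cocompleteness, as is standard for closed symmetric monoidal categories. The compatibility equations for $u_R$ then reduce, summand by summand, to the compatibility equations already assumed for the $\rho_c$. Base change compatibility follows from the naturality of the comparison isomorphism $\Phi(\coprod_c)\cong\coprod_c\Phi$.

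\textbf{Step 3: uniqueness.} Any corrector morphism $v:\mathcal{O}_P^{\mathrm{res}}\to R$ must in particular satisfy $v\circ\iota_c=\rho_c$ for all $c$. Uniqueness in the coproduct universal property then forces $v=u_R$. Hence $\mathcal{O}_P^{\mathrm{res}}$ is initial, and the identification up to unique isomorphism follows because initial objects are unique up to unique isomorphism.

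\textbf{Main obstacle.} I expect the hard part to be Step 2's reduction to summands. Definition~6 of Part~I may phrase compatibility through structure involving $\mathcal{O}_P^{\mathrm{res}}$ itself, such as a multiplication $\mathcal{O}\otimes\mathcal{O}\to\mathcal{O}$, rather than through the $\rho_c$ alone. In that case I need $\otimes$ to commute with the coproduct in each variable. I would first try to derive this from cocontinuity of $-\otimes X$. Failing that, I would add it as an explicit standing hypothesis: $\mathcal{M}$ closed monoidal, so that $-\otimes X$ is a left adjoint. I would also note that the cross-colour terms $P(c;c)\otimes P(d;d)$ with $c\neq d$ must be sent to zero, or else excluded by the definition.
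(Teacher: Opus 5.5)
Your argument is correct and is essentially the one the statement encodes: the paper gives no proof here (it recalls Part~I, Theorem~0.2), and its ``concretely'' clause is exactly the coproduct universal property of your Steps~2--3, in which the compatibility hypotheses on the $\rho_c$ play no role. The worry about $\otimes$ distributing over coproducts is unnecessary, since a corrector as phrased is just an object with maps $\rho_c$ (and cocompleteness alone would not give distributivity anyway); the one genuinely definition-dependent point is Step~1, that the $\iota_c$ are themselves compatible with unary composition, which is not automatic if Part~I's condition is the cyclic relation $\rho_c(g\circ f)=\rho_d(f\circ g)$ for $f\in P(c;d)$, $g\in P(d;c)$, as the paper's later formula $\mathcal{O}_P^{\mathrm{res}}\cong\int^{c\in C}P(c;c)$ suggests.
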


In the context of the Taylor tower, the operadic residue appears as the universal 
corrector that ensures the tower is compatible with operadic composition and base change.

\medskip

\noindent
\textbf{Main theorem: Universality of the spectral Taylor tower.}

\begin{theorem}[Universal property of spectral Taylor approximation]
\label{thm:universal-tower}
Let $F:\mathcal{C}\to\mathcal{M}$ be an admissible functor, and suppose that
$P_n^{\mathrm{spec}}F$ is constructed as the universal spectral polynomial
approximation of degree at most $n$. Then, for each $n\geq 0$, the natural map
\[
\iota_n:F\longrightarrow P_n^{\mathrm{spec}}F
\]
is initial among maps from $F$ to spectral polynomial functors of degree at most
$n$. That is, for every spectral polynomial functor $G$ of degree $\leq n$ and
every natural transformation $\eta:F\to G$, there exists a unique natural
transformation
\[
\bar{\eta}:P_n^{\mathrm{spec}}F\to G
\]
such that
\[
\bar{\eta}\circ \iota_n=\eta.
\]

Consequently, if $\{Q_nF\}$ is another system of degree $\leq n$ spectral
polynomial approximations satisfying the same universal property, then there are
canonical isomorphisms
\[
P_n^{\mathrm{spec}}F\cong Q_nF
\]
compatible with the tower maps.

Moreover, the construction of $P_n^{\mathrm{spec}}F$ is determined by the
spectralized cross-effects
\[
\mathrm{cr}_kF\otimes_{P^{\otimes k}}
(\mathcal{O}_P^{\mathrm{res}})^{\otimes k},
\qquad k\leq n,
\]
together with their residue-module and symmetry data.
\end{theorem}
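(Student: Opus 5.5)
The first assertion is essentially tautological once we unwind Definition~\ref{def:spectral-taylor-approx}. The hypothesis says that $(P_n^{\mathrm{spec}}F,\iota_n)$ is an $n$-th spectral Taylor approximation. By definition this means: for every $G\in\mathrm{Poly}_n^{\mathrm{spec}}$ and every $\eta:F\to G$ there is a unique $\phi$ with $\phi\circ\iota_n=\eta$. So I would set $\bar\eta:=\phi$ and note that existence and uniqueness are exactly the defining clauses. Proposition~\ref{prop:tower-polynomial} records that $P_n^{\mathrm{spec}}F$ itself lies in $\mathrm{Poly}_n^{\mathrm{spec}}$, so $\iota_n$ is indeed a map into the relevant full subcategory. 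Hence $\iota_n$ is an initial object of the comma category $F\downarrow\mathrm{Poly}_n^{\mathrm{spec}}$.

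For the uniqueness clause I would run the standard argument that initial objects are unique up to unique isomorphism. Given $(Q_nF,j_n)$ with the same universal property, apply the universal property of $P_n^{\mathrm{spec}}F$ to $j_n$ to get $\alpha:P_n^{\mathrm{spec}}F\to Q_nF$, and symmetrically $\beta:Q_nF\to P_n^{\mathrm{spec}}F$. Then $\beta\alpha\circ\iota_n=\iota_n=\mathrm{id}\circ\iota_n$, so uniqueness forces $\beta\alpha=\mathrm{id}$, and likewise $\alpha\beta=\mathrm{id}$.

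Compatibility with the tower maps needs the filtration $\mathrm{Poly}_{n-1}^{\mathrm{spec}}\subseteq\mathrm{Poly}_n^{\mathrm{spec}}$. Define $p_n:P_n^{\mathrm{spec}}F\to P_{n-1}^{\mathrm{spec}}F$ as the unique factorization of $\iota_{n-1}$ through $\iota_n$, and define $q_n$ on the $Q$-side the same way. Both composites $\alpha_{n-1}p_n$ and $q_n\alpha_n$ restrict along $\iota_n$ to $j_{n-1}$. By uniqueness in the universal property at level $n$ (note $Q_{n-1}F\in\mathrm{Poly}_n^{\mathrm{spec}}$), the two composites agree. This step implicitly uses that spectral negligibility of $\mathrm{cr}_nG$ implies that of $\mathrm{cr}_{n+1}G$. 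I would justify that by the standard identification of $\mathrm{cr}_{n+1}G$ as an iterated total fiber of $\mathrm{cr}_nG$ in the stable setting, combined with the compatibility of $\sigma_P$ with fibers and sums used in Proposition~\ref{prop:spectral-poly-stability}.

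The last clause, that $P_n^{\mathrm{spec}}F$ is determined by the spectralized cross-effects with $k\le n$, is where the real content and the main obstacle lie. My plan is an induction on $n$ using Corollary~\ref{cor:homogeneous-layers}. The base case is $P_0^{\mathrm{spec}}F=F(0)$. The inductive step uses that $P_n^{\mathrm{spec}}F$ is an extension of $P_{n-1}^{\mathrm{spec}}F$ by the layer $D_n^{\mathrm{spec}}F=\mathrm{cr}_nF(A,\dots,A)\otimes_{P^{\otimes n}}(\mathcal O_P^{\mathrm{res}})^{\otimes n}$. Extension data are classified by a connecting map $D_n^{\mathrm{spec}}F\to\Sigma P_{n-1}^{\mathrm{spec}}F$. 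I would argue that this map is itself built from the $\Sigma_n$-symmetry and residue-module structure of the layers, using Theorem~\ref{thm:residue-universality} to see that any corrector map out of $\mathcal O_P^{\mathrm{res}}$ is uniquely determined by its unary components. The delicate point is that a general extension is not determined by its graded pieces alone. I would therefore state this clause with the connecting data explicitly included among the ``residue-module and symmetry data''. In the split case this reduces to the direct-sum formula $\bigoplus_{k\le n}D_k^{\mathrm{spec}}F$, where the claim is immediate.
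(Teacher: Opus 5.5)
Your handling of the first two assertions matches the paper's argument. The factorization property is Definition~\ref{def:spectral-taylor-approx} unwound, and uniqueness is the standard initial-object argument.

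Your check of compatibility with the tower maps is more careful than the paper's, which only asserts it. Defining $p_n\colon P_n^{\mathrm{spec}}F\to P_{n-1}^{\mathrm{spec}}F$ as the factorization of $\iota_{n-1}$ through $\iota_n$ is the direction the reflector actually supports. The universal property yields no map $P_{n-1}^{\mathrm{spec}}F\to P_n^{\mathrm{spec}}F$ under $F$ unless $P_n^{\mathrm{spec}}F\in\mathrm{Poly}_{n-1}^{\mathrm{spec}}$, contrary to the paper's ``ascending'' convention.

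For the inclusion $\mathrm{Poly}_{n-1}^{\mathrm{spec}}\subseteq\mathrm{Poly}_n^{\mathrm{spec}}$ you do not need compatibility of $\sigma_P$ with fibers, which is never established. In the additive/stable setting, $\mathrm{cr}_{n+1}G(A_1,\dots,A_{n+1})$ is a natural direct summand of $\mathrm{cr}_nG(A_1\oplus A_{n+1},A_2,\dots,A_n)$. So compatibility with finite sums, as used in Proposition~\ref{prop:spectral-poly-stability}, suffices.

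The genuine gap is in the last clause. The paper does not derive it from the universal property. Its proof just reads the dependence off the coend/residue split model $P_n^{\mathrm{spec}}F\simeq\bigoplus_{k\le n}D_k^{\mathrm{spec}}F$, so your remark that the split case is immediate is exactly what the paper does. Your attempt to go further breaks at three points.

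\emph{Residue universality.} Theorem~\ref{thm:residue-universality} only says that a map out of $\mathcal{O}_P^{\mathrm{res}}$ is determined by its restrictions to the $P(c;c)$. It gives no control over an extension class relating $D_n^{\mathrm{spec}}F$ and $P_{n-1}^{\mathrm{spec}}F$, whose source also involves $\mathrm{cr}_nF(A,\dots,A)$. So it cannot show the connecting map is built from residue-module and symmetry data.

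\emph{Direction.} The cofiber sequence $P_{n-1}^{\mathrm{spec}}F\to P_n^{\mathrm{spec}}F\to D_n^{\mathrm{spec}}F$ of Corollary~\ref{cor:homogeneous-layers} runs against your own $p_n$. For the reflector, the relevant sequence would be $D_n^{\mathrm{spec}}F\to P_n^{\mathrm{spec}}F\to P_{n-1}^{\mathrm{spec}}F$, classified by a map $P_{n-1}^{\mathrm{spec}}F\to\Sigma D_n^{\mathrm{spec}}F$.

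\emph{Base case.} Neither the base case nor the layer identification follows from the reflector definition. That corollary is proved ``by construction'', i.e.\ it presupposes the model whose dependence you want to establish. When spectral negligibility is strictly weaker than vanishing, as the paper insists, the base case is false. If some $F\in\mathrm{Poly}_0^{\mathrm{spec}}$ has $\mathrm{cr}_1F\not\cong 0$, Corollary~\ref{cor:tower-stabilization-universal} gives $P_0^{\mathrm{spec}}F\cong F$. This is not the constant functor at $F(0)$, because its first cross-effect is nonzero.

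So the ``Moreover'' clause is a property of, or hypothesis on, the split coend model rather than a consequence of the universal property. Your fallback of folding the connecting data into the ``residue-module and symmetry data'' makes the clause true by definition rather than proving it.
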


\begin{proof}
By construction, $P_n^{\mathrm{spec}}F$ is the universal object obtained from
$F$ by killing, in the spectral sense, all cross-effects of order greater than
$n$. Hence any map from $F$ to a spectral polynomial functor $G$ of degree at
most $n$ must annihilate the same higher spectral cross-effects. Therefore such
a map factors uniquely through $P_n^{\mathrm{spec}}F$, giving the required map
\[
\bar{\eta}:P_n^{\mathrm{spec}}F\to G.
\]

The uniqueness statement follows formally from this universal property. Indeed,
if another approximation $Q_nF$ satisfies the same property, then the identity
maps induced by the two universal properties give morphisms
\[
P_n^{\mathrm{spec}}F\to Q_nF,
\qquad
Q_nF\to P_n^{\mathrm{spec}}F,
\]
whose composites are identities by uniqueness. Thus
$P_n^{\mathrm{spec}}F\cong Q_nF$, compatibly with the tower maps.

Finally, the coend/residue construction expresses the $n$-th approximation in
terms of the spectralized cross-effects of orders $k\leq n$. Thus the tower is
controlled by these spectral cross-effect data, together with their natural
module and symmetry structures.
\end{proof}

\begin{remark}[Direction of the tower]
In contrast to the classical Goodwillie tower which descends
\[
\cdots \to P_nF \to P_{n-1}F \to \cdots,
\]
the spectral Taylor tower defined here is an \emph{ascending filtered tower}
\[
F \to P_0^{\mathrm{spec}}F \to P_1^{\mathrm{spec}}F \to \cdots,
\]
where each map $\iota_n : F \to P_n^{\mathrm{spec}}F$ is the universal map to a
spectral polynomial functor of degree at most $n$. This convention is adopted
for compatibility with the operadic residue construction and does not affect
the universal property.
\end{remark}

\begin{remark}[On spectral completeness]
The theorem does \emph{not} claim that the tower depends only on the classical
spectra $\sigma(\mathrm{cr}_kF)$. Instead, it depends on the \emph{spectralized
cross-effects}
\[
\mathrm{cr}_kF \otimes_{P^{\otimes k}} (\mathcal{O}_P^{\mathrm{res}})^{\otimes k},
\]
which encode both the classical spectral data and the residue-module structure.
Thus two functors with identical classical cross-effect spectra may yield
different towers if their residue-module structures differ.
\end{remark}

\begin{corollary}[Stabilization]
\label{cor:tower-stabilization-universal}
If $F$ is itself a spectral polynomial of degree $\le n$, then the canonical map 
$\iota_n: F \to P_n^{\mathrm{spec}}F$ is an isomorphism, and $P_m^{\mathrm{spec}}F \cong F$ 
for all $m \ge n$.
\end{corollary}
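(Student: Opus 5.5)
The plan is to argue purely formally from the universal property in Definition~\ref{def:spectral-taylor-approx} and Theorem~\ref{thm:universal-tower}: a reflector onto a full subcategory restricts to the identity, up to the unit, on objects already lying in that subcategory. We use nothing about the coend or residue model beyond the fact that $P_n^{\mathrm{spec}}F$ is characterized by this universal property.

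\textbf{Step 1: an inverse to $\iota_n$.} Since $F\in\mathrm{Poly}_n^{\mathrm{spec}}$, the identity $\mathrm{id}_F:F\to F$ is a natural transformation from $F$ to a spectral polynomial of degree $\le n$. The universal property therefore gives a unique $r:P_n^{\mathrm{spec}}F\to F$ with $r\circ\iota_n=\mathrm{id}_F$.

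\textbf{Step 2: the other composite.} Consider $\iota_n\circ r:P_n^{\mathrm{spec}}F\to P_n^{\mathrm{spec}}F$. By Proposition~\ref{prop:tower-polynomial}, $P_n^{\mathrm{spec}}F$ is a spectral polynomial of degree $\le n$. Moreover,
\[
(\iota_n\circ r)\circ\iota_n=\iota_n=\mathrm{id}_{P_n^{\mathrm{spec}}F}\circ\iota_n .
\]
So $\iota_n\circ r$ and the identity both factor $\iota_n$ through $\iota_n$. Uniqueness in the universal property (applied with $G=P_n^{\mathrm{spec}}F$ and $\eta=\iota_n$) forces $\iota_n\circ r=\mathrm{id}$. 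Hence $\iota_n$ is a natural isomorphism with inverse $r$.

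\textbf{Step 3: all $m\ge n$.} We need the filtration inclusion $\mathrm{Poly}_n^{\mathrm{spec}}\subseteq\mathrm{Poly}_m^{\mathrm{spec}}$ for $m\ge n$. This amounts to: if $\sigma_P(\mathrm{cr}_{n+1}F)=\{0\}$ on all inputs, then $\sigma_P(\mathrm{cr}_{m+1}F)=\{0\}$. I would deduce this from the standard recursion for cross-effects,
\[
\mathrm{cr}_{k+1}F(A_1,\dots,A_{k+1})\simeq \mathrm{cr}_2\big(\mathrm{cr}_kF(-,A_2,\dots,A_k,-)\big)(A_1,A_{k+1}),
\]
which exhibits $\mathrm{cr}_{m+1}F$ as an iterated total fiber built from values of $\mathrm{cr}_{n+1}F$. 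Spectral negligibility would then pass through these fibers by the same compatibility of $\sigma_P$ with finite (co)limits used in Proposition~\ref{prop:spectral-poly-stability}.

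\textbf{Main obstacle.} Step 3 is the hardest part, because $\sigma_P$ is not obviously exact on total fibers. As a fallback, I would simply invoke the filtration $\mathrm{Poly}_0^{\mathrm{spec}}\subseteq\mathrm{Poly}_1^{\mathrm{spec}}\subseteq\cdots$ asserted in Section~\ref{subsec:spectral-polynomial}. With $F\in\mathrm{Poly}_m^{\mathrm{spec}}$ in hand, Steps 1--2 applied with $m$ in place of $n$ give $P_m^{\mathrm{spec}}F\cong F$ via $\iota_m$. These isomorphisms are compatible with the tower maps by the uniqueness clause of Theorem~\ref{thm:universal-tower}.
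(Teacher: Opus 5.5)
Your proposal is correct and follows the paper's proof essentially verbatim. The paper obtains the retraction from $\mathrm{id}_F$ and then uses uniqueness to force $\iota_n\circ r=\mathrm{id}$; you correctly add, via Proposition~\ref{prop:tower-polynomial}, that $P_n^{\mathrm{spec}}F$ is itself a legitimate target for that uniqueness. For $m\ge n$ the paper does exactly what your fallback does and simply asserts that $F$ is also a spectral polynomial of degree $\le m$. If you want to justify this, note that in the additive or stable setting of Definition~\ref{def:cross-effects}, $\mathrm{cr}_{k+1}F(A_1,A_1',A_2,\dots,A_k)$ is a split summand of $\mathrm{cr}_kF(A_1\oplus A_1',A_2,\dots,A_k)$. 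So you only need $\sigma_P$ to be compatible with finite direct sums, as already used in Proposition~\ref{prop:spectral-poly-stability}, not exact on total fibers.
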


\begin{proof}[Proof of Corollary~\ref{cor:tower-stabilization-universal}]
By Theorem~\ref{thm:universal-tower}, $\iota_n: F \to P_n^{\mathrm{spec}}F$ satisfies
the universal property of the spectral polynomial approximation of degree $\leq n$.

Since $F$ itself is a spectral polynomial of degree $\leq n$, the identity map
$\mathrm{id}_F: F \to F$ is a map from $F$ to a spectral polynomial of degree
$\leq n$. By the universal property, there exists a unique map
$\rho: P_n^{\mathrm{spec}}F \to F$ such that $\rho \circ \iota_n = \mathrm{id}_F$.

Now consider $\iota_n \circ \rho: P_n^{\mathrm{spec}}F \to P_n^{\mathrm{spec}}F$.
We have $(\iota_n \circ \rho) \circ \iota_n = \iota_n \circ (\rho \circ \iota_n)
= \iota_n \circ \mathrm{id}_F = \iota_n$. But the identity map
$\mathrm{id}_{P_n^{\mathrm{spec}}F}$ also satisfies
$\mathrm{id}_{P_n^{\mathrm{spec}}F} \circ \iota_n = \iota_n$. By the uniqueness
clause of the universal property (Theorem~\ref{thm:universal-tower}), we must have
$\iota_n \circ \rho = \mathrm{id}_{P_n^{\mathrm{spec}}F}$. Hence $\iota_n$ is an
isomorphism.

If $m \geq n$, then $F$ is also a spectral polynomial of degree $\leq m$, and the
same argument applied to $P_m^{\mathrm{spec}}F$ yields $P_m^{\mathrm{spec}}F \cong F$.
\end{proof}

\begin{corollary}[Functoriality]
\label{cor:tower-functorial-universal}
The assignment $F \mapsto P_n^{\mathrm{spec}}F$ defines a functor from the category of 
admissible functors to the category of spectral polynomials of degree $\le n$. 
Moreover, the natural transformations $\iota_n: F \to P_n^{\mathrm{spec}}F$ assemble into 
a natural transformation between functors.
\end{corollary}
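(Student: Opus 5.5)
The plan is to derive the corollary formally from the universal property in Theorem~\ref{thm:universal-tower}, in the same way Proposition~\ref{prop:taylor-functorial} was obtained; Corollary~\ref{cor:tower-functorial-universal} is essentially a restatement of that proposition in the language of functor categories. Let $\mathsf{Adm}$ be the category whose objects are admissible functors $\mathcal{C}\to\mathcal{M}$ and whose morphisms are natural transformations. For each $F$ we fix a choice of universal pair $(P_n^{\mathrm{spec}}F,\iota_{n,F})$; any two choices are canonically isomorphic by the uniqueness clause of Theorem~\ref{thm:universal-tower}, so this choice is harmless. Given $\alpha:F\to G$ in $\mathsf{Adm}$, the composite $\iota_{n,G}\circ\alpha:F\to P_n^{\mathrm{spec}}G$ has target in $\mathrm{Poly}_n^{\mathrm{spec}}$ by Proposition~\ref{prop:tower-polynomial}. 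The universal property of $\iota_{n,F}$ therefore gives a unique $P_n^{\mathrm{spec}}\alpha$ with $P_n^{\mathrm{spec}}\alpha\circ\iota_{n,F}=\iota_{n,G}\circ\alpha$.

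Functoriality then follows from the uniqueness clause. Both $\mathrm{id}_{P_n^{\mathrm{spec}}F}$ and $P_n^{\mathrm{spec}}(\mathrm{id}_F)$ factor $\iota_{n,F}$ through itself, so they are equal. For composable $\alpha:F\to G$ and $\beta:G\to H$, both $P_n^{\mathrm{spec}}\beta\circ P_n^{\mathrm{spec}}\alpha$ and $P_n^{\mathrm{spec}}(\beta\circ\alpha)$ factor $\iota_{n,H}\circ\beta\circ\alpha$ through $\iota_{n,F}$. Using the defining squares twice, they coincide. This gives a functor $P_n^{\mathrm{spec}}:\mathsf{Adm}\to\mathrm{Poly}_n^{\mathrm{spec}}$. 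The defining square $P_n^{\mathrm{spec}}\alpha\circ\iota_{n,F}=\iota_{n,G}\circ\alpha$ is exactly the naturality condition for the family $\{\iota_{n,F}\}_F$. Hence $\iota_n$ is a natural transformation from the inclusion functor $J:\mathsf{Adm}\to\mathsf{Adm}$ to $J\circ P_n^{\mathrm{spec}}$; equivalently, it is the unit of the reflection described after Definition~\ref{def:spectral-taylor-approx}.

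The only genuine obstacle is a bookkeeping point: $P_n^{\mathrm{spec}}F$ must lie in the source category again, so that $J\circ P_n^{\mathrm{spec}}$ makes sense. By Definition~\ref{def:spectral-polynomial}, spectral polynomials are admissible by definition, so $\mathrm{Poly}_n^{\mathrm{spec}}$ is a full subcategory of $\mathsf{Adm}$ and this holds automatically. One might also want compatibility with the tower maps $P_n^{\mathrm{spec}}F\to P_{n-1}^{\mathrm{spec}}F$. That compatibility would come from the same uniqueness argument, using the inclusion $\mathrm{Poly}_{n-1}^{\mathrm{spec}}\subseteq\mathrm{Poly}_n^{\mathrm{spec}}$ of the degree filtration.
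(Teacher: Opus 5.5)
Your proposal is correct and matches the paper's proof: you define $P_n^{\mathrm{spec}}\alpha$ by applying the universal property to $\iota_{n,G}\circ\alpha$, get the identity and composition laws from the uniqueness clause, and read off naturality of $\iota_n$ from the defining square. One small slip: the unit goes from $\mathrm{id}_{\mathsf{Adm}}$ to $J\circ P_n^{\mathrm{spec}}$, where $J:\mathrm{Poly}_n^{\mathrm{spec}}\hookrightarrow\mathsf{Adm}$ is the inclusion, not from an ``inclusion'' $\mathsf{Adm}\to\mathsf{Adm}$.
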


\begin{proof}[Proof of Corollary~\ref{cor:tower-functorial-universal}]
Let $\alpha: F \to G$ be a natural transformation between admissible functors.
By Theorem~\ref{thm:universal-tower}, $\iota_n^G: G \to P_n^{\mathrm{spec}}G$ is the
universal map to a spectral polynomial of degree $\leq n$.

Consider the composite $\iota_n^G \circ \alpha: F \to P_n^{\mathrm{spec}}G$.
Since $P_n^{\mathrm{spec}}G$ is a spectral polynomial of degree $\leq n$, the
universal property of $P_n^{\mathrm{spec}}F$ (Theorem~\ref{thm:universal-tower})
yields a unique natural transformation
$P_n^{\mathrm{spec}}\alpha: P_n^{\mathrm{spec}}F \to P_n^{\mathrm{spec}}G$ such that
\[
P_n^{\mathrm{spec}}\alpha \circ \iota_n^F = \iota_n^G \circ \alpha.
\]

For the identity $\mathrm{id}_F: F \to F$, the unique map satisfying the
universal property is $\mathrm{id}_{P_n^{\mathrm{spec}}F}$, so
$P_n^{\mathrm{spec}}(\mathrm{id}_F) = \mathrm{id}_{P_n^{\mathrm{spec}}F}$.

For composable $\alpha: F \to G$ and $\beta: G \to H$, both
$P_n^{\mathrm{spec}}(\beta \circ \alpha)$ and
$P_n^{\mathrm{spec}}\beta \circ P_n^{\mathrm{spec}}\alpha$ satisfy the same
universal equation:
\[
(P_n^{\mathrm{spec}}\beta \circ P_n^{\mathrm{spec}}\alpha) \circ \iota_n^F
= P_n^{\mathrm{spec}}\beta \circ (\iota_n^G \circ \alpha)
= \iota_n^H \circ \beta \circ \alpha.
\]
By uniqueness, they are equal. Hence $P_n^{\mathrm{spec}}$ is a functor.

The equation $P_n^{\mathrm{spec}}\alpha \circ \iota_n^F = \iota_n^G \circ \alpha$
is precisely the naturality condition for $\iota_n$, so the family
$\{\iota_n: \mathrm{Id} \Rightarrow P_n^{\mathrm{spec}}\}$ is a natural
transformation.
\end{proof}

\begin{corollary}[Base change compatibility]
\label{cor:tower-base-change-universal}
Let $\Phi:\mathcal{M}\to\mathcal{N}$ be a strong monoidal cocontinuous functor
which preserves admissible functors, spectral polynomial degree, and the
residue/coend construction defining $P_n^{\mathrm{spec}}$. Then, for any
admissible functor $F$, there is a canonical natural isomorphism
\[
\Phi_*(P_n^{\mathrm{spec}}F)
\cong
P_n^{\mathrm{spec}}(\Phi_*F).
\]
Thus the spectral Taylor tower is compatible with such operadic base change.
\end{corollary}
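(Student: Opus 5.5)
The plan is to obtain the isomorphism $\Phi_*(P_n^{\mathrm{spec}}F)\cong P_n^{\mathrm{spec}}(\Phi_*F)$ purely from the universal property in Theorem~\ref{thm:universal-tower}. The explicit coend formula will serve only as a consistency check.

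\textbf{Step 1: a comparison map.} By hypothesis, $\Phi$ preserves spectral polynomial degree, and $P_n^{\mathrm{spec}}F$ lies in $\mathrm{Poly}_n^{\mathrm{spec}}$ by Proposition~\ref{prop:tower-polynomial}. Hence $\Phi_*(P_n^{\mathrm{spec}}F)$ is a spectral polynomial of degree $\le n$; this is also part~(3) of Proposition~\ref{prop:spectral-poly-stability}. The functor $\Phi_*F$ is admissible, again by hypothesis. The map $\Phi_*(\iota_n):\Phi_*F\to\Phi_*(P_n^{\mathrm{spec}}F)$ is therefore a natural transformation from an admissible functor to a degree-$\le n$ spectral polynomial. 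By the universal property of $\iota_n^{\Phi_*F}$, it factors uniquely through a map
\[
\theta: P_n^{\mathrm{spec}}(\Phi_*F)\longrightarrow \Phi_*(P_n^{\mathrm{spec}}F),
\qquad \theta\circ\iota_n^{\Phi_*F}=\Phi_*(\iota_n^F).
\]

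\textbf{Step 2: $\theta$ is an isomorphism.} This is the main obstacle. The universal property runs only from $\Phi_*F$ outward, so it yields no inverse for $\theta$. I would use the hypothesis that $\Phi$ preserves the residue/coend construction. Via the cofiber sequences of Corollary~\ref{cor:homogeneous-layers}, $P_n^{\mathrm{spec}}$ is built from the spectralized cross-effects $\mathrm{cr}_kF\otimes_{P^{\otimes k}}(\mathcal O_P^{\mathrm{res}})^{\otimes k}$ for $k\le n$.

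The layers are compared as follows. First, $\mathrm{cr}_k(\Phi_*F)\cong\Phi(\mathrm{cr}_kF)$, because $\Phi$ preserves finite coproducts and the total fibers, as in the proof of Proposition~\ref{prop:spectral-poly-stability}(3). Second, strong monoidality gives $\Phi(X\otimes_{P^{\otimes k}}Y)\cong \Phi X\otimes_{(\Phi_*P)^{\otimes k}}\Phi Y$, since cocontinuity preserves the coequalizers defining relative tensors. Third, $\Phi(\mathcal O_P^{\mathrm{res}})\cong\mathcal O_{\Phi_*P}^{\mathrm{res}}$, because $\Phi$ preserves coproducts and $\mathcal O_P^{\mathrm{res}}\cong\coprod_c P(c;c)$ (Theorem~\ref{thm:residue-universality}). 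Together these give $\Phi(D_k^{\mathrm{spec}}F)\cong D_k^{\mathrm{spec}}(\Phi_*F)$.

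One then checks that $\theta$ induces exactly these isomorphisms on associated graded pieces. This is plausible because $\theta$ is compatible with the $\iota$'s, and both sides' tower maps come from the same universal factorizations. An induction on $n$ follows: for $n=0$, $D_0=F(0)$ and $\Phi(F(0))=(\Phi_*F)(0)$. At each later stage, apply the five lemma (in the stable/additive setting) to the cofiber sequences $P_{n-1}\to P_n\to D_n$. This shows $\theta$ is an isomorphism.

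The remaining delicate point is verifying that $\theta$ really does match the graded identifications. If that fails, I would instead simply take the isomorphism produced by the induction as the definition of the comparison, and then check that it satisfies the factorization $\theta\circ\iota_n=\Phi_*(\iota_n)$.

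\textbf{Step 3: naturality and tower compatibility.} Naturality in $F$ follows from the uniqueness clause of the universal property, exactly as in Corollary~\ref{cor:tower-functorial-universal}. The same uniqueness argument shows that the isomorphisms $\theta$ commute with the tower maps $P_{n-1}^{\mathrm{spec}}\to P_n^{\mathrm{spec}}$. This gives compatibility of the whole spectral Taylor tower with base change.
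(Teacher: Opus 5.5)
Your Step~1 is exactly the map the paper constructs (it calls it $\psi$), and you are right that it comes with no inverse. The gap is in Step~2. Nothing you have shown makes $\theta$ a map of the cofiber sequences $P_{n-1}^{\mathrm{spec}}\to P_n^{\mathrm{spec}}\to D_n^{\mathrm{spec}}$ of Corollary~\ref{cor:homogeneous-layers}, let alone one that induces your layer isomorphisms $\Phi(D_k^{\mathrm{spec}}F)\cong D_k^{\mathrm{spec}}(\Phi_*F)$.

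All you know about $\theta$ is $\theta\circ\iota_n^{\Phi_*F}=\Phi_*(\iota_n^F)$. The universal property of Theorem~\ref{thm:universal-tower} only controls maps \emph{out of} $P_n^{\mathrm{spec}}$ into degree-$\le n$ spectral polynomials. Since $\mathrm{Poly}_{n-1}^{\mathrm{spec}}\subseteq\mathrm{Poly}_n^{\mathrm{spec}}$, it produces, with uniqueness, only the \emph{descending} maps $r_n\colon P_n^{\mathrm{spec}}F\to P_{n-1}^{\mathrm{spec}}F$. So your Step~3 argument shows $\theta$ commutes with the $r_n$, not with the ascending maps $P_{n-1}^{\mathrm{spec}}\to P_n^{\mathrm{spec}}$ that your five-lemma argument uses; no universal property characterizes those. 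Switching to the $r_n$ does not rescue it. You would still have to identify the map $\theta$ induces on fibers with your explicit layer isomorphism, and the paper gives neither naturality of these sequences in $F$ nor their preservation by $\Phi$. An abstract isomorphism between the source and target of a fiber map does not make that map invertible.

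The fallback fails for a similar reason. The five lemma certifies that a given map of cofiber sequences is invertible; it does not build one from isomorphisms of the outer terms. Building one would require $\Phi$ to respect the connecting maps $D_n^{\mathrm{spec}}\to\Sigma P_{n-1}^{\mathrm{spec}}$. And checking $\theta\circ\iota_n=\Phi_*(\iota_n)$ for a map glued this way requires knowing how $\iota_n$ sits relative to the filtration, which you do not have.

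The missing input is the hypothesis that $\Phi$ preserves the construction defining $P_n^{\mathrm{spec}}$, applied to $P_n^{\mathrm{spec}}F$ together with its unit map, not just to the layers. Read this way, it gives an isomorphism $\alpha\colon\Phi_*(P_n^{\mathrm{spec}}F)\to P_n^{\mathrm{spec}}(\Phi_*F)$ with $\alpha\circ\Phi_*(\iota_n^F)=\iota_n^{\Phi_*F}$. Then $(\alpha\circ\theta)\circ\iota_n^{\Phi_*F}=\iota_n^{\Phi_*F}$, so uniqueness forces $\alpha\circ\theta=\mathrm{id}$, hence $\theta=\alpha^{-1}$.

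This is the paper's route: after building your $\theta$, it takes a reverse map $\varphi$ from the preservation hypothesis and appeals to uniqueness. Uniqueness alone only gives $\varphi\circ\theta=\mathrm{id}$, and only once $\varphi\circ\Phi_*(\iota_n^F)=\iota_n^{\Phi_*F}$ is known. The other composite is an endomorphism of $\Phi_*(P_n^{\mathrm{spec}}F)$, which has no universal property of its own. It follows only once $\varphi$ is known to be invertible, or $\Phi_*(\iota_n^F)$ universal, and that is exactly what the hypothesis must supply.

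Your layerwise approach, if completed, would reduce the corollary to more checkable facts: that $\Phi$ preserves cross-effects, relative tensor products and the residue. Completing it, however, requires showing that $\theta$ respects the filtration and its layers. That in turn needs the sequences of Corollary~\ref{cor:homogeneous-layers} to be natural in $F$ and compatible with $\Phi$, and this is precisely the step you left open.
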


\begin{proof}[Proof of Corollary~\ref{cor:tower-base-change-universal}]
By Theorem~\ref{thm:universal-tower}, $\iota_n^F: F \to P_n^{\mathrm{spec}}F$ is the
universal map to a spectral polynomial of degree $\leq n$ in $\mathcal{M}$.

Since $\Phi$ is strong monoidal and cocontinuous, it preserves the coend and
tensor constructions that define $P_n^{\mathrm{spec}}F$. Hence
$\Phi_*(P_n^{\mathrm{spec}}F)$ is a spectral polynomial of degree $\leq n$ in
$\mathcal{N}$.

Applying $\Phi_*$ to $\iota_n^F$ gives a map
$\Phi_*(\iota_n^F): \Phi_*F \to \Phi_*(P_n^{\mathrm{spec}}F)$. Thus
$\Phi_*(P_n^{\mathrm{spec}}F)$ is a degree $\leq n$ spectral polynomial
approximation of $\Phi_*F$.

By the universal property of $P_n^{\mathrm{spec}}(\Phi_*F)$
(Theorem~\ref{thm:universal-tower} applied in $\mathcal{N}$), there exists a
unique map
\[
\psi: P_n^{\mathrm{spec}}(\Phi_*F) \longrightarrow \Phi_*(P_n^{\mathrm{spec}}F)
\]
such that $\psi \circ \iota_n^{\Phi_*F} = \Phi_*(\iota_n^F)$.

Conversely, because $\Phi$ preserves the residue/coend construction, the
universal property of $P_n^{\mathrm{spec}}F$ in $\mathcal{M}$ induces, via
$\Phi_*$, a canonical map
\[
\varphi: \Phi_*(P_n^{\mathrm{spec}}F) \longrightarrow P_n^{\mathrm{spec}}(\Phi_*F).
\]

Both $\psi \circ \varphi$ and $\varphi \circ \psi$ satisfy the identity equations
characterizing the respective universal maps. By the uniqueness clause of
Theorem~\ref{thm:universal-tower}, they must be identities. Hence $\psi$ and
$\varphi$ are mutual inverses, yielding a canonical isomorphism
$\Phi_*(P_n^{\mathrm{spec}}F) \cong P_n^{\mathrm{spec}}(\Phi_*F)$, natural in $F$.
\end{proof}

\medskip

\noindent
\textbf{Conceptual consequence.}
Theorem~\ref{thm:universal-tower} shows that the spectral Taylor tower provides
a canonical approximation scheme \emph{within the class of spectral polynomial
functors}. More precisely, the assignment
\[
F \longmapsto P_n^{\mathrm{spec}}F
\]
defines a reflector from admissible functors to spectral polynomial functors
of degree $\leq n$. In particular, any approximation process that preserves
spectral polynomial structure and is compatible with spectral data factors
uniquely through this tower.

\medskip

\noindent
\textbf{Comparison with classical calculus.}
In Goodwillie calculus, the Taylor tower is characterized by homotopy-theoretic
universality, relying on excision and homotopy limits. In contrast, the
spectral Taylor tower is controlled by operadic spectral data through the
functor $\sigma_P(-)$ and the residue object $\mathcal{O}_P^{\mathrm{res}}$.
This reflects a shift in emphasis from homotopy-theoretic control to
spectral and analytic control.

\[
\begin{array}{|c|c|}
\hline
\textbf{Goodwillie Calculus} & \textbf{Spectral Operadic Calculus} \\
\hline
\text{Homotopy-theoretic universality} 
& \text{Spectral universality via } \sigma_P \\
\text{Homotopy limits/colimits} 
& \text{Cross-effects + coend with } \mathcal{O}_P^{\mathrm{res}} \\
\text{Yoneda (homotopy setting)} 
& \text{Yoneda (enriched / analytic setting)} \\
\hline
\end{array}
\]

\medskip

In this sense, the spectral Taylor tower may be viewed as an analytic
refinement of the Goodwillie tower, where spectral invariants replace
connectivity as the primary control mechanism.

\medskip

\textbf{Remark on the role of the operadic residue.}
The universality proof relies on the structural role of the operadic residue
$\mathcal{O}_P^{\mathrm{res}}$ (Theorem~\ref{thm:residue-universality})
in two key ways:

\begin{enumerate}
    \item \textbf{Compatibility with unit and evaluation maps:}
    The canonical maps
    \[
    \mathrm{cr}_kF(\cdots)
    \otimes_{P^{\otimes k}}
    (\mathcal{O}_P^{\mathrm{res}})^{\otimes k}
    \longrightarrow
    \mathrm{cr}_kF(\cdots)
    \]
    arise from the universal property of $\mathcal{O}_P^{\mathrm{res}}$ as a
    recipient of operadic actions, together with its compatibility with the
    unit and evaluation structure. This ensures that the inclusion maps
    $\iota_k^{\mathrm{res}}$ are well-defined and functorial.

    \item \textbf{Control of extensions via universality:}
    The universal property of the coend construction, combined with the
    spectral polynomial structure of $G$, ensures that any transformation
    $\eta:F\to G$ determines a unique extension
    \[
    \bar{\eta}:P_n^{\mathrm{spec}}F\to G.
    \]
    The residue encodes precisely the minimal operadic data needed for this
    extension to be well-defined and unique.
\end{enumerate}

\medskip

Without the operadic residue, the classical spectrum alone does not provide
sufficient functorial control to enforce this universal factorization property.
Thus, the residue is essential for achieving spectral universality of the
Taylor tower.

\medskip
\noindent
\textbf{Sharper interpretation.}
In particular, while the classical spectrum captures pointwise information,
it does not encode the operadic coherence required for universal
approximation. The operadic residue provides this missing structure.

\medskip

With the universal spectral Taylor tower in hand, we now turn to its convergence properties. 
In the next section, we show that for spectrally analytic functors, the tower converges 
to $F$ with explicit quantitative bounds controlled by the spectral radius 
$\|\sigma_P(A)\|$. This establishes SOC as a genuine quantitative analytic calculus.

\section{Convergence and Quantitative Approximation}
\label{sec:convergence}

We now arrive at the quantitative heart of Spectral Operadic Calculus. The universal 
spectral Taylor tower constructed in Section~\ref{sec:tower} provides, for any admissible 
functor $F$, a sequence of polynomial approximations $P_n^{\mathrm{spec}}F$. The fundamental 
question is: when does this tower converge to $F$, and at what rate?

\medskip

\noindent
\textbf{The central insight.}
In classical Goodwillie calculus, convergence of the Taylor tower is a qualitative 
homotopy-theoretic statement: for analytic functors, the tower converges weakly, but 
no explicit rates are provided. In contrast, the spectral Taylor tower is controlled 
by the operadic spectrum $\sigma_P(A)$, a quantitative invariant that carries explicit 
norm information. This allows us to establish \emph{quantitative} convergence results 
with explicit bounds expressed directly in terms of the spectral radius 
$\|\sigma_P(A)\|$.

\medskip

\noindent
\textbf{Overview of the section.}
We proceed in four stages. First, we introduce the notion of \emph{spectral analyticity}, 
which requires that the Taylor tower converges in norm to the functor $F$. 
This condition serves as the natural analogue of a convergent power series 
in classical functional analysis.

\medskip

Second, we establish norm estimates for the homogeneous layers 
$D_n^{\mathrm{spec}}F(A)$, showing that they satisfy bounds of the form
\[
\|D_n^{\mathrm{spec}}F(A)\| \;\le\; 
\|\partial_n^{\mathrm{spec}}F\| \cdot \|\sigma_P(A)\|^n.
\]
These estimates link the analytic growth of the functor directly to the 
spectral size of the input.

\medskip

Third, we introduce the notion of a \emph{radius of convergence} for the spectral 
Taylor expansion, defined in terms of the growth of the spectral derivatives. 
We show that the behavior of the series is governed by this radius: when the 
spectral size of the input is below the radius, the expansion converges 
absolutely, while above it, divergence is typical.

\medskip

Fourth and finally, we establish a quantitative convergence result, providing 
explicit exponential bounds on the remainder of the Taylor approximation:
\[
\big\|F(A) - P_n^{\mathrm{spec}}F(A)\big\| 
\;\le\; 
C \,\rho^{\,n} \cdot \|\sigma_P(A)\|^{\,n},
\]
whenever the spectral size of $A$ lies within the radius of convergence. 
This result distinguishes the present framework from classical functor 
calculus by providing explicit rates of convergence controlled by the 
operadic spectrum.

\medskip

\noindent
\textbf{The spectral dichotomy.}
As a consequence, we obtain a sharp dichotomy: the behavior of the Taylor tower 
is completely determined by the spectral size of the input. When the spectral 
size lies below the radius of convergence, the approximation converges 
exponentially fast; when it lies above, divergence occurs in general.

\medskip

\noindent
Thus, the operadic spectrum $\sigma_P(A)$ plays the role of a radius of 
convergence, transforming the Taylor tower into a genuinely analytic expansion.

\medskip

\noindent
\textbf{Remark on the exponential functor.}
To illustrate the power of these estimates, consider the exponential functor 
$\exp(A) = \sum_{k=0}^\infty A^{\otimes k}/k!$. Its spectral derivatives satisfy 
$\|\partial_k^{\mathrm{spec}}\exp\| = 1/k!$, so the radius of convergence is $R = \infty$. 
The remainder bound becomes
\[
\|\exp(A) - P_n^{\mathrm{spec}}\exp(A)\| \;\le\; \sum_{k=n+1}^\infty \frac{\|\sigma_P(A)\|^k}{k!},
\]
which decays super-exponentially as $n \to \infty$. This demonstrates that SOC captures 
the full analyticity of the exponential, a feat not possible in classical Goodwillie 
calculus where the exponential is not analytic.

\medskip

We now turn to the detailed development of these ideas.

\subsection{Spectral Analyticity}
\label{subsec:spectral-analyticity}

We now formalize the notion of analytic behavior in the spectral framework. 
The spectral Taylor tower constructed in the previous section provides a sequence 
of spectral polynomial approximations to a given admissible functor. 
The key question is whether this sequence converges to the original functor, 
and if so, under what conditions.

\medskip

\begin{definition}[Spectral Analyticity]
\label{def:spectral-analyticity}
Let $F:\mathcal{C}\to\mathcal{M}$ be an admissible functor, and assume that
$\mathcal{M}$ is additive and Banach-enriched so that the remainder
\[
R_n^{\mathrm{spec}}F(A)
:=
\operatorname{cofib}\big(F(A)\to P_n^{\mathrm{spec}}F(A)\big)
\]
has a well-defined norm. We say that $F$ is \emph{spectrally analytic} if, for
every object $A\in\mathcal{C}$,
\[
\lim_{n\to\infty}
\big\|R_n^{\mathrm{spec}}F(A)\big\|=0.
\]
Equivalently, in additive settings where the difference
$F(A)-P_n^{\mathrm{spec}}F(A)$ is defined, this condition can be written as
\[
\lim_{n\to\infty}
\big\|F(A)-P_n^{\mathrm{spec}}F(A)\big\|=0.
\]
\end{definition}

\medskip

\noindent
Equivalently, $F$ is spectrally analytic if it can be recovered as the limit 
of its spectral polynomial approximations:
\[
F(A) \;\simeq\; \lim_{n \to \infty} P_n^{\mathrm{spec}}F(A),
\]
with convergence understood in the norm topology of $\mathcal{M}$.

\medskip

\noindent
\textbf{Local spectral analyticity.}
In many situations, convergence depends on the spectral size of the input.
We therefore introduce a localized version of spectral analyticity.

\begin{definition}[Local spectral analyticity]
\label{def:local-spectral-analyticity}
Let $R>0$. We say that $F$ is \emph{spectrally analytic on the radius $R$}
if, for every object $A\in\mathcal{C}$ satisfying
\[
\|\sigma_P(A)\|<R,
\]
the spectral Taylor tower converges in the sense of
Definition~\ref{def:spectral-analyticity}, i.e.,
\[
\lim_{n\to\infty}
\|R_n^{\mathrm{spec}}F(A)\|=0.
\]
In this case, $R$ is called a \emph{spectral radius of analyticity} of $F$.
\end{definition}

\medskip

\noindent
This formulation anticipates a notion of radius of convergence, analogous to
classical analytic functions, where $\|\sigma_P(A)\|$ plays the role of the
expansion parameter. Spectral analyticity may thus be viewed as analyticity
with respect to an \emph{operadic spectral parameter}.

\medskip

\noindent
\textbf{Conceptual interpretation.}
Spectral analyticity expresses the principle that higher-order behavior of $F$
is controlled by its spectral Taylor data. In particular, the operadic spectrum
$\sigma_P(A)$ serves as an intrinsic variable governing analytic expansion,
replacing coordinate-based notions of smallness.

\medskip

\noindent
\textbf{Relation to classical analyticity.}
In classical analysis, analyticity is characterized by convergence of power
series in a normed variable. Here, the spectral size $\|\sigma_P(A)\|$ plays
the role of the variable, and the spectral Taylor tower serves as an operadic
analogue of a power series expansion.

\[
\begin{array}{|c|c|}
\hline
\text{Classical Analysis} & \text{Spectral Operadic Calculus} \\
\hline
\text{Function } f(z) & \text{Functor } F(A) \\
\text{Taylor series } \sum a_n z^n & \text{Spectral layers } \sum D_n^{\mathrm{spec}}F(A) \\
\text{Convergence for } |z| < R & \text{Convergence for } \|\sigma_P(A)\| < R \\
\text{Analytic at } 0 & \text{Spectrally analytic} \\
\hline
\end{array}
\]

\medskip

\begin{remark}
If $F$ is a spectral polynomial of finite degree, then it is automatically
spectrally analytic. The main interest of the definition lies in the
infinite-degree case, where convergence becomes nontrivial and depends on
spectral bounds.
\end{remark}

\begin{example}[Spectral polynomial functors]
\label{ex:poly-spectral-analytic}
Every spectral polynomial functor of degree $\leq n$ is spectrally analytic.
Indeed, by Corollary~\ref{cor:tower-stabilization-universal},
\[
P_m^{\mathrm{spec}}F\cong F
\qquad (m\geq n).
\]
Hence the spectral Taylor tower stabilizes, so the remainder vanishes for all
$m\geq n$.
\end{example}

\begin{example}[The identity functor]
\label{ex:id-spectral-analytic}
Under the reduced convention and the canonical residue normalization, the
identity functor satisfies
\[
P_0^{\mathrm{spec}}\mathrm{Id}\cong 0,
\qquad
P_1^{\mathrm{spec}}\mathrm{Id}\cong \mathrm{Id},
\qquad
P_m^{\mathrm{spec}}\mathrm{Id}\cong \mathrm{Id}
\quad (m\geq 1).
\]
Thus its spectral Taylor tower stabilizes at level $1$, and therefore
$\mathrm{Id}$ is spectrally analytic.
\end{example}

\begin{example}[The exponential functor]
\label{ex:exp-spectral-analytic}
Suppose the ambient Banach monoidal structure and the residue normalization are
chosen so that
\[
\|A^{\otimes k}\|
\leq
\|\sigma_P(A)\|^k .
\]
For the formal exponential functor
\[
\exp(A)=\bigoplus_{k=0}^{\infty}\frac{A^{\otimes k}}{k!},
\]
the spectral Taylor approximation is the truncated series
\[
P_n^{\mathrm{spec}}\exp(A)
\cong
\bigoplus_{k=0}^{n}\frac{A^{\otimes k}}{k!}.
\]
Hence
\[
\big\|\exp(A)-P_n^{\mathrm{spec}}\exp(A)\big\|
\leq
\sum_{k=n+1}^{\infty}
\frac{\|\sigma_P(A)\|^k}{k!}.
\]
The right-hand side tends to $0$ whenever $\|\sigma_P(A)\|<\infty$. Thus, under
these norm-compatibility assumptions, $\exp$ is spectrally analytic with
infinite radius of convergence.
\end{example}

\medskip

\noindent
\textbf{Relation to analytic continuation.}
Spectral analyticity implies a uniqueness principle analogous to analytic
continuation: a spectrally analytic functor is determined by its spectral
Taylor data at the origin.

\begin{lemma}[Analytic continuation for spectral functors]
\label{lem:analytic-continuation}
Let $F$ and $G$ be spectrally analytic functors. Suppose that there exists
$\varepsilon>0$ such that $F$ and $G$ are naturally isomorphic on the full
subcategory of $P$-algebras satisfying
\[
\|\sigma_P(A)\|<\varepsilon .
\]
Then
\[
F\cong G
\]
as functors on the common domain on which both spectral Taylor expansions
converge.
\end{lemma}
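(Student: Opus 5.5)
The argument runs the analytic-continuation principle through the spectral Taylor tower. We show that $F$ and $G$ have isomorphic spectral Taylor data, and then use spectral analyticity to recover each functor from that data.

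\emph{Step 1: isomorphic approximations.} Let $\mathcal{C}_\varepsilon$ be the full subcategory of $P$-algebras with $\|\sigma_P(A)\|<\varepsilon$, and let $\theta:F|_{\mathcal{C}_\varepsilon}\xrightarrow{\ \sim\ } G|_{\mathcal{C}_\varepsilon}$ be the given natural isomorphism. The first task is to show $P_n^{\mathrm{spec}}F\cong P_n^{\mathrm{spec}}G$ for every $n$, compatibly with the tower maps.
\begin{itemize}
\item Cross-effects are built from values on finite coproducts $\bigoplus_{i\in S}A_i$ (Definition~\ref{def:cross-effects}). If $\|\sigma_P\|$ is subadditive on coproducts, with $\|\sigma_P(\bigoplus A_i)\|\le\max_i\|\sigma_P(A_i)\|$, then $\theta$ induces isomorphisms $\mathrm{cr}_kF(A_1,\dots,A_k)\cong\mathrm{cr}_kG(A_1,\dots,A_k)$ whenever all inputs are spectrally small. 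This uses compatibility of $\sigma_P$ with finite coproducts, as in the proof of Proposition~\ref{prop:spectral-poly-stability}.
\item The next task is to extend these isomorphisms to arbitrary inputs. Here I would use the multilinearity/homogeneity of the layers together with Lemma~\ref{lem:spectral-scaling}. Given arbitrary $A$, choose $t\in\mathbb{C}^\times$ with $\|\sigma_P(tA)\|=|t|\,\|\sigma_P(A)\|<\varepsilon$. Then $D_k^{\mathrm{spec}}F(tA)\cong t^kD_k^{\mathrm{spec}}F(A)$, and similarly for $G$. Rescaling by $t^{-k}$ transports the isomorphism on small inputs to all $A$.
\item Assembling the layers via the cofiber sequences of Corollary~\ref{cor:homogeneous-layers} (split in the additive setting) gives $P_n^{\mathrm{spec}}F\cong P_n^{\mathrm{spec}}G$. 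Naturality of these isomorphisms in $n$ follows from functoriality (Corollary~\ref{cor:tower-functorial-universal}) applied to $\theta$ on the small region.
\end{itemize}

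\emph{Step 2: passage to the limit.} On the common domain where both towers converge, spectral analyticity (Definition~\ref{def:spectral-analyticity}) gives $F(A)\simeq\lim_n P_n^{\mathrm{spec}}F(A)$ and $G(A)\simeq\lim_n P_n^{\mathrm{spec}}G(A)$. A compatible system of isomorphisms between the towers induces an isomorphism of their norm limits. This uses completeness of hom-spaces (Definition~\ref{def:normed-category}) and uniform boundedness of the comparison maps; the latter follows because they are rescalings of $\theta$. Naturality in $A$ is inherited termwise, which yields $F\cong G$.

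\emph{Main obstacle.} The delicate point is the extension in Step 1 from the small region to all inputs. The layers $D_k^{\mathrm{spec}}F$ must genuinely be homogeneous of degree $k$ under the scaling $A\mapsto tA$, and this homogeneity must be natural. This is the operadic analogue of identity-theorem reasoning via Taylor coefficients. I would try to extract it from the multilinearity hypothesis on cross-effects (as in Theorem~\ref{thm:spectral-excision}) combined with Lemma~\ref{lem:spectral-scaling}. If this route fails, the fallback is to restrict the conclusion to inputs obtainable by scaling from $\mathcal{C}_\varepsilon$, which is precisely the domain on which both expansions are determined by data near the origin.
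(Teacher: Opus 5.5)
Your outline matches the paper's: the restricted isomorphism $\theta$ gives isomorphisms of cross-effects on $\mathcal{C}_{<\varepsilon}$, hence of layers, hence $P_n^{\mathrm{spec}}F\cong P_n^{\mathrm{spec}}G$, and then you pass to the limit. You diverge at the two points where the real work lies.

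The paper goes from agreement of the layers on the $\varepsilon$-ball to agreement of the spectral derivatives outright. It does this with a bare ``equivalently'' and gives no mechanism for reaching large inputs. It then obtains the natural isomorphism $F\cong G$ at the end by citing Theorem~\ref{thm:reconstruction}.

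Your scaling step supplies the identity-theorem mechanism the paper leaves implicit. It combines Lemma~\ref{lem:homogeneity-layers} with $\|\sigma_P(tA)\|=|t|\,\|\sigma_P(A)\|$ from Lemma~\ref{lem:spectral-scaling}. This matters because $\partial_k^{\mathrm{spec}}F=\mathrm{cr}_kF$ is evaluated at arbitrary inputs, so local agreement does not give global agreement for free.

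Getting naturality termwise, rather than from the Reconstruction Theorem, has two further benefits. You need neither operadic compatibility nor a right $P$-module isomorphism of derivatives, and neither is a hypothesis of the lemma. You also avoid a circularity, since Step~3 of the proof of Theorem~\ref{thm:reconstruction} itself invokes this lemma.

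The price is several hypotheses absent from the statement:
\begin{enumerate}
\item $\mathbb{C}$-linearity and scaling compatibility, as in Lemma~\ref{lem:scaling-homogeneous};
\item max-subadditivity of $\|\sigma_P\|$ on coproducts, which the paper uses tacitly when it restricts cross-effects to $\mathcal{C}_{<\varepsilon}$;
\item homogeneity isomorphisms natural in $A$ and compatible with $\theta$, so that the transported map is independent of the choice of $t$.
\end{enumerate}
The last does not follow from naturality of $\theta$, because $A\mapsto tA$ need not be implemented by morphisms of $\mathsf{Alg}_P(\mathcal{M})$. Your fallback does not help with it either: every $A$ of finite spectral size is already a rescaling of an object of $\mathcal{C}_{<\varepsilon}$, so the failure mode is non-naturality, not the size of the domain.

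Finally, uniform bounds in Step~2 do not follow ``because they are rescalings of $\theta$''. The $k$-th comparison map is induced by $\theta$ at the objects $(tA)^{\oplus j}$, $j\le k$. Bounds uniform in $k$ for these maps and their inverses are therefore an extra assumption. The paper's ``passing to the limit'' needs the same assumption and never states it.
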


\begin{proof}
Let
\[
\mathcal{C}_{<\varepsilon}
\subseteq
\mathcal{C}
\]
denote the full subcategory of $P$-algebras $A$ satisfying
\[
\|\sigma_P(A)\|<\varepsilon .
\]
By assumption, there is a natural isomorphism
\[
\theta:F|_{\mathcal{C}_{<\varepsilon}}
\longrightarrow
G|_{\mathcal{C}_{<\varepsilon}} .
\]

Since cross-effects are functorial in the underlying functor, the natural
isomorphism $\theta$ induces natural isomorphisms of cross-effects
\[
\mathrm{cr}_kF|_{\mathcal{C}_{<\varepsilon}}
\cong
\mathrm{cr}_kG|_{\mathcal{C}_{<\varepsilon}}
\]
for every $k\geq 0$. Applying the residue construction gives isomorphisms of
spectral homogeneous layers
\[
D_k^{\mathrm{spec}}F
\cong
D_k^{\mathrm{spec}}G
\]
on this neighborhood. Equivalently, the spectral derivatives agree:
\[
\partial_k^{\mathrm{spec}}F
\cong
\partial_k^{\mathrm{spec}}G
\qquad
(k\geq 0).
\]

Because $F$ and $G$ are spectrally analytic, each functor is recovered from its
convergent spectral Taylor tower:
\[
F(A)\cong \lim_{n} P_n^{\mathrm{spec}}F(A),
\qquad
G(A)\cong \lim_{n} P_n^{\mathrm{spec}}G(A).
\]
The isomorphisms of spectral derivatives identify the Taylor towers
\[
P_n^{\mathrm{spec}}F
\cong
P_n^{\mathrm{spec}}G
\]
for all $n$. Passing to the limit gives
\[
F(A)\cong G(A)
\]
for every object $A$ in the common convergence domain.

Finally, by the Reconstruction Theorem
(Theorem~\ref{thm:reconstruction}), agreement of the spectral derivative data,
together with the compatible operadic module structure, implies a natural
isomorphism
\[
F\cong G.
\]
This proves the claim.
\end{proof}

\medskip

Having defined spectral analyticity, we now turn to the quantitative estimates that 
control the convergence rate. The key ingredient is a norm bound for homogeneous layers 
in terms of the spectral radius $\|\sigma_P(A)\|$, which we establish in the next 
subsection.

\subsection{Homogeneous Layer Estimates}
\label{subsec:homogeneous-bounds}

We now derive quantitative estimates for the homogeneous layers of the spectral Taylor tower. 
These estimates form the analytic backbone of the convergence theory, as they control 
the magnitude of each term in the spectral expansion.

\medskip

Recall that for an admissible functor $F$, the spectral Taylor tower admits a decomposition 
into homogeneous layers
\[
D_n^{\mathrm{spec}}F,
\]
where $D_n^{\mathrm{spec}}F$ captures the pure $n$-th order contribution of $F$. 
These layers may be viewed as the operadic analogue of homogeneous polynomials 
in classical analysis.

\medskip

\begin{definition}[Spectral derivative norm]
\label{def:spectral-derivative-norm}
The \emph{$n$-th spectral derivative} of $F$ is the spectralized cross-effect
\[
\partial_n^{\mathrm{spec}}F
:=
\mathrm{cr}_nF
\otimes_{P^{\otimes n}}
(\mathcal{O}_P^{\mathrm{res}})^{\otimes n}.
\]
Its norm is defined by
\[
\|\partial_n^{\mathrm{spec}}F\|
:=
\sup_{\substack{A_1,\dots,A_n\\
\|\sigma_P(A_i)\|\leq 1}}
\left\|
\mathrm{cr}_nF(A_1,\dots,A_n)
\otimes_{P^{\otimes n}}
(\mathcal{O}_P^{\mathrm{res}})^{\otimes n}
\right\|.
\]
When the above supremum is finite, we say that the $n$-th spectral derivative is bounded.
\end{definition}

\medskip

\noindent
This quantity measures the maximal $n$-th order interaction of $F$ under normalized 
spectral input, and plays the role of the coefficient of degree $n$ in the spectral expansion.

\medskip

\begin{lemma}[Homogeneous layer estimate]
\label{lem:homogeneous-estimate}
Let $F:\mathcal{C}\to\mathcal{M}$ be an admissible functor whose $n$-th
spectral derivative is bounded. Assume that the residue and symmetrization maps
appearing in $D_n^{\mathrm{spec}}F$ are norm non-increasing, or that their
operator norms are included in the definition of
$\|\partial_n^{\mathrm{spec}}F\|$. Then, for every $A\in\mathcal{C}$,
\[
\|D_n^{\mathrm{spec}}F(A)\|
\leq
\|\partial_n^{\mathrm{spec}}F\|\,
\|\sigma_P(A)\|^n .
\]
\end{lemma}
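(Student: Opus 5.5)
The plan is a homogeneity (rescaling) argument. The definition of $\|\partial_n^{\mathrm{spec}}F\|$ controls inputs of unit spectral size, and the homogeneity of cross-effects transports this bound to arbitrary spectral size.

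I would first dispose of the degenerate case $\|\sigma_P(A)\|=0$. Here $D_n^{\mathrm{spec}}F(A)$ is the diagonal evaluation of $\mathrm{cr}_nF$ (tensored with the residue). Since $\mathrm{cr}_nF$ is multi-reduced and, in the setting where the layer is $n$-homogeneous, satisfies $\mathrm{cr}_nF(tA,\dots,tA)=t^n\mathrm{cr}_nF(A,\dots,A)$, the bound for arbitrary small $t>0$ below yields $\|D_n^{\mathrm{spec}}F(A)\|\le \|\partial_n^{\mathrm{spec}}F\|\,t^n$ for every $t>0$, hence $0$. So it is cleanest to treat this case as a limit of the main case.

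For the main case, set $r:=\|\sigma_P(A)\|>0$ and let $B:=r^{-1}A$ be the rescaled $P$-algebra of Lemma~\ref{lem:spectral-scaling}.
\begin{enumerate}
\item By the first part of that lemma, $\sigma_P(B)=r^{-1}\sigma_P(A)$, so $\|\sigma_P(B)\|=1$ and the tuple $(B,\dots,B)$ is admissible in the supremum defining $\|\partial_n^{\mathrm{spec}}F\|$. This gives
\[
\bigl\|\mathrm{cr}_nF(B,\dots,B)\otimes_{P^{\otimes n}}(\mathcal{O}_P^{\mathrm{res}})^{\otimes n}\bigr\|\le \|\partial_n^{\mathrm{spec}}F\|.
\]
\item I would invoke degree-$n$ homogeneity of the cross-effect (the multilinearity hypothesis of the type in Theorem~\ref{thm:spectral-excision}, which holds in the additive/stable settings where the layers are defined) to write $\mathrm{cr}_nF(A,\dots,A)=\mathrm{cr}_nF(rB,\dots,rB)\cong r^n\,\mathrm{cr}_nF(B,\dots,B)$. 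Tensoring over $P^{\otimes n}$ with the residue is linear on hom-spaces, so the scalar $r^n$ passes through, and the norm scales by $r^n$ by homogeneity of the Banach norm.
\item Finally, $D_n^{\mathrm{spec}}F(A)$ is obtained from $\partial_n^{\mathrm{spec}}F(A,\dots,A)$ by the diagonal restriction and the residue/symmetrization maps, taking $\Sigma_n$-coinvariants in the split model. By hypothesis these maps are norm non-increasing, or their norms are absorbed into $\|\partial_n^{\mathrm{spec}}F\|$. Submultiplicativity of composition (Definition~\ref{def:normed-category}) then gives
\[
\|D_n^{\mathrm{spec}}F(A)\|\le r^n\|\partial_n^{\mathrm{spec}}F\|=\|\partial_n^{\mathrm{spec}}F\|\,\|\sigma_P(A)\|^n.
\]
\end{enumerate}

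The main obstacle is step 2. Lemma~\ref{lem:spectral-scaling} gives scaling of spectra, not of norms, so I need the cross-effect itself to be degree-$n$ homogeneous as an object, with a norm that is homogeneous under the scalar action. My approach is to make this an explicit standing hypothesis: $\mathrm{cr}_nF$ is multilinear in the derived sense, as in the listed properties of cross-effects. I would then use the natural isomorphism $G(tA_1,\dots,tA_n)\cong t^nG(A_1,\dots,A_n)$ with $G=\mathrm{cr}_nF$, applied at $t=r$ and $t=r^{-1}$. If one only has homogeneity up to isomorphism, I would also need the comparison isomorphisms to be isometric, which I would fold into the residue normalization assumption already present in the statement.
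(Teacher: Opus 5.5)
Your proposal is correct and follows the paper's proof: both normalize to unit spectral size using the scaling of $\sigma_P$, apply the supremum defining $\|\partial_n^{\mathrm{spec}}F\|$, transport back using degree-$n$ homogeneity of the cross-effect, and absorb the residue and symmetrization maps via the norm non-increasing hypothesis. The differences are minor: the paper first proves the multilinear bound $\|\partial_n^{\mathrm{spec}}F(A_1,\dots,A_n)\|\le\|\partial_n^{\mathrm{spec}}F\|\prod_i\|\sigma_P(A_i)\|$ by normalizing each slot separately and then restricts to the diagonal, whereas you rescale the diagonal directly (which needs only joint homogeneity), and your $t\to 0$ treatment of the case $\|\sigma_P(A)\|=0$ is more explicit than the paper's appeal to continuity.
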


\begin{proof}
By definition, the $n$-th homogeneous layer is obtained by evaluating the
spectralized $n$-fold cross-effect on the diagonal:
\[
D_n^{\mathrm{spec}}F(A)
=
\partial_n^{\mathrm{spec}}F(A,\ldots,A),
\]
up to the canonical residue and symmetrization maps. By the definition of the
spectral derivative norm,
\[
\|\partial_n^{\mathrm{spec}}F\|
=
\sup_{\|\sigma_P(A_i)\|\leq 1}
\left\|
\partial_n^{\mathrm{spec}}F(A_1,\ldots,A_n)
\right\|.
\]
Hence, by $n$-multilinearity and homogeneity, for arbitrary
$A_1,\ldots,A_n$ with $\|\sigma_P(A_i)\|>0$,
\[
\left\|
\partial_n^{\mathrm{spec}}F(A_1,\ldots,A_n)
\right\|
\leq
\|\partial_n^{\mathrm{spec}}F\|
\prod_{i=1}^{n}\|\sigma_P(A_i)\|.
\]
Indeed, this follows by applying the defining bound to the normalized inputs
\[
\widetilde A_i
=
A_i/\|\sigma_P(A_i)\|.
\]
If some $\|\sigma_P(A_i)\|=0$, the same inequality follows by continuity, or
directly from multilinearity.

Taking $A_1=\cdots=A_n=A$, we obtain
\[
\left\|
\partial_n^{\mathrm{spec}}F(A,\ldots,A)
\right\|
\leq
\|\partial_n^{\mathrm{spec}}F\|
\|\sigma_P(A)\|^n .
\]
Since the canonical residue and symmetrization maps are assumed norm
non-increasing, or have already been included in
$\|\partial_n^{\mathrm{spec}}F\|$, this gives
\[
\|D_n^{\mathrm{spec}}F(A)\|
\leq
\|\partial_n^{\mathrm{spec}}F\|
\|\sigma_P(A)\|^n .
\]
\end{proof}

\medskip

\begin{remark}[Power series interpretation and radius of convergence]
\label{rem:power-series-interpretation}
Lemma~\ref{lem:homogeneous-estimate} shows that each homogeneous layer behaves
like a degree-$n$ monomial in the spectral size $\|\sigma_P(A)\|$, in the sense that
\[
\|D_n^{\mathrm{spec}}F(A)\|
\;\lesssim\;
\|\partial_n^{\mathrm{spec}}F\|\,
\|\sigma_P(A)\|^n.
\]
Consequently, whenever the spectral Taylor tower converges, the functor $F$
admits an expansion of the form
\[
F(A)
\;\simeq\;
\sum_{n=0}^{\infty} D_n^{\mathrm{spec}}F(A),
\]
which may be viewed as an operadic analogue of a power series, with coefficients
controlled by the spectral derivative norms $\|\partial_n^{\mathrm{spec}}F\|$.

\medskip

If the sequence $\{\|\partial_n^{\mathrm{spec}}F\|\}_{n\geq 0}$ grows at most
exponentially, i.e., there exist constants $C,R^{-1}>0$ such that
\[
\|\partial_n^{\mathrm{spec}}F\|
\leq
C R^{-n},
\]
then the above estimate implies that the series converges whenever
\[
\|\sigma_P(A)\|<R.
\]
This provides a natural notion of spectral radius of convergence and leads
to quantitative control of the Taylor remainder.
\end{remark}

This shows that spectral analyticity is equivalent to the existence of a
power-series expansion controlled by the spectral size $\|\sigma_P(A)\|$.

\medskip

\begin{corollary}[Remainder bound]
\label{cor:remainder-bound}
Let $F$ be spectrally analytic, and suppose that its spectral Taylor remainder
is represented by the tail of its homogeneous layers. Then, for any $P$-algebra
$A$,
\[
\bigl\|R_n^{\mathrm{spec}}F(A)\bigr\|
\leq
\sum_{k=n+1}^{\infty}
\bigl\|\partial_k^{\mathrm{spec}}F\bigr\|
\|\sigma_P(A)\|^k,
\]
whenever the right-hand side converges.
\end{corollary}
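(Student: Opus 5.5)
The plan is to represent the remainder by the tail of the homogeneous layers, bound each layer using Lemma~\ref{lem:homogeneous-estimate}, and then sum.

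\textbf{Step 1 (tail representation).} By hypothesis $R_n^{\mathrm{spec}}F(A)$ is represented by the tail of the homogeneous layers. Concretely, via the cofiber sequences of Corollary~\ref{cor:homogeneous-layers} and spectral analyticity (Definition~\ref{def:spectral-analyticity}), I will write
\[
R_n^{\mathrm{spec}}F(A)\;\simeq\;\lim_{N\to\infty}\bigoplus_{k=n+1}^{N} D_k^{\mathrm{spec}}F(A),
\]
with the limit taken in the norm topology of the Banach-enriched category $\mathcal{M}$. Spectral analyticity guarantees that this limit exists and equals the remainder, since $F(A)\simeq\lim_N P_N^{\mathrm{spec}}F(A)$.

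\textbf{Step 2 (finite partial sums).} For each $N>n$, the triangle inequality for the norm on $\mathcal{M}$ gives
\[
\Bigl\|\bigoplus_{k=n+1}^{N} D_k^{\mathrm{spec}}F(A)\Bigr\|\le\sum_{k=n+1}^{N}\bigl\|D_k^{\mathrm{spec}}F(A)\bigr\|.
\]
Here I assume that the norm of a finite biproduct is bounded by the sum of the norms of its summands, as in Example~\ref{ex:polynomial-admissible}. Applying Lemma~\ref{lem:homogeneous-estimate} to each summand bounds this by $\sum_{k=n+1}^{N}\|\partial_k^{\mathrm{spec}}F\|\,\|\sigma_P(A)\|^k$.

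\textbf{Step 3 (passing to the limit).} The right-hand side is monotone in $N$ and, by assumption, converges to the full tail sum. Continuity of the norm then lets the inequality pass to $N\to\infty$, which yields the stated bound. If the series diverges, the bound holds vacuously.

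\textbf{Main obstacle.} The delicate point is Step 1: the remainder is defined as a cofiber, not as a sum, so I have to justify that the norm of the cofiber agrees with the norm of the norm-limit of the partial tails. I will first try to use the split situation of Corollary~\ref{cor:homogeneous-layers}, where each cofiber sequence splits and the cofibers are literally the direct summands $\bigoplus_{k>n}D_k^{\mathrm{spec}}F(A)$. I will then invoke Banach completeness to identify the infinite sum with its norm limit. Outside the split case, I would take the corollary's hypothesis ("represented by the tail") as precisely this identification.
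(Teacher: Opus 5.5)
Your proposal is correct and follows the paper's proof: represent the remainder by the tail $\sum_{k>n}D_k^{\mathrm{spec}}F(A)$, apply the triangle inequality, and bound each layer with Lemma~\ref{lem:homogeneous-estimate}. The paper applies the triangle inequality to the infinite sum in a single step, whereas you bound the finite partial sums and then pass to the limit $N\to\infty$; this makes explicit the completeness and continuity argument that the paper leaves implicit.
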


\begin{proof}
By spectral analyticity, the spectral Taylor tower converges to $F(A)$ in norm.
Under the homogeneous-layer decomposition, the remainder is represented by the
tail
\[
R_n^{\mathrm{spec}}F(A)
\simeq
\sum_{k=n+1}^{\infty}D_k^{\mathrm{spec}}F(A).
\]
Hence, by the triangle inequality,
\[
\bigl\|R_n^{\mathrm{spec}}F(A)\bigr\|
\leq
\sum_{k=n+1}^{\infty}
\bigl\|D_k^{\mathrm{spec}}F(A)\bigr\|.
\]
Applying Lemma~\ref{lem:homogeneous-estimate} to each layer gives
\[
\bigl\|D_k^{\mathrm{spec}}F(A)\bigr\|
\leq
\bigl\|\partial_k^{\mathrm{spec}}F\bigr\|
\|\sigma_P(A)\|^k.
\]
Combining these inequalities proves the claim.
\end{proof}

\begin{corollary}[Geometric decay]
\label{cor:geometric-decay}
Assume that there exist constants $C_0>0$ and $0<\rho<1$ such that
\[
\|\partial_k^{\mathrm{spec}}F\|
\leq
C_0\rho^k
\qquad (k\geq 1).
\]
Then, for any $P$-algebra $A$ satisfying
\[
\rho\|\sigma_P(A)\|<1,
\]
one has
\[
\bigl\|R_n^{\mathrm{spec}}F(A)\bigr\|
\leq
\frac{C_0(\rho\|\sigma_P(A)\|)^{n+1}}
{1-\rho\|\sigma_P(A)\|}.
\]
In particular, the remainder decays exponentially fast as $n\to\infty$.
\end{corollary}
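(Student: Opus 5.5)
The natural route is to combine the remainder bound of Corollary~\ref{cor:remainder-bound} with the hypothesis on the spectral derivatives, and then sum a geometric series. Since Corollary~\ref{cor:geometric-decay} implicitly keeps the standing assumptions of Corollary~\ref{cor:remainder-bound}, I assume that $F$ is spectrally analytic (at least on the relevant domain) and that its remainder is represented by the tail of homogeneous layers. Set $x:=\rho\|\sigma_P(A)\|$, so that $0\le x<1$ by hypothesis.

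First, invoke Corollary~\ref{cor:remainder-bound}:
\[
\bigl\|R_n^{\mathrm{spec}}F(A)\bigr\|\le\sum_{k=n+1}^{\infty}\bigl\|\partial_k^{\mathrm{spec}}F\bigr\|\,\|\sigma_P(A)\|^k .
\]
This requires the right-hand side to converge, which the next step provides. Since $k\ge n+1\ge1$, the hypothesis $\|\partial_k^{\mathrm{spec}}F\|\le C_0\rho^k$ applies to every term of the tail; the excluded index $k=0$ never appears. Substituting gives the termwise bound $C_0x^k$. The resulting majorant series $\sum_{k\ge n+1}C_0x^k$ converges because $x<1$, and this convergence legitimizes the use of Corollary~\ref{cor:remainder-bound}.

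Second, sum the geometric tail:
\[
\sum_{k=n+1}^\infty C_0x^k=\frac{C_0x^{n+1}}{1-x}=\frac{C_0(\rho\|\sigma_P(A)\|)^{n+1}}{1-\rho\|\sigma_P(A)\|},
\]
which is the stated estimate. Exponential decay in $n$ follows at once: for fixed $A$ the bound equals a constant times $x^{n+1}$ with $x<1$. In the degenerate case $\|\sigma_P(A)\|=0$ the bound is $0$. This is still consistent, since each layer vanishes in norm by Lemma~\ref{lem:homogeneous-estimate}.

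The only delicate point is the first step, namely justifying that the remainder is genuinely dominated by the tail sum for this particular $A$. Corollary~\ref{cor:remainder-bound} requires the tail representation $R_n^{\mathrm{spec}}F(A)\simeq\sum_{k>n}D_k^{\mathrm{spec}}F(A)$ together with norm convergence. If spectral analyticity is only assumed locally, I would first check that $A$ lies in the convergence domain. The hypothesis $\|\partial_k^{\mathrm{spec}}F\|\le C_0\rho^k$ gives $\limsup_k\|\partial_k^{\mathrm{spec}}F\|^{1/k}\le\rho$, so by Remark~\ref{rem:power-series-interpretation} the layer series converges absolutely whenever $\|\sigma_P(A)\|<\rho^{-1}$, which is exactly the assumed condition. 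In that region the partial sums $P_n^{\mathrm{spec}}F(A)$ are Cauchy in the Banach hom-space. Their limit is identified with $F(A)$ by the assumed analyticity, so the triangle inequality may be applied to the infinite tail, and this justifies the bound.
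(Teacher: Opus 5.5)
Your proposal is correct and is essentially the paper's own proof: invoke Corollary~\ref{cor:remainder-bound}, bound each tail term by $C_0(\rho\|\sigma_P(A)\|)^k$ using the hypothesis, and sum the geometric series. Your extra checks do not change the argument; they only make explicit assumptions the paper uses implicitly, namely that every index in the tail satisfies $k\ge n+1\ge 1$ and that the condition $\rho\|\sigma_P(A)\|<1$ places $A$ inside the region where the tail representation of the remainder is valid.
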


\begin{proof}
By Corollary~\ref{cor:remainder-bound},
\[
\bigl\|R_n^{\mathrm{spec}}F(A)\bigr\|
\leq
\sum_{k=n+1}^{\infty}
\|\partial_k^{\mathrm{spec}}F\|
\|\sigma_P(A)\|^k.
\]
Using the hypothesis
\[
\|\partial_k^{\mathrm{spec}}F\|\leq C_0\rho^k,
\]
we obtain
\[
\bigl\|R_n^{\mathrm{spec}}F(A)\bigr\|
\leq
C_0\sum_{k=n+1}^{\infty}
\bigl(\rho\|\sigma_P(A)\|\bigr)^k.
\]
Since \(\rho\|\sigma_P(A)\|<1\), the geometric series converges and equals
\[
\sum_{k=n+1}^{\infty}
\bigl(\rho\|\sigma_P(A)\|\bigr)^k
=
\frac{(\rho\|\sigma_P(A)\|)^{n+1}}
{1-\rho\|\sigma_P(A)\|}.
\]
Therefore
\[
\bigl\|R_n^{\mathrm{spec}}F(A)\bigr\|
\leq
\frac{C_0(\rho\|\sigma_P(A)\|)^{n+1}}
{1-\rho\|\sigma_P(A)\|}.
\]
This tends to \(0\) exponentially fast as \(n\to\infty\).
\end{proof}

\medskip

\begin{example}[Polynomial functor of degree $m$]
\label{ex:poly-homogeneous}
If $F$ is a spectral polynomial of degree $\leq m$, then
\[
\partial_k^{\mathrm{spec}}F=0
\qquad (k>m).
\]
Consequently,
\[
\|\partial_k^{\mathrm{spec}}F\|=0
\qquad (k>m),
\]
and the remainder bound gives
\[
\|R_n^{\mathrm{spec}}F(A)\|=0
\qquad (n\geq m).
\]
Thus the spectral Taylor tower terminates after finitely many steps.
\end{example}

\begin{example}[Exponential functor]
\label{ex:exp-homogeneous}
Assume the normalized residue and symmetric tensor convention. For the formal
exponential functor
\[
\exp(A)=\bigoplus_{k=0}^{\infty}\frac{A^{\otimes k}}{k!},
\]
the $k$-th spectral derivative satisfies
\[
\|\partial_k^{\mathrm{spec}}\exp\|
\leq
\frac{1}{k!}.
\]
Hence the remainder estimate gives
\[
\|R_n^{\mathrm{spec}}\exp(A)\|
\leq
\sum_{k=n+1}^{\infty}
\frac{\|\sigma_P(A)\|^k}{k!}.
\]
Since factorial decay dominates geometric decay, this tail tends to zero
faster than any fixed geometric rate for bounded $\|\sigma_P(A)\|$. In
particular, the exponential functor is spectrally analytic with infinite
radius of convergence under the stated norm-compatibility assumptions.
\end{example}

\medskip

\noindent
\textbf{Scaling property.}
An important consistency check is the scaling behavior of homogeneous layers.

\begin{lemma}[Scaling of homogeneous layers]
\label{lem:scaling-homogeneous}
Assume that $\mathcal{C}$ is $\mathbb{C}$-linear and that scalar multiplication
on objects is compatible with the spectralized cross-effects. Then, for any
scalar $\lambda\in\mathbb{C}$,
\[
D_n^{\mathrm{spec}}F(\lambda A)
=
\lambda^n D_n^{\mathrm{spec}}F(A).
\]
\end{lemma}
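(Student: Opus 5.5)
The plan is to reduce the statement to the multilinearity of the spectralized cross-effect and then restrict to the diagonal. First I will use the description of the homogeneous layer already used in the proof of Lemma~\ref{lem:homogeneous-estimate}: up to the canonical residue and symmetrization maps,
\[
D_n^{\mathrm{spec}}F(A)\;=\;\partial_n^{\mathrm{spec}}F(A,\dots,A)
\;=\;\mathrm{cr}_nF(A,\dots,A)\otimes_{P^{\otimes n}}(\mathcal{O}_P^{\mathrm{res}})^{\otimes n},
\]
followed by homotopy $\Sigma_n$-coinvariants in the split model. It therefore suffices to show that $\partial_n^{\mathrm{spec}}F(\lambda A_1,\dots,\lambda A_n)=\lambda^n\,\partial_n^{\mathrm{spec}}F(A_1,\dots,A_n)$, naturally in the inputs, and that this identity passes through the residue tensor and the coinvariants.

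The key step is the scaling of the cross-effect itself. Here I will invoke the hypothesis that scalar multiplication on objects is compatible with the spectralized cross-effects. Concretely, I read it as saying that $\mathrm{cr}_nF$ is multilinear in the additive/stable setting, as listed among the properties after Definition~\ref{def:cross-effects}. This is the same condition as the spectral multilinearity hypothesis (Condition~2) of Theorem~\ref{thm:spectral-excision}. Multilinearity gives a factor $\lambda$ from each slot, so scaling all $n$ slots simultaneously yields $\mathrm{cr}_nF(\lambda A_1,\dots,\lambda A_n)\cong\lambda^n\,\mathrm{cr}_nF(A_1,\dots,A_n)$.

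Next I will check that the operations applied afterwards are $\mathbb{C}$-linear and therefore commute with multiplication by $\lambda^n$.
\begin{itemize}
\item The functor $(-)\otimes_{P^{\otimes n}}(\mathcal{O}_P^{\mathrm{res}})^{\otimes n}$ is a coequalizer of bilinear tensor maps, using tensor compatibility in Definition~\ref{def:normed-category}. Bilinearity lets the scalar pass through.
\item The $\Sigma_n$-action permutes the slots. Because the scalar $\lambda^n$ is symmetric in the slots, it commutes with the action and hence descends to the coinvariants.
\end{itemize}
Setting $A_1=\cdots=A_n=A$ then gives the claim. The degenerate case $\lambda=0$ is consistent, since both sides vanish by multi-reducedness of $\mathrm{cr}_n F$ (zero input).

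The main obstacle is interpretive rather than computational. In a general pointed category, $\mathrm{cr}_nF$ is only reduced in each variable, not multilinear. The statement becomes true only once the compatibility hypothesis is read as providing this multilinearity, so I would state explicitly that this is how the hypothesis is being used. The equality should also be understood as a natural isomorphism, not a strict equality. A secondary point to check is that the diagonal restriction $D_n^{\mathrm{spec}}F(A)=\partial_n^{\mathrm{spec}}F(A,\dots,A)$ respects scaling. This holds because diagonal restriction is functorial, so the isomorphism obtained in the scaling step restricts to the diagonal.
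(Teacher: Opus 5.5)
Your proposal is correct and follows the paper's proof. Both write $D_n^{\mathrm{spec}}F(A)$ as the diagonal value $\partial_n^{\mathrm{spec}}F(A,\dots,A)$ and use $n$-linearity to pull out $\lambda^n$. Your extra checks are that the scalar passes through the residue tensor and the $\Sigma_n$-coinvariants, and that the compatibility hypothesis should be read as multilinearity of $\mathrm{cr}_nF$; these only make explicit what the paper compresses into the assertion that $\partial_n^{\mathrm{spec}}F$ is $n$-linear.
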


\begin{proof}
By definition,
\[
D_n^{\mathrm{spec}}F(A)
=
\partial_n^{\mathrm{spec}}F(A,\ldots,A).
\]
Therefore
\[
D_n^{\mathrm{spec}}F(\lambda A)
=
\partial_n^{\mathrm{spec}}F(\lambda A,\ldots,\lambda A).
\]
Since $\partial_n^{\mathrm{spec}}F$ is $n$-linear, we obtain
\[
\partial_n^{\mathrm{spec}}F(\lambda A,\ldots,\lambda A)
=
\lambda^n
\partial_n^{\mathrm{spec}}F(A,\ldots,A).
\]
Hence
\[
D_n^{\mathrm{spec}}F(\lambda A)
=
\lambda^n D_n^{\mathrm{spec}}F(A).
\]
\end{proof}

\begin{remark}
This scaling property is consistent with the homogeneous-layer estimate:
if $\|\sigma_P(\lambda A)\|=|\lambda|\|\sigma_P(A)\|$, then both sides of
\[
\|D_n^{\mathrm{spec}}F(A)\|
\leq
\|\partial_n^{\mathrm{spec}}F\|\,\|\sigma_P(A)\|^n
\]
scale as $|\lambda|^n$. Thus the estimate has the correct homogeneous
dependence on the spectral size $\|\sigma_P(A)\|$.
\end{remark}

\medskip

\noindent
\textbf{Relation to radius of convergence.}
The growth rate of $\|\partial_n^{\mathrm{spec}}F\|$ determines the radius of convergence 
of the spectral Taylor series. Recall that for a power series $\sum a_n z^n$, the radius 
of convergence is given by $R^{-1} = \limsup_{n \to \infty} |a_n|^{1/n}$. The same 
principle applies here.

\begin{proposition}[Radius bound from derivative norms]
\label{prop:radius-from-derivatives}
Let
\[
L:=\limsup_{n\to\infty}
\|\partial_n^{\mathrm{spec}}F\|^{1/n}.
\]
Then the spectral Taylor series converges absolutely for every $A$ satisfying
\[
\|\sigma_P(A)\|<L^{-1},
\]
with the convention that $L^{-1}=\infty$ if $L=0$. In particular, the spectral
radius of convergence satisfies
\[
R\geq L^{-1}.
\]
\end{proposition}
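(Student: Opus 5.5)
The plan is to reduce the statement to the classical Cauchy--Hadamard root test, applied to the scalar majorant series supplied by Lemma~\ref{lem:homogeneous-estimate}. Write $r:=\|\sigma_P(A)\|$ and $a_n:=\|\partial_n^{\mathrm{spec}}F\|$, which we assume finite for all $n$ (otherwise $L=\infty$ and the statement is vacuous). By Lemma~\ref{lem:homogeneous-estimate}, each homogeneous layer satisfies $\|D_n^{\mathrm{spec}}F(A)\|\le a_n r^n$. Absolute convergence of the spectral Taylor series $\sum_n D_n^{\mathrm{spec}}F(A)$ therefore follows from convergence of the nonnegative real series $\sum_n a_n r^n$.

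The key step is the root test. Suppose $r<L^{-1}$, that is, $rL<1$. I will choose $\rho$ with $rL<\rho<1$; when $L=0$, any $\rho\in(0,1)$ works. If $r=0$, all terms with $n\ge1$ vanish and there is nothing to prove. Otherwise, by the definition of $\limsup$ there is an $N$ such that $a_n^{1/n}<\rho/r$ for all $n\ge N$. Hence $a_n r^n<\rho^n$ for $n\ge N$, so the tail is dominated by a convergent geometric series. This gives the tail estimate
\[
\sum_{n\ge N} \|D_n^{\mathrm{spec}}F(A)\| \le \frac{\rho^N}{1-\rho}.
\]

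To pass from absolute convergence to convergence in $\mathcal{M}$, I would use the completeness assumption in Definition~\ref{def:normed-category}. Since the hom-spaces are Banach, the partial sums $\bigoplus_{k\le n} D_k^{\mathrm{spec}}F(A)$ are Cauchy and so converge. Combined with Corollary~\ref{cor:remainder-bound}, the remainder $\|R_n^{\mathrm{spec}}F(A)\|$ is bounded by the tail above and tends to $0$. This is exactly spectral analyticity on the radius $L^{-1}$ in the sense of Definition~\ref{def:local-spectral-analyticity}, and the inequality $R\ge L^{-1}$ follows immediately, since $R$ is the supremum of such radii.

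I expect the main obstacle to be the identification of the remainder with the tail of the homogeneous layers, which is needed to turn absolute convergence of the layers into convergence of the tower to $F(A)$. This requires the split situation of Corollary~\ref{cor:homogeneous-layers}, where $P_n^{\mathrm{spec}}F(A)\cong\bigoplus_{k\le n}D_k^{\mathrm{spec}}F(A)$, together with the tail-representation hypothesis of Corollary~\ref{cor:remainder-bound}. I would state these as standing assumptions. The purely analytic content, namely absolute convergence of $\sum_n D_n^{\mathrm{spec}}F(A)$, holds unconditionally from the layer estimate.
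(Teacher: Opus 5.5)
Your core argument is the paper's: you bound $\|D_n^{\mathrm{spec}}F(A)\|$ by $\|\partial_n^{\mathrm{spec}}F\|\,\|\sigma_P(A)\|^n$ via Lemma~\ref{lem:homogeneous-estimate} and apply Cauchy--Hadamard to the scalar majorant. You prove Cauchy--Hadamard by hand with the root test, and you do so correctly.

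The paper's $R$ here is just the radius of absolute convergence of $\sum_n D_n^{\mathrm{spec}}F(A)$, so that argument already gives $R\ge L^{-1}$. Your third paragraph should therefore be dropped rather than kept as added strength, for three reasons:
\begin{enumerate}
\item Corollary~\ref{cor:remainder-bound} \emph{assumes} $F$ is spectrally analytic, so invoking it to conclude analyticity on the ball of radius $L^{-1}$ is circular.
\item The tail-representation hypothesis you want to adopt already presupposes $F(A)\simeq\sum_k D_k^{\mathrm{spec}}F(A)$.
\item Completeness of the hom-spaces in Definition~\ref{def:normed-category} says nothing about a sequence of objects $\bigoplus_{k\le n}D_k^{\mathrm{spec}}F(A)$.
\end{enumerate}

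The paper deliberately keeps ``the series converges absolutely'' separate from ``the tower converges to $F(A)$''. Compare part~1 of Theorem~\ref{thm:spectral-radius}, which adds ``whenever $F$ is spectrally analytic there''. You should claim only the former.
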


\begin{proof}
By Lemma~\ref{lem:homogeneous-estimate},
\[
\|D_n^{\mathrm{spec}}F(A)\|
\leq
\|\partial_n^{\mathrm{spec}}F\|\,
\|\sigma_P(A)\|^n.
\]
Let \(r=\|\sigma_P(A)\|\). The majorant series is
\[
\sum_{n=0}^{\infty}
\|\partial_n^{\mathrm{spec}}F\|\, r^n.
\]
By the Cauchy--Hadamard formula, this scalar series converges whenever
\[
r<\frac{1}{\limsup_{n\to\infty}
\|\partial_n^{\mathrm{spec}}F\|^{1/n}}
=
L^{-1}.
\]
Therefore
\[
\sum_{n=0}^{\infty}
\|D_n^{\mathrm{spec}}F(A)\|
\]
converges for every \(A\) with \(\|\sigma_P(A)\|<L^{-1}\). Hence the spectral
Taylor series converges absolutely in that region, so the radius of convergence
satisfies
\[
R\geq L^{-1}.
\]
\end{proof}

\begin{remark}
This proposition gives an operadic analogue of the Cauchy--Hadamard radius
estimate. In classical holomorphic functional calculus, the growth of Taylor
coefficients controls the domain on which the Taylor expansion converges. Here,
the spectral derivative norms play the role of coefficient sizes, while
\(\|\sigma_P(A)\|\) plays the role of the analytic variable.
\end{remark}

\medskip

These estimates provide the quantitative foundation for the convergence theory. 
In the next subsection, we use them to define the spectral radius of convergence 
and characterize the domain of validity of the spectral Taylor expansion.

\subsection{The Radius of Convergence}
\label{subsec:radius}

We now introduce the notion of radius of convergence for the spectral Taylor expansion. 
Building on the homogeneous layer estimates of the previous subsection, we show that 
the convergence of the spectral Taylor tower is governed by the growth of the spectral 
derivatives.

\medskip

\noindent
\textbf{Motivation.}
By Lemma~\ref{lem:homogeneous-estimate}, the $n$-th homogeneous layer satisfies
\[
\|D_n^{\mathrm{spec}}F(A)\|
\;\leq\;
\|\partial_n^{\mathrm{spec}}F\|
\cdot
\|\sigma_P(A)\|^n.
\]
Thus, the convergence of the series
\[
\sum_{n=0}^{\infty} D_n^{\mathrm{spec}}F(A)
\]
is controlled by the growth of the sequence $\{\|\partial_n^{\mathrm{spec}}F\|\}$ 
and the spectral size $\|\sigma_P(A)\|$.

\medskip

\begin{definition}[Spectral Radius of Convergence]
\label{def:spectral-radius}
Let $F$ be an admissible functor. The \emph{spectral radius of convergence} of $F$ 
is defined by
\[
R_F
\;:=\;
\left(\limsup_{n \to \infty} 
\|\partial_n^{\mathrm{spec}}F\|^{1/n}\right)^{-1}
\;\in\;
(0,\infty],
\]
with the conventions that $1/0 = \infty$ and $1/\infty = 0$.
\end{definition}

\noindent
This definition is directly analogous to the classical Cauchy–Hadamard formula 
for power series, with $\|\partial_n^{\mathrm{spec}}F\|$ playing the role of 
Taylor coefficients and $\|\sigma_P(A)\|$ serving as the expansion parameter.

\medskip

\begin{theorem}[Spectral radius criterion]
\label{thm:spectral-radius}
Let $F$ be an admissible functor, and set
\[
L := \limsup_{n \to \infty} \|\partial_n^{\mathrm{spec}}F\|^{1/n}.
\]
Then the following hold.

\begin{enumerate}
    \item \textbf{Convergence inside the radius:} 
    The spectral Taylor series
    \[
    \sum_{n=0}^{\infty} D_n^{\mathrm{spec}}F(A)
    \]
    converges absolutely in norm for every $A$ satisfying
    \[
    \|\sigma_P(A)\| < L^{-1},
    \]
    with the convention that $L^{-1} = \infty$ when $L = 0$. 
    Consequently, on this region the spectral Taylor tower converges to $F(A)$
    whenever $F$ is spectrally analytic there.

    \item \textbf{Divergence under a sharpness assumption:} 
    Assume, in addition, that the homogeneous-layer estimate is sharp along some
    ray, meaning that there exist an object $A$ and a constant $c > 0$ such that
    \[
    \|D_n^{\mathrm{spec}}F(A)\|
    \geq
    c \, \|\partial_n^{\mathrm{spec}}F\|
    \|\sigma_P(A)\|^n
    \]
    for infinitely many $n$. Then if
    \[
    \|\sigma_P(A)\| > L^{-1},
    \]
    the spectral Taylor series fails to converge absolutely along that ray.
\end{enumerate}
\end{theorem}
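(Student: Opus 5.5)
The plan is to reduce both parts of the statement to the scalar Cauchy--Hadamard root test applied to the majorant sequence $a_n := \|\partial_n^{\mathrm{spec}}F\|\,r^n$, where $r := \|\sigma_P(A)\|$. The link between the functorial series and this scalar series is supplied in one direction by Lemma~\ref{lem:homogeneous-estimate} (an upper bound) and in the other by the sharpness hypothesis of part~2 (a lower bound). No new structural input about $\sigma_P$ or the residue is needed beyond what those two inequalities encode.

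For part~1, first invoke Lemma~\ref{lem:homogeneous-estimate} to get $\|D_n^{\mathrm{spec}}F(A)\|\le a_n$ for every $n$. If $r<L^{-1}$, choose $q$ with $rL<q<1$ (any $q\in(0,1)$ works if $L=0$). By the definition of $\limsup$, there is $N$ such that $\|\partial_n^{\mathrm{spec}}F\|^{1/n}\le q/r$ for all $n\ge N$; if $r=0$ the claim is trivial. Hence $a_n\le q^n$ for $n\ge N$, and comparison with the geometric series gives $\sum_n\|D_n^{\mathrm{spec}}F(A)\|<\infty$. Completeness of the hom-spaces (Definition~\ref{def:normed-category}, item 4) then yields norm convergence of the partial sums, which is exactly the argument of Proposition~\ref{prop:radius-from-derivatives}, and I would simply cite it. For the "consequently" clause, I would combine this with spectral analyticity on the region, in the sense of Definition~\ref{def:local-spectral-analyticity}. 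Corollary~\ref{cor:remainder-bound} identifies the remainder with the tail $\sum_{k>n}D_k^{\mathrm{spec}}F(A)$, and this tail is bounded by $\sum_{k>n}q^k\to0$. So the tower converges to $F(A)$.

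For part~2, fix the object $A$ from the sharpness assumption with $r>L^{-1}$, so in particular $L>0$ and $r>0$. Pick $q$ with $1<q<rL$. Since $L$ is a $\limsup$, there are infinitely many $n$ with $\|\partial_n^{\mathrm{spec}}F\|^{1/n}\ge q/r$, that is, $a_n\ge q^n$. The obstacle is that the sharpness inequality holds only for \emph{some} infinite set $S$ of indices, which need not overlap the set $T$ where $a_n\ge q^n$. To handle this, I would first try to read the hypothesis as holding for infinitely many $n$ that witness the $\limsup$, i.e.\ $\limsup_{n\in S}\|\partial_n^{\mathrm{spec}}F\|^{1/n}=L$. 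That is the natural "sharp along a ray" meaning, and I would state this interpretation explicitly. With it, $T\cap S$ is infinite, and for those $n$ we get $\|D_n^{\mathrm{spec}}F(A)\|\ge c\,q^n\to\infty$. The terms then do not tend to zero, so $\sum_n\|D_n^{\mathrm{spec}}F(A)\|$ diverges; indeed the series does not even converge in norm. If the literal hypothesis is meant instead, I would note that divergence still follows whenever $\|\partial_n^{\mathrm{spec}}F\|$ is eventually comparable to $L^n$ along $S$. I would flag this index-alignment issue as the only genuinely delicate point of the proof; everything else is routine root-test bookkeeping.
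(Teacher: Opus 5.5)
Your proposal follows the paper's proof. Part~1 is the Cauchy--Hadamard root test applied to the scalar majorant from Lemma~\ref{lem:homogeneous-estimate}, and part~2 combines the sharpness lower bound with the $\limsup$ definition of $L$.

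The index-alignment issue you flag is real, and it is exactly where the paper's own argument is loose. The paper simply asserts that $\|\partial_n^{\mathrm{spec}}F\|^{1/n}\to L$ along the sharpness subsequence, which is your strengthened reading adopted silently. Read literally, the hypothesis can even hold vacuously on indices with $\|\partial_n^{\mathrm{spec}}F\|=0$, so stating the interpretation explicitly, as you do, is the correct repair.
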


\begin{proof}
We prove each part.

\medskip

\noindent
\textit{Part 1: Convergence inside the radius.}
By Lemma~\ref{lem:homogeneous-estimate},
\[
\|D_n^{\mathrm{spec}}F(A)\|
\leq
\|\partial_n^{\mathrm{spec}}F\|
\|\sigma_P(A)\|^n.
\]
Let \(r = \|\sigma_P(A)\|\). Then
\[
\sum_{n=0}^{\infty} \|D_n^{\mathrm{spec}}F(A)\|
\leq
\sum_{n=0}^{\infty}
\|\partial_n^{\mathrm{spec}}F\| \, r^n.
\]
By the Cauchy--Hadamard theorem, the scalar majorant series on the right
converges whenever \(r < L^{-1}\). Therefore the spectral Taylor series
converges absolutely in norm for every \(A\) with \(\|\sigma_P(A)\| < L^{-1}\).
If \(F\) is spectrally analytic on this region, then the limit of the spectral
Taylor tower agrees with \(F(A)\), so
\[
\lim_{n \to \infty} P_n^{\mathrm{spec}}F(A) = F(A).
\]

\medskip

\noindent
\textit{Part 2: Divergence under sharpness.}
Assume the sharpness condition holds. If \(\|\sigma_P(A)\| > L^{-1}\) (with
\(L > 0\); the case \(L = 0\) implies \(L^{-1} = \infty\), so the condition
cannot occur), then for the subsequence where the lower bound holds,
\[
\|D_n^{\mathrm{spec}}F(A)\|
\geq
c \, \|\partial_n^{\mathrm{spec}}F\| \, \|\sigma_P(A)\|^n.
\]
Since \(\|\partial_n^{\mathrm{spec}}F\|^{1/n} \to L\) along that subsequence, we have
\[
\bigl( \|\partial_n^{\mathrm{spec}}F\| \, \|\sigma_P(A)\|^n \bigr)^{1/n}
= \|\partial_n^{\mathrm{spec}}F\|^{1/n} \, \|\sigma_P(A)\|
\to L \cdot \|\sigma_P(A)\| > 1.
\]
Hence the terms \(\|\partial_n^{\mathrm{spec}}F\| \, \|\sigma_P(A)\|^n\) do not tend to
zero, and by the lower bound, the same holds for \(\|D_n^{\mathrm{spec}}F(A)\|\).
Therefore the spectral Taylor series fails to converge absolutely along that ray.
\end{proof}

\medskip

\begin{remark}[Radius and analytic control]
\label{rem:radius-interpretation}
The spectral radius $R_F$ quantifies the domain on which the spectral Taylor
expansion is guaranteed to converge. By
Proposition~\ref{prop:radius-from-derivatives}, it satisfies
\[
R_F \;\ge\;
\frac{1}{\limsup_{n\to\infty}
\|\partial_n^{\mathrm{spec}}F\|^{1/n}}.
\]
Thus, the analytic behavior of $F$ is controlled by the interaction between the
growth of its spectral derivatives and the spectral size $\|\sigma_P(A)\|$ of
the input.

\medskip

This yields a direct analogy with classical power series theory:
\[
\begin{array}{c|c}
\text{Classical analysis} & \text{Spectral operadic calculus} \\
\hline
\text{Variable } z & \|\sigma_P(A)\| \\
\text{Coefficient } a_n & \|\partial_n^{\mathrm{spec}}F\| \\
\text{Radius } R \ge 1/\limsup |a_n|^{1/n}
&
R_F \ge 1/\limsup \|\partial_n^{\mathrm{spec}}F\|^{1/n} \\
\text{Convergence for } |z|<R
&
\text{Convergence for } \|\sigma_P(A)\|<R_F
\end{array}
\]

\medskip

In this sense, Spectral Operadic Calculus provides a functional calculus in
which the operadic spectrum plays the role of an analytic variable, and the
spectral derivatives control the domain of convergence.
\end{remark}

\medskip

\begin{example}[Polynomial Functor]
\label{ex:poly-radius}
If $F$ is a spectral polynomial of degree $\le m$, then $\|\partial_n^{\mathrm{spec}}F\| = 0$ 
for all $n > m$. Hence $\limsup_{n \to \infty} \|\partial_n^{\mathrm{spec}}F\|^{1/n} = 0$, 
so $R_F = \infty$. The spectral Taylor expansion truncates exactly, i.e.,
\[
F(A) = P_m^{\mathrm{spec}}F(A),
\]
so the series is finite and converges trivially for all $A$.
\end{example}

\begin{example}[Exponential Functor]
\label{ex:exp-radius}
For $\exp(A) = \sum_{k=0}^\infty A^{\otimes k}/k!$, under the canonical normalization of the
multilinear spectral derivatives, we have $\|\partial_n^{\mathrm{spec}}\exp\| = 1/n!$. 
Since $(1/n!)^{1/n} \to 0$, we obtain $R_{\exp} = \infty$. The exponential functor 
is \emph{entire}—its spectral Taylor series converges for all $A$.
\end{example}

\begin{example}[Geometric Series Functor]
\label{ex:geometric-radius}
Consider $F(A) = \sum_{k=0}^\infty A^{\otimes k}$, defined on the domain 
$\|\sigma_P(A)\| < 1$ (i.e., where the series converges). 
Assume the tensor product is isometric in the sense that 
$\|A_1 \otimes \cdots \otimes A_n\| = \prod_{i=1}^n \|A_i\|$ for all objects 
(e.g., in Banach spaces with the projective tensor norm). 
Then $\|\partial_n^{\mathrm{spec}}F\| = 1$ for all $n$, so $R_F = 1$. The spectral 
Taylor series converges for $\|\sigma_P(A)\| < 1$ and diverges for $\|\sigma_P(A)\| > 1$, 
mirroring the classical geometric series.
\end{example}

\begin{example}[Factorial Growth]
\label{ex:factorial-radius}
Suppose $F$ has spectral derivatives satisfying $\|\partial_n^{\mathrm{spec}}F\| = n!$. 
Then $(n!)^{1/n} \to \infty$, so $R_F = 0$. The spectral Taylor series converges 
only for spectrally trivial inputs (i.e., $\|\sigma_P(A)\| = 0$). Such 
functors are nowhere analytic in the spectral sense.
\end{example}

\begin{remark}[Spectral Growth Classification]
\label{rem:spectral-growth-classification}
The above examples illustrate a spectral growth hierarchy analogous to classical analytic function theory. 
The growth rate of $\|\partial_n^{\mathrm{spec}}F\|$ determines a complete classification of spectral analyticity types:

\[
\begin{array}{|c|c|c|}
\hline
\text{Growth of } \|\partial_n^{\mathrm{spec}}F\| & \text{Radius } R_F & \text{Type} \\
\hline
\text{eventually } 0 & \infty & \text{polynomial} \\
1/n! & \infty & \text{entire} \\
\text{constant (e.g., } 1\text{)} & 1 & \text{finite-radius analytic} \\
n! & 0 & \text{non-analytic} \\
\hline
\end{array}
\]

Thus, the spectral radius $R_F = \bigl(\limsup_{n\to\infty} \|\partial_n^{\mathrm{spec}}F\|^{1/n}\bigr)^{-1}$ 
provides a sharp dichotomy: functors are spectrally analytic precisely when $R_F > 0$, and the convergence 
of the spectral Taylor tower is exponentially fast for $\|\sigma_P(A)\| < R_F$.
\end{remark}

\medskip

\begin{example}[Polynomial Functor]
\label{ex:poly-radius}
If $F$ is a spectral polynomial of degree $\le m$, then $\|\partial_n^{\mathrm{spec}}F\| = 0$ 
for all $n > m$. Hence $\limsup_{n \to \infty} \|\partial_n^{\mathrm{spec}}F\|^{1/n} = 0$, 
so $R_F = \infty$. The spectral Taylor expansion truncates exactly, i.e.,
\[
F(A) = P_m^{\mathrm{spec}}F(A),
\]
so the series is finite and converges trivially for all $A$.
\end{example}

\begin{example}[Exponential Functor]
\label{ex:exp-radius}
For $\exp(A) = \sum_{k=0}^\infty A^{\otimes k}/k!$, under the canonical normalization of the
multilinear spectral derivatives, we have $\|\partial_n^{\mathrm{spec}}\exp\| = 1/n!$. 
Since $(1/n!)^{1/n} \to 0$, we obtain $R_{\exp} = \infty$. The exponential functor 
is \emph{entire}—its spectral Taylor series converges for all $A$.
\end{example}

\begin{example}[Geometric Series Functor]
\label{ex:geometric-radius}
Consider $F(A) = \sum_{k=0}^\infty A^{\otimes k}$, defined on the domain 
$\|\sigma_P(A)\| < 1$ (i.e., where the series converges). 
Assume the tensor product is isometric in the sense that 
$\|A_1 \otimes \cdots \otimes A_n\| = \prod_{i=1}^n \|A_i\|$ for all objects 
(e.g., in Banach spaces with the projective tensor norm). 
Then $\|\partial_n^{\mathrm{spec}}F\| = 1$ for all $n$, so $R_F = 1$. The spectral 
Taylor series converges for $\|\sigma_P(A)\| < 1$ and diverges for $\|\sigma_P(A)\| > 1$, 
mirroring the classical geometric series.
\end{example}

\begin{example}[Factorial Growth]
\label{ex:factorial-radius}
Suppose $F$ has spectral derivatives satisfying $\|\partial_n^{\mathrm{spec}}F\| = n!$. 
Then $(n!)^{1/n} \to \infty$, so $R_F = 0$. The spectral Taylor series converges 
only for spectrally trivial inputs (i.e., $\|\sigma_P(A)\| = 0$). Such 
functors are nowhere analytic in the spectral sense.
\end{example}

\begin{remark}[Spectral Growth Classification]
\label{rem:spectral-growth-classification}
The above examples illustrate a spectral growth hierarchy analogous to classical analytic function theory. 
The growth rate of $\|\partial_n^{\mathrm{spec}}F\|$ determines a complete classification of spectral analyticity types:

\[
\begin{array}{|c|c|c|}
\hline
\text{Growth of } \|\partial_n^{\mathrm{spec}}F\| & \text{Radius } R_F & \text{Type} \\
\hline
\text{eventually } 0 & \infty & \text{polynomial} \\
1/n! & \infty & \text{entire} \\
\text{constant (e.g., } 1\text{)} & 1 & \text{finite-radius analytic} \\
n! & 0 & \text{non-analytic} \\
\hline
\end{array}
\]

Thus, the spectral radius $R_F = \bigl(\limsup_{n\to\infty} \|\partial_n^{\mathrm{spec}}F\|^{1/n}\bigr)^{-1}$ 
provides a sharp dichotomy: functors are spectrally analytic precisely when $R_F > 0$, and the convergence 
of the spectral Taylor tower is exponentially fast for $\|\sigma_P(A)\| < R_F$.
\end{remark}

\medskip

\begin{theorem}[Spectral Convergence Rate]
\label{thm:spectral-convergence-rate}
Let $F$ be a spectrally analytic functor with radius of convergence
\[
R_F = \frac{1}{\limsup_{n\to\infty} \|\partial_n^{\mathrm{spec}}F\|^{1/n}} > 0.
\]
Then for any $A$ satisfying $\|\sigma_P(A)\| < R_F$, there exist constants $C > 0$ 
and $\rho \in (0,1)$ such that
\[
\|F(A) - P_n^{\mathrm{spec}}F(A)\| \le C \, \rho^n,
\quad \text{for all } n \ge 0,
\]
where one can take
\[
\rho = \frac{\|\sigma_P(A)\|}{R_F}.
\]
In particular, the spectral Taylor expansion converges exponentially fast 
in the regime $\|\sigma_P(A)\| < R_F$.
\end{theorem}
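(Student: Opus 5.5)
The plan is to reduce the statement to the tail estimate of Corollary~\ref{cor:remainder-bound} and then control that tail with the Cauchy--Hadamard definition of $R_F$, in the same way as Corollary~\ref{cor:geometric-decay}. Fix $A$ with $r:=\|\sigma_P(A)\|<R_F$ and write $L=R_F^{-1}=\limsup_n\|\partial_n^{\mathrm{spec}}F\|^{1/n}$. Since $F$ is spectrally analytic, and we assume (as in Corollary~\ref{cor:remainder-bound}) that the remainder is represented by the tail of the homogeneous layers, we have
\[
\|F(A)-P_n^{\mathrm{spec}}F(A)\| \le \sum_{k=n+1}^{\infty}\|\partial_k^{\mathrm{spec}}F\|\,r^k .
\]
By Theorem~\ref{thm:spectral-radius}(1), the right-hand side is finite for every $n$. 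The case $r=0$ is immediate: every term with $k\ge1$ vanishes, so the remainder is $0$ for $n\ge0$ and any $\rho$ works. Assume from now on that $r>0$.

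Next I choose an intermediate radius. Pick $R'$ with $r<R'<R_F$; when $R_F=\infty$, take any $R'>r$. By the definition of $\limsup$, there is an $N$ such that $\|\partial_k^{\mathrm{spec}}F\|^{1/k}\le 1/R'$ for all $k\ge N$. Hence $\|\partial_k^{\mathrm{spec}}F\|\le C_0 R'^{-k}$ for all $k\ge1$, where
\[
C_0:=\max\Bigl(1,\ \max_{k<N}\|\partial_k^{\mathrm{spec}}F\|\,R'^{k}\Bigr),
\]
which is finite because only finitely many bounded derivatives enter. This is exactly the hypothesis of Corollary~\ref{cor:geometric-decay} with its $\rho$ replaced by $1/R'$, applied to our input with $r/R'<1$. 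That corollary yields
\[
\|F(A)-P_n^{\mathrm{spec}}F(A)\|\le \frac{C_0\,(r/R')^{n+1}}{1-r/R'} .
\]
So the claim holds with $\rho=r/R'\in(0,1)$ and $C=C_0\rho/(1-\rho)$.

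The main obstacle is the stated choice $\rho=\|\sigma_P(A)\|/R_F$ exactly. The $\limsup$ only gives $\|\partial_k^{\mathrm{spec}}F\|\le (L+\varepsilon)^k$ eventually, not $\le C L^k$. For example, coefficients $k\,R_F^{-k}$ produce a tail of order $n\rho^n$, which is not $O(\rho^n)$. I would therefore prove the theorem in the robust form: for every $\rho\in(\|\sigma_P(A)\|/R_F,\,1)$ there is a constant $C=C(\rho,A)$ satisfying the bound. I would then remark that $\rho=\|\sigma_P(A)\|/R_F$ itself is admissible under the additional hypothesis $\sup_k\|\partial_k^{\mathrm{spec}}F\|R_F^k<\infty$, by the same computation with $R'=R_F$. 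In either form the conclusion is exponential convergence throughout the regime $\|\sigma_P(A)\|<R_F$, which is what the theorem asserts.
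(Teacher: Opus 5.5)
This is essentially the paper's argument: the layerwise tail bound, the $\limsup$ giving $\|\partial_k^{\mathrm{spec}}F\|\le (R_F^{-1}+\varepsilon)^k$ for large $k$ (your $1/R'$), and a geometric series, with your $C_0$ absorbing the finitely many initial terms more cleanly than the paper's ``after adjusting for finitely many initial $n$'' (and invoking Corollary~\ref{cor:geometric-decay} when $1/R'\ge 1$ is harmless, since its proof only uses $\rho\|\sigma_P(A)\|<1$). Your caveat is correct and worth keeping: the paper's own proof likewise only produces $\rho=(R_F^{-1}+\varepsilon)\|\sigma_P(A)\|$, which is strictly larger than the advertised $\|\sigma_P(A)\|/R_F$ whenever $\|\sigma_P(A)\|>0$, so the robust form you prove is what is actually established, and the exact choice needs an extra hypothesis such as your $\sup_k\|\partial_k^{\mathrm{spec}}F\|R_F^k<\infty$.
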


\begin{proof}
By the spectral Taylor expansion, the remainder admits the decomposition
\[
F(A) - P_n^{\mathrm{spec}}F(A) = \sum_{k=n+1}^\infty D_k^{\mathrm{spec}}F(A)
= \sum_{k=n+1}^\infty \partial_k^{\mathrm{spec}}F(A,\ldots,A).
\]

Using the multilinearity of $\partial_k^{\mathrm{spec}}F$ and the definition 
of the spectral derivative norm (Definition~\ref{def:spectral-derivative-norm}), 
we obtain the estimate
\[
\|\partial_k^{\mathrm{spec}}F(A,\ldots,A)\|
\le \|\partial_k^{\mathrm{spec}}F\| \cdot \|\sigma_P(A)\|^k.
\]

Hence,
\[
\|F(A) - P_n^{\mathrm{spec}}F(A)\|
\le \sum_{k=n+1}^\infty 
\|\partial_k^{\mathrm{spec}}F\| \cdot \|\sigma_P(A)\|^k.
\]

By the definition of $R_F$, we have $\limsup_{k\to\infty} \|\partial_k^{\mathrm{spec}}F\|^{1/k} = R_F^{-1}$. 
Consequently, for any $\varepsilon > 0$, there exists $N$ such that for all $k \ge N$,
\[
\|\partial_k^{\mathrm{spec}}F\| \le (R_F^{-1} + \varepsilon)^k.
\]

Thus, for sufficiently large $n \ge N$,
\[
\|F(A) - P_n^{\mathrm{spec}}F(A)\|
\le \sum_{k=n+1}^\infty 
\bigl( (R_F^{-1} + \varepsilon)\|\sigma_P(A)\| \bigr)^k.
\]

Since $\|\sigma_P(A)\| < R_F$, we may choose $\varepsilon > 0$ small enough so that
\[
\rho := (R_F^{-1} + \varepsilon)\|\sigma_P(A)\| < 1.
\]

The tail is then bounded by a geometric series:
\[
\sum_{k=n+1}^\infty \rho^k = \frac{\rho^{\,n+1}}{1-\rho}.
\]

Therefore, for all sufficiently large $n$,
\[
\|F(A) - P_n^{\mathrm{spec}}F(A)\|
\le C \rho^{\,n},
\]
where $C = \rho/(1-\rho)$ (after adjusting for finitely many initial $n$). 
This completes the proof.
\end{proof}

\begin{remark}
The above theorem shows that the radius $R_F$ not only determines 
the domain of convergence, but also governs the rate of convergence. 
In particular, the quantity $\|\sigma_P(A)\|/R_F$ plays the role of an 
effective expansion parameter, directly analogous to the classical 
ratio $|z|/R$ in complex analysis.
\end{remark}

\begin{remark}
This result establishes a quantitative refinement of Goodwillie-type 
convergence: while classical Taylor towers provide qualitative convergence, 
the spectral framework yields explicit exponential rates controlled by 
operator norms. This highlights a fundamental distinction between 
homotopy-theoretic and analytic-operadic approaches.
\end{remark}

\medskip

\noindent
With the radius of convergence and its quantitative convergence theorem established, 
we now turn to the algebraic structure of spectral derivatives (Section~\ref{sec:derivatives-algebra}).

\subsection{Quantitative Convergence Theorem}
\label{subsec:convergence-theorem}

We now establish the main analytic result of this work, which provides a precise 
rate of convergence for the spectral Taylor tower. This result shows that, 
within the domain of spectral analyticity, the approximation of a functor by 
its spectral Taylor polynomials is not only valid, but quantitatively controlled 
with exponential decay.

\medskip

\begin{theorem}[Main Theorem B: Quantitative Spectral Convergence]
\label{thm:quantitative-convergence}
Let $F : \mathcal{C} \to \mathcal{M}$ be an admissible functor with spectral 
radius of convergence $R_F > 0$ (Definition~\ref{def:spectral-radius}). 
Fix real numbers
\[
0 < r < s < R_F.
\]
Then there exists a constant $C_s > 0$, depending only on $F$ and $s$, such that for every
$A \in \mathcal{C}$ satisfying $\|\sigma_P(A)\| \le r$, the spectral Taylor remainder satisfies
\[
\bigl\| F(A) - P_n^{\mathrm{spec}}F(A) \bigr\|
\le
C_s \left( \frac{\|\sigma_P(A)\|}{s} \right)^{\!n+1}
\quad \text{for all } n \ge 0.
\]

In particular, uniformly on the spectral ball $\|\sigma_P(A)\| \le r$,
\[
\bigl\| F(A) - P_n^{\mathrm{spec}}F(A) \bigr\|
\le
C_s \left( \frac{r}{s} \right)^{\!n+1},
\]
so the spectral Taylor tower converges exponentially fast to $F$.
\end{theorem}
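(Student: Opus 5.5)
The estimate will come from a Cauchy-type coefficient bound at the intermediate radius $s$, followed by summing a geometric tail. As in the proof of Theorem~\ref{thm:spectral-convergence-rate}, I will work under the standing hypotheses that give the remainder as the tail of homogeneous layers, i.e.\ $F(A)-P_n^{\mathrm{spec}}F(A)\simeq\sum_{k\ge n+1}D_k^{\mathrm{spec}}F(A)$ on the region $\|\sigma_P(A)\|<R_F$. This identification holds under spectral analyticity on that ball, and in the split additive setting it follows from Corollary~\ref{cor:homogeneous-layers} together with Corollary~\ref{cor:remainder-bound}. Given that identification, Corollary~\ref{cor:remainder-bound} and Lemma~\ref{lem:homogeneous-estimate} give
\[
\bigl\|F(A)-P_n^{\mathrm{spec}}F(A)\bigr\|\le\sum_{k=n+1}^{\infty}\|\partial_k^{\mathrm{spec}}F\|\,\|\sigma_P(A)\|^k .
\]

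\textbf{Step 1: uniform coefficient bound.} Since $s<R_F$, we have $s\cdot\limsup_k\|\partial_k^{\mathrm{spec}}F\|^{1/k}<1$. Hence $\|\partial_k^{\mathrm{spec}}F\|\,s^k\to 0$, and in particular
\[
M_s:=\sup_{k\ge0}\|\partial_k^{\mathrm{spec}}F\|\,s^k<\infty .
\]
This is the analogue of the Cauchy estimate $|a_k|\le M_s s^{-k}$. The constant $M_s$ depends only on $F$ and $s$. The only care needed is at finitely many small $k$: those terms are finite because each $\partial_k^{\mathrm{spec}}F$ is bounded, which is implicit in $R_F$ being defined through finite norms.

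\textbf{Step 2: geometric tail.} Put $t=\|\sigma_P(A)\|\le r$. Step 1 gives
\[
\sum_{k\ge n+1}\|\partial_k^{\mathrm{spec}}F\|\,t^k\le M_s\sum_{k\ge n+1}(t/s)^k=\frac{M_s}{1-t/s}\Bigl(\frac{t}{s}\Bigr)^{n+1}\le\frac{M_s}{1-r/s}\Bigl(\frac{t}{s}\Bigr)^{n+1}.
\]
So the first inequality holds with $C_s:=M_s/(1-r/s)$. Because $t\le r$, the uniform form $C_s(r/s)^{n+1}$ follows by monotonicity. Exponential convergence then follows from $r/s<1$.

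\textbf{Main obstacle: the constant and the remainder identification.} The constant produced above, $M_s/(1-r/s)$, depends on $r$ as well as on $s$. The statement asks for dependence only on $F$ and $s$. Since $r<s$ is fixed in advance, this is harmless if $C_s$ is read as depending on the fixed data $(F,r,s)$. If strict $r$-independence is wanted, I would first try to absorb the factor by passing to a slightly larger radius $s'\in(s,R_F)$. Bounding $(t/s')^k$ only moves the difficulty, however, because one still needs $t/s$ in the final estimate. The cleanest honest route is to record the dependence $C_s=M_s\,s/(s-r)$ and note that it stays bounded as long as $r$ remains away from $s$.

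The other delicate point is the identification of the norm of the remainder with the tail sum. That identification requires convergence of the tower to $F$ on the ball, i.e.\ spectral analyticity on radius $R_F$ (Definition~\ref{def:local-spectral-analyticity}). I would state this explicitly as an assumption. Theorem~\ref{thm:spectral-radius}(1) supplies absolute convergence of the series, but it does not by itself identify the limit with $F(A)$.
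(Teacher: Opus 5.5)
Your reduction to the tail bound $\sum_{k\ge n+1}\|\partial_k^{\mathrm{spec}}F\|\,t^k$ with $t=\|\sigma_P(A)\|$ matches the paper's Steps 1--2. Your caveat about the remainder is also fair. The paper's Step 1 likewise invokes spectral analyticity and the split form $P_n^{\mathrm{spec}}F=\sum_{k\le n}D_k^{\mathrm{spec}}F$, even though the statement assumes only admissibility and $R_F>0$.

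The gap is in the constant. The statement requires $C_s$ to depend only on $F$ and $s$. Your argument produces $M_s\,s/(s-r)$, which blows up as $r\to s$, and you then claim $r$-independence cannot be cleanly obtained. It can, with a one-line change. Replacing the coefficients by their supremum $M_s$ throws away the fact that at radius $s<R_F$ they are not merely bounded but summable. By the root test, $\limsup_k(\|\partial_k^{\mathrm{spec}}F\|s^k)^{1/k}=s/R_F<1$, so $C_s:=\sum_{k\ge0}\|\partial_k^{\mathrm{spec}}F\|s^k<\infty$. Then, instead of summing a geometric series in $t/s$, factor out the smallest power termwise: for $k\ge n+1$ and $t\le s$ one has $(t/s)^k\le(t/s)^{n+1}$, so
\[
\sum_{k\ge n+1}\|\partial_k^{\mathrm{spec}}F\|\,t^k=\sum_{k\ge n+1}\bigl(\|\partial_k^{\mathrm{spec}}F\|s^k\bigr)\Bigl(\frac{t}{s}\Bigr)^k\le C_s\Bigl(\frac{t}{s}\Bigr)^{n+1}.
\]
This is exactly the paper's Steps 3--4. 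It gives a constant independent of $r$, and the bound in fact holds on the whole ball $t\le s$.

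Your larger-radius idea also works, contrary to your remark that it only moves the difficulty. Take $s<s'<R_F$ and $M_{s'}=\sup_k\|\partial_k^{\mathrm{spec}}F\|\,s'^k$, which is finite. For $k\ge n+1$,
\[
(t/s')^k=(s/s')^k(t/s)^k\le(s/s')^k(t/s)^{n+1},
\]
and summing over $k$ gives the constant $M_{s'}/(1-s/s')$. This depends only on $F$ and $s$ once $s'$ is fixed as a function of $s$ and $R_F$, e.g.\ $s'=(s+R_F)/2$, or $s'=2s$ if $R_F=\infty$. It is the same summability at $s$ in disguise.
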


\begin{proof}
We prove the theorem in several steps.

\medskip

\noindent
\textit{Step 1: Homogeneous-layer decomposition.}
By spectral analyticity, the functor admits the spectral Taylor expansion
\[
F(A) = \sum_{k=0}^{\infty} D_k^{\mathrm{spec}}F(A),
\]
and the $n$-th spectral Taylor approximation is the truncation
\[
P_n^{\mathrm{spec}}F(A) = \sum_{k=0}^{n} D_k^{\mathrm{spec}}F(A).
\]
Therefore, the remainder is the tail of the series:
\[
F(A) - P_n^{\mathrm{spec}}F(A) = \sum_{k=n+1}^{\infty} D_k^{\mathrm{spec}}F(A).
\]

\medskip

\noindent
\textit{Step 2: Layerwise norm estimate.}
By Lemma~\ref{lem:homogeneous-estimate}, each homogeneous layer satisfies
\[
\| D_k^{\mathrm{spec}}F(A) \|
\le
\|\partial_k^{\mathrm{spec}}F\| \cdot \|\sigma_P(A)\|^k.
\]
Applying the triangle inequality yields
\[
\bigl\| F(A) - P_n^{\mathrm{spec}}F(A) \bigr\|
\le
\sum_{k=n+1}^{\infty}
\|\partial_k^{\mathrm{spec}}F\| \cdot \|\sigma_P(A)\|^k.
\]

\medskip

\noindent
\textit{Step 3: Coefficient bound from the radius of convergence.}
Since $s < R_F$, the definition of the spectral radius (Definition~\ref{def:spectral-radius}) implies
\[
\limsup_{k \to \infty} \|\partial_k^{\mathrm{spec}}F\|^{1/k} = \frac{1}{R_F} < \frac{1}{s}.
\]
Consequently, the power series
\[
\sum_{k=0}^{\infty} \|\partial_k^{\mathrm{spec}}F\| \, s^k
\]
converges. Define the constant
\[
C_s := \sum_{k=0}^{\infty} \|\partial_k^{\mathrm{spec}}F\| \, s^k < \infty.
\]

\medskip

\noindent
\textit{Step 4: Tail estimate via the auxiliary parameter $s$.}
For any $A$ with $\|\sigma_P(A)\| \le r < s$, we rewrite the $k$-th term as
\[
\|\partial_k^{\mathrm{spec}}F\| \cdot \|\sigma_P(A)\|^k
=
\bigl( \|\partial_k^{\mathrm{spec}}F\| \, s^k \bigr)
\left( \frac{\|\sigma_P(A)\|}{s} \right)^{\!k}.
\]

Since $k \ge n+1$ and $\|\sigma_P(A)\|/s < 1$, we have
\[
\left( \frac{\|\sigma_P(A)\|}{s} \right)^{\!k}
\le
\left( \frac{\|\sigma_P(A)\|}{s} \right)^{\!n+1}.
\]

Hence,
\[
\begin{aligned}
\bigl\| F(A) - P_n^{\mathrm{spec}}F(A) \bigr\|
&\le
\sum_{k=n+1}^{\infty}
\bigl( \|\partial_k^{\mathrm{spec}}F\| \, s^k \bigr)
\left( \frac{\|\sigma_P(A)\|}{s} \right)^{\!k} \\
&\le
\left( \frac{\|\sigma_P(A)\|}{s} \right)^{\!n+1}
\sum_{k=n+1}^{\infty}
\|\partial_k^{\mathrm{spec}}F\| \, s^k \\
&\le
C_s \left( \frac{\|\sigma_P(A)\|}{s} \right)^{\!n+1}.
\end{aligned}
\]

\medskip

\noindent
\textit{Step 5: Uniform bound on the spectral ball.}
Since $\|\sigma_P(A)\| \le r$, we obtain the uniform estimate
\[
\bigl\| F(A) - P_n^{\mathrm{spec}}F(A) \bigr\|
\le
C_s \left( \frac{r}{s} \right)^{\!n+1}.
\]

Because $r/s < 1$, this decays exponentially as $n \to \infty$, uniformly on the ball $\|\sigma_P(A)\| \le r$. This completes the proof.
\end{proof}

\medskip

\begin{corollary}[Asymptotic Spectral Convergence at the Radius]
\label{cor:radius-convergence}
Let $F : \mathcal{C} \to \mathcal{M}$ be an admissible functor with spectral 
radius of convergence $R_F > 0$ (Definition~\ref{def:spectral-radius}). 
Then for every $\varepsilon > 0$, there exists a constant $C_\varepsilon > 0$ 
such that for all $A \in \mathcal{C}$ satisfying $\|\sigma_P(A)\| < R_F$, we have
\[
\bigl\| F(A) - P_n^{\mathrm{spec}}F(A) \bigr\|
\;\le\;
C_\varepsilon 
\left( \frac{\|\sigma_P(A)\|}{R_F - \varepsilon} \right)^{\!n+1}
\quad \text{for all } n \ge 0.
\]
In particular, the spectral Taylor expansion converges exponentially with rate 
arbitrarily close to $\|\sigma_P(A)\|/R_F$.
\end{corollary}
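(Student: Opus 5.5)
Throughout I take $0<\varepsilon<R_F$, so that $s:=R_F-\varepsilon>0$; for $\varepsilon\ge R_F$ the right-hand side is not meaningful. Write $a_k:=\|\partial_k^{\mathrm{spec}}F\|$ and $t:=\|\sigma_P(A)\|<R_F$. As in Steps 1–2 of the proof of Theorem~\ref{thm:quantitative-convergence}, the layer estimate Lemma~\ref{lem:homogeneous-estimate} and the tail decomposition reduce everything to the scalar inequality
\[
T_n(t):=\sum_{k=n+1}^{\infty} a_k t^k \;\le\; C_\varepsilon\Big(\frac{t}{s}\Big)^{n+1},
\]
which is to hold for all $t\in[0,R_F)$ and all $n\ge 0$. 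Since $t<R_F$, Cauchy--Hadamard (Definition~\ref{def:spectral-radius}) makes $T_n(t)$ finite. I would split the range of $t$ at $s$.

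\emph{Inner region, $t\le s$.} Here I would copy Step~4 of Theorem~\ref{thm:quantitative-convergence} with the auxiliary radius $s$. Write $a_k t^k=(a_k s^k)(t/s)^k$ and use $(t/s)^k\le (t/s)^{n+1}$ for $k\ge n+1$. This gives the bound with $C^{(1)}:=\sum_k a_k s^k$, which is finite because $s<R_F$.

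\emph{Annulus, $s<t<R_F$.} Here $(t/s)^{n+1}\ge 1$ and it grows in $n$, so it would suffice to bound $T_n(t)\le T_0(t)$ uniformly in $t$. I would first try to absorb the growth of $(t/s)^{n+1}$. Fix $s<s'<R_F$, so that $a_k\le M s'^{-k}$, and compare termwise: $a_k t^k \le M(t/s')^k$. This is summable only when $t<s'$.

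\emph{Main obstacle.} The annulus is where I expect the argument to stall, and in fact to fail in general. For the geometric series functor of Example~\ref{ex:geometric-radius}, with $R_F=1$ and $n=0$, one has $T_0(t)=t/(1-t)\to\infty$ as $t\to 1^-$. The right-hand side, however, stays bounded by $C_\varepsilon/(1-\varepsilon)$. So no single constant $C_\varepsilon$ can work on the whole open ball $\|\sigma_P(A)\|<R_F$.

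What I would therefore actually prove is the statement with $C_\varepsilon$ allowed to depend on $A$ through $t$. On the annulus I would set
\[
C^{(2)}(t):=T_0(t)=\sum_{k\ge 1}a_k t^k<\infty,
\]
and use $T_n(t)\le C^{(2)}(t)\le C^{(2)}(t)(t/s)^{n+1}$. Combined with the inner region, this gives
\[
\|F(A)-P_n^{\mathrm{spec}}F(A)\|\le \max\big(C^{(1)},C^{(2)}(t)\big)\Big(\frac{t}{s}\Big)^{n+1}
\]
for every $A$ with $t<R_F$. This bound is uniform in $A$ on every closed sub-ball $t\le r<R_F$, and it yields the asserted asymptotic rate arbitrarily close to $t/R_F$ by letting $\varepsilon\to 0$.

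I would state explicitly in the write-up that full uniformity of $C_\varepsilon$ on the open ball, as the corollary literally asserts, is false without further hypotheses; the geometric series functor above is a counterexample.
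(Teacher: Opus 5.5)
Your diagnosis is correct: the corollary as stated is false, and your proof of the corrected version is sound. On the provable part your argument is the paper's own. The paper sets $s=R_F-\varepsilon$ and invokes Theorem~\ref{thm:quantitative-convergence}, whose constant $C_s=\sum_k\|\partial_k^{\mathrm{spec}}F\|\,s^k$ is exactly your $C^{(1)}$.

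For the annulus $R_F-\varepsilon\le\|\sigma_P(A)\|<R_F$, the paper only says one may ``increase $\varepsilon$ or apply the theorem with a larger $s$''. The first option is not available once $\varepsilon$ is fixed, and it would in fact enlarge the bad region. The second produces a constant $C_{s'}$ with $s'\in(\|\sigma_P(A)\|,R_F)$, which depends on $A$ and blows up as $\|\sigma_P(A)\|\to R_F$. That is the same phenomenon as your $C^{(2)}(t)=T_0(t)$. So what the paper's argument actually establishes is your corrected statement: a bound uniform on closed sub-balls $\|\sigma_P(A)\|\le r<R_F$, with pointwise rate arbitrarily close to $\|\sigma_P(A)\|/R_F$. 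The remark immediately after the corollary, that uniform estimates require working strictly inside the radius, concedes this.

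Two tightenings are needed.

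\emph{The counterexample must bound the remainder, not the majorant.} As written, you only show that the majorant $T_0(t)$ is unbounded. That by itself does not refute a bound on $\|F(A)-P_n^{\mathrm{spec}}F(A)\|$; you need a lower bound on the remainder itself. Use the equality $\|F(A)-P_n^{\mathrm{spec}}F(A)\|=\|\sigma_P(A)\|^{n+1}/(1-\|\sigma_P(A)\|)$ asserted in Remark~\ref{rem:optimality}. Alternatively, take the scalar function $1/(1-z)$ on $(0,1)$, where the remainder is exactly $z^{n+1}/(1-z)$. In either case, for every fixed $n$ the ratio of the left side to $C_\varepsilon\bigl(t/(1-\varepsilon)\bigr)^{n+1}$ equals $(1-\varepsilon)^{n+1}/\bigl(C_\varepsilon(1-t)\bigr)$, which tends to $\infty$ as $t\to 1^-$.

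\emph{You also need $R_F<\infty$, not just $\varepsilon<R_F$.} Definition~\ref{def:spectral-radius} allows $R_F=\infty$. In that case the right-hand side is identically $0$, so the statement fails for $\exp$, and your $s$ and $C^{(1)}$ are undefined. There, replace $R_F-\varepsilon$ by an arbitrary finite $s$; your two-region argument then goes through unchanged, since $T_0(t)<\infty$ for every $t$.
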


\begin{proof}
Fix $\varepsilon > 0$ and set $s = R_F - \varepsilon$. Then $s < R_F$, so by 
Theorem~\ref{thm:quantitative-convergence}, there exists a constant $C_s > 0$ such that for 
all $A$ with $\|\sigma_P(A)\| < s$ (and in particular for any $A$ with 
$\|\sigma_P(A)\| \le r < s$),
\[
\bigl\| F(A) - P_n^{\mathrm{spec}}F(A) \bigr\|
\le
C_s \left( \frac{\|\sigma_P(A)\|}{s} \right)^{\!n+1}.
\]

Since $\|\sigma_P(A)\| < R_F$, we may choose $\varepsilon$ sufficiently small (or, more 
precisely, work with the given $\varepsilon$ directly) so that $\|\sigma_P(A)\| < s$ 
holds for all $A$ under consideration. Substituting $s = R_F - \varepsilon$ yields the 
desired bound with $C_\varepsilon := C_s$.

For $A$ such that $\|\sigma_P(A)\|$ is arbitrarily close to $R_F$, we simply note that 
the estimate holds with the same $C_\varepsilon$ as long as $\|\sigma_P(A)\| < R_F - \varepsilon$. 
If $\|\sigma_P(A)\| \ge R_F - \varepsilon$, we may either increase $\varepsilon$ or apply 
the theorem with a larger $s$; the constant $C_\varepsilon$ may depend on $\varepsilon$ 
but remains finite for each fixed $\varepsilon$. This completes the proof.
\end{proof}

\begin{remark}
The above corollary shows that while the radius $R_F$ governs the asymptotic 
rate of convergence, uniform estimates require working strictly inside the 
radius. This mirrors classical analytic function theory, where boundary behavior 
may fail to be uniformly controlled. In particular, for any fixed $A$ with 
$\|\sigma_P(A)\| < R_F$, one may take $\varepsilon = (R_F - \|\sigma_P(A)\|)/2$ to obtain 
a bound with base $\|\sigma_P(A)\|/(R_F + \|\sigma_P(A)\|)/2$, which is strictly less 
than $\|\sigma_P(A)\|/R_F$ but still exponential.
\end{remark}

\medskip

Theorem~\ref{thm:quantitative-convergence} establishes that Spectral Operadic Calculus admits a fully quantitative approximation theory. In this framework, the operadic spectrum $\sigma_P(A)$ functions as the analytic expansion parameter, while the spectral derivatives $\partial_n^{\mathrm{spec}}F$ govern the rate of convergence. Consequently, the spectral Taylor tower provides an effective and rapidly converging approximation scheme, with explicit exponential error bounds controlled by $|\sigma_P(A)|$.

\begin{example}[Exponential Functor (Entire)]
\label{ex:exp-convergence}
For
\[
\exp(A)=\sum_{k=0}^{\infty}\frac{A^{\otimes k}}{k!},
\]
under the canonical normalization of the spectral derivatives, one has
\[
\|\partial_k^{\mathrm{spec}}\exp\|=\frac{1}{k!}.
\]
Hence $R_{\exp}=\infty$. Therefore Theorem~\ref{thm:quantitative-convergence}
applies on every spectral ball $\|\sigma_P(A)\|\le r<\infty$. Moreover,
\[
\begin{aligned}
\|\exp(A)-P_n^{\mathrm{spec}}\exp(A)\|
&\le
\sum_{k=n+1}^{\infty}\frac{\|\sigma_P(A)\|^k}{k!}  \\
&\le
\frac{\|\sigma_P(A)\|^{n+1}}{(n+1)!}
e^{\|\sigma_P(A)\|}.
\end{aligned}
\]
Thus the remainder decays factorially fast in $n$, and in particular faster than
any fixed geometric rate for fixed $A$, consistent with the entire spectral
analyticity of $\exp$.
\end{example}

\begin{example}[Geometric Series Functor]
\label{ex:geometric-convergence}
For
\[
F(A)=\sum_{k=0}^{\infty}A^{\otimes k},
\]
assume the tensor product is isometric so that $\|\partial_k^{\mathrm{spec}}F\|=1$ 
(as in Example~\ref{ex:geometric-radius}). Then $R_F=1$. For
$\|\sigma_P(A)\|<1$, the spectral Taylor remainder satisfies
\[
\begin{aligned}
\|F(A)-P_n^{\mathrm{spec}}F(A)\|
&\le
\sum_{k=n+1}^{\infty}\|\sigma_P(A)\|^k  \\
&=
\frac{\|\sigma_P(A)\|^{n+1}}{1-\|\sigma_P(A)\|}.
\end{aligned}
\]
In particular, uniformly for $\|\sigma_P(A)\|\le r<1$,
\[
\|F(A)-P_n^{\mathrm{spec}}F(A)\|
\le
\frac{r^{n+1}}{1-r}.
\]
Thus the convergence is exponential with base controlled by
$\|\sigma_P(A)\|$, matching the classical geometric series.
\end{example}

\begin{example}[Polynomial Functor]
\label{ex:poly-convergence}
If $F$ is a spectral polynomial of degree $\le m$, then
$\partial_k^{\mathrm{spec}}F=0$ for all $k>m$ (see Example~\ref{ex:poly-radius}), 
and hence $R_F=\infty$. Moreover,
\[
P_n^{\mathrm{spec}}F=F
\quad \text{for all } n\ge m.
\]
Thus the spectral Taylor tower terminates after finitely many steps, so the
remainder is exactly zero for all $n\ge m$.
\end{example}

\medskip

\begin{remark}[Optimality of the convergence rate]
\label{rem:optimality}
The exponential convergence rate in Theorem~\ref{thm:quantitative-convergence} 
is asymptotically optimal. Indeed, by definition of the radius,
\[
R_F^{-1} = \limsup_{n\to\infty} \|\partial_n^{\mathrm{spec}}F\|^{1/n},
\]
so no strictly smaller exponential rate can hold uniformly without additional 
assumptions on $F$.

This optimality is realized by the geometric series functor, for which
\[
\|F(A)-P_n^{\mathrm{spec}}F(A)\|
=
\frac{\|\sigma_P(A)\|^{n+1}}{1-\|\sigma_P(A)\|},
\]
showing that the decay rate is precisely governed by $\|\sigma_P(A)\|^n$ up to constants.
\end{remark}

\begin{remark}[Comparison with Goodwillie calculus]
\label{rem:goodwillie-comparison}
In classical Goodwillie calculus, convergence of the Taylor tower is a qualitative 
property, typically controlled by connectivity estimates, and does not admit 
explicit norm-based convergence rates. 

In contrast, Spectral Operadic Calculus provides quantitative exponential 
convergence bounds of the form
\[
\|F(A)-P_n^{\mathrm{spec}}F(A)\|
\le
C
\left(\frac{\|\sigma_P(A)\|}{s}\right)^n,
\quad s<R_F,
\]
with explicit dependence on the spectral size $\|\sigma_P(A)\|$. This reflects a 
fundamental distinction between homotopy-theoretic and analytic-operadic frameworks.
\end{remark}

\begin{remark}[Approximation complexity]
\label{rem:approximation}
The exponential convergence immediately yields complexity estimates for 
approximation. To achieve an error tolerance $\varepsilon>0$, it suffices to take
\[
n \;\ge\; \frac{\log(C/\varepsilon)}{\log(s/\|\sigma_P(A)\|)},
\]
for any $s$ satisfying $\|\sigma_P(A)\|<s<R_F$. 

Thus, the number of required Taylor terms grows only logarithmically in 
$1/\varepsilon$, which is characteristic of exponentially convergent schemes.
\end{remark}

\begin{remark}[Analytic structure and entire functors]
\label{rem:analytic-structure}
If the spectral derivatives decay exponentially, then the spectral Taylor 
expansion converges uniformly on compact spectral domains, giving rise to a 
holomorphic-type functional calculus governed by the operadic spectrum.

A notable example is the exponential functor $\exp(A)$, which satisfies 
$R_{\exp}=\infty$ and exhibits factorial decay of the remainder. While such 
behavior does not arise within the convergence framework of classical Goodwillie 
calculus (i.e., $\exp$ does not admit convergence in the Goodwillie sense), 
it is naturally captured in the spectral setting, where analyticity is 
controlled by $\|\sigma_P(A)\|$ rather than homotopy invariants.
\end{remark}

\medskip

With the quantitative convergence theorem established, we have completed the 
analytic core of Spectral Operadic Calculus. This quantitative estimate has no 
direct analogue in classical Goodwillie calculus, where convergence is typically 
not accompanied by explicit rates. The remaining sections develop the algebraic 
structure of spectral derivatives (Section~\ref{sec:derivatives-algebra}), 
prove the operadic chain rule (Section~\ref{sec:chain-rule}), establish the 
reconstruction theorem (Section~\ref{sec:reconstruction}), and explore the 
geometric and moduli-theoretic implications (Sections~\ref{sec:moduli} 
and~\ref{sec:examples}).

\section{Spectral Derivatives and Algebraic Structure}
\label{sec:derivatives-algebra}

We now turn to the internal structure of the spectral Taylor expansion. 
While the previous sections established the existence, universality, and convergence 
of the spectral Taylor tower, the present section investigates the algebraic nature 
of its building blocks, namely the spectral derivatives.

\medskip

\noindent
\textbf{Guiding principle.}
In classical analysis, derivatives appear as scalar coefficients of a power series. 
In contrast, in Spectral Operadic Calculus, derivatives are inherently higher-order 
multilinear objects arising from cross-effects. As such, they carry nontrivial 
symmetry and compositional structure, reflecting the operadic nature of the theory.

\medskip

\noindent
\textbf{The dual nature of spectral derivatives.}
Recall from Section~\ref{subsec:tower-construction} that the homogeneous layers are 
defined by
\[
D_n^{\mathrm{spec}}F(A) := \mathrm{cr}_nF(A,\dots,A) \otimes_{P^{\otimes n}} (\mathcal{O}_P^{\mathrm{res}})^{\otimes n},
\]
and that the spectral Taylor tower decomposes as
\[
P_n^{\mathrm{spec}}F(A) \cong \bigoplus_{k=0}^{n} D_k^{\mathrm{spec}}F(A).
\]
Thus, the spectral derivatives $\partial_n^{\mathrm{spec}}F$—defined as the multilinear 
functors underlying these homogeneous layers—play a role exactly analogous to the 
Taylor coefficients in classical analysis. However, in the operadic setting, these 
derivatives carry additional algebraic structure that is not present in classical 
calculus.

\medskip

\noindent
More precisely, we show that the collection of spectral derivatives 
\[
\{\partial_n^{\mathrm{spec}}F\}_{n \geq 0}
\]
forms a structured object analogous to a \emph{symmetric sequence}, and, under suitable 
compatibility conditions, acquires the structure of a module over the underlying operad $P$. 
In the strongest case, when $F$ preserves operadic composition, $\partial^{\mathrm{spec}}F$ 
becomes an operad in its own right. This reveals that the spectral Taylor expansion 
is not merely a formal series, but is governed by an intrinsic algebraic organization.

\medskip

\noindent
\textbf{Why algebraic structure matters.}
The presence of this algebraic structure is not a mere curiosity; it is essential for 
the coherence of the calculus. The chain rule (Theorem~\ref{thm:chain-rule}) expresses 
the composition of functors in terms of operadic plethysm of their derivative sequences, 
and the reconstruction theorem (Theorem~\ref{thm:reconstruction}) shows that a 
spectrally analytic functor is completely determined by its derivative data together 
with this algebraic composition law. Without recognizing the algebraic nature of 
derivatives, these results would remain opaque.

\medskip

\noindent
\textbf{Conceptual consequence.}
This perspective upgrades the role of derivatives from numerical coefficients 
to algebraic data encoding higher-order interactions. In particular, the spectral 
Taylor tower may be viewed as being determined by this algebraic structure together 
with the operadic composition laws.

\medskip

\noindent
\textbf{Duality between analytic and algebraic structures.}
The results of this section therefore complete the structural picture of Spectral 
Operadic Calculus: analytic behavior is controlled by the operadic spectrum, 
while the algebraic organization of derivatives reflects the underlying operadic 
symmetries. This duality between analytic control and algebraic structure is a 
central feature of the theory. Spectral derivatives thus provide a system of 
higher-order coordinates for functors, analogous to jets in differential geometry, 
but organized by operadic composition rather than linear structure.

\medskip

\noindent
\textbf{Relation to Part I.}
The algebraic structure we uncover depends crucially on the operadic residue 
$\mathcal{O}_P^{\mathrm{res}}$ introduced in Part~I. The $P$-module structure uses 
the composition maps of $P$, which are encoded in the residue via the coend formula 
$\mathcal{O}_P^{\mathrm{res}} \cong \int^{c \in C} P(c;c)$. Without the residue 
correction, the spectral derivatives would not inherit these algebraic properties, 
as demonstrated by the No-Go Theorem (Part~I, Theorem 0.1). Thus, the algebraic 
structure of derivatives is a direct manifestation of the operadic correction 
that makes spectral invariants functorial.

\medskip

\noindent
\textbf{Forward reference.}
The algebraic structures developed in this section are essential for the chain rule 
(Section~\ref{sec:chain-rule}) and the reconstruction theorem (Section~\ref{sec:reconstruction}). 
In particular, the operadic plethysm that appears in the chain rule is precisely the 
composition law that makes the derivative sequence an algebra over the operad of 
symmetric sequences.

\medskip

\noindent
\textbf{Overview of the section.}
We proceed in three stages. First, we identify the spectral derivatives $\partial_n^{\mathrm{spec}}F$ 
with the cross-effects $\mathrm{cr}_nF$ (Proposition~\ref{prop:derivative-cross-effect}) and 
establish their basic properties. This connects the analytic notion of derivative 
to the combinatorial notion of cross-effects, making the derivatives computable 
in practice.

Second, we show that the collection $\{\partial_n^{\mathrm{spec}}F\}_{n\ge 1}$ forms a 
\emph{symmetric sequence} in $\mathcal{M}$ (Theorem~\ref{thm:symmetric-sequence}). Each 
$\partial_n^{\mathrm{spec}}F$ carries a natural action of the symmetric group $\Sigma_n$ 
by permuting inputs, and these actions are compatible with the operadic structure. 
This is the minimal algebraic structure present for any admissible functor.

Third, under the additional hypothesis that $F$ is compatible with the operadic 
structure of $P$ (e.g., when $F$ takes values in $P$-algebras or is itself an 
operad functor), we prove that the symmetric sequence $\partial^{\mathrm{spec}}F$ 
carries the structure of a \emph{right $P$-module} (Theorem~\ref{thm:module-structure}). 
In the strongest case, when $F$ preserves operadic composition, $\partial^{\mathrm{spec}}F$ 
becomes an operad, with composition induced by the chain rule.

\medskip

We now begin with the identification of spectral derivatives as homogeneous layers.

\subsection{Spectral Derivatives as Homogeneous Layers}
\label{subsec:derivatives}

We now identify the fundamental building blocks of the spectral Taylor tower, 
namely the homogeneous layers and their associated derivatives. These objects 
encode the pure $n$-th order behavior of a functor and serve as the operadic 
analogue of higher derivatives in classical analysis.

\medskip

\begin{definition}[Spectral Derivatives and Homogeneous Layers]
\label{def:spectral-derivatives}
Let $F : \mathcal{C} \to \mathcal{M}$ be an admissible functor.

\begin{enumerate}
    \item The \emph{$n$-th spectral derivative} of $F$ is the multilinear functor
    \[
    \partial_n^{\mathrm{spec}}F : \mathcal{C}^n \to \mathcal{M},
    \]
    defined as the $n$-th cross-effect:
    \[
    \partial_n^{\mathrm{spec}}F := \mathrm{cr}_n F,
    \]
    which is multilinear with respect to the operadic tensor structure of $\mathcal{M}$.

    \item The \emph{$n$-th homogeneous layer} of $F$ is the functor
    \[
    D_n^{\mathrm{spec}}F : \mathcal{C} \to \mathcal{M},
    \]
    obtained by evaluating the spectral derivative along the diagonal and 
    symmetrizing under the natural action of the symmetric group $S_n$:
    \[
    D_n^{\mathrm{spec}}F(A)
    :=
    \frac{1}{n!}
    \sum_{\pi \in S_n}
    \partial_n^{\mathrm{spec}}F(A_{\pi(1)},\dots,A_{\pi(n)}).
    \]
\end{enumerate}
\end{definition}

\begin{lemma}[Homogeneity of Spectral Layers]
\label{lem:homogeneity-layers}
Each homogeneous layer $D_n^{\mathrm{spec}}F$ is homogeneous of degree $n$ in the sense that
\[
D_n^{\mathrm{spec}}F(\lambda A) = \lambda^n D_n^{\mathrm{spec}}F(A)
\]
for any scalar $\lambda \in \mathbb{C}$ and any $P$-algebra $A$.
\end{lemma}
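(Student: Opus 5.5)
The plan is to unwind Definition~\ref{def:spectral-derivatives} and reduce the claim to the $n$-linearity of $\partial_n^{\mathrm{spec}}F=\mathrm{cr}_nF$. This is the same mechanism already used in Lemma~\ref{lem:scaling-homogeneous}; only the bookkeeping for the symmetrization differs. By definition, $D_n^{\mathrm{spec}}F(A)$ is the normalized symmetrization $\frac{1}{n!}\sum_{\pi\in S_n}\partial_n^{\mathrm{spec}}F(A_{\pi(1)},\dots,A_{\pi(n)})$, evaluated on the diagonal $A_1=\cdots=A_n=A$. Substituting $\lambda A$ for $A$ therefore replaces every argument in every summand by $\lambda A$.

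The first step is to record, for a single summand, that
\[
\partial_n^{\mathrm{spec}}F(\lambda A,\dots,\lambda A)=\lambda^n\,\partial_n^{\mathrm{spec}}F(A,\dots,A).
\]
I would derive this by applying linearity in each slot separately, pulling out one factor of $\lambda$ per slot. Alternatively, one can invoke directly the multilinearity hypothesis recorded in Theorem~\ref{thm:spectral-excision}(2), namely $\mathrm{cr}_nF(tA_1,\dots,tA_n)=t^n\,\mathrm{cr}_nF(A_1,\dots,A_n)$, with $t=\lambda$. Since permuting the inputs does not change the multiset of scalars, each of the $n!$ permuted summands acquires exactly the same factor $\lambda^n$.

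The second step uses the additive (Banach-enriched, $\mathbb{C}$-linear) structure of $\mathcal{M}$ assumed throughout Section~\ref{sec:convergence}. In that setting, finite sums and the scalar $\frac{1}{n!}$ commute with multiplication by $\lambda^n$, so the factor can be pulled out of the symmetrized sum, which gives $D_n^{\mathrm{spec}}F(\lambda A)=\lambda^n D_n^{\mathrm{spec}}F(A)$. The residue factor $(\mathcal{O}_P^{\mathrm{res}})^{\otimes n}$, if one uses the spectralized form of the derivative, is untouched by the scaling. This is because $-\otimes_{P^{\otimes n}}(\mathcal{O}_P^{\mathrm{res}})^{\otimes n}$ is a linear functor, so the same factor passes through it.

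The main obstacle is justifying the multilinearity itself. Cross-effects are defined as total homotopy fibers, and they are not literally linear in each variable for an arbitrary functor. Here I would rely on the standing convention of Definition~\ref{def:spectral-derivatives}, that $\partial_n^{\mathrm{spec}}F$ is multilinear with respect to the operadic tensor structure, together with the compatibility of scalar multiplication on objects assumed in Lemma~\ref{lem:scaling-homogeneous}. The scaled algebra $\lambda A$ is understood via the functional calculus of Lemma~\ref{lem:spectral-scaling}. For $\lambda=0$ the identity reduces to multi-reducedness of cross-effects, since $\partial_n^{\mathrm{spec}}F$ vanishes when an input is the zero $P$-algebra, so no separate argument is needed there.
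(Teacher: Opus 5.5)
Your proposal is correct and is essentially the paper's own argument: multilinearity of $\partial_n^{\mathrm{spec}}F$ pulls a factor $\lambda^n$ out of each diagonal summand, and the normalized finite symmetrization preserves that factor. Your side remarks are harmless but not needed. Multilinearity already covers $\lambda=0$ without identifying $0\cdot A$ with the zero $P$-algebra, which the structure-map scaling of Lemma~\ref{lem:spectral-scaling} does not obviously give, and the scaling hypothesis in Theorem~\ref{thm:spectral-excision} is local to that theorem rather than a standing convention.
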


\begin{proof}
Since $\partial_n^{\mathrm{spec}}F$ is multilinear, we have
\[
\partial_n^{\mathrm{spec}}F(\lambda A,\dots,\lambda A) = \lambda^n \partial_n^{\mathrm{spec}}F(A,\dots,A).
\]
Symmetrization preserves this scaling property, yielding the claim.
\end{proof}

\medskip

\begin{remark}[Interpretation]
\label{rem:derivative-interpretation}
The spectral derivative $\partial_n^{\mathrm{spec}}F$ captures the full multilinear
interaction of order $n$, while the homogeneous layer $D_n^{\mathrm{spec}}F$
is obtained by restricting this interaction to repeated inputs. Thus,
$D_n^{\mathrm{spec}}F$ plays the role of a homogeneous polynomial of degree $n$
in the spectral expansion.
\end{remark}

\begin{proposition}[Computability of Spectral Derivatives]
\label{prop:derivative-cross-effect}
Let $F$ be an admissible functor. Then the $n$-th spectral derivative is given by
the $n$-th cross-effect:
\[
\partial_n^{\mathrm{spec}}F = \mathrm{cr}_n F.
\]
Consequently, the homogeneous layer is explicitly computed by diagonal evaluation:
\[
D_n^{\mathrm{spec}}F(A)
=
\bigl(\mathrm{cr}_nF(A,\ldots,A)\bigr)_{\mathrm{sym}}.
\]
\end{proposition}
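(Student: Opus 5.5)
The plan is to read Proposition~\ref{prop:derivative-cross-effect} against Definition~\ref{def:spectral-derivatives}. There the $n$-th spectral derivative is \emph{defined} as $\mathrm{cr}_nF$, and the homogeneous layer as the $S_n$-symmetrization of that derivative along the diagonal. Read this way, the first assertion is an identification by definition, and the second is obtained by substituting it into the formula for $D_n^{\mathrm{spec}}F$. The content to check is consistency. This definition must agree, under the residue normalization, with the earlier spectralized version $\mathrm{cr}_nF\otimes_{P^{\otimes n}}(\mathcal{O}_P^{\mathrm{res}})^{\otimes n}$ from Definition~\ref{def:spectral-derivative-norm}. The symmetrization must also be well defined.

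\textbf{Step 1.} Fix the identification of the two descriptions. The unit maps $P(c;c)\to\mathcal{O}_P^{\mathrm{res}}$ from Theorem~\ref{thm:residue-universality}, together with the evaluation maps discussed after Corollary~\ref{cor:tower-base-change-universal}, give a canonical comparison
\[
\mathrm{cr}_nF(A_1,\dots,A_n)\otimes_{P^{\otimes n}}(\mathcal{O}_P^{\mathrm{res}})^{\otimes n}\longrightarrow \mathrm{cr}_nF(A_1,\dots,A_n).
\]
Under the canonical residue normalization used throughout Section~\ref{sec:tower}, I would take this map to be an isomorphism. In the case $P=\mathbb{I}$ this is exactly $\mathcal{O}_{\mathbb I}^{\mathrm{res}}\cong\mathbf 1_{\mathcal M}$, so the two definitions of $\partial_n^{\mathrm{spec}}F$ coincide and the equality $\partial_n^{\mathrm{spec}}F=\mathrm{cr}_nF$ holds.

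\textbf{Step 2.} Establish the symmetric structure. By the symmetry property of cross-effects listed after Definition~\ref{def:cross-effects}, the total homotopy fiber of the cube $\mathcal{X}_{A_1,\dots,A_n}$ is natural under permuting the indexing set $\{1,\dots,n\}$. This gives an $S_n$-action on $\mathrm{cr}_nF(A,\dots,A)$. Since $\mathcal{M}$ is $\mathbb{C}$-linear and additive, the averaging operator $\frac{1}{n!}\sum_{\pi}\pi$ makes sense. Its image is the symmetric part $(\cdot)_{\mathrm{sym}}$, which in characteristic zero models the homotopy coinvariants $(\cdot)_{h\Sigma_n}$. Substituting Step 1 into Definition~\ref{def:spectral-derivatives}(2) then gives $D_n^{\mathrm{spec}}F(A)=(\mathrm{cr}_nF(A,\dots,A))_{\mathrm{sym}}$. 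Lemma~\ref{lem:homogeneity-layers} confirms that the result is homogeneous of degree $n$.

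\textbf{Main obstacle.} The delicate point is Step 1: showing that tensoring with the residue over $P^{\otimes n}$ does not change the cross-effect. I would argue it first by the universal property. Each $\mathrm{cr}_nF(A_1,\dots,A_n)$ carries compatible unary $P(c;c)$-actions from the $P$-algebra structure of its inputs, which makes it a spectral corrector receiving the $\rho_c$. Theorem~\ref{thm:residue-universality} then yields the evaluation map, and unitality of the unary action shows that the map is inverse to the unit inclusion. If this fails in general, I would restrict the statement to the normalized setting, as the paper's examples do.
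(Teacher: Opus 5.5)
Your load-bearing argument is the same as the paper's. The paper's proof consists only of the observation that Definition~\ref{def:spectral-derivatives} \emph{defines} $\partial_n^{\mathrm{spec}}F$ to be $\mathrm{cr}_nF$, plus an informal gloss via Theorem~\ref{thm:spectral-excision}; the layer formula is then read off by substitution. Your Step~2 is a small but genuine improvement. Read literally, Definition~\ref{def:spectral-derivatives}(2) averages $\partial_n^{\mathrm{spec}}F$ over permutations of identical inputs. Your reading of $(\cdot)_{\mathrm{sym}}$ as the image of the idempotent $\frac{1}{n!}\sum_{\pi}\pi$ for the $S_n$-action on the single object $\mathrm{cr}_nF(A,\dots,A)$ is what makes the formula meaningful.

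Step~1, by contrast, is not needed for the statement and does not hold up as argued, so it should not be billed as the main obstacle. It is fine when $P=\mathbb{I}$, since $\mathcal{O}_{\mathbb I}^{\mathrm{res}}\cong\mathbf 1_{\mathcal M}$, but the general argument fails for two reasons.

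\emph{No unary action.} In general $\mathrm{cr}_nF(A_1,\dots,A_n)$ carries no $P(c;c)$-action. $F$ is only a functor on $\mathsf{Alg}_P(\mathcal{M})$. A unary operation acts on $A_i$ by an $\mathcal{M}$-morphism that is usually not a $P$-algebra map, since it would have to commute with every operation of $P$. So $F$ cannot be applied to it.

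\emph{Wrong direction for the universal property.} Theorem~\ref{thm:residue-universality} only produces morphisms \emph{out of} $\mathcal{O}_P^{\mathrm{res}}$ into an object $R$ that receives maps $\rho_c:P(c;c)\to R$. The cross-effect itself is not such an object; at best $\underline{\mathrm{End}}(\mathrm{cr}_nF(\cdots))$ would be. Initiality in $\mathbf{Corr}_P$ therefore gives no evaluation map $\mathrm{cr}_nF\otimes_{P^{\otimes n}}(\mathcal{O}_P^{\mathrm{res}})^{\otimes n}\to\mathrm{cr}_nF$, let alone an inverse to a unit inclusion.

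An argument this general would also make the residue factor invisible wherever it appears, which is not what the paper intends. The paper reconciles Definition~\ref{def:spectral-derivative-norm} with Definition~\ref{def:spectral-derivatives} only by fiat (``up to the canonical residue normalization''). Your fallback is therefore the right formulation: state the identification as a normalization convention and drop the universal-property derivation.
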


\begin{proof}
The assertion follows directly from Definition~\ref{def:spectral-derivatives}, 
where the spectral derivative is defined as the $n$-th cross-effect. 
The cross-effect $\mathrm{cr}_nF$ is the universal multilinear functor 
measuring the $n$-fold interaction of $F$, equivalently the obstruction 
to being a polynomial of degree at most $n-1$.

By Theorem~\ref{thm:spectral-excision}, spectral polynomiality is characterized 
by the vanishing of higher cross-effects. Hence $\mathrm{cr}_nF$ isolates exactly 
the $n$-th order contribution of $F$ in the spectral Taylor tower. Evaluating 
this multilinear functor on the diagonal and symmetrizing gives the associated 
homogeneous layer.
\end{proof}
\medskip

\begin{remark}[Conceptual meaning of spectral derivatives]
\label{rem:conceptual-derivative}
The identification $\partial_n^{\mathrm{spec}}F = \mathrm{cr}_nF$ shows that 
spectral derivatives are not defined via limiting procedures, but arise 
intrinsically from the functorial structure through cross-effects. 

In particular, spectral derivatives are canonical, functorial, and explicitly 
computable whenever the cross-effects of $F$ can be determined.
\end{remark}

\begin{remark}[Analogy with classical differentiation]
\label{rem:analogy-differentiation}
In classical analysis, higher derivatives are defined via limits of difference 
quotients. In contrast, the cross-effect construction provides a purely 
categorical and operadic analogue of differentiation, capturing higher-order 
interactions without recourse to limits. 

Thus, $\mathrm{cr}_nF$ plays the role of the $n$-th derivative, while the spectral 
Taylor tower corresponds to a polynomial expansion in this categorical sense.
\end{remark}

\begin{remark}[Homogeneous layers]
\label{rem:homogeneous-layers}
Since $\partial_n^{\mathrm{spec}}F = \mathrm{cr}_nF$, the homogeneous layer can 
be expressed as the diagonal evaluation
\[
D_n^{\mathrm{spec}}F(A)
=
\bigl(\partial_n^{\mathrm{spec}}F(A,\dots,A)\bigr)_{\mathrm{sym}}.
\]
By Lemma~\ref{lem:homogeneity-layers}, each $D_n^{\mathrm{spec}}F$ behaves as 
a homogeneous polynomial of degree $n$ in the spectral expansion.
\end{remark}

\medskip

\begin{example}[Identity Functor]
\label{ex:id-derivative}
For the identity functor
\[
\mathrm{Id}:\mathsf{Alg}_P(\mathcal{M})\to\mathcal{M},
\]
we have
\[
\mathrm{cr}_1\mathrm{Id}(A)\cong A,
\qquad
\mathrm{cr}_n\mathrm{Id}\cong 0
\quad (n\ge 2).
\]
Hence
\[
\partial_1^{\mathrm{spec}}\mathrm{Id}\cong \mathrm{Id},
\qquad
\partial_n^{\mathrm{spec}}\mathrm{Id}\cong 0
\quad (n\ge 2).
\]
Consequently,
\[
D_1^{\mathrm{spec}}\mathrm{Id}(A)\cong A,
\qquad
D_n^{\mathrm{spec}}\mathrm{Id}(A)\cong 0
\quad (n\ne 1).
\]
\end{example}

\begin{example}[Quadratic Functor]
\label{ex:quadratic-derivative}
Let
\[
F(A)=A\otimes A.
\]
Then the second cross-effect is given by the bilinear interaction
\[
\mathrm{cr}_2F(A_1,A_2)
\cong
A_1\otimes A_2 \oplus A_2\otimes A_1,
\]
and $\mathrm{cr}_nF\cong 0$ for $n\ge 3$. Under the normalized symmetric convention,
\[
\partial_2^{\mathrm{spec}}F(A_1,A_2)
\cong
\frac{1}{2}
\bigl(A_1\otimes A_2 \oplus A_2\otimes A_1\bigr).
\]
Therefore,
\[
D_2^{\mathrm{spec}}F(A)
=
\partial_2^{\mathrm{spec}}F(A,A)
\cong
A\otimes A,
\]
and all higher homogeneous layers vanish.
\end{example}

\begin{example}[Exponential Functor]
\label{ex:exp-derivative}
For
\[
\exp(A)=\sum_{k=0}^{\infty}\frac{A^{\otimes k}}{k!},
\]
the $k$-th spectral derivative is the normalized symmetric $k$-linear component
\[
\partial_k^{\mathrm{spec}}\exp(A_1,\dots,A_k)
=
\frac{1}{k!}
\sum_{\sigma\in\Sigma_k}
A_{\sigma(1)}\otimes\cdots\otimes A_{\sigma(k)}.
\]
Thus
\[
D_k^{\mathrm{spec}}\exp(A)
=
\frac{A^{\otimes k}}{k!}.
\]
With the corresponding coefficient norm convention, this gives
\[
\|\partial_k^{\mathrm{spec}}\exp\|=\frac{1}{k!},
\]
and hence $R_{\exp}=\infty$.
\end{example}

\medskip
\noindent
\textbf{Connection to the spectral Taylor expansion.}
With this identification, the spectral Taylor expansion takes the form
\[
F(A)
\sim
\sum_{n=0}^{\infty}D_n^{\mathrm{spec}}F(A),
\]
where each homogeneous layer is determined by the corresponding spectral derivative
$\partial_n^{\mathrm{spec}}F$. This is the operadic analogue of the classical Taylor
series expansion.

\[
\begin{array}{|c|c|}
\hline
\text{Classical Analysis} & \text{Spectral Operadic Calculus} \\
\hline
f^{(n)}(0)/n! & \partial_n^{\mathrm{spec}}F \\
\bigl(f^{(n)}(0)/n!\bigr)x^n & D_n^{\mathrm{spec}}F(A) \\
\text{Taylor series} & \text{Spectral Taylor tower} \\
\hline
\end{array}
\]

\medskip

The following properties follow directly from the cross-effect construction.

\begin{proposition}[Basic Properties of Spectral Derivatives]
\label{prop:derivative-properties}
Let $F$ and $G$ be admissible functors. Then:
\begin{enumerate}
    \item \textbf{Linearity:}
    \[
    \partial_n^{\mathrm{spec}}(F \oplus G)
    \cong
    \partial_n^{\mathrm{spec}}F \oplus \partial_n^{\mathrm{spec}}G .
    \]

    \item \textbf{Vanishing for polynomials:}
    If $F$ is a spectral polynomial of degree $\le m$, then
    \[
    \partial_n^{\mathrm{spec}}F \cong 0
    \qquad \text{for all } n>m.
    \]

    \item \textbf{Symmetric-group equivariance:}
    Each $\partial_n^{\mathrm{spec}}F$ carries a natural action of the symmetric
    group $\Sigma_n$ by permutation of its $n$ inputs.

    \item \textbf{Naturality:}
    A natural transformation $\eta:F\to G$ induces a natural morphism
    \[
    \partial_n^{\mathrm{spec}}\eta:
    \partial_n^{\mathrm{spec}}F
    \longrightarrow
    \partial_n^{\mathrm{spec}}G .
    \]
\end{enumerate}
\end{proposition}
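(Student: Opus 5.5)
The plan is to reduce each of the four assertions to the corresponding property of cross-effects, using the identification $\partial_n^{\mathrm{spec}}F = \mathrm{cr}_nF$ from Proposition~\ref{prop:derivative-cross-effect}. Since $\mathrm{cr}_nF$ is defined in Definition~\ref{def:cross-effects} as the total homotopy fiber of the cube $S\mapsto F(\bigoplus_{i\in S}A_i)$, every claim becomes a statement about how total fibers behave under operations on the cube. We may work throughout in the stable/additive setting, where the inclusion--exclusion formula is available as a check.

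\textbf{Linearity.} The cube for $F\oplus G$ is, objectwise and compatibly with the structure maps, the direct sum of the cubes for $F$ and for $G$. In a stable (or additive) category finite direct sums are also finite products, and homotopy limits commute with homotopy limits. Hence the total fiber of the sum cube is the sum of the total fibers, which gives $\mathrm{cr}_n(F\oplus G)\cong \mathrm{cr}_nF\oplus\mathrm{cr}_nG$. This is the same identification already used in Proposition~\ref{prop:spectral-poly-stability}(1).

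\textbf{Naturality and equivariance.} For naturality, a transformation $\eta:F\to G$ induces a morphism of cubes $\eta_{\bigoplus_{S}A_i}$, and the total fiber is functorial in the cube. This yields $\partial_n^{\mathrm{spec}}\eta$, and compatibility with composition and identities comes from functoriality of $\operatorname{tfib}$. For equivariance, a permutation $\pi\in\Sigma_n$ induces an isomorphism of index posets $\mathcal P(\{1,\dots,n\})\to\mathcal P(\{1,\dots,n\})$. Combined with the symmetry isomorphisms of the coproduct $\bigoplus_{i\in S}A_i\cong\bigoplus_{i\in\pi(S)}A_{\pi^{-1}(i)}$, this gives an isomorphism between the cube for $(A_1,\dots,A_n)$ and the cube for the permuted inputs. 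Passing to total fibers produces maps $\mathrm{cr}_nF(A_1,\dots,A_n)\to\mathrm{cr}_nF(A_{\pi(1)},\dots,A_{\pi(n)})$. Two points must then be checked: that these maps satisfy the action axioms, which follows from the coherence of the symmetric coproduct, and that they are natural in the $A_i$.

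\textbf{Vanishing for polynomials.} This is the step I expect to be the main obstacle. The definition of spectral polynomial (Definition~\ref{def:spectral-polynomial}) only gives $\sigma_P(\mathrm{cr}_{m+1}F)=\{0\}$, not $\mathrm{cr}_{m+1}F\cong 0$, and Example~\ref{ex:nilpotent-spectral-polynomial} shows that the first does not imply the second. I would therefore interpret ``$\cong 0$'' in the sense used in the surrounding examples (Examples~\ref{ex:poly-homogeneous} and~\ref{ex:poly-radius}), as spectral or norm vanishing. Under the zero-control hypothesis, Corollary~\ref{cor:strict-excision} upgrades $\sigma_P(\mathrm{cr}_{m+1}F)=\{0\}$ to $\|\mathrm{cr}_{m+1}F\|=0$, and with a faithful norm this gives $\mathrm{cr}_{m+1}F\cong 0$ as in Theorem~\ref{thm:spectral-excision}. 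To pass from degree $m+1$ to all $n>m$, I would use the standard recursion in which $\mathrm{cr}_{n+1}F(A_1,\dots,A_{n+1})$ is a total fiber built from $\mathrm{cr}_nF(A_1\oplus A_2,A_3,\dots)$, $\mathrm{cr}_nF(A_1,A_3,\dots)$ and $\mathrm{cr}_nF(A_2,A_3,\dots)$. An induction on $n$ then shows that vanishing of $\mathrm{cr}_{m+1}F$ forces vanishing of every higher cross-effect. In the spectral-only sense the same recursion works, provided $\sigma_P$ of a fiber of spectrally trivial objects is trivial; I would take this from compatibility of $\sigma_P$ with finite (co)products and fibers in the stable setting, and state it explicitly as an assumption if it cannot be extracted from Part~I.
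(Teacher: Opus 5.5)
Your approach matches the paper's. The paper gives no separate proof; it only remarks that these properties ``follow directly from the cross-effect construction'' via $\partial_n^{\mathrm{spec}}F=\mathrm{cr}_nF$. Your arguments for linearity, naturality and $\Sigma_n$-equivariance are the same facts the paper uses elsewhere: Proposition~\ref{prop:spectral-poly-stability}(1), Theorem~\ref{thm:symmetric-sequence} and Remark~\ref{rem:derivative-functoriality}.

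On item 2 you are more careful than the paper, and rightly so. By Definition~\ref{def:spectral-polynomial}, the literal claim $\partial_n^{\mathrm{spec}}F\cong 0$ for $n>m$ does not follow. It is in fact contradicted by the paper's own Example~\ref{ex:nilpotent-spectral-polynomial}, which gives a spectral polynomial of degree $\le 1$ whose $\mathrm{cr}_2$ need not vanish. So some extra input is genuinely required; this is not a weakness of your proposal. Either of your two options works:
\begin{itemize}
\item zero-control together with a faithful norm, giving literal vanishing of $\mathrm{cr}_{m+1}F$;
\item the spectral reading of ``$\cong 0$'', plus an explicit assumption that $\sigma_P$ of a total fiber of spectrally trivial objects is trivial.
\end{itemize}

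Your propagation from $\mathrm{cr}_{m+1}F$ to all $n>m$ uses $\mathrm{cr}_{n+1}F(A_1,\dots,A_{n+1})\simeq\mathrm{cr}_2\bigl(\mathrm{cr}_nF(-,A_3,\dots,A_{n+1})\bigr)(A_1,A_2)$. This is correct, and it is a step the paper never supplies. Yet the paper tacitly relies on it for the filtration $\mathrm{Poly}_n^{\mathrm{spec}}\subseteq\mathrm{Poly}_{n+1}^{\mathrm{spec}}$ and in Example~\ref{ex:poly-homogeneous}.
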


\medskip
\noindent
\textbf{Operator norm of spectral derivatives.}
For the convergence theory developed in Section~\ref{sec:convergence}, we need to
control the size of the spectral derivatives. Recall from
Definition~\ref{def:spectral-derivative-norm} that
\[
\|\partial_n^{\mathrm{spec}}F\|
:=
\sup_{\substack{A_1,\dots,A_n\\ \|\sigma_P(A_i)\|\le 1}}
\|\partial_n^{\mathrm{spec}}F(A_1,\dots,A_n)\|.
\]
Since $\partial_n^{\mathrm{spec}}F=\mathrm{cr}_nF$, this norm measures the
supremal $n$-th order interaction of $F$ under normalized spectral inputs.

\medskip

Having identified spectral derivatives with cross-effects and established their
basic properties, we now turn to the algebraic structure carried by these
derivatives. In the next subsection, we show that the collection
$\{\partial_n^{\mathrm{spec}}F\}_{n\ge 1}$ forms a symmetric sequence: each
$\partial_n^{\mathrm{spec}}F$ is equipped with a natural $\Sigma_n$-action, and
these actions organize the derivatives into the first layer of operadic structure.

\subsection{The Symmetric Sequence of Derivatives}
\label{subsec:symmetric-sequence}

We now show that the collection of spectral derivatives naturally assembles 
into a structured object carrying permutation symmetry. This provides the first 
layer of algebraic organization underlying Spectral Operadic Calculus.

\medskip

\noindent
Recall from Definition~\ref{def:spectral-derivatives} that for an admissible functor 
$F : \mathcal{C} \to \mathcal{M}$, the $n$-th spectral derivative is given by the 
multilinear functor
\[
\partial_n^{\mathrm{spec}}F = \mathrm{cr}_n F : \mathcal{C}^n \to \mathcal{M}.
\]

\medskip

\noindent
Since the cross-effect $\mathrm{cr}_n F$ is defined symmetrically in its $n$ inputs, 
it carries a natural action of the symmetric group $S_n$ by permutation of variables. 
This symmetry organizes the family of derivatives into a coherent sequence.

\medskip

\begin{definition}[Symmetric Sequence]
\label{def:symmetric-sequence}
A \emph{symmetric sequence} in a category $\mathcal{M}$ is a collection of objects
$\{X_n\}_{n \geq 0}$, where each $X_n \in \mathcal{M}$ is equipped with an action 
of the symmetric group $S_n$. A morphism of symmetric sequences is a collection 
of $S_n$-equivariant morphisms.
\end{definition}

\medskip

\begin{definition}[Symmetric Sequence of Spectral Derivatives]
\label{def:symmetric-sequence-derivatives}
Let $F : \mathsf{Alg}_P(\mathcal{M}) \to \mathcal{M}$ be an admissible functor. 
The \emph{symmetric sequence of spectral derivatives} of $F$ is the collection
\[
\partial^{\mathrm{spec}}F := \{\partial_n^{\mathrm{spec}}F\}_{n \ge 0},
\]
where:
\begin{itemize}
    \item For each $n \ge 1$, $\partial_n^{\mathrm{spec}}F : \mathcal{C}^n \to \mathcal{M}$ 
    is the $n$-th spectral derivative, equipped with the natural action of the symmetric 
    group $\Sigma_n$ by permuting its inputs.

    \item The degree-$0$ component is defined by
    \[
    \partial_0^{\mathrm{spec}}F := F(0),
    \]
    where $0$ denotes the initial (or zero) object of $\mathcal{C}$, and the 
    $\Sigma_0$-action is trivial.
\end{itemize}
\end{definition}

\medskip

\begin{theorem}[Symmetric Sequence Structure of Spectral Derivatives]
\label{thm:symmetric-sequence}
Let $F : \mathcal{C} \to \mathcal{M}$ be an admissible functor. Then the collection
\[
\partial^{\mathrm{spec}}F
:=
\{\partial_n^{\mathrm{spec}}F\}_{n \geq 0}
\]
forms a symmetric sequence in the functor category of multilinear functors from
powers of $\mathcal{C}$ to $\mathcal{M}$.
\end{theorem}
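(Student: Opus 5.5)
The plan is to verify the axioms of Definition~\ref{def:symmetric-sequence} directly, one degree at a time. We work in the functor category $\mathrm{Fun}(\mathcal{C}^n,\mathcal{M})$ of multilinear functors. For each $n$ we must produce a $\Sigma_n$-action on the object $\partial_n^{\mathrm{spec}}F=\mathrm{cr}_nF$ of that category. This means a group homomorphism $\Sigma_n\to\mathrm{Aut}(\mathrm{cr}_nF)$, where each automorphism is a natural isomorphism between $\mathrm{cr}_nF$ and itself precomposed with the permutation functor $\pi^*:\mathcal{C}^n\to\mathcal{C}^n$, $(A_1,\dots,A_n)\mapsto(A_{\pi(1)},\dots,A_{\pi(n)})$. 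The degree-$0$ term $\partial_0^{\mathrm{spec}}F=F(0)$ carries the trivial $\Sigma_0$-action, so there is nothing to check there.

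\textbf{Step 1: constructing the action.} I would define the action at the level of the cube from Definition~\ref{def:cross-effects}. A permutation $\pi\in\Sigma_n$ induces an automorphism of the poset $\mathcal{P}(\{1,\dots,n\})$, given by $S\mapsto\pi(S)$. Coproducts in $\mathsf{Alg}_P(\mathcal{M})$ are symmetric, so there are canonical isomorphisms
\[
\bigoplus_{i\in S}A_{\pi(i)}\cong\bigoplus_{j\in\pi(S)}A_j .
\]
These assemble into an isomorphism of $n$-cubes
\[
\mathcal{X}_{A_{\pi(1)},\dots,A_{\pi(n)}}\cong\mathcal{X}_{A_1,\dots,A_n}\circ\pi ,
\]
natural in the tuple $(A_i)$. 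The total homotopy fiber is invariant under reindexing a cube by a poset automorphism, so applying $\operatorname{tfib}$ gives natural isomorphisms
\[
\tau_\pi:\mathrm{cr}_nF(A_{\pi(1)},\dots,A_{\pi(n)})\xrightarrow{\ \cong\ }\mathrm{cr}_nF(A_1,\dots,A_n).
\]
In the stable/additive model these are simply the reindexing of the inclusion--exclusion sum, since the sign $(-1)^{n-|S|}$ is unchanged by $\pi$.

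\textbf{Step 2: checking that it is an action.} Two identities are needed: $\tau_{\mathrm{id}}=\mathrm{id}$ and $\tau_{\pi\pi'}=\tau_\pi\circ\pi^*\tau_{\pi'}$. Both reduce to the coherence theorem for the symmetric monoidal (coproduct) structure. Any two composites of canonical symmetry isomorphisms between the same iterated coproducts agree, so the cube isomorphisms compose correctly. Functoriality of $\operatorname{tfib}$ then transports these identities to the maps $\tau_\pi$. Naturality of $\tau_\pi$ in the inputs follows from naturality of the coproduct symmetry together with functoriality of $F$.

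\textbf{Step 3: compatibility with the paper's other structure.} Multilinearity is preserved because precomposition with $\pi^*$ only permutes the variables. Morphisms are handled by Proposition~\ref{prop:derivative-properties}(4): a natural transformation $\eta:F\to G$ induces maps $\partial_n^{\mathrm{spec}}\eta$ at the level of cubes, and these commute with the $\tau_\pi$ by the same naturality. This makes $F\mapsto\partial^{\mathrm{spec}}F$ land in symmetric sequences, with morphisms given by equivariant maps.

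\textbf{Main obstacle.} The delicate step is Step 2 in the homotopical setting. There $\operatorname{tfib}$ is a homotopy limit, so one must make sure the action is strict rather than only up to homotopy. I would first try a strictly functorial model of the total fiber, namely the limit over the punctured cube of a fibrant replacement. With such a model, $\Sigma_n$ acts strictly by reindexing the indexing diagram, and strictness is immediate. If that is unavailable, the fallback is to work in the additive model given in Definition~\ref{def:cross-effects}, where the action is a strict permutation of summands.
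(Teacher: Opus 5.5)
Your proposal is correct and takes essentially the same route as the paper. The paper's proof simply asserts canonical natural isomorphisms $\mathrm{cr}_nF(A_1,\dots,A_n)\cong\mathrm{cr}_nF(A_{\sigma(1)},\dots,A_{\sigma(n)})$ that are compatible with composition in $S_n$, together with the trivial $S_0$-action on $F(0)$. Your reindexing of the cube, your check of $\tau_{\pi\pi'}=\tau_\pi\circ\pi^*\tau_{\pi'}$ via coherence of coproducts, and your attention to strictness for a homotopy-limit model of $\operatorname{tfib}$ are a more careful version of that argument, supplying details the paper leaves implicit.
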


\begin{proof}
For each $n \geq 1$, the spectral derivative is defined by the cross-effect
\[
\partial_n^{\mathrm{spec}}F = \mathrm{cr}_nF.
\]
The cross-effect $\mathrm{cr}_nF(A_1,\ldots,A_n)$ is functorial in its inputs. Moreover,
for every permutation $\sigma \in S_n$, there is a canonical natural isomorphism
\[
\mathrm{cr}_nF(A_1,\ldots,A_n)
\cong
\mathrm{cr}_nF(A_{\sigma(1)},\ldots,A_{\sigma(n)}).
\]
These natural isomorphisms are compatible with composition in $S_n$, and hence define
an $S_n$-action on $\partial_n^{\mathrm{spec}}F$ by permutation of inputs.

For $n = 0$, the derivative is the constant term
\[
\partial_0^{\mathrm{spec}}F = F(0),
\]
equipped with the trivial $S_0$-action. Therefore the collection
$\{\partial_n^{\mathrm{spec}}F\}_{n \geq 0}$ forms a symmetric sequence.
\end{proof}

\medskip

\begin{remark}
When $\mathcal{M}$ is cocomplete, one can identify each multilinear functor 
$\partial_n^{\mathrm{spec}}F$ with its value at the unit object (if $\mathcal{C}$ 
has a suitable monoidal structure), thereby obtaining an object of $\mathcal{M}$. 
In that case, the symmetric sequence may be viewed as living in $\mathcal{M}$ 
itself. For the general theory, however, it is more natural to work in the 
functor category.
\end{remark}

\medskip

\begin{remark}[Functoriality]
\label{rem:derivative-functoriality}
The assignment $F \mapsto \partial^{\mathrm{spec}}F$ is functorial. More precisely,
any natural transformation $\eta:F\to G$ induces, for each $n\ge 0$, a natural map
\[
\partial_n^{\mathrm{spec}}\eta:
\partial_n^{\mathrm{spec}}F
\longrightarrow
\partial_n^{\mathrm{spec}}G.
\]
Since $\partial_n^{\mathrm{spec}}F = \mathrm{cr}_nF$, this map is obtained by applying
the cross-effect construction to $\eta$:
\[
\partial_n^{\mathrm{spec}}\eta
:=
\mathrm{cr}_n(\eta).
\]
These maps are compatible with the natural $\Sigma_n$-actions, and hence assemble into a
morphism of symmetric sequences
\[
\partial^{\mathrm{spec}}\eta:
\partial^{\mathrm{spec}}F
\longrightarrow
\partial^{\mathrm{spec}}G.
\]
\end{remark}

\begin{remark}[Conceptual interpretation]
\label{rem:symmetric-interpretation}
The symmetric group action reflects the fact that the $n$-th order interaction
encoded by $\partial_n^{\mathrm{spec}}F$ is equivariant under reordering of the
inputs. Thus spectral derivatives behave like symmetric multilinear forms,
generalizing the symmetry of higher derivatives in classical analysis.
\end{remark}

\begin{remark}[Relation to homogeneous layers]
\label{rem:symmetric-homogeneous}
The symmetrization appearing in the definition of the homogeneous layer
$D_n^{\mathrm{spec}}F$ is induced by the $\Sigma_n$-action on
$\partial_n^{\mathrm{spec}}F$. Consequently, the symmetric sequence structure
provides the algebraic source of the homogeneous decomposition in the spectral
Taylor tower.
\end{remark}

\medskip

\begin{example}[Identity Functor]
\label{ex:id-symmetric}
For $\mathrm{Id}$, we have
\[
\partial_1^{\mathrm{spec}}\mathrm{Id}\cong \mathrm{Id},
\qquad
\partial_n^{\mathrm{spec}}\mathrm{Id}\cong 0
\quad (n\ge 2).
\]
Thus the symmetric sequence $\partial^{\mathrm{spec}}\mathrm{Id}$ is supported in
degree $1$, with the trivial $\Sigma_1$-action.
\end{example}

\begin{example}[Quadratic Functor]
\label{ex:quadratic-symmetric}
For $F(A)=A\otimes A$, the second spectral derivative is, under the normalized
symmetric convention,
\[
\partial_2^{\mathrm{spec}}F(A_1,A_2)
\cong
\frac{1}{2}
\bigl(A_1\otimes A_2 \oplus A_2\otimes A_1\bigr).
\]
The group $\Sigma_2$ acts by interchanging the two tensor factors, equivalently by
swapping the two summands. The symmetric sequence is supported in degree $2$.
\end{example}

\begin{example}[Exponential Functor]
\label{ex:exp-symmetric}
For
\[
\exp(A)=\sum_{k=0}^{\infty}\frac{A^{\otimes k}}{k!},
\]
all spectral derivatives are nonzero. Each $\partial_k^{\mathrm{spec}}\exp$
carries the natural $\Sigma_k$-action by permuting inputs, equivalently tensor
factors in the symmetric multilinear component. Hence
$\partial^{\mathrm{spec}}\exp$ is supported in all degrees.
\end{example}

\medskip

\begin{remark}[Relation to operadic plethysm]
\label{rem:plethysm}
The symmetric sequence structure is essential for defining operadic composition 
(plethysm). For symmetric sequences $A$ and $B$, the plethysm $A \circ_{\mathrm{op}} B$ 
encodes substitution of multilinear operations. 

With this structure, the chain rule (Theorem~\ref{thm:chain-rule}) takes the form
\[
\partial^{\mathrm{spec}}(F \circ G)
\cong
\partial^{\mathrm{spec}}F \circ_{\mathrm{op}} \partial^{\mathrm{spec}}G,
\]
showing that $\partial^{\mathrm{spec}}$ is a monoidal functor from admissible 
functors (under composition) to symmetric sequences (under plethysm).
\end{remark}

\begin{remark}[Analogy with classical calculus]
\label{rem:classical-analogy}
In classical calculus, higher derivatives are symmetric: mixed partial derivatives 
commute, and the $n$-th derivative defines a symmetric multilinear form. 

The symmetric sequence $\{\partial_n^{\mathrm{spec}}F\}$ provides an operadic 
generalization of this phenomenon, where the $\Sigma_n$-action encodes symmetry 
of higher-order interactions.
\end{remark}

The symmetric sequence structure provides the minimal algebraic organization of 
spectral derivatives. When $F$ is compatible with the operad $P$, this structure 
refines further: in the next subsection, we show that $\partial^{\mathrm{spec}}F$ 
naturally carries the structure of a right $P$-module. This enhancement is 
fundamental for the operadic chain rule and reconstruction theory.

\subsection{The Operadic Module Structure}
\label{subsec:module-structure}

We now refine the symmetric sequence structure of spectral derivatives by 
showing that, under suitable compatibility conditions, they assemble into a 
module over the operad $P$. This provides a strong algebraic organization 
of the derivatives and reveals a deep connection between the analytic and 
operadic aspects of the theory.

\medskip

\noindent
\textbf{Motivation.}
In classical calculus, higher derivatives interact through composition rules 
such as the Faà di Bruno formula. In the operadic setting, such interactions 
are governed by the composition laws of the operad $P$. We therefore expect 
the collection of spectral derivatives to inherit an algebraic structure 
reflecting this operadic composition.

\medskip

\begin{definition}[Operadic Compatibility]
\label{def:operadic-compatibility}
Let $F : \mathcal{C} \to \mathcal{M}$ be an admissible functor with spectral 
derivatives $\{\partial_n^{\mathrm{spec}}F\}_{n \ge 1}$ (Definition~\ref{def:spectral-derivatives}). 
We say that $F$ is \emph{operadically compatible} if there exists a collection 
of natural transformations
\[
\mu_{\gamma} : 
\partial_{n_1}^{\mathrm{spec}}F \otimes \cdots \otimes \partial_{n_k}^{\mathrm{spec}}F
\;\longrightarrow\;
\partial_{n_1 + \cdots + n_k}^{\mathrm{spec}}F,
\qquad \gamma \in P(k),
\]
defined for every operation $\gamma \in P(k)$ and every tuple of nonnegative 
integers $(n_1,\dots,n_k)$, such that the following conditions hold:

\begin{enumerate}
    \item \textbf{(Equivariance)} The maps $\mu_{\gamma}$ are compatible with the 
    natural actions of the symmetric groups $\Sigma_{n_1} \times \cdots \times \Sigma_{n_k}$ 
    and $\Sigma_{n_1+\cdots+n_k}$.
    
    \item \textbf{(Associativity)} For composable operations $\gamma \in P(k)$ and 
    $\gamma_1 \in P(m_1), \dots, \gamma_k \in P(m_k)$, the diagram
    \[
    \begin{tikzcd}
    \bigotimes_{i=1}^k \bigotimes_{j=1}^{m_i} \partial_{n_{ij}}F
    \ar[r, "\bigotimes_i \mu_{\gamma_i}"] \ar[d, "\text{reassoc}"] &
    \bigotimes_{i=1}^k \partial_{\sum_j n_{ij}}F \ar[d, "\mu_{\gamma}"] \\
    \bigotimes_{\ell=1}^{m} \partial_{n_\ell}F \ar[r, "\mu_{\gamma \circ (\gamma_1,\dots,\gamma_k)}"] &
    \partial_{\sum_\ell n_\ell}F
    \end{tikzcd}
    \]
    commutes, where $m = \sum_i m_i$ and the reassociation map permutes factors 
    according to the operadic composition rule.
    
    \item \textbf{(Unit)} For the unit operation $\mathbf{1} \in P(1)$, the induced 
    map $\mu_{\mathbf{1}} : \partial_n^{\mathrm{spec}}F \to \partial_n^{\mathrm{spec}}F$ 
    is the identity.
\end{enumerate}

Equivalently, the collection $\{\partial_n^{\mathrm{spec}}F\}_{n \ge 0}$ forms a 
\emph{right module over the operad $P$} in the category $\mathcal{M}$.
\end{definition}

\noindent
This definition formalizes the requirement that the interaction of higher 
derivatives is governed by the operadic composition in $P$, providing an 
operadic analogue of the Faà di Bruno rule. The maps $\mu_{\gamma}$ encode 
how a $k$-ary operation in $P$ combines $k$ derivative actions into a single 
derivative of higher order.

\begin{remark}
The full verification of the right $P$-module axioms (associativity, unit, 
equivariance) from a given admissibility and compatibility hypothesis is 
technically involved but follows the same pattern as the classical construction 
of modules over operads from compatible functors (see, e.g., \cite{MarklShniderStasheff2002}). Here we state the structure as a theorem 
under the explicit assumption of operadic compatibility.
\end{remark}

\medskip

\noindent
\textbf{Examples of operadically compatible functors.}
This compatibility holds for a broad class of functors of interest:

\begin{itemize}
    \item \textbf{Polynomial functors:} Any functor built from tensor products, 
    direct sums, and the $P$-algebra structure maps, with scalar coefficients, 
    is operadically compatible. This follows from the multilinearity of cross-effects 
    and the naturality of operadic composition.
    
    \item \textbf{Operadic spectrum functor:} The functor $\sigma_P : \mathsf{Alg}_P(\mathcal{M}) \to \mathcal{M}$ 
    is expected to be operadically compatible, as suggested by the Base Change 
    Theorem (Part~I, Theorem~0.5) and the functoriality of the Hochschild complex. 
    A detailed verification is deferred to future work.
    
    \item \textbf{Analytic functors:} Any spectrally analytic functor that admits 
    an operadic Taylor expansion (Section~\ref{sec:convergence}) is operadically 
    compatible by construction of the homogeneous layers.
\end{itemize}

\medskip

\begin{theorem}[Operadic Module Structure of Spectral Derivatives]
\label{thm:module-structure}
Let $F : \mathcal{C} \to \mathcal{M}$ be an admissible and operadically 
compatible functor. Then the symmetric sequence of spectral derivatives
\[
\{\partial_n^{\mathrm{spec}}F\}_{n \geq 0}
\]
admits the structure of a \emph{right $P$-module}.
\end{theorem}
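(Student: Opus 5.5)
The plan is to unpack the right $P$-module axioms and check each one against the data supplied by Definition~\ref{def:operadic-compatibility}. The symmetric sequence itself is already in hand from Theorem~\ref{thm:symmetric-sequence}. The definition of operadic compatibility gives, for each $\gamma \in P(k)$, maps $\mu_\gamma$ that are equivariant, associative, and unital. The first step is therefore to assemble these operation-indexed maps into a single structure map
\[
\mu : \partial^{\mathrm{spec}}F \circ_{\mathrm{op}} P \longrightarrow \partial^{\mathrm{spec}}F .
\]
The $\mu_\gamma$ are indexed by elements $\gamma \in P(k)$, but $P(k)$ is an object of $\mathcal{M}$, so I will first repackage them as morphisms
\[
P(k)\otimes \partial_{n_1}^{\mathrm{spec}}F\otimes\cdots\otimes\partial_{n_k}^{\mathrm{spec}}F \longrightarrow \partial_{n_1+\cdots+n_k}^{\mathrm{spec}}F .
\]
I will require these to depend naturally on $\gamma$, using the enrichment of Definition~\ref{def:normed-category} and the bilinearity of composition. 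Next I will pass to the coequalizer of the $\Sigma_k$-actions and induce along the subgroup inclusions $\Sigma_{n_1}\times\cdots\times\Sigma_{n_k}\wr\Sigma_k \subseteq \Sigma_{n}$. This produces components of the composite product. The equivariance axiom (1) is exactly what guarantees that $\mu$ descends to these coinvariants and is $\Sigma_n$-equivariant.

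The second step is to verify the two module diagrams. For associativity, I will show that the two maps
\[
\partial^{\mathrm{spec}}F\circ_{\mathrm{op}} P\circ_{\mathrm{op}} P \rightrightarrows \partial^{\mathrm{spec}}F
\]
agree, one given by $\mu\circ(\mu\circ\mathrm{id})$ and the other by $\mu\circ(\mathrm{id}\circ\gamma_P)$. I will restrict to a single summand indexed by a two-level tree, that is, by $\gamma$ together with $(\gamma_1,\dots,\gamma_k)$. On that summand the required identity is literally the commutative square of axiom (2); the reassociation map there accounts for the block permutation in the definition of plethysm. For unitality, I will precompose with the unit $\mathbf{1}\to P(1)$ and apply axiom (3). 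Naturality in the inputs $A_1,\dots,A_n$ is automatic, because each $\mu_\gamma$ is a natural transformation of multilinear functors. The degree-$0$ term $\partial_0^{\mathrm{spec}}F = F(0)$ is handled by allowing $n_i=0$, which the definition permits.

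The main obstacle is that the definition has a type mismatch: it says ``$\gamma\in P(k)$'' although $P(k)$ is an object, and its left-hand side is a tensor of functors on different powers of $\mathcal{C}$. I will address this first. I will interpret $\otimes$ as the external (Day-type) tensor of multilinear functors $\mathcal{C}^{n_1}\times\cdots\times\mathcal{C}^{n_k}\to\mathcal{M}$, and I will assume that $\mathcal{M}$ is cocomplete with $\otimes$ preserving colimits in each variable, so that $\circ_{\mathrm{op}}$ is defined and associative. If the elementwise indexing cannot be upgraded to a morphism out of $P(k)$ in general, I will restrict to the case where $P(k)$ is a coproduct of units, i.e.\ a set-operad linearized. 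That restriction covers the polynomial and analytic examples listed above. With that convention fixed, the remaining verifications are the formal bookkeeping described above, following the classical pattern in \cite{MarklShniderStasheff2002}.
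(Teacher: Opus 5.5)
Your first step does not produce the map you name. After passing to $\Sigma_k$-coinvariants and inducing, the morphisms
\[
P(k)\otimes\partial_{n_1}^{\mathrm{spec}}F\otimes\cdots\otimes\partial_{n_k}^{\mathrm{spec}}F\longrightarrow\partial_{n_1+\cdots+n_k}^{\mathrm{spec}}F
\]
are components of a map $P\circ_{\mathrm{op}}\partial^{\mathrm{spec}}F\to\partial^{\mathrm{spec}}F$. They are not components of $\partial^{\mathrm{spec}}F\circ_{\mathrm{op}}P\to\partial^{\mathrm{spec}}F$. In Definition~\ref{def:operadic-plethysm} the first factor of $\circ_{\mathrm{op}}$ sits at the root. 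So the degree-$n$ summands of $\partial^{\mathrm{spec}}F\circ_{\mathrm{op}}P$ are $\partial_k^{\mathrm{spec}}F\otimes_{\Sigma_k}\operatorname{Ind}\bigl(P(n_1)\otimes\cdots\otimes P(n_k)\bigr)$, that is, a $k$-ary derivative with $P$-operations grafted onto its inputs. The $\mu_\gamma$ have the arities the other way round: an operation of arity $k$ at the root, and derivatives of arities $n_i$ at the leaves. So, despite the phrase ``right module'' that closes Definition~\ref{def:operadic-compatibility}, they define a left $P$-module, i.e.\ a $P$-algebra in symmetric sequences for the Day convolution.

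The same mismatch breaks your associativity check. Axiom (2) is the identity $\mu_\gamma\circ(\mu_{\gamma_1}\otimes\cdots\otimes\mu_{\gamma_k})=\mu_{\gamma\circ(\gamma_1,\dots,\gamma_k)}$ on $P\circ_{\mathrm{op}}P\circ_{\mathrm{op}}\partial^{\mathrm{spec}}F$. Right-module associativity on $\partial^{\mathrm{spec}}F\circ_{\mathrm{op}}P\circ_{\mathrm{op}}P$ instead reads $\rho\bigl(\rho(x;\gamma_1,\dots,\gamma_k);(\gamma_{i,j})_{i,j}\bigr)=\rho\bigl(x;\gamma_1\circ(\gamma_{1,\bullet}),\dots,\gamma_k\circ(\gamma_{k,\bullet})\bigr)$ for $x\in\partial_k^{\mathrm{spec}}F$, and axiom (2) says nothing about it. Likewise, the right unit axiom is $\rho(x;\mathbf{1},\dots,\mathbf{1})=x$ with $k$ units, not $\mu_{\mathbf{1}}=\mathrm{id}$.

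You caught the element-versus-object sloppiness, but restricting to linearized set-operads does not touch this problem, and it is not a bookkeeping issue. Take the non-unital $\mathrm{Com}$. There the $\mu_\gamma$ amount to a commutative product $\partial_{n_1}^{\mathrm{spec}}F\otimes\partial_{n_2}^{\mathrm{spec}}F\to\partial_{n_1+n_2}^{\mathrm{spec}}F$. A right $\mathrm{Com}$-module structure is instead a family of maps $\partial_k^{\mathrm{spec}}F\to\partial_n^{\mathrm{spec}}F$ indexed by surjections $[n]\to[k]$, which is a different kind of datum.

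The paper's proof never assembles the $\mu_\gamma$ this way. It reads operadic compatibility as supplying maps of the correct shape from the start. These are natural transformations $\Phi_{\gamma_1,\dots,\gamma_k}:\mathrm{cr}_kF\otimes\bigotimes_{i=1}^kP(n_i)\to\mathrm{cr}_nF$ with $\gamma_i\in P(n_i)$, which split the $i$-th input slot of the $k$-fold cross-effect into a block of $n_i$ inputs combined by $\gamma_i$. They give $\rho_{n_1,\dots,n_k}:\partial_k^{\mathrm{spec}}F\otimes P(n_1)\otimes\cdots\otimes P(n_k)\to\partial_n^{\mathrm{spec}}F$. Associativity and unit then reduce to associativity and unitality of grafting in $P$ at the inputs.

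To repair your argument you have two options. You can take such input-substitution maps as your starting datum; they are not the $\mu_\gamma$, and the paper simply takes them as the content of compatibility. Or you can state the conclusion as a left $P$-module structure, which is exactly what your verification of axioms (1)--(3) establishes.
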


\begin{proof}
We denote the $n$-th spectral derivative by
\[
M_F(n) \;:=\; \partial_n^{\mathrm{spec}}F : \mathcal{C}^n \to \mathcal{M}.
\]
For each $n \ge 0$, the functor $M_F(n)$ carries a natural action of the symmetric group $\Sigma_n$ by permutation of its $n$ input arguments, inherited from the symmetry of the cross-effect $\mathrm{cr}_nF$. Hence $\{M_F(n)\}_{n\ge 0}$ is a symmetric sequence in the functor category $[\mathcal{C}^n,\mathcal{M}]$ (or, after evaluation at suitable objects, in $\mathcal{M}$ itself).

\medskip

\noindent
\textbf{Construction of the right $P$-module structure maps.}
Let $k \ge 0$ and $n_1,\dots,n_k \ge 0$ be integers, and set $n = n_1 + \cdots + n_k$. 
A right $P$-module structure on $\{M_F(n)\}$ consists of natural maps
\[
\rho_{n_1,\dots,n_k} : 
M_F(k) \;\otimes\; P(n_1) \;\otimes\; \cdots \;\otimes\; P(n_k)
\;\longrightarrow\;
M_F(n),
\]
compatible with symmetric group actions, associativity, and the unit of $P$.

By the \emph{operadic compatibility} of $F$ (Definition~\ref{def:operadic-compatibility}), for any collection of operations
\[
\gamma_i \in P(n_i), \qquad i = 1,\dots,k,
\]
there is a natural transformation on cross-effects:
\[
\Phi_{\gamma_1,\dots,\gamma_k} : 
\mathrm{cr}_kF \;\otimes\; \bigotimes_{i=1}^k P(n_i)
\;\longrightarrow\;
\mathrm{cr}_nF.
\]
Intuitively, this map refines each of the $k$ input slots of $\mathrm{cr}_kF$ into a block of $n_i$ inputs, and the operation $\gamma_i$ prescribes how the $n_i$ variables inside the $i$-th block are to be composed (or substituted) before being fed into the $k$-ary interaction.

Since $\partial_n^{\mathrm{spec}}F = \mathrm{cr}_nF$ by definition, applying the spectral derivative construction (which is functorial and preserves the relevant tensor products) to $\Phi_{\gamma_1,\dots,\gamma_k}$ yields a natural map
\[
\rho_{\gamma_1,\dots,\gamma_k} : 
\partial_k^{\mathrm{spec}}F \;\otimes\; P(n_1) \;\otimes\; \cdots \;\otimes\; P(n_k)
\;\longrightarrow\;
\partial_n^{\mathrm{spec}}F.
\]
Varying over all $\gamma_i \in P(n_i)$ and using the universal property of the tensor product, these assemble into the desired structure map
\[
\rho_{n_1,\dots,n_k} : 
M_F(k) \otimes P(n_1) \otimes \cdots \otimes P(n_k) \longrightarrow M_F(n).
\]

\medskip

\noindent
\textbf{Verification of the module axioms.}

\begin{enumerate}
    \item \textbf{Equivariance.} 
    The map $\rho_{n_1,\dots,n_k}$ must be equivariant with respect to the actions of $\Sigma_{n_1} \times \cdots \times \Sigma_{n_k}$ (acting on the blocks) and of $\Sigma_n$ (acting on the total $n$ inputs). This follows from the naturality of the cross-effect construction and the fact that the operad $P$ carries compatible symmetric group actions. Explicitly, permuting the inputs inside a block or permuting the blocks themselves corresponds to reindexing the $P(n_i)$ factors and the input variables of $\mathrm{cr}_kF$, which is respected by $\Phi_{\gamma_1,\dots,\gamma_k}$.

    \item \textbf{Associativity.} 
    Consider operations $\gamma \in P(k)$ and $\gamma_i \in P(m_i)$ for $i=1,\dots,k$, and suppose further that each $\gamma_i$ is itself a composition of operations $\gamma_{i,j} \in P(n_{i,j})$ with $m_i = \sum_j n_{i,j}$. The operadic composition in $P$ gives the composite operation $\gamma \circ (\gamma_1,\dots,\gamma_k) \in P(m)$ where $m = \sum_i m_i = \sum_{i,j} n_{i,j}$. 
    
    Associativity of the module structure requires that refining variables in two stages (first via the $\gamma_{i,j}$, then via the $\gamma_i$, and finally via $\gamma$) yields the same map as refining directly via the composite $\gamma \circ (\gamma_1,\dots,\gamma_k)$. This is precisely the commutativity of the following diagram:
    
    \[
    \begin{tikzcd}
    M_F(k) \otimes \bigotimes_{i=1}^k P(m_i) \otimes \bigotimes_{i=1}^k \bigotimes_{j=1}^{m_i} P(n_{i,j})
    \ar[r, "\sim"] 
    \ar[d, "\bigotimes_i \rho_{n_{i,1},\dots,n_{i,m_i}}"'] &
    \bigl( M_F(k) \otimes \bigotimes_{i=1}^k P(m_i) \bigr) \otimes \bigotimes_{i=1}^k \bigotimes_{j=1}^{m_i} P(n_{i,j})
    \ar[d, "\rho_{m_1,\dots,m_k} \otimes \mathrm{id}"] \\
    M_F(k) \otimes \bigotimes_{i=1}^k P\bigl(\textstyle\sum_j n_{i,j}\bigr)
    \ar[d, "\rho_{\sum_j n_{1,j},\dots,\sum_j n_{k,j}}"'] &
    M_F(m) \otimes \bigotimes_{i=1}^k \bigotimes_{j=1}^{m_i} P(n_{i,j})
    \ar[d, "\rho_{n_{1,1},\dots,n_{k,m_k}}"] \\
    M_F\bigl(\textstyle\sum_i \sum_j n_{i,j}\bigr)
    \ar[r, "\mathrm{id}"] &
    M_F\bigl(\textstyle\sum_i \sum_j n_{i,j}\bigr)
    \end{tikzcd}
    \]
    
    The left vertical path (down then right) corresponds to first refining each block via the $\gamma_{i,j}$ and then combining via $\gamma$, while the right vertical path (right then down) corresponds to first combining via the $\gamma_i$ and then applying the composite operation. The diagram commutes because the underlying cross-effect maps are assembled via the operadic composition law of $P$, which is associative by definition.

    \item \textbf{Unit axiom.} 
    The operad $P$ contains a distinguished unit operation $\mathbf{1} \in P(1)$. For $k=1$ and $n_1 = n$, the structure map becomes
    \[
    \rho_n : M_F(1) \otimes P(n) \longrightarrow M_F(n).
    \]
    The unit axiom requires that for any $x \in M_F(1)$, the map induced by $\mathbf{1}$ satisfies $\rho_n(x \otimes \mathbf{1}) = x$, where $x$ is identified with its image under the canonical isomorphism $M_F(1) \cong M_F(1) \otimes P(1)$. This follows because substituting the unit operation into an input slot does nothing—it corresponds to the identity substitution on that slot, which leaves the cross-effect unchanged. Hence the unit axiom holds.
\end{enumerate}

\medskip

We have constructed a family of maps $\rho_{n_1,\dots,n_k}$ satisfying equivariance, associativity, and the unit condition. Therefore, the symmetric sequence $\{\partial_n^{\mathrm{spec}}F\}_{n \ge 0}$ is equipped with a coherent right $P$-module structure. This completes the proof.
\end{proof}

\medskip

Theorem~\ref{thm:module-structure} established that the spectral derivatives 
always form a right $P$-module. In the strongest compatibility regime, this 
structure enhances to a full operad. This occurs precisely when $F$ itself 
preserves operadic composition, i.e., when $F$ is an \emph{operad functor}.

\begin{corollary}[Operad Structure of Spectral Derivatives]
\label{cor:operad-structure}
Assume that $F : \mathcal{C} \to \mathcal{M}$ is an \emph{operad functor}, 
meaning:
\begin{enumerate}
    \item $F$ takes values in $\mathsf{Alg}_P(\mathcal{M})$ (so $F(A)$ is itself a $P$-algebra for every $P$-algebra $A$);
    \item for every operation $\gamma \in P(k)$ and every $P$-algebra $A$, the structure map $\gamma_{F(A)} : F(A)^{\otimes k} \to F(A)$ coincides with $F(\gamma_A)$ under the natural identification $F(A)^{\otimes k} \cong F(A^{\otimes k})$ (i.e., $F$ preserves the $P$-algebra structure).
\end{enumerate}
Then the symmetric sequence of spectral derivatives
\[
\partial^{\mathrm{spec}}F \;:=\; \{\partial_n^{\mathrm{spec}}F\}_{n\geq 1}
\]
admits the structure of an \emph{operad in $\mathcal{M}$}.
\end{corollary}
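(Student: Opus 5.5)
The plan is to build the operad structure on $\partial^{\mathrm{spec}}F$ by reusing the right $P$-module maps of Theorem~\ref{thm:module-structure}, and to obtain the composition maps among the derivatives themselves from the hypothesis that $F$ preserves $P$-algebra structure. First I will check that an operad functor is operadically compatible in the sense of Definition~\ref{def:operadic-compatibility}. The maps $\mu_\gamma$ come from applying $\mathrm{cr}_n$ to the structure maps
\[
\gamma_{F(A)} = F(\gamma_A) : F(A)^{\otimes k} \to F(A).
\]
These are then restricted to the multi-graded pieces of $F\bigl(\bigoplus_i A_i\bigr)^{\otimes k}$, which are indexed by blocks of inputs. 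Theorem~\ref{thm:module-structure} then supplies the right $P$-module maps $\rho_{n_1,\dots,n_k}$.

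Next I will define the unit and the operadic composition. The unit $\eta : \mathbf{1} \to \partial_1^{\mathrm{spec}}F$ is induced from the identity on the linear layer, namely $\mathrm{cr}_1 F \supseteq$ the image of $\mathrm{id}$ under $F$ applied to $\mathbf{1} \in P(1)$. The partial compositions
\[
\circ_i : \partial_k^{\mathrm{spec}}F \otimes \partial_m^{\mathrm{spec}}F \longrightarrow \partial_{k+m-1}^{\mathrm{spec}}F
\]
are defined by substituting the $m$-fold cross-effect into the $i$-th slot of the $k$-fold one. Concretely, the $i$-th input of $\mathrm{cr}_kF$ is set equal to $F$ applied to the coproduct of the $m$ new inputs. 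The identification $F(A)^{\otimes k} \cong F(A^{\otimes k})$ from hypothesis (2) lets the result be read as a multilinear piece of $F$ of total arity $k+m-1$. Multi-reducedness of cross-effects then projects this piece onto $\mathrm{cr}_{k+m-1}F$.

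With these maps in hand, I will verify the operad axioms. Equivariance comes from the $\Sigma_n$-actions of Theorem~\ref{thm:symmetric-sequence}, and the unit laws come from the unit clause of Definition~\ref{def:operadic-compatibility}. Both sequential and parallel associativity reduce to associativity of operadic composition in $P$, transported through $F$ by hypothesis (2). They also use the fact that iterated substitution into total homotopy fibers of cubes agrees with a single substitution, since a cube of cubes is again a cube.

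The main obstacle is the substitution step: showing that plugging a cross-effect into a slot of another cross-effect lands in the correct $(k+m-1)$-st cross-effect, naturally and coherently. My first attempt will use the stable inclusion--exclusion formula from Definition~\ref{def:cross-effects}, expanding both sides as alternating sums over subsets and matching terms. In a stable setting this should yield the maps up to canonical isomorphism, and coherence then follows from the associativity of the resulting subset bookkeeping.
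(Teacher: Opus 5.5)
Your plan is essentially the paper's own. The paper also defines the compositions by substituting cross-effects into the slots of $\mathrm{cr}_kF$ (its maps $\Psi_{n_1,\dots,n_k}$), takes the unit from $\mathbf 1\in P(1)$, and then checks the axioms. Using partial compositions $\circ_i$ instead of full ones makes no difference. However, the step you single out as the main obstacle is a genuine gap, and the paper does not close it either: it only asserts that $\Psi$ exists ``by functoriality and the universal property of cross-effects''.

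Put $F(B_1\oplus\cdots\oplus B_m)$, or $\mathrm{cr}_mF(B_1,\dots,B_m)$, into the $i$-th slot of $\mathrm{cr}_kF$. What you get is a multilinear piece of a cross-effect of a composite functor in which $F$ occurs \emph{twice}. By the paper's own chain rule (Theorem~\ref{thm:chain-rule}, which applies because hypothesis (1) makes $F\circ F$ defined), such substitutions are exactly the terms of $\partial^{\mathrm{spec}}F\circ_{\mathrm{op}}\partial^{\mathrm{spec}}F\cong\partial^{\mathrm{spec}}(F\circ F)$, not of $\partial^{\mathrm{spec}}F$. To land in $\partial^{\mathrm{spec}}_{k+m-1}F$ you need a natural transformation $F\circ F\Rightarrow F$. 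For the unit you need a natural transformation $\mathrm{Id}\Rightarrow F$. Together these form a monad structure on $F$; compare the role of $\mathrm{Id}\circ\mathrm{Id}=\mathrm{Id}$ for the operad of derivatives of the identity in Goodwillie calculus. Hypothesis (2) only moves tensor powers across $F$ and intertwines $\gamma_{F(A)}$ with $F(\gamma_A)$. It never collapses a nested $F(F(-))$, so it cannot ``let the result be read as a piece of $F$ of total arity $k+m-1$''. The inclusion--exclusion bookkeeping cannot repair this either. That formula is only an identity in a stable category or Grothendieck group, so matching terms gives at best equalities of classes, not natural, coherent morphisms. And what it computes is the cross-effect of the composite; this is exactly Step~5 of the chain-rule proof.

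The unit and the axiom checks inherit the same problem. The morphism $F(\mathbf 1_A)=\mathrm{id}_{F(A)}$ gives the identity \emph{endomorphism} of $\partial_1^{\mathrm{spec}}F$, not a map from the unit of the composition product into $\partial_1^{\mathrm{spec}}F$. The unit clause of Definition~\ref{def:operadic-compatibility} is a statement about the $P$-action, not the unit law $\eta\circ_1 x=x$ for your new compositions.

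In fact hypotheses (1)--(2) cannot produce a unit at all. Read $\partial_1^{\mathrm{spec}}F$ as the linear layer, as you do. The paper's Example~\ref{ex:quadratic-symmetric} shows that the derivatives of $F(A)=A\otimes A$ are concentrated in arity $2$. For $P$ the commutative operad, with the componentwise structure on $A\otimes A$, this $F$ satisfies (1) and (2). But then any unit $\mathbf 1_{\mathcal M}\to\partial_1^{\mathrm{spec}}F=0$ is zero, and $\eta\circ_1 x=x$ for $x\in\partial_2^{\mathrm{spec}}F$ forces $\partial_2^{\mathrm{spec}}F=0$, a contradiction.

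The detour through Theorem~\ref{thm:module-structure} also buys nothing. That structure consists of maps out of $\partial_k^{\mathrm{spec}}F\otimes P(n_1)\otimes\cdots\otimes P(n_k)$, whereas your $\circ_i$ never use the composition of $P$. So their associativity cannot reduce to associativity in $P$; it would have to come from associativity of the missing multiplication $F\circ F\Rightarrow F$.

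A correct version needs a monad structure on $F$ as an extra hypothesis. The compositions would then be defined as $\partial^{\mathrm{spec}}F\circ_{\mathrm{op}}\partial^{\mathrm{spec}}F\cong\partial^{\mathrm{spec}}(F\circ F)\to\partial^{\mathrm{spec}}F$.
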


\begin{proof}
We construct the operadic composition maps explicitly and verify the axioms.

\medskip

\noindent
\textbf{Step 1: From operad functor to cross-effect composition.}
Because $F$ is an operad functor, it respects the $P$-algebra structure on its values. This compatibility extends to cross-effects: for any $k \ge 1$, any $n_1,\dots,n_k \ge 1$, and any collection of inputs $\{A_{i,j}\}$ (with $i=1,\dots,k$, $j=1,\dots,n_i$), there is a natural transformation
\[
\Psi_{n_1,\dots,n_k} : 
\mathrm{cr}_kF\bigl( \mathrm{cr}_{n_1}F(A_{1,\bullet}), \dots, \mathrm{cr}_{n_k}F(A_{k,\bullet}) \bigr)
\;\longrightarrow\;
\mathrm{cr}_nF(A_{1,1},\dots,A_{k,n_k}),
\]
where $n = n_1 + \cdots + n_k$. 
Intuitively, this map substitutes the $n_i$-linear cross-effects of $F$ into the $k$-linear cross-effect of $F$, reflecting the fact that "a derivative of a derivative is a higher derivative." The existence of $\Psi$ follows from the functoriality of $F$ and the universal property of cross-effects, using the operad functor hypothesis to ensure that the substitution is coherent.

\medskip

\noindent
\textbf{Step 2: Passing to spectral derivatives.}
Recall that $\partial_m^{\mathrm{spec}}F = \mathrm{cr}_mF$ by definition. Applying the spectral derivative construction (which is functorial and preserves the relevant tensor products) to the natural transformation $\Psi$ yields the desired operadic composition map:
\[
\mu_{n_1,\dots,n_k} := \partial^{\mathrm{spec}}(\Psi) : 
\partial_k^{\mathrm{spec}}F \;\otimes\; \partial_{n_1}^{\mathrm{spec}}F \;\otimes\; \cdots \;\otimes\; \partial_{n_k}^{\mathrm{spec}}F
\;\longrightarrow\;
\partial_n^{\mathrm{spec}}F.
\]

\medskip

\noindent
\textbf{Step 3: Verification of operad axioms.}

\begin{enumerate}
   \item \noindent
\textbf{Associativity.}
Let $k \ge 1$, $m_1,\dots,m_k \ge 1$, and for each $1\le i\le k$,
let $n_{i,1},\dots,n_{i,m_i} \ge 1$.
Then the following diagram commutes:

\[
\begin{tikzcd}
\partial_k \otimes \bigotimes_{i=1}^k \partial_{m_i}
\otimes \bigotimes_{i=1}^k \bigotimes_{j=1}^{m_i} \partial_{n_{i,j}}
\ar[r, "\mathrm{reassoc}"] 
\ar[d, "\mathrm{id}\,\otimes\,\bigotimes_i \mu_{n_{i,1},\dots,n_{i,m_i}}"'] 
&
\bigl( \partial_k \otimes \bigotimes_{i=1}^k \partial_{m_i} \bigr)
\otimes \bigotimes_{i=1}^k \bigotimes_{j=1}^{m_i} \partial_{n_{i,j}}
\ar[d, "\mu_{m_1,\dots,m_k} \otimes \mathrm{id}"] 
\\
\partial_k \otimes \bigotimes_{i=1}^k \partial_{\sum_{j=1}^{m_i} n_{i,j}}
\ar[d, "\mu_{\sum_j n_{1,j},\dots,\sum_j n_{k,j}}"'] 
&
\partial_{\sum_{i=1}^k m_i}
\otimes \bigotimes_{i=1}^k \bigotimes_{j=1}^{m_i} \partial_{n_{i,j}}
\ar[d, "\mu_{\{n_{i,j}\}_{i,j}}"] 
\\
\partial_{\sum_{i=1}^k \sum_{j=1}^{m_i} n_{i,j}}
\ar[r, phantom, "\;=\joinrel=\joinrel=\joinrel=\;"] 
&
\partial_{\sum_{i=1}^k \sum_{j=1}^{m_i} n_{i,j}}
\end{tikzcd}
\]

The left vertical composite first contracts each inner block
$\partial_{m_i} \otimes \bigotimes_{j=1}^{m_i} \partial_{n_{i,j}}$
via $\mu_{n_{i,1},\dots,n_{i,m_i}}$, producing
$\partial_{\sum_j n_{i,j}}$, and then applies
$\mu_{\sum_j n_{1,j},\dots,\sum_j n_{k,j}}$.

The right vertical composite first contracts the outer layer
via $\mu_{m_1,\dots,m_k}$, producing $\partial_{\sum_i m_i}$,
and then applies $\mu$ to all entries $\{n_{i,j}\}_{i,j}$ simultaneously.

Both paths correspond to two different parenthesizations of the same
iterated operadic composition, and by coherence of the tensor product
(reassociation) together with the associativity axiom of $\mu$,
they yield the same morphism
\[
\partial_k \otimes \bigotimes_{i,j} \partial_{n_{i,j}}
\;\longrightarrow\;
\partial_{\sum_{i,j} n_{i,j}}.
\]

Hence the diagram commutes, establishing associativity.

    \item \textbf{Unit.} 
    The unit of the operad $\partial^{\mathrm{spec}}F$ is induced by the unit operation $\mathbf{1} \in P(1)$. Specifically, define $\eta : \mathbf{1}_{\mathcal{M}} \to \partial_1^{\mathrm{spec}}F(\mathbf{1})$ as the image of the unit under the structure map. The unit axioms require that for any $n \ge 1$,
    \[
    \mu_{1,n}(\eta \otimes \partial_n^{\mathrm{spec}}F) = \mathrm{id}_{\partial_n^{\mathrm{spec}}F} = \mu_{n,1}(\partial_n^{\mathrm{spec}}F \otimes \eta).
    \]
    These equalities hold because substituting the unit operation into an input slot does nothing—it corresponds to the identity substitution on that slot, which leaves the cross-effect unchanged.

    \item \textbf{Equivariance.} 
    The composition map $\mu_{n_1,\dots,n_k}$ must be equivariant under the actions of $\Sigma_{n_1} \times \cdots \times \Sigma_{n_k}$ (permuting inputs within each block) and of $\Sigma_n$ (permuting all $n$ inputs). This follows from the natural symmetry of the cross-effects $\mathrm{cr}_mF$ and the fact that the construction $\Psi$ is compatible with these symmetric group actions.
\end{enumerate}

\medskip

\noindent
\textbf{Step 4: Conclusion.}
We have defined composition maps $\mu_{n_1,\dots,n_k}$ on the symmetric sequence $\{\partial_n^{\mathrm{spec}}F\}_{n\ge 1}$ and verified the associativity, unit, and equivariance axioms. Hence $\partial^{\mathrm{spec}}F$ is an operad in $\mathcal{M}$.

This operad can be viewed as the \emph{derivative operad} of $F$, encoding its higher-order compositional structure.
\end{proof}

\medskip

\begin{remark}[Conceptual and classical connections]
\label{rem:module-structure-interpretation}

\noindent
\textbf{Conceptual interpretation.}
The operadic module structure established in Theorem~\ref{thm:module-structure} 
indicates that spectral derivatives are not independent formal symbols, 
but are linked through the composition laws of the operad $P$. 
In particular, higher-order derivatives are systematically generated from 
lower-order ones via iterated operadic compositions (Definition~\ref{def:operadic-compatibility}). 
This suggests that the collection $\{\partial_n^{\mathrm{spec}}F\}_{n\geq 0}$
may serve as a natural coordinate system --- or at least a generating system --- for 
admissible functors, in the sense that operadic composition reconstructs 
higher-order behavior from lower-order data.

\medskip

\noindent
\textbf{Relation to classical formulas.}
This structure can be viewed as a categorical generalization of classical 
composition rules such as the Faà di Bruno formula, where higher derivatives 
of a composite function are expressed in terms of lower-order derivatives:
\[
(f\circ g)^{(n)}(x) = \sum_{\pi\in\operatorname{Part}([n])} 
f^{(|\pi|)}(g(x)) \cdot \prod_{P\in\pi} g^{(|P|)}(x),
\]
with the sum running over set partitions of $\{1,\dots,n\}$.
In the present operadic setting, the operad $P$ encodes the precise combinatorial 
structure underlying such expansions, providing a unified algebraic framework 
for organizing higher-order interactions. The plethysm $\circ_{\mathrm{op}}$ 
(Definition~\ref{def:operadic-plethysm}) serves as the categorical machine that 
reproduces and extends this formula to the level of functors, as will be shown 
in the chain rule (Theorem~\ref{thm:chain-rule}).
\end{remark}

\medskip

\begin{example}[Identity Functor]
\label{ex:id-module}
Let $\mathrm{Id}: \mathsf{Alg}_P(\mathcal{M}) \to \mathcal{M}$ denote the
underlying-object functor. Since $\mathrm{Id}$ is linear (its cross-effects vanish 
for $n \ge 2$; see Example~\ref{ex:identity-spectral-polynomial}), its spectral 
derivative sequence is concentrated in arity one:
\[
\partial_1^{\mathrm{spec}}\mathrm{Id} \cong \mathrm{Id},
\qquad
\partial_n^{\mathrm{spec}}\mathrm{Id} \cong 0 \quad (n\ge 2).
\]
The induced right $P$-module structure (Theorem~\ref{thm:module-structure}) is the 
canonical one coming from operadic substitution: for any operation $\gamma \in P(k)$ 
and any $P$-algebra $A$, the module map sends $\gamma$ to the structure map 
$\gamma_A : A^{\otimes k} \to A$, composed with the identifications 
$\partial_1^{\mathrm{spec}}\mathrm{Id}(A) \cong A$. In this sense, the identity 
functor provides the basic linear example of a spectral derivative module.
\end{example}

\begin{example}[Operadic Spectrum Functor]
\label{ex:spectrum-module}
The operadic spectrum functor
\[
\sigma_P: \mathsf{Alg}_P(\mathcal{M}) \to \mathcal{M}
\]
is operadically compatible by construction (Part~I, Theorem~0.5). Its first 
spectral derivative may be viewed as the \emph{linearized operadic spectrum} 
(see the discussion following Lemma~\ref{lem:spectral-scaling}), while its higher 
spectral derivatives measure the nonlinear interaction between the Hochschild 
complex $\mathrm{Hoch}_{\mathcal{M}}(A)$ and the operadic residue 
$\mathcal{O}_P^{\mathrm{res}}$. Thus the right $P$-module structure on 
$\partial^{\mathrm{spec}}\sigma_P$ (guaranteed by Theorem~\ref{thm:module-structure}) 
records how spectral data transform under iterated operadic composition.
\end{example}

\begin{example}[Formal Exponential Functor]
\label{ex:exp-module}
Consider the formal exponential functor
\[
\mathrm{Exp}(A)
=
\bigoplus_{k\ge 0} \frac{1}{k!}\, A^{\otimes k},
\]
or, in a symmetric monoidal setting, its symmetric-power version
\[
\mathrm{Exp}(A)
=
\bigoplus_{k\ge 0} \frac{1}{k!}\, \mathrm{Sym}^k(A),
\]
where convergence is understood in the norm topology when 
$\|\sigma_P(A)\|$ is finite (see Example~\ref{ex:exp-spectral-analytic}). 
Its $k$-th homogeneous layer (Definition~\ref{def:spectral-derivatives}) is 
governed by the symmetric $k$-fold tensor power:
\[
D_k^{\mathrm{spec}}\mathrm{Exp}(A) = \frac{1}{k!}\, A^{\otimes k},
\qquad\text{or}\qquad
\frac{1}{k!}\, \mathrm{Sym}^k(A),
\]
depending on the chosen symmetrization convention. Consequently, the spectral 
derivative sequence is modeled by
\[
\partial_k^{\mathrm{spec}}\mathrm{Exp}
\;\sim\;
\frac{1}{k!}\,\mathrm{Sym}^k,
\]
up to the normalization convention used for spectral derivatives (i.e., before 
or after diagonal evaluation). The induced $P$-module structure is obtained 
from operadic substitution of symmetric powers, categorifying the combinatorial 
rules underlying exponential generating functions and Faà di Bruno type formulas 
(cf. Theorem~\ref{thm:chain-rule}).
\end{example}

\medskip
\begin{remark}[Spectral Taylor structure and algebraic control]
\label{rem:module-structure-algebraic}
The right $P$-module structure on the spectral derivatives
$\{\partial_n^{\mathrm{spec}}F\}_{n\geq 0}$ (Theorem~\ref{thm:module-structure}) 
shows that the spectral Taylor tower is governed by an underlying algebraic system.
In particular, the expansion of $F$ is controlled by this $P$-module together
with operadic composition, suggesting that higher-order behavior is systematically
generated from lower-order data. This algebraic organization is foundational 
for the chain rule (Section~\ref{sec:chain-rule}) and the reconstruction theorem 
(Section~\ref{sec:reconstruction}).

\medskip

\noindent
\textbf{Dependence on the operadic residue.}
This structure relies essentially on the operadic residue
$\mathcal{O}_P^{\mathrm{res}}$ introduced in Part~I~\cite{ChangSOC1}.
The composition maps $\rho_{n_1,\dots,n_k}$ are defined via the operadic 
compatibility maps $\Phi_{\gamma_1,\dots,\gamma_k}$, which themselves depend 
on the universal property of the residue (Theorem~\ref{thm:residue-universality}).
Without $\mathcal{O}_P^{\mathrm{res}}$, the spectral derivatives would not admit
a coherent $P$-module structure, and the operadic formulation of the chain rule
would break down. Thus, the algebraic structure observed here is a direct 
consequence of the minimal correction mechanism developed in Part~I.
\end{remark}

\begin{remark}[Comparison with classical calculus]
\label{rem:module-structure-comparison}
In classical calculus, derivatives are organized primarily by linear and
symmetric structures (e.g., the Faà di Bruno formula encodes combinatorial 
patterns via set partitions), but no explicit operadic module structure is present.
In contrast, Spectral Operadic Calculus equips derivatives with a right
$P$-module structure that explicitly records how they interact under composition:

\[
\begin{array}{|c|c|}
\hline
\text{Classical Calculus} & \text{Spectral Operadic Calculus} \\
\hline
\text{Implicit combinatorics (e.g., Faà di Bruno)} & \text{Explicit operadic structure} \\
\text{Derivatives as scalars/operators} & \text{Derivatives with } P\text{-action} \\
\text{Chain rule via algebraic formulas} & \text{Chain rule via operadic plethysm} \\
\hline
\end{array}
\]

The passage from symmetric sequences (mere permutation symmetry) to operadic 
modules (full compositional structure) reflects a refinement that is essential 
for capturing higher-order coherence in operadic settings. This strengthening 
ultimately enables the reconstruction theorem and the equivalence of categories 
$\mathsf{SpecAn} \simeq \mathsf{DerAlg}$ (Section~\ref{sec:reconstruction}).
\end{remark}

\medskip

\noindent
\textbf{Summary of algebraic hierarchy.}
The spectral derivative construction organizes admissible functors into a hierarchy
of algebraic structures:

\[
\begin{tikzcd}
\text{Admissible Functor } F \ar[r, "\partial^{\mathrm{spec}}"] &
\text{Symmetric Sequence}
\ar[d, "\text{+ operadic compatibility}"] \\
& \text{Right } P\text{-Module}
\ar[d, "\text{+ multiplicative compatibility}"] \\
& \text{Operad-like structure}
\end{tikzcd}
\]

\begin{itemize}
    \item At the base level, every admissible functor determines a symmetric sequence
    $\{\partial_n^{\mathrm{spec}}F\}_{n\geq 0}$ (Theorem~\ref{thm:symmetric-sequence}).

    \item If $F$ is operadically compatible (Definition~\ref{def:operadic-compatibility}), 
    these derivatives naturally acquire the structure of a right $P$-module 
    (Theorem~\ref{thm:module-structure}).

    \item If, in addition, $F$ preserves operadic composition in a multiplicative sense,
    the derivative sequence admits an induced operad (or operad-like) structure 
    (see Corollary~\ref{cor:operad-structure} for a precise statement under suitable hypotheses).
\end{itemize}

\medskip

With this algebraic hierarchy in place, the next goal is to establish the operadic
chain rule (Section~\ref{sec:chain-rule}). This will show that the assignment
\[
F \longmapsto \partial^{\mathrm{spec}}F
\]
is compatible with composition, and in particular gives rise to a (lax) monoidal
functor from the category of admissible functors (under composition) to the
category of symmetric sequences (under plethysm).

\section{The Operadic Chain Rule}
\label{sec:chain-rule}

We now develop the computational core of Spectral Operadic Calculus, 
namely the operadic chain rule governing the behavior of spectral derivatives 
under composition of functors. While the previous sections established the 
algebraic structure of derivatives as a $P$-module, the present section shows 
how these derivatives interact under composition, providing a complete 
operadic analogue of the classical chain rule.

\medskip

\noindent
\textbf{The classical analogy.}
In classical calculus, the chain rule $(f \circ g)'(x) = f'(g(x)) \cdot g'(x)$ is 
the fundamental tool for computing derivatives of composite functions. The Faà 
di Bruno formula generalizes this to higher derivatives, expressing the $n$-th 
derivative of a composite in terms of the derivatives of $f$ and $g$, with 
combinatorial coefficients given by the partition structure.

\medskip

\noindent
\textbf{The operadic generalization.}
In Spectral Operadic Calculus, the role of the $n$-th derivative is played by 
the spectral derivative $\partial_n^{\mathrm{spec}}F$, a symmetric multilinear 
functor. The composition of functors $F \circ G$ then has spectral derivatives 
given by an operadic analogue of the Faà di Bruno formula:
\[
\partial^{\mathrm{spec}}(F \circ G) \;\cong\; \partial^{\mathrm{spec}}F \;\circ_{\mathrm{op}}\; \partial^{\mathrm{spec}}G,
\]
where $\circ_{\mathrm{op}}$ denotes \emph{operadic plethysm}—a composition operation 
on symmetric sequences that sums over partitions of inputs, substitutes derivatives 
of $G$ into those of $F$, and then symmetrizes.

\medskip

\noindent
As a consequence, we show that spectrally analytic functors are stable under 
composition, completing the analytic-algebraic framework of the theory.

\medskip

We now begin with the definition of operadic plethysm.

\subsection{Operadic Plethysm}
\label{subsec:plethysm}

We now introduce the algebraic operation that governs the composition of spectral derivatives, namely \emph{operadic plethysm}. This construction provides the appropriate notion of composition for symmetric sequences and serves as the underlying combinatorial and algebraic mechanism for the operadic chain rule. It is the precise categorical encoding of the classical Faà di Bruno formula.

\medskip

\begin{definition}[Operadic Plethysm]
\label{def:operadic-plethysm}
Let $A=\{A_k\}_{k\geq 0}$ and $B=\{B_n\}_{n\geq 0}$ be symmetric sequences
in a cocomplete symmetric monoidal category $\mathcal{M}$. Their
\emph{operadic plethysm} is the symmetric sequence
$A\circ_{\mathrm{op}}B = \{(A\circ_{\mathrm{op}}B)_n\}_{n\geq 0}$ defined by
\[
(A\circ_{\mathrm{op}}B)_n
:=
\bigoplus_{k\geq 0}
A_k \otimes_{S_k}
\left(
\bigoplus_{\substack{n_1+\cdots+n_k=n\\ n_i\geq 1}}
\operatorname{Ind}^{S_n}_{S_{n_1}\times\cdots\times S_{n_k}}
\bigl(
B_{n_1}\otimes\cdots\otimes B_{n_k}
\bigr)
\right).
\]
Here $S_k$ acts on $A_k$ by its symmetric-sequence structure and on the inner
sum by permuting the $k$ blocks. The induction functor
$\operatorname{Ind}^{S_n}_{H}(-)= \mathbb{C}[S_n]\otimes_{\mathbb{C}[H]}(-)$
(defined in the appropriate enriched sense) sends a $H$-object to an
$S_n$-object, giving a natural $S_n$-action that encodes all ways to assign
$n$ distinct inputs to the $k$ blocks.
\end{definition}

\medskip

\noindent
\textbf{Equivalent combinatorial description (unordered partitions).}
The above sum is isomorphic to a sum over set partitions:
\[
(A\circ_{\mathrm{op}}B)_n
\;\cong\;
\bigoplus_{k\geq 0}
\left(
A_k \otimes
\bigoplus_{\substack{\pi\in \mathrm{Part}([n])\\ |\pi|=k}}
\bigotimes_{P\in \pi} B_{|P|}
\right)_{S_k},
\]
where $\pi$ ranges over \emph{unordered} partitions of $[n]=\{1,\dots,n\}$ into
$k$ nonempty blocks. The coinvariants $(\cdot)_{S_k}$ identify different orderings of
the same collection of blocks, removing the artificial ordering introduced by
labeling the blocks $1,\dots,k$.

\medskip

\noindent
\textbf{Interpretation: substitution of operations.}
This definition encodes the substitution of operations categorically:
\begin{itemize}
    \item A term with block sizes $n_1,\dots,n_k$ represents inserting $k$ operations from $B$ (of arities $n_1,\dots,n_k$) into the $k$ input slots of a $k$-ary operation from $A$.
    \item The induction to $S_n$ accounts for all labelings of the $n$ distinct inputs.
    \item The $S_k$-coinvariants remove the artificial ordering of the inserted blocks, reflecting that the input slots of $A_k$ are unordered up to the $S_k$-action.
\end{itemize}
Thus, operadic plethysm gives the combinatorial and algebraic machine underlying the Faà di Bruno formula in the operadic setting (see Theorem~\ref{thm:chain-rule}).

\medskip

\begin{remark}
If the symmetric sequences admit $0$-ary components ($A_0$ or $B_0$), the condition $n_i \geq 1$ may be relaxed to $n_i \geq 0$ to include nullary operations. In the reduced setting where $B_0 = 0$ (common in functor calculus), the condition $n_i \geq 1$ is appropriate.
\end{remark}

\medskip

\begin{remark}[Connection with the classical Faà di Bruno formula]
\label{rem:plethysm-faa-di-bruno}
To see the relation with classical calculus, consider the case where
$\mathcal{M}$ is the category of vector spaces and the symmetric sequences
$A=\{A_n\}$ and $B=\{B_n\}$ encode spaces of multilinear maps.
Then $(A\circ_{\mathrm{op}}B)_n$ organizes multilinear expressions built by
substituting $B$-type multilinear maps into an $A$-type multilinear map.
Its indexing by partitions of $[n]$ reproduces the same combinatorial pattern
appearing in the Faà di Bruno formula for the $n$-th derivative of a composite.
Thus, operadic plethysm may be regarded as a conceptual, coordinate-free
formulation of the Faà di Bruno combinatorics.
\end{remark}

\begin{remark}[Functoriality and monoidal structure]
\label{rem:plethysm-monoidal}
The operation $\circ_{\mathrm{op}}$ is functorial in both variables and, under the
standard cocompleteness and compatibility assumptions on $\mathcal{M}$ (see 
Definition~\ref{def:normed-category}), defines a monoidal structure on the 
category of symmetric sequences $\mathsf{SymSeq}(\mathcal{M})$.
The unit object is the identity symmetric sequence $I$ given by
\[
I_1 = \mathbf{1}_{\mathcal{M}},
\qquad
I_n = 0 \quad (n \neq 1).
\]
A key property, essential for the chain rule (Theorem~\ref{thm:chain-rule}), 
is the associativity isomorphism
\[
(A \circ_{\mathrm{op}} B) \circ_{\mathrm{op}} C
\;\cong\;
A \circ_{\mathrm{op}} (B \circ_{\mathrm{op}} C),
\]
which reflects the associativity of substituting operations, or equivalently,
the associativity of refining set partitions.
\end{remark}

\begin{remark}[Relation to operads and modules]
\label{rem:plethysm-operads}
If $P$ is an operad, then its composition maps are encoded by a plethysm morphism
\[
\gamma: P \circ_{\mathrm{op}} P \longrightarrow P.
\]
Thus, plethysm provides the ambient monoidal structure in which operads are
monoid objects. Similarly, a right $P$-module $M$ is equipped with a structure map
\[
M \circ_{\mathrm{op}} P \longrightarrow M,
\]
satisfying the usual associativity and unit compatibility conditions. This 
perspective will be essential when we prove the chain rule, where the 
composition of functors corresponds to plethysm of their derivative sequences.
\end{remark}

\medskip

\noindent
\textbf{Connection to spectral derivatives.}
Recall from Theorem~\ref{thm:symmetric-sequence} that the collection of spectral
derivatives
\[
\partial^{\mathrm{spec}}F
=
\{\partial_n^{\mathrm{spec}}F\}_{n\geq 0}
\]
forms a symmetric sequence.
Operadic plethysm therefore provides the natural operation for describing the
derivatives of composite functors.
As will be proved in Theorem~\ref{thm:chain-rule}, one has the operadic chain rule
\[
\partial^{\mathrm{spec}}(F\circ G)
\;\cong\;
\partial^{\mathrm{spec}}F
\circ_{\mathrm{op}}
\partial^{\mathrm{spec}}G .
\]
Thus, the spectral derivative construction is compatible with composition:
it defines a \emph{lax monoidal} functor from the category of admissible functors 
(under composition) to the monoidal category of symmetric sequences (under plethysm).

\medskip

\noindent
\textbf{Enriched analytic setting.}
In the normed Banach categories used in this work
(Definition~\ref{def:normed-category}), the direct sums and colimits appearing
in plethysm are understood in their completed form, for example via completed
$\ell^1$-direct sums and projective tensor products.
This ensures that plethysm remains inside the analytic category and that the
chain-rule isomorphism is \emph{bounded with controlled norm constants}
(i.e., both the isomorphism and its inverse are bounded linear maps whose
norms are controlled by the admissibility constants of $F$ and $G$).
Consequently, the quantitative estimates developed in
Section~\ref{sec:convergence} are preserved under operadic composition.

\medskip

With operadic plethysm established, we can now formulate and prove the full
operadic chain rule. This result shows that the combinatorial structure encoded
by $\circ_{\mathrm{op}}$ is precisely the structure needed to express
higher-order spectral derivatives of composed functors, forming the
computational core of Spectral Operadic Calculus.

\subsection{Faà di Bruno Chain Rule}
\label{subsec:fa-di-bruno}

We now arrive at the computational heart of Spectral Operadic Calculus: the operadic chain rule. This theorem establishes that the spectral derivative functor \(\partial^{\mathrm{spec}}\) is monoidal, transforming composition of admissible functors into plethysm of their derivative sequences. This is the precise operadic generalization of the classical Faà di Bruno formula.

\medskip

Let $F, G: \mathcal{C} \to \mathcal{M}$ be admissible functors such that the 
composite $F \circ G$ is defined and admissible. We work in the normed 
symmetric monoidal category $\mathcal{M}$ (Definition~\ref{def:normed-category}), 
and all isomorphisms are understood to be natural in the inputs and to be 
bounded isomorphisms with controlled norm constants.

\medskip

\begin{theorem}[Operadic Faà di Bruno Chain Rule]
\label{thm:chain-rule}
Let $F, G: \mathsf{Alg}_P(\mathcal{M}) \to \mathcal{M}$ be admissible functors.
Then there is a canonical natural isomorphism of symmetric sequences
\[
\partial^{\mathrm{spec}}(F \circ G)
\;\cong\;
\partial^{\mathrm{spec}}F
\circ_{\mathrm{op}}
\partial^{\mathrm{spec}}G.
\]
More explicitly, for any $P$-algebras $A_1,\dots,A_n$,
\[
\partial_n^{\mathrm{spec}}(F \circ G)(A_1,\dots,A_n)
\;\cong\;
\bigoplus_{k \geq 0}
\;
\bigoplus_{\pi \in \mathrm{Part}([n])}
\;
\partial_k^{\mathrm{spec}}F\Bigl(
\partial_{|P_1|}^{\mathrm{spec}}G(A_{P_1}),\,
\dots,\,
\partial_{|P_k|}^{\mathrm{spec}}G(A_{P_k})
\Bigr),
\]
where the sum is over all unordered partitions $\pi = \{P_1,\dots,P_k\}$ of the 
set $[n] = \{1,\dots,n\}$ into $k$ nonempty blocks, and $A_{P_i}$ denotes the 
tuple $(A_j)_{j \in P_i}$.
\end{theorem}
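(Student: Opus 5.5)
The plan is to work in the split, stable setting in which the paper's earlier results identify each spectrally analytic functor with its homogeneous expansion. By Proposition~\ref{prop:derivative-cross-effect} and Corollary~\ref{cor:homogeneous-layers}, we may write
\[
G(X)\simeq \bigoplus_{m\ge 0} D_m^{\mathrm{spec}}G(X), \qquad D_m^{\mathrm{spec}}G(X)=\bigl(\partial_m^{\mathrm{spec}}G(X,\dots,X)\bigr)_{\mathrm{sym}},
\]
and similarly for $F$. The proof then compares the multilinear parts of both sides. In outline:
\begin{enumerate}
\item Substitute the expansion of $G$ into that of $F$.
\item Expand the composite multilinearly.
\item Read off $\mathrm{cr}_n(F\circ G)$ by inclusion--exclusion, using Definition~\ref{def:cross-effects}.
\end{enumerate}
The identification $\partial_n^{\mathrm{spec}}=\mathrm{cr}_n$ from Definition~\ref{def:spectral-derivatives} then lets us translate the result into a statement about spectral derivatives.

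\textbf{Step 1: reduction to multilinear inputs.} Evaluate $F\circ G$ on $X=A_1\oplus\cdots\oplus A_n$. Multilinearity of $\partial_m^{\mathrm{spec}}G$ expands $G(X)$ as a sum over multisets of indices drawn from $[n]$. The $n$-th cross-effect isolates exactly those summands in which every $A_j$ appears at least once. Terms in which some $A_j$ appears more than once are non-multilinear; they cancel in the cross-effect by the multi-reducedness property, because inclusion--exclusion kills every term missing some index. Since we extract $\mathrm{cr}_n$ of the composite and not of $G$ alone, the relevant multilinear terms are those in which $F$ receives $k$ arguments, each itself a multilinear $G$-term in a block $P_i\subseteq[n]$, with the blocks disjoint and covering $[n]$.

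\textbf{Step 2: partition bookkeeping.} Expanding $F(\bigoplus_m D_m^{\mathrm{spec}}G(X))$ through $\partial_k^{\mathrm{spec}}F$, again by multilinearity, gives terms
\[
\partial_k^{\mathrm{spec}}F\bigl(\partial_{|P_1|}^{\mathrm{spec}}G(A_{P_1}),\dots,\partial_{|P_k|}^{\mathrm{spec}}G(A_{P_k})\bigr),
\]
one for each ordered $k$-tuple of blocks. The $\Sigma_k$-symmetry of $\partial_k^{\mathrm{spec}}F$ (Theorem~\ref{thm:symmetric-sequence}) identifies ordered tuples that differ by a permutation. Passing to $S_k$-coinvariants converts these into unordered set partitions $\pi\in\mathrm{Part}([n])$. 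Matching this with the set-partition description of $\circ_{\mathrm{op}}$ following Definition~\ref{def:operadic-plethysm} gives the displayed componentwise formula.

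\textbf{Step 3: equivariance and naturality.} Check that the isomorphism is $\Sigma_n$-equivariant: permuting the $A_j$ permutes the partitions, matching the $\operatorname{Ind}^{S_n}$ action. Check that it is natural in the $A_j$ using Remark~\ref{rem:derivative-functoriality}. Boundedness in the Banach setting follows from the completed $\ell^1$ sums and the submultiplicativity in Definition~\ref{def:normed-category}.

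The main obstacle is Step 1 in the non-split generality of the statement. There, $\mathrm{cr}_n$ is a total homotopy fiber, so one must justify interchanging $F$ with the infinite sum over layers of $G$. One must also justify that the cross-effect of the composite is computed termwise. I would first prove the formula for $F$ and $G$ spectral polynomials, where all sums are finite and the cube computation is purely combinatorial: iterate the $2$-variable case $\mathrm{cr}_2$ and induct on $n$ via the standard recursion $\mathrm{cr}_{n+1}F(A_1,\dots,A_{n+1})\simeq \mathrm{cr}_2(\mathrm{cr}_nF(-,A_2,\dots,A_n))(A_1,A_{n+1})$. I would then pass to spectrally analytic $F$ and $G$ by truncating with $P_N^{\mathrm{spec}}$ and taking the norm limit, using Theorem~\ref{thm:quantitative-convergence} for uniform tail control on a spectral ball. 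This requires the hypothesis that $\|\sigma_P(G(A))\|$ lies within $R_F$, which I expect must be added for the limit to exist.
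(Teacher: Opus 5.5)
Your strategy matches the paper's proof: expand $G$ in layers, substitute into the layers of $F$, expand multilinearly, extract $\mathrm{cr}_n$ by inclusion--exclusion, then pass to $\Sigma_k$-coinvariants. The extra hypotheses you flag are genuinely needed. The paper's Steps 3--4 invoke spectral analyticity although the statement assumes only admissibility, and the paper never controls $\sigma_P(G(A))$.

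\textbf{The gap is in your Step 1.} Inclusion--exclusion over $S\subseteq[n]$ removes exactly the summands that \emph{miss} some index. A summand in which $A_1$ occurs twice and $A_2$ once misses nothing, so it survives; multi-reducedness says nothing about it. Hence $\mathrm{cr}_n$ extracts the part of multidegree $\ge(1,\dots,1)$, not $=(1,\dots,1)$.

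With $\partial_m^{\mathrm{spec}}=\mathrm{cr}_m$ one has $G(A_1\oplus\cdots\oplus A_n)\cong G(0)\oplus\bigoplus_{\emptyset\ne S\subseteq[n]}\mathrm{cr}_{|S|}G(A_S)$. Consequently $\mathrm{cr}_n(F\circ G)(A_1,\dots,A_n)$ is a sum over \emph{covers} of $[n]$ by distinct nonempty subsets, overlaps allowed, not over partitions.

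Concretely, take $P=\mathbb I$ on finite-dimensional vector spaces, $F=G=(-)^{\otimes 2}$, and $A_1,A_2$ one-dimensional. Then $\mathrm{cr}_2(F\circ G)(A_1,A_2)$ has dimension $2^4-2=14$. The displayed right-hand side gives $\dim\mathrm{cr}_1F(\mathrm{cr}_2G(A_1,A_2))+\dim\mathrm{cr}_2F(A_1^{\otimes 2},A_2^{\otimes 2})=4+2=6$. The missing $8$ dimensions come from the overlapping covers $\{\{1\},\{1,2\}\}$ and $\{\{2\},\{1,2\}\}$.

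So with $\partial_n^{\mathrm{spec}}=\mathrm{cr}_n$ the identity is false. The finite polynomial computation you propose as a first step would exhibit exactly these extra terms. Your appeal to multilinearity of $\partial_m^{\mathrm{spec}}G$ is also inconsistent with $\partial_m^{\mathrm{spec}}=\mathrm{cr}_m$, since $\mathrm{cr}_1G(A)=G(A)-G(0)$ is not linear. The paper's Step 5 collapses nested partitions to a single partition in the same way, without handling repeated indices, so it offers no repair.

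\textbf{The missing idea is a genuine multilinearization.} Let $\partial_n$ be the component of $\mathrm{cr}_n$ of multidegree exactly $(1,\dots,1)$. For example, take the coefficient of $t_1\cdots t_n$ in $\mathrm{cr}_nF(t_1A_1,\dots,t_nA_n)$, using scaling as in Lemma~\ref{lem:spectral-scaling}; this is the coefficient symmetric sequence, with $D_nF(A)\cong(\partial_nF(A,\dots,A))_{\Sigma_n}$.

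With this definition your argument goes through, and for the right reason. Multidegrees add under substitution into the multilinear $\partial_kF$, so no $G$-term with a repeated index and no pair of overlapping blocks can reach multidegree $(1,\dots,1)$. Only disjoint blocks covering $[n]$ survive, and the $\Sigma_k$-coinvariants turn ordered tuples into unordered partitions.

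You must also assume $G(0)=0$, or expand $F$ at $G(0)$ rather than at $0$. Your own expansion contains $D_0^{\mathrm{spec}}G=G(0)$, and slots of $\partial_kF$ filled by $G(0)$ produce terms outside the nonempty-block formula. For $F=(-)^{\otimes 2}$ and $G=C\oplus\mathrm{Id}$, the left side contains $C\otimes A\oplus A\otimes C$ in arity one, and the right side does not. The paper's device of replacing $G$ by $G-G(0)$ does not help, because it changes $F\circ G$.
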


\begin{proof}
We prove the theorem by establishing an isomorphism between the two sides
that is natural in the inputs $A_1,\dots,A_n$ and compatible with symmetric
group actions. The proof proceeds in seven steps.

\medskip

\noindent
\textit{Step 1: Reduction to cross-effects.}
By Proposition~\ref{prop:derivative-cross-effect}, for any admissible functor $H$,
$\partial_n^{\mathrm{spec}}H = \mathrm{cr}_nH$. Thus, it suffices to prove
\[
\mathrm{cr}_n(F \circ G)(A_1,\dots,A_n)
\;\cong\;
\bigl(\partial^{\mathrm{spec}}F \circ_{\mathrm{op}} \partial^{\mathrm{spec}}G\bigr)_n(A_1,\dots,A_n).
\]

\medskip

\noindent
\textit{Step 2: Inclusion-exclusion expansion of the cross-effect.}
The cross-effect $\mathrm{cr}_n(F \circ G)$ is defined as the total homotopy fiber
of the $n$-cube $S \mapsto (F \circ G)(\bigoplus_{i \in S} A_i)$. In any stable
or additive setting where finite coproducts are preserved, this total fiber is
given by the inclusion-exclusion formula
\[
\mathrm{cr}_n(F \circ G)(A_1,\dots,A_n)
=
\sum_{S \subseteq [n]} (-1)^{n-|S|}
\; (F \circ G)\!\left( \bigoplus_{i \in S} A_i \right),
\]
where the sum is taken in the Grothendieck group of $\mathcal{M}$ (or, in a
Banach-enriched setting, as a norm-convergent sum). For reduced functors
($F(0)=0$, $G(0)=0$), the term $S = \emptyset$ vanishes; for non-reduced
functors, one must first replace $F$ and $G$ by their reduced versions
$F(A) - F(0)$, which does not affect the spectral derivatives for $n \ge 1$.
Thus, we may assume without loss of generality that $F$ and $G$ are reduced,
so the formula simplifies to a sum over nonempty $S$, but we retain the full
sum for notational convenience.

Explicitly,
\[
\mathrm{cr}_n(F \circ G)(A_1,\dots,A_n)
=
\sum_{S \subseteq [n]} (-1)^{n-|S|}
\; F\!\left( G\!\left( \bigoplus_{i \in S} A_i \right) \right).
\tag{1}
\]

\medskip

\noindent
\textit{Step 3: Spectral Taylor expansion of $F$ inside the sum.}
For each fixed $S \subseteq [n]$, set $X_S := G(\bigoplus_{i \in S} A_i)$.
Since $F$ is spectrally analytic (Definition~\ref{def:spectral-analyticity}),
it admits a convergent spectral Taylor expansion at $0$:
\[
F(X_S) = \sum_{k=0}^{\infty} D_k^{\mathrm{spec}}F(X_S),
\]
where $D_k^{\mathrm{spec}}F(X_S) = \partial_k^{\mathrm{spec}}F(X_S,\dots,X_S)_{\mathrm{sym}}$.
However, for the purpose of obtaining a multilinear expansion in the $A_i$,
it is more convenient to use the full multilinear form of the Taylor series,
which expresses $F(Y)$ as a sum over partitions of the formal inputs when
$Y$ is itself a coproduct. Concretely, for any finite set $S$ and any
collection $\{X_j\}_{j \in S}$ of $P$-algebras, repeated application of
the cross-effect expansion yields
\[
F\!\left( \bigoplus_{j \in S} X_j \right)
=
\sum_{k \ge 0}
\;
\sum_{\pi \in \mathrm{Part}(S)}
\;
\partial_k^{\mathrm{spec}}F\!\left(
\bigoplus_{j \in P_1} X_j,\,
\dots,\,
\bigoplus_{j \in P_k} X_j
\right),
\tag{2}
\]
where $\pi = \{P_1,\dots,P_k\}$ runs over all unordered partitions of $S$
into $k$ nonempty blocks. This formula is a direct consequence of the
definition of cross-effects and the multilinearity of $\partial_k^{\mathrm{spec}}F$;
it holds as an identity of convergent series in the norm topology when the
spectral radii of the $X_j$ are sufficiently small.

Applying (2) with $X_j = G(A_j)$ (noting that $G(\bigoplus_{i \in S} A_i)$
is not generally equal to $\bigoplus_{i \in S} G(A_i)$, so we must be careful)
is not directly valid. Instead, we first observe that the argument of $F$ in (1)
is $G(\bigoplus_{i \in S} A_i)$, not a coproduct of $G(A_i)$. Therefore,
we cannot directly apply (2) with $X_j = G(A_j)$. This is a subtle but
important point: the Taylor expansion of $F$ is applied to a single object
$G(\bigoplus_{i \in S} A_i)$, which is not a priori a coproduct. To proceed,
we use the fact that the spectral Taylor expansion of $F$ at $0$ is given by
the series $\sum_{k \ge 0} \partial_k^{\mathrm{spec}}F(\underbrace{Y,\dots,Y}_{k\text{ times}})$,
evaluated at $Y = G(\bigoplus_{i \in S} A_i)$. But this expresses $F(G(\bigoplus_{i \in S} A_i))$
directly in terms of $G(\bigoplus_{i \in S} A_i)$, not in terms of the $A_i$.
To obtain an expression in terms of the $A_i$, we must further expand
$G(\bigoplus_{i \in S} A_i)$ using the Taylor expansion of $G$. This is the
content of Step 4.

\medskip

\noindent
\textit{Step 4: Spectral Taylor expansion of $G$.}
For each $S \subseteq [n]$, consider $G(\bigoplus_{i \in S} A_i)$. Since $G$
is spectrally analytic, it admits a convergent spectral Taylor expansion:
\[
G\!\left( \bigoplus_{i \in S} A_i \right)
=
\sum_{m \ge 0}
\;
\sum_{\tau \in \mathrm{Part}(S)}
\;
\partial_m^{\mathrm{spec}}G\!\left(
\bigoplus_{j \in Q_1} A_j,\,
\dots,\,
\bigoplus_{j \in Q_m} A_j
\right),
\tag{3}
\]
where $\tau = \{Q_1,\dots,Q_m\}$ runs over partitions of $S$ into $m$ nonempty
blocks. For $m = 0$, the term is $G(0)$, which vanishes if $G$ is reduced;
for $m \ge 1$, each block $Q$ contributes a direct sum $\bigoplus_{j \in Q} A_j$,
which is an object of $\mathcal{C}$. The expansion converges absolutely for
inputs with $\|\sigma_P(A_i)\| < R_G$ (see Theorem~\ref{thm:quantitative-convergence}).

Now substitute (3) into (1). For each $S \subseteq [n]$, we have
\[
F\!\left( G\!\left( \bigoplus_{i \in S} A_i \right) \right)
=
F\!\left( \sum_{m \ge 0}
\;
\sum_{\tau \in \mathrm{Part}(S)}
\;
\partial_m^{\mathrm{spec}}G\!\left(
\bigoplus_{j \in Q_1} A_j,\,
\dots,\,
\bigoplus_{j \in Q_m} A_j
\right) \right).
\]
However, $F$ is not linear, so we cannot simply pull the sum out of $F$.
Instead, we must use the fact that the Taylor expansion of $F$ is a power series,
and that the argument of $F$ is itself a convergent series in the norm topology.
By the continuity of the Taylor series (Theorem~\ref{thm:quantitative-convergence}),
we may expand $F$ around $0$ and then substitute the series for its argument,
collecting terms by total degree. This is the analytic analogue of the
composition of formal power series.

More concretely, write $Y_S := G(\bigoplus_{i \in S} A_i)$ and its expansion
$Y_S = \sum_{m \ge 0} Y_S^{(m)}$, where $Y_S^{(m)}$ denotes the $m$-th
homogeneous part from (3). Then
\[
F(Y_S) = \sum_{k \ge 0} \partial_k^{\mathrm{spec}}F(\underbrace{Y_S,\dots,Y_S}_{k\text{ times}})
= \sum_{k \ge 0} \partial_k^{\mathrm{spec}}F\!\left(
\sum_{m_1 \ge 0} Y_S^{(m_1)},\,
\dots,\,
\sum_{m_k \ge 0} Y_S^{(m_k)}
\right).
\]
By multilinearity of $\partial_k^{\mathrm{spec}}F$ and absolute convergence of
the series (for sufficiently small spectral radii), we may expand the
$k$-fold multilinear map as a sum over $k$-tuples $(m_1,\dots,m_k)$:
\[
F(Y_S) = \sum_{k \ge 0} \sum_{m_1,\dots,m_k \ge 0}
\partial_k^{\mathrm{spec}}F\!\left(
Y_S^{(m_1)},\dots,Y_S^{(m_k)}
\right).
\]
Now each $Y_S^{(m_i)}$ is itself a sum over partitions of $S$ into $m_i$ blocks,
each block contributing a direct sum. By multilinearity again, we obtain a sum
over collections of partitions. After collecting terms, the result is a sum
over \emph{nested partitions}: a partition of $S$ into blocks, together with
a partition of each block.

\medskip

\noindent
\textit{Step 5: Partition combinatorics (the Faà di Bruno structure).}
The combinatorial outcome of the double expansion is the following. A term in
the expansion of $F(G(\bigoplus_{i \in S} A_i))$ corresponds to:
\begin{itemize}
    \item A choice of $k \ge 0$ (the order of the $F$-derivative);
    \item For each of the $k$ arguments of $\partial_k^{\mathrm{spec}}F$, a choice
          of $m_i \ge 1$ (if we ignore the $m_i=0$ term, which contributes only
          when $G(0) \neq 0$; for reduced functors, $m_i \ge 1$);
    \item For each $i$, a partition of $S$ into $m_i$ blocks $\{Q_{i,1},\dots,Q_{i,m_i}\}$;
    \item The evaluation $\partial_k^{\mathrm{spec}}F$ applied to the $k$ tuples
          $(\bigoplus_{j \in Q_{i,1}} A_j,\dots,\bigoplus_{j \in Q_{i,m_i}} A_j)$.
\end{itemize}
By multilinearity of $\partial_k^{\mathrm{spec}}F$, the direct sums inside each
argument can be distributed, but the key observation is that the entire structure
is equivalent to a single partition of the original set $S$: the blocks of this
partition are the intersections of the $Q_{i,\ell}$ across different $i$, but
because the $A_j$ are distinct variables, the only way to obtain a non-zero
contribution from the inclusion-exclusion sum over $S \subseteq [n]$ is when
the nested partition refines a full partition of $[n]$.

Now substitute the expansion into the inclusion-exclusion sum (1):
\[
\mathrm{cr}_n(F \circ G)(A_1,\dots,A_n)
=
\sum_{S \subseteq [n]} (-1)^{n-|S|}
\sum_{\text{nested partitions of }S}
\partial_k^{\mathrm{spec}}F(\cdots).
\]
Interchanging the sums (justified by absolute convergence), we obtain a sum
over nested partitions of subsets $S$. For a fixed nested partition that
ultimately corresponds to a partition $\pi$ of the full set $[n]$, the
coefficient contributed is
\[
\sum_{S \supseteq \mathrm{support}(\pi)} (-1)^{n-|S|},
\]
where the sum runs over all $S$ that contain all elements appearing in the
nested partition. Since the nested partition involves only finitely many
elements, this sum is a standard inclusion-exclusion factor:
\[
\sum_{T \subseteq [n] \setminus U} (-1)^{n-|U|-|T|}
= \delta_{U,[n]},
\]
where $U$ is the set of elements appearing. Therefore, only nested partitions
that involve \emph{all} elements of $[n]$ survive, and for such partitions
the coefficient is $1$.

Thus, after collecting terms, the only contributions come from nested partitions
of the full set $[n]$, which are equivalent to a single partition $\pi$ of $[n]$
into $k$ blocks, together with, for each block $P \in \pi$, a partition of $P$
into $m_P$ blocks that is \emph{trivial} (i.e., each $m_P = 1$) because the
expansion of $G$ already accounts for the internal structure. More precisely,
the nested partition collapse yields precisely the plethysm formula:
\[
\mathrm{cr}_n(F \circ G)(A_1,\dots,A_n)
=
\sum_{k \ge 0}
\;
\sum_{\pi \in \mathrm{Part}([n]),\,|\pi|=k}
\;
\partial_k^{\mathrm{spec}}F\!\left(
\partial_{|P_1|}^{\mathrm{spec}}G(A_{P_1}),\,
\dots,\,
\partial_{|P_k|}^{\mathrm{spec}}G(A_{P_k})
\right).
\tag{4}
\]

\medskip

\noindent
\textit{Step 6: Identification with plethysm.}
The right-hand side of (4) is exactly the explicit combinatorial description
of $(\partial^{\mathrm{spec}}F \circ_{\mathrm{op}} \partial^{\mathrm{spec}}G)_n$
given in Definition~\ref{def:operadic-plethysm} (see the unordered partition
formulation with $S_k$-coinvariants). Indeed, the sum over unordered partitions
$\pi$ of $[n]$ into $k$ blocks, with each block contributing $\partial_{|P|}^{\mathrm{spec}}G$,
and the outer $\partial_k^{\mathrm{spec}}F$ applied to the $k$ results, is
precisely the plethysm after taking coinvariants by the action of $S_k$
(which permutes the $k$ arguments of $\partial_k^{\mathrm{spec}}F$). Therefore,
\[
\mathrm{cr}_n(F \circ G) \cong \bigl(\partial^{\mathrm{spec}}F \circ_{\mathrm{op}} \partial^{\mathrm{spec}}G\bigr)_n.
\]

\medskip

\noindent
\textit{Step 7: Naturality, equivariance, and norm estimates.}
The isomorphisms constructed in Steps 2–6 are natural in the inputs
$A_1,\dots,A_n$ because each operation (cross-effect, Taylor expansion,
inclusion-exclusion, plethysm) is functorial. The symmetric group $S_n$ acts
on both sides by permuting the variables; the construction is manifestly
$S_n$-equivariant because all sums are over symmetric partitions and the
plethysm definition incorporates the necessary symmetrization.

Finally, the norm estimates. Since $F$ and $G$ are admissible, there exist
constants $C_F, C_G$ and radii $R_F, R_G > 0$ such that for all inputs with
$\|\sigma_P(A_i)\| < \min(R_F, R_G)$, the Taylor series converge absolutely
and the remainder bounds from Theorem~\ref{thm:quantitative-convergence} apply.
The combinatorial sums in the plethysm involve finitely many terms for each
fixed $n$, so the resulting isomorphism is bounded with norm controlled by
$C_F$ and $C_G$. Specifically, there exists a constant $C$ (depending on
$F$, $G$, and $n$) such that
\[
\bigl\|\mathrm{cr}_n(F \circ G)(A_1,\dots,A_n)\bigr\|
\le C \prod_{i=1}^n \|\sigma_P(A_i)\|,
\]
for inputs with sufficiently small spectral radii. Hence the isomorphism is
a bounded isomorphism of Banach spaces (or more generally of objects in
$\mathcal{M}$) with controlled norm constants.

\medskip

Thus, we have established a canonical natural isomorphism
\[
\partial^{\mathrm{spec}}(F \circ G) \;\cong\; \partial^{\mathrm{spec}}F \;\circ_{\mathrm{op}}\; \partial^{\mathrm{spec}}G,
\]
completing the proof of the operadic Faà di Bruno chain rule.
\end{proof}

\medskip

\noindent
\textbf{Conceptual interpretation.}
Thereom~\ref{thm:chain-rule} asserts that the $n$-th spectral derivative of the composite functor 
is built from:
\begin{enumerate}
    \item A partition of the $n$ inputs into $k$ clusters;
    \item The application of the derivatives of $G$ to each cluster;
    \item The application of the $k$-th derivative of $F$ to the $k$ results.
\end{enumerate}
This is precisely the categorical analogue of the classical Faà di Bruno formula 
for higher derivatives of composite functions.

\medskip

The chain rule has several important consequences that will be used throughout the remainder of the paper.

\begin{corollary}[Derivative of the Identity]
\label{cor:chain-identity}
For the identity functor $\mathrm{Id}$, we have $\partial^{\mathrm{spec}}\mathrm{Id} \cong I$, the identity symmetric sequence. Consequently, for any admissible functor $F$,
\[
\partial^{\mathrm{spec}}(\mathrm{Id} \circ F) \cong \partial^{\mathrm{spec}}F \cong \partial^{\mathrm{spec}}(F \circ \mathrm{Id}),
\]
consistent with the unit axioms of the monoidal structure.
\end{corollary}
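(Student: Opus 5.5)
The corollary has two parts: computing $\partial^{\mathrm{spec}}\mathrm{Id}$, and then deducing the unit compatibilities from the chain rule.

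\emph{First part.} By Example~\ref{ex:id-derivative} (equivalently Example~\ref{ex:id-symmetric}), $\partial_1^{\mathrm{spec}}\mathrm{Id}\cong \mathrm{Id}$ and $\partial_n^{\mathrm{spec}}\mathrm{Id}\cong 0$ for $n\ge 2$. This rests on the identity functor preserving finite coproducts, so $\mathrm{cr}_n\mathrm{Id}\equiv 0$ for $n\ge 2$ (Example~\ref{ex:identity-spectral-polynomial}). For $n=0$ we have $\partial_0^{\mathrm{spec}}\mathrm{Id}=\mathrm{Id}(0)=0$, since we use the reduced, pointed convention. Regarding the arity-one component as the unit object, as in the remark following Theorem~\ref{thm:symmetric-sequence}, gives a trivial $\Sigma_1$-action and the identification $\partial^{\mathrm{spec}}\mathrm{Id}\cong I$. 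Here $I$ is the unit sequence of Remark~\ref{rem:plethysm-monoidal}.

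\emph{Second part.} I would apply Theorem~\ref{thm:chain-rule} twice:
\[
\partial^{\mathrm{spec}}(\mathrm{Id}\circ F)\cong I\circ_{\mathrm{op}}\partial^{\mathrm{spec}}F,
\qquad
\partial^{\mathrm{spec}}(F\circ\mathrm{Id})\cong \partial^{\mathrm{spec}}F\circ_{\mathrm{op}} I .
\]
Each right-hand side is then computed directly from Definition~\ref{def:operadic-plethysm}.

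For $I\circ_{\mathrm{op}}B$, only the $k=1$ summand survives. There $S_1$ is trivial, and the block decomposition forces $n_1=n$, with $\operatorname{Ind}^{S_n}_{S_n}$ acting as the identity. This leaves $\mathbf 1\otimes B_n\cong B_n$.

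For $A\circ_{\mathrm{op}}I$, only the decomposition $n_1=\cdots=n_k=1$ survives, and it forces $k=n$. The summand is $A_n\otimes_{S_n}\operatorname{Ind}^{S_n}_{1}(\mathbf 1)=A_n\otimes_{S_n}\mathbb{C}[S_n]\cong A_n$, with its original $S_n$-action.

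In the explicit partition form of the theorem the same collapse is visible:
\begin{itemize}
\item With $F$ outside and $\mathrm{Id}$ inside, only the partition into singletons contributes. Substituting $\partial_1\mathrm{Id}(A_j)=A_j$ returns $\partial_n^{\mathrm{spec}}F(A_1,\dots,A_n)$.
\item With $\mathrm{Id}$ outside, only $k=1$ contributes, with the single block $[n]$. This returns $\partial_1^{\mathrm{spec}}\mathrm{Id}(\partial_n^{\mathrm{spec}}F(\cdots))=\partial_n^{\mathrm{spec}}F(\cdots)$.
\end{itemize}
As a consistency check, the corollary also follows without the chain rule, because $\mathrm{Id}\circ F=F=F\circ\mathrm{Id}$ on the nose. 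The content of the statement is therefore that the chain-rule isomorphism is compatible with the unitors of $\circ_{\mathrm{op}}$.

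\emph{Main obstacle.} The real work is checking that the two unitor isomorphisms agree with the composite isomorphisms coming from the chain rule, rather than merely exhibiting abstract isomorphisms. Two issues need care here:
\begin{itemize}
\item the normalization and symmetrization conventions (the $1/n!$ factors), which must not introduce stray scalars;
\item in the Banach setting, whether the completed sums and the $\otimes_{S_n}\mathbb{C}[S_n]$ identification are isometric, or at least bounded with bounded inverse.
\end{itemize}
My first approach would be to check at the level of the explicit partition formula. There only one partition contributes, so the isomorphism is literally the identity map on $\partial_n^{\mathrm{spec}}F$, and the norm constants are trivially controlled.
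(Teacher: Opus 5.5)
Your proposal is correct and follows the same route as the paper. Linearity of $\mathrm{Id}$ gives $\partial^{\mathrm{spec}}\mathrm{Id}\cong I$, and two applications of Theorem~\ref{thm:chain-rule} plus the unit laws of $\circ_{\mathrm{op}}$ finish the argument; your explicit unitor computations from Definition~\ref{def:operadic-plethysm} spell out what the paper simply cites as ``the unit axioms of the monoidal structure.'' One small patch is needed: in arity zero with $F(0)\neq 0$, the constraint $n_i\ge 1$ gives $(I\circ_{\mathrm{op}}\partial^{\mathrm{spec}}F)_0=0\neq F(0)$, so there you need the relaxed convention $n_i\ge 0$ from the remark after that definition (or your direct observation that $\mathrm{Id}\circ F=F$).
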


\begin{proof}
By definition, the identity functor is linear. Hence its spectral derivative
sequence is concentrated in arity one:
\[
\partial^{\mathrm{spec}}\mathrm{Id}
\cong I,
\]
where $I$ is the identity symmetric sequence, defined by
\[
I_1 = \mathbf{1}_{\mathcal{M}},
\qquad
I_n = 0 \quad (n \neq 1).
\]

Applying Theorem~\ref{thm:chain-rule} to $\mathrm{Id} \circ F$, we obtain
\[
\partial^{\mathrm{spec}}(\mathrm{Id} \circ F)
\cong
\partial^{\mathrm{spec}}\mathrm{Id}
\circ_{\mathrm{op}}
\partial^{\mathrm{spec}}F
\cong
I \circ_{\mathrm{op}} \partial^{\mathrm{spec}}F
\cong
\partial^{\mathrm{spec}}F.
\]

Similarly,
\[
\partial^{\mathrm{spec}}(F \circ \mathrm{Id})
\cong
\partial^{\mathrm{spec}}F
\circ_{\mathrm{op}}
\partial^{\mathrm{spec}}\mathrm{Id}
\cong
\partial^{\mathrm{spec}}F \circ_{\mathrm{op}} I
\cong
\partial^{\mathrm{spec}}F.
\]

Thus the derivative of the identity functor agrees with the unit object for
plethysm, and the result follows from the unit axioms of the monoidal structure.
\end{proof}

\begin{corollary}[Derivative of a Constant Functor]
\label{cor:chain-constant}
If $F$ is constant with value $C \in \mathcal{M}$, then $\partial_0^{\mathrm{spec}}F = C$ and 
$\partial_n^{\mathrm{spec}}F = 0$ for $n \ge 1$. Hence for any admissible functor $G$,
\[
\partial^{\mathrm{spec}}(F \circ G) \cong \partial^{\mathrm{spec}}F \cong C \cdot I_0,
\]
where $C \cdot I_0$ denotes the symmetric sequence concentrated in arity zero with
$(C \cdot I_0)_0 = C$ and $(C \cdot I_0)_n = 0$ for $n \ge 1$.
\end{corollary}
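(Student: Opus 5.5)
The plan has two parts. First compute the spectral derivatives of the constant functor $F \equiv C$ directly from the cross-effect definition. Then feed them into the chain rule, Theorem~\ref{thm:chain-rule}, and evaluate the plethysm.

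For the first part, recall from Definition~\ref{def:symmetric-sequence-derivatives} that $\partial_0^{\mathrm{spec}}F = F(0) = C$. For $n \ge 1$ we have $\partial_n^{\mathrm{spec}}F = \mathrm{cr}_nF$ by Proposition~\ref{prop:derivative-cross-effect}. This is the total homotopy fiber of the cube $S \mapsto F(\bigoplus_{i\in S}A_i)$ (Definition~\ref{def:cross-effects}). Since $F$ is constant, every vertex is $C$ and every edge is $\mathrm{id}_C$. Any $n$-cube ($n\ge 1$) whose maps in one direction are isomorphisms has contractible total fiber: write it as a map of $(n-1)$-cubes that is an equivalence and take fibers iteratively. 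In the stable/additive description the same conclusion reads $\sum_{S}(-1)^{n-|S|}C = (1-1)^n C = 0$. Hence $\partial_n^{\mathrm{spec}}F = 0$ for $n\ge1$, so $\partial^{\mathrm{spec}}F \cong C\cdot I_0$.

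For the second part, apply Theorem~\ref{thm:chain-rule}: $\partial^{\mathrm{spec}}(F\circ G) \cong \partial^{\mathrm{spec}}F \circ_{\mathrm{op}} \partial^{\mathrm{spec}}G$. We evaluate this using Definition~\ref{def:operadic-plethysm}. Only the summand $k=0$ survives, because $(\partial^{\mathrm{spec}}F)_k = 0$ for $k\ge1$. In that summand the inner sum is over decompositions of $n$ into zero positive parts, so it is the unit $\mathbf{1}_{\mathcal{M}}$ when $n=0$ and $0$ when $n\ge1$. Thus $(C\cdot I_0 \circ_{\mathrm{op}} B)_0 \cong C\otimes_{S_0}\mathbf{1} \cong C$ and $(C\cdot I_0 \circ_{\mathrm{op}} B)_n = 0$ for $n\ge1$, for any symmetric sequence $B$.

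As a sanity check independent of the chain rule, $F\circ G$ is itself the constant functor $C$, so the first part applied to $F\circ G$ gives the same answer. I will record this remark because it sidesteps the analytic hypotheses used in the proof of the chain rule.

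The main obstacle is the empty-partition convention in the plethysm at $n=0$. The displayed formula with $n_i\ge1$ must be read so that the $k=0$ term contributes the empty tensor product $\mathbf{1}_{\mathcal{M}}$ exactly in arity $0$. I will state this convention explicitly and, if needed, fall back on the direct computation for $F\circ G$ to justify the arity-$0$ identification. A second, minor point: the reduction to reduced functors in the chain-rule proof does not apply to constant functors. This is another reason to cross-check with the direct computation.
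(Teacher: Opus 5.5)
Your proposal is correct and follows the paper's proof. You compute $\partial^{\mathrm{spec}}F \cong C\cdot I_0$, apply Theorem~\ref{thm:chain-rule}, and note that plethysm with a sequence concentrated in arity zero returns only the arity-zero component; you state the $k=0$ empty-composition convention explicitly, which the paper leaves implicit. Your extra remark that $F\circ G$ is itself the constant functor $C$ gives an independent derivation that avoids the chain rule and its reducedness and analyticity hypotheses.
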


\begin{proof}
If $F$ is constant with value $C \in \mathcal{M}$, then all positive spectral
derivatives vanish:
\[
\partial_n^{\mathrm{spec}}F = 0 \qquad (n \ge 1),
\]
while
\[
\partial_0^{\mathrm{spec}}F = C.
\]
Thus $\partial^{\mathrm{spec}}F$ is the symmetric sequence concentrated in
arity zero:
\[
\partial^{\mathrm{spec}}F \cong C \cdot I_0,
\]
where
\[
(C \cdot I_0)_0 = C,
\qquad
(C \cdot I_0)_n = 0 \quad (n \ge 1).
\]

Applying Theorem~\ref{thm:chain-rule}, we obtain
\[
\partial^{\mathrm{spec}}(F \circ G)
\cong
\partial^{\mathrm{spec}}F
\circ_{\mathrm{op}}
\partial^{\mathrm{spec}}G.
\]
Since $\partial^{\mathrm{spec}}F$ is concentrated in arity zero, plethystic
composition with it remains concentrated in arity zero and simply returns the
degree-zero component. Hence
\[
\partial^{\mathrm{spec}}(F \circ G)
\cong
\partial^{\mathrm{spec}}F
\cong
C \cdot I_0.
\]
This proves the claim.
\end{proof}

\begin{corollary}[Derivative of a Linear Functor]
\label{cor:chain-linear}
Let $L$ be a linear functor (i.e., $\partial_1^{\mathrm{spec}}L = L$ and $\partial_n^{\mathrm{spec}}L = 0$ for $n \ne 1$). Then for any admissible functor $G$,
\[
\partial^{\mathrm{spec}}(L \circ G) \cong L[1] \circ_{\mathrm{op}} \partial^{\mathrm{spec}}G,
\]
where $L[1]$ denotes the symmetric sequence concentrated in arity one defined by
\[
L[1]_1 = L, \qquad L[1]_n = 0 \quad (n \ne 1).
\]
Unwinding the definition, this means $(\partial^{\mathrm{spec}}(L \circ G))_n \cong L \otimes \partial_n^{\mathrm{spec}}G$ for all $n$.
\end{corollary}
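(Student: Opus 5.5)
The plan is to deduce this directly from the chain rule (Theorem~\ref{thm:chain-rule}), in the same way as Corollaries~\ref{cor:chain-identity} and~\ref{cor:chain-constant}. By hypothesis $\partial^{\mathrm{spec}}L$ is concentrated in arity one with $\partial_1^{\mathrm{spec}}L = L$, so as a symmetric sequence $\partial^{\mathrm{spec}}L \cong L[1]$. The $\Sigma_1$-action on the arity-one term is trivial, so the identification is automatically compatible with the symmetric-sequence structure of Theorem~\ref{thm:symmetric-sequence}. Applying Theorem~\ref{thm:chain-rule} to the composite $L\circ G$ then gives
\[
\partial^{\mathrm{spec}}(L\circ G)\;\cong\;\partial^{\mathrm{spec}}L\circ_{\mathrm{op}}\partial^{\mathrm{spec}}G\;\cong\;L[1]\circ_{\mathrm{op}}\partial^{\mathrm{spec}}G,
\]
which is the first assertion.

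For the unwound formula, I would compute the plethysm directly from Definition~\ref{def:operadic-plethysm}. Since $L[1]_k = 0$ for $k\neq 1$, only the summand $k=1$ survives. In that summand the inner sum runs over compositions $n_1 = n$ with $n_1\geq 1$, so it reduces to the single term $\partial_n^{\mathrm{spec}}G$. The induction $\operatorname{Ind}^{S_n}_{S_n}$ is the identity, and the $S_1$-coinvariants are trivial. Hence
\[
\bigl(L[1]\circ_{\mathrm{op}}\partial^{\mathrm{spec}}G\bigr)_n \cong L\otimes\partial_n^{\mathrm{spec}}G .
\]
Equivalently, in the partition form of Theorem~\ref{thm:chain-rule}, only the one-block partition $\pi=\{[n]\}$ contributes, giving $\partial_1^{\mathrm{spec}}L\bigl(\partial_n^{\mathrm{spec}}G(A_1,\dots,A_n)\bigr)$. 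Here $L\otimes(-)$ is read as the action of the linear functor $L$ on its argument. The $S_n$-equivariance is inherited from $\partial_n^{\mathrm{spec}}G$, since $L$ acts on the single output slot.

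The arity-zero case needs separate treatment. When $n=0$ and $G$ is not reduced, the plethysm as literally written with $n_i\ge 1$ would give zero. Following the Remark after Definition~\ref{def:operadic-plethysm}, I would relax the condition to allow $n_1=0$, which yields $L\otimes\partial_0^{\mathrm{spec}}G = L(G(0))$. This is consistent with $\partial_0^{\mathrm{spec}}(L\circ G)=L(G(0))$. Alternatively, one can restrict to reduced $G$, as in Step 2 of the proof of Theorem~\ref{thm:chain-rule}.

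The main obstacle is interpretive rather than computational. One has to make precise the meaning of ``$L\otimes\partial_n^{\mathrm{spec}}G$'' when $L$ is a functor rather than an object, that is, the identification of arity-one components with linear functors inside the functor-category symmetric sequences. I would fix this by declaring that, in arity one, $\otimes$ denotes composition of a linear functor with a multilinear one. Linearity of $L$ is exactly what makes this composite multilinear again.
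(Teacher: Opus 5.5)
Your proposal follows the paper's proof: identify $\partial^{\mathrm{spec}}L\cong L[1]$, apply the chain rule, and observe that only the $k=1$ summand of the plethysm survives, with trivial induction and trivial $S_1$-coinvariants. You also treat the arity-zero component separately, noting that the literal $n_i\ge 1$ convention gives $0$ rather than $L(G(0))$ when $G$ is not reduced; the paper glosses over this point when it asserts the formula for all $n\ge 0$.
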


\begin{proof}
Since $L$ is linear, its spectral derivative sequence is concentrated in arity
one:
\[
\partial^{\mathrm{spec}}L \cong L[1],
\]
where $L[1]$ denotes the symmetric sequence defined by
\[
L[1]_1 = L, \qquad L[1]_n = 0 \quad (n \ne 1).
\]

Applying Theorem~\ref{thm:chain-rule}, we obtain
\[
\partial^{\mathrm{spec}}(L \circ G)
\cong
\partial^{\mathrm{spec}}L \circ_{\mathrm{op}} \partial^{\mathrm{spec}}G
\cong
L[1] \circ_{\mathrm{op}} \partial^{\mathrm{spec}}G.
\]

Unwinding the definition of plethysm (Definition~\ref{def:operadic-plethysm}), only the $k = 1$ summand contributes. For $k = 1$, we have $n_1 = n$, and the induction functor is trivial (since $S_n \times S_{n_1}$ with $k=1$ reduces to $S_n$). The $S_1$-action is trivial, so the tensor product over $S_1$ is the ordinary tensor product. Hence, for every $n \ge 0$,
\[
\bigl(\partial^{\mathrm{spec}}(L \circ G)\bigr)_n
\cong
L \otimes \partial_n^{\mathrm{spec}}G,
\]
up to the canonical symmetry and normalization conventions of the monoidal
category. This proves the claim.
\end{proof}

\begin{corollary}[Associativity Compatibility of the Chain Rule]
\label{cor:plethysm-assoc}
The chain rule is compatible with associativity of composition. Specifically, for admissible functors $F, G, H$,
\[
\partial^{\mathrm{spec}}(F \circ (G \circ H)) \cong \partial^{\mathrm{spec}}F \circ_{\mathrm{op}} (\partial^{\mathrm{spec}}G \circ_{\mathrm{op}} \partial^{\mathrm{spec}}H)
\]
and also
\[
\partial^{\mathrm{spec}}((F \circ G) \circ H) \cong (\partial^{\mathrm{spec}}F \circ_{\mathrm{op}} \partial^{\mathrm{spec}}G) \circ_{\mathrm{op}} \partial^{\mathrm{spec}}H.
\]
Since $F \circ (G \circ H) = (F \circ G) \circ H$, we obtain the associativity isomorphism
\[
\partial^{\mathrm{spec}}F \circ_{\mathrm{op}} (\partial^{\mathrm{spec}}G \circ_{\mathrm{op}} \partial^{\mathrm{spec}}H) \cong (\partial^{\mathrm{spec}}F \circ_{\mathrm{op}} \partial^{\mathrm{spec}}G) \circ_{\mathrm{op}} \partial^{\mathrm{spec}}H,
\]
which is natural in $F, G, H$.
\end{corollary}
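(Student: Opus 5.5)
The plan is to derive both isomorphisms from two applications of Theorem~\ref{thm:chain-rule}, and then compare them using the equality of functors $F\circ(G\circ H)=(F\circ G)\circ H$.

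\emph{First isomorphism.} Apply Theorem~\ref{thm:chain-rule} to the pair $(F, G\circ H)$. This requires that $G\circ H$ be admissible and spectrally analytic, with a nonzero radius; I would state this as a standing hypothesis, as the paper does for composites, or cite the forthcoming stability-under-composition result. The theorem gives
\[
\partial^{\mathrm{spec}}(F\circ(G\circ H))\cong\partial^{\mathrm{spec}}F\circ_{\mathrm{op}}\partial^{\mathrm{spec}}(G\circ H).
\]
Next apply Theorem~\ref{thm:chain-rule} to $(G,H)$. Plethysm is functorial in its second variable (Remark~\ref{rem:plethysm-monoidal}), so $\partial^{\mathrm{spec}}F\circ_{\mathrm{op}}(-)$ carries the isomorphism $\partial^{\mathrm{spec}}(G\circ H)\cong\partial^{\mathrm{spec}}G\circ_{\mathrm{op}}\partial^{\mathrm{spec}}H$ to an isomorphism. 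Composing the two yields the first displayed isomorphism.

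\emph{Second isomorphism.} Proceed symmetrically. Apply the chain rule to $(F\circ G,H)$, then use functoriality of $\circ_{\mathrm{op}}$ in its first variable to substitute $\partial^{\mathrm{spec}}(F\circ G)\cong\partial^{\mathrm{spec}}F\circ_{\mathrm{op}}\partial^{\mathrm{spec}}G$.

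\emph{Associativity isomorphism.} Since the two composite functors are literally equal, their derivative sequences coincide. Composing the first isomorphism inverted with the second gives an isomorphism between the two iterated plethysms.

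\emph{Naturality.} Naturality in $F,G,H$ comes from the naturality of the chain-rule isomorphism, which the theorem asserts to be canonical, together with the naturality of $\partial^{\mathrm{spec}}$ in Remark~\ref{rem:derivative-functoriality}. Given natural transformations $F\to F'$ and so on, each square of the composite commutes because each constituent square does.

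\emph{Main obstacle.} The hard step is to check that the isomorphism obtained this way agrees with the abstract associator of $\mathsf{SymSeq}$ from Remark~\ref{rem:plethysm-monoidal}, and not with some other automorphism. The statement only asks for \emph{an} isomorphism natural in $F,G,H$, so this is not strictly needed. To check it anyway, I would use the explicit partition formula. Both sides, evaluated at $(A_1,\dots,A_n)$, become sums over two-level nested partitions of $[n]$: a partition into $k$ blocks, each block partitioned further. The chain-rule identification in Step~5 matches the terms indexed by the same nested partition. Refining partitions is associative, so the terms correspond bijectively, and this bijection is exactly the combinatorial associator. Norm control in the Banach setting follows as in Step~7, since for fixed $n$ only finitely many terms appear.
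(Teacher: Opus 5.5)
Your proposal is correct and follows the paper's proof essentially verbatim. Like the paper, you apply Theorem~\ref{thm:chain-rule} to both parenthesizations and then compare the results via $F\circ(G\circ H)=(F\circ G)\circ H$. Your additions are refinements, not a different route: you invoke functoriality of $\circ_{\mathrm{op}}$ in each variable explicitly, you state the admissibility/analyticity hypothesis on the inner composites, and you note that identifying the result with the abstract associator is not needed (the paper asserts that identification without proof).
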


\begin{proof}
By associativity of functor composition, there is a canonical identification
\[
F \circ (G \circ H) \cong (F \circ G) \circ H.
\]

Applying Theorem~\ref{thm:chain-rule} to the left-hand parenthesization gives
\[
\partial^{\mathrm{spec}}\bigl(F \circ (G \circ H)\bigr)
\cong
\partial^{\mathrm{spec}}F
\circ_{\mathrm{op}}
\partial^{\mathrm{spec}}(G \circ H)
\cong
\partial^{\mathrm{spec}}F
\circ_{\mathrm{op}}
\bigl(
\partial^{\mathrm{spec}}G
\circ_{\mathrm{op}}
\partial^{\mathrm{spec}}H
\bigr).
\]

Applying the chain rule to the right-hand parenthesization gives
\[
\partial^{\mathrm{spec}}\bigl((F \circ G) \circ H\bigr)
\cong
\partial^{\mathrm{spec}}(F \circ G)
\circ_{\mathrm{op}}
\partial^{\mathrm{spec}}H
\cong
\bigl(
\partial^{\mathrm{spec}}F
\circ_{\mathrm{op}}
\partial^{\mathrm{spec}}G
\bigr)
\circ_{\mathrm{op}}
\partial^{\mathrm{spec}}H.
\]

Since $F \circ (G \circ H) \cong (F \circ G) \circ H$, the two resulting derivative sequences are canonically isomorphic. Therefore,
\[
\partial^{\mathrm{spec}}F
\circ_{\mathrm{op}}
\bigl(
\partial^{\mathrm{spec}}G
\circ_{\mathrm{op}}
\partial^{\mathrm{spec}}H
\bigr)
\cong
\bigl(
\partial^{\mathrm{spec}}F
\circ_{\mathrm{op}}
\partial^{\mathrm{spec}}G
\bigr)
\circ_{\mathrm{op}}
\partial^{\mathrm{spec}}H.
\]

This is precisely the associativity isomorphism for plethysm restricted to
spectral derivative sequences, and it is natural in $F, G, H$.
\end{proof}

\medskip

\begin{example}[Exponential of a Linear Functor]
\label{ex:exp-linear}
Let $L$ be a linear functor and define the formal exponential functor
\[
\mathrm{Exp}(L)(A)
=
\bigoplus_{k=0}^{\infty} \frac{1}{k!} L(A)^{\otimes k},
\]
where the convergence is understood in the norm topology of $\mathcal{M}$.
Since $L$ is linear, its spectral derivative sequence is concentrated in arity one:
\[
\partial^{\mathrm{spec}}L \cong L[1].
\]
By the operadic chain rule (Theorem~\ref{thm:chain-rule}), the derivative sequence 
of $\mathrm{Exp}(L)$ is obtained by substituting the derivative sequence of $L$ 
into the derivative sequence of $\mathrm{Exp}$. Hence
\[
\partial_n^{\mathrm{spec}}(\mathrm{Exp}(L))
\cong
\frac{1}{n!} L^{\otimes n},
\]
up to the standard symmetrization convention.
Thus the formal exponential functor has the expected exponential derivative
sequence, paralleling the classical fact that the exponential function is its own
derivative.
\end{example}

\begin{example}[Composition of Two Quadratic Functors]
\label{ex:quadratic-composition}
Let
\[
F(A)=A\otimes A,
\qquad
G(A)=A\otimes A.
\]
Both $F$ and $G$ are homogeneous quadratic functors, so their spectral derivative
sequences are concentrated in arity two:
\[
\partial^{\mathrm{spec}}F \text{ and } \partial^{\mathrm{spec}}G
\quad
\text{are concentrated in degree }2.
\]
The chain rule (Theorem~\ref{thm:chain-rule}) gives
\[
\partial^{\mathrm{spec}}(F\circ G)
\cong
\partial^{\mathrm{spec}}F
\circ_{\mathrm{op}}
\partial^{\mathrm{spec}}G.
\]
Since plethysm substitutes two copies of the quadratic derivative of $G$ into the
quadratic derivative of $F$, the composite is concentrated in degree
\[
2 \cdot 2 = 4.
\]
Indeed,
\[
(F \circ G)(A)
=
F(G(A))
=
(A\otimes A)\otimes(A\otimes A)
\cong
A^{\otimes 4}.
\]
Consequently,
\[
\partial_4^{\mathrm{spec}}(F\circ G)(A_1,A_2,A_3,A_4)
\cong
\bigoplus_{\pi\in \mathrm{Part}_2([4])}
\partial_2^{\mathrm{spec}}F
\Big(
\partial_2^{\mathrm{spec}}G(A_{P_1}),
\partial_2^{\mathrm{spec}}G(A_{P_2})
\Big),
\]
where $\mathrm{Part}_2([4])$ denotes the set of unordered partitions of $[4]$ into 
two blocks of size two (i.e., the three pairings $\{\{1,2\},\{3,4\}\}$, 
$\{\{1,3\},\{2,4\}\}$, and $\{\{1,4\},\{2,3\}\}$).
Thus the quartic derivative records all ways of grouping the four inputs into
two quadratic variations of $G$, and then feeding those two outputs into the
quadratic variation of $F$.
\end{example}

\medskip

\begin{remark}[Comparison with Goodwillie calculus]
\label{rem:chain-rule-goodwillie}
In Goodwillie calculus, a chain rule for derivatives of functors is expressed
in terms of operadic composition of Taylor towers
(see~\cite{AroneChing2011}).
The present formulation can be viewed as a spectral, norm-enriched analogue
of that result, where plethystic composition plays the role of operadic
substitution.
\end{remark}

\begin{remark}[Quantitative vs.\ homotopical structure]
\label{rem:chain-rule-quantitative}
A key distinction is that the chain rule in spectral operadic calculus is
quantitative: the resulting isomorphisms are compatible with the norm
structure and admit explicit bounds.
In contrast, Goodwillie calculus is formulated in a homotopical setting,
where equivalences are typically understood up to homotopy rather than
controlled by analytic norms.
\end{remark}

\begin{remark}[Derivatives and computability]
\label{rem:chain-rule-computability}
The spectral derivatives $\partial_n^{\mathrm{spec}}F$ are constructed from
cross-effects and admit relatively concrete descriptions.
By comparison, derivatives in Goodwillie calculus are defined using
homotopy limits and stabilization, which are generally more abstract.
Thus the present framework emphasizes computability and explicit structure.
\end{remark}

\begin{remark}[Convergence and analytic control]
\label{rem:chain-rule-convergence}
The convergence of the spectral Taylor expansion
(Section~\ref{sec:convergence}) ensures that the chain rule holds at the
level of convergent series within the normed category.
This provides analytic control over higher-order behavior, whereas in
Goodwillie calculus convergence is typically not expressed in terms of
norms or radii of convergence.
\end{remark}

\begin{remark}[Comparison summary]
\label{rem:chain-rule-summary}
\[
\begin{array}{|c|c|}
\hline
\text{Goodwillie Calculus (Arone--Ching)} & \text{Spectral Operadic Calculus} \\
\hline
\text{Homotopy chain rule} & \text{Normed chain rule} \\
\text{Operadic composition (formal)} & \text{Plethysm with quantitative control} \\
\text{Abstract convergence notions} & \text{Explicit convergence with radius } R_F \\
\hline
\end{array}
\]
\end{remark}

\begin{remark}[Role of the operadic residue]
\label{rem:chain-rule-residue}
The operadic residue $\mathcal{O}_P^{\mathrm{res}}$ plays an essential role
in the chain rule. First, it enables the definition of spectral derivatives
via balanced tensor products over $P$ (Definition~\ref{def:spectral-derivatives}),
ensuring compatibility with operadic structure. Second, its universality
(Theorem~\ref{thm:residue-universality}) guarantees that the Taylor expansions
used in the proof are compatible with operadic composition.
Without this residue correction, the resulting derivatives would not admit
a coherent $P$-module structure, and the operadic formulation of the chain
rule would break down.
\end{remark}

\medskip

With the chain rule established, we now have a complete algebraic and analytic toolkit. The following section will use the chain rule to prove the Reconstruction Theorem, which shows that a spectrally analytic functor is uniquely determined by its sequence of spectral derivatives together with the operadic composition structure encoded by the residue.

\subsection{Composition Stability}
\label{subsec:composition-stability}

We now show that the class of spectrally analytic functors is stable under composition. This result demonstrates that Spectral Operadic Calculus forms a closed and robust framework under the fundamental operation of functor composition, and is a direct consequence of the operadic chain rule.

In classical calculus, analytic functions are closed under composition within their domain of convergence. The following theorem establishes the analogous property in the operadic setting, with convergence controlled by the spectral radius. Without this closure property, the calculus would be incomplete; with it, we have a self-consistent analytic framework.

\medskip

\begin{theorem}[Stability of Spectral Analyticity under Composition]
\label{thm:composition-stability}
Let $F, G : \mathsf{Alg}_P(\mathcal{M}) \to \mathcal{M}$ be admissible,
operadically compatible, and spectrally analytic functors. Assume moreover that
the normed plethysm estimates of Section~\ref{sec:convergence} hold. Then
$F \circ G$ is spectrally analytic.

More precisely, if there exist constants $C_F, C_G > 0$ and $\rho_F, \rho_G > 0$
such that for all $m \ge 0$,
\[
\|\partial_m^{\mathrm{spec}}F\| \le C_F \rho_F^{\,m},
\qquad
\|\partial_m^{\mathrm{spec}}G\| \le C_G \rho_G^{\,m},
\]
then there exist constants $C > 0$ and
\[
\gamma = K_{\mathrm{pl}} \rho_G (1 + C_G \rho_F)
\]
such that for all $n \ge 0$,
\[
\|\partial_n^{\mathrm{spec}}(F \circ G)\| \le C \gamma^{\,n}.
\]
Consequently,
\[
R_{F \circ G} \ge \frac{1}{\gamma} > 0,
\]
i.e., $F \circ G$ has positive spectral radius of convergence and is therefore
spectrally analytic.
\end{theorem}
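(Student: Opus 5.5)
The argument starts from the chain rule, Theorem~\ref{thm:chain-rule}, which expresses each $\partial_n^{\mathrm{spec}}(F\circ G)$ as a finite sum over $k$ and over unordered partitions $\pi=\{P_1,\dots,P_k\}$ of $[n]$ of the terms $\partial_k^{\mathrm{spec}}F(\partial_{|P_1|}^{\mathrm{spec}}G(A_{P_1}),\dots,\partial_{|P_k|}^{\mathrm{spec}}G(A_{P_k}))$. We then estimate this sum in norm. The ``normed plethysm estimates'' in the hypothesis are read as a constant $K_{\mathrm{pl}}\ge 1$ that absorbs the distortion coming from the completed $\ell^1$ direct sums, the $S_k$-coinvariants and the induction functors. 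Concretely, we assume that a plethystic term indexed by $\pi$ has norm at most $K_{\mathrm{pl}}^{\,n}\,\|\partial_k^{\mathrm{spec}}F\|\prod_{i}\|\partial_{|P_i|}^{\mathrm{spec}}G\|$. We also assume that feeding an output of $\partial_m^{\mathrm{spec}}G$ into $\partial_k^{\mathrm{spec}}F$ costs at most its spectral size, which by Lemma~\ref{lem:norm-spectral-comparison} on the controlled class is bounded by its norm. With these, the homogeneous estimate of Lemma~\ref{lem:homogeneous-estimate}, applied with the normalisation $\|\sigma_P(A_i)\|\le 1$ from Definition~\ref{def:spectral-derivative-norm}, turns the sup-norm of each summand into a product of coefficient norms.

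Inserting the hypotheses $\|\partial_m^{\mathrm{spec}}F\|\le C_F\rho_F^m$ and $\|\partial_m^{\mathrm{spec}}G\|\le C_G\rho_G^m$ gives
\[
\|\partial_n^{\mathrm{spec}}(F\circ G)\|\le K_{\mathrm{pl}}^{\,n}\sum_{k=1}^{n} C_F\rho_F^{\,k}\,C_G^{\,k}\rho_G^{\,n}\,S(n,k),
\]
where $S(n,k)$ is the Stirling number of the second kind counting the partitions. We may take $F,G$ reduced, as in Step~2 of the chain rule proof. The $n=0$ term is handled separately and only enters the constant $C$. The remaining task is pure combinatorics: we must bound the Touchard-type polynomial $\sum_k S(n,k)x^k$ with $x=C_G\rho_F$. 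The approach is to avoid the Bell-number growth $n!$-like behaviour by using that the spectral derivatives are the normalized symmetric multilinear components, as in Example~\ref{ex:exp-derivative}. These carry the factor $1/|P|!$ per block, so the partition count is replaced by compositions of $n$. This is the same bookkeeping as composing ordinary power series $\sum a_k z^k$ with $\sum b_m z^m$ rather than exponential ones. With that normalization, the number of ordered block-size tuples $(n_1,\dots,n_k)$ summing to $n$ is $\binom{n-1}{k-1}$. The sum becomes $C_F\,(K_{\mathrm{pl}}\rho_G)^n\sum_k\binom{n-1}{k-1}x^k\le C_F\,x\,(K_{\mathrm{pl}}\rho_G(1+x))^{n}$, up to a harmless factor. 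This is exactly $C\gamma^n$ with $\gamma=K_{\mathrm{pl}}\rho_G(1+C_G\rho_F)$ and $C=C_F\max(1,C_G\rho_F)$ (plus the $n=0$ contribution).

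To conclude, Definition~\ref{def:spectral-radius} gives $\limsup\|\partial_n^{\mathrm{spec}}(F\circ G)\|^{1/n}\le\gamma$, hence $R_{F\circ G}\ge 1/\gamma>0$. Theorem~\ref{thm:spectral-radius} then yields absolute convergence of the spectral Taylor series of $F\circ G$ on $\|\sigma_P(A)\|<1/\gamma$. To identify the limit with $F\circ G$ itself, and not merely with some convergent series, we use the spectral analyticity of $F$ and $G$. The double expansion of Step~4 in the chain rule proof converges absolutely on a small spectral ball where both towers converge, so the rearranged series equals $F(G(A))$ there. Lemma~\ref{lem:analytic-continuation} then propagates the identification to the full ball of radius $1/\gamma$.

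The main obstacle is the combinatorial step. It requires justifying that the normalization of $\partial^{\mathrm{spec}}$ turns the Stirling/Bell sum into a binomial sum. With the raw cross-effect normalization of Theorem~\ref{thm:chain-rule}, the partition count would give only a Bell-number bound and no geometric rate. To handle this, I would try to pin the convention down explicitly via the symmetrized layers of Definition~\ref{def:spectral-derivatives} and to fold any residual $S_k$-coinvariant factors into $K_{\mathrm{pl}}$.
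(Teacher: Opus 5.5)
Your proposal is essentially the paper's proof: the chain rule, a plethysm bound carrying a factor $K_{\mathrm{pl}}^{\,n}$, substitution of the exponential bounds, the count $\binom{n-1}{k-1}$ of compositions, and the binomial theorem, which together give $\gamma=K_{\mathrm{pl}}\rho_G(1+C_G\rho_F)$ and $R_{F\circ G}\ge 1/\gamma$. The paper writes its key inequality directly as a sum over compositions $n_1+\cdots+n_k=n$ and lets $K_{\mathrm{pl}}^{\,n}$ absorb the induction and $S_k$-coinvariant factors; your remark that a literal set-partition count gives Stirling/Bell growth that no $K^{n}$ can absorb, so that the ordinary-power-series normalization is what is really being used, makes explicit an assumption the paper leaves implicit. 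Your final identification step goes beyond the paper, which simply equates $R_{F\circ G}>0$ with spectral analyticity, but Lemma~\ref{lem:analytic-continuation} assumes both functors are already spectrally analytic, so it cannot by itself deliver that conclusion for $F\circ G$.
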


\begin{proof}
We prove the theorem in five steps, using the operadic chain rule
(Theorem~\ref{thm:chain-rule}), the definition of plethysm
(Definition~\ref{def:operadic-plethysm}), and the quantitative estimates from
Section~\ref{sec:convergence}. The constant $K_{\mathrm{pl}} \ge 1$ absorbs all
norm contributions from the symmetric-group induction functors and the
$S_k$-coinvariants appearing in the plethysm construction.

\medskip

\noindent
\textit{Step 1: Derivatives of the composite via plethysm.}
By the Faà di Bruno chain rule (Theorem~\ref{thm:chain-rule}), we have a
natural isomorphism
\[
\partial^{\mathrm{spec}}(F \circ G)
\;\cong\;
\partial^{\mathrm{spec}}F \;\circ_{\mathrm{op}}\; \partial^{\mathrm{spec}}G.
\]
Unwinding the definition of plethysm (Definition~\ref{def:operadic-plethysm}),
there exists a constant $K_{\mathrm{pl}} \ge 1$ (depending only on the norms of
the induction functors and the $S_k$-coinvariant projections) such that for all
$n \ge 1$,
\[
\|\partial_n^{\mathrm{spec}}(F \circ G)\|
\le
K_{\mathrm{pl}}^n
\sum_{k=1}^{n}
\;
\sum_{\substack{n_1 + \cdots + n_k = n \\ n_i \ge 1}}
\;
\|\partial_k^{\mathrm{spec}}F\|
\;
\prod_{i=1}^{k}
\|\partial_{n_i}^{\mathrm{spec}}G\|.
\tag{1}
\]
The factor $K_{\mathrm{pl}}^n$ accounts for the operator norms of the
induction functors $\operatorname{Ind}^{S_n}_{S_{n_1}\times\cdots\times S_{n_k}}$
and the $S_k$-coinvariant projections, which may grow with $n$ but are
uniformly bounded by $K_{\mathrm{pl}}^n$ under the normed plethysm estimates.

\medskip

\noindent
\textit{Step 2: Substituting the exponential bounds.}
By hypothesis,
\[
\|\partial_k^{\mathrm{spec}}F\| \le C_F \rho_F^{\,k},
\qquad
\|\partial_{n_i}^{\mathrm{spec}}G\| \le C_G \rho_G^{\,n_i}.
\]
Substituting these into (1) yields
\[
\|\partial_n^{\mathrm{spec}}(F \circ G)\|
\le
K_{\mathrm{pl}}^n
\sum_{k=1}^{n}
\;
\sum_{\substack{n_1 + \cdots + n_k = n \\ n_i \ge 1}}
C_F \rho_F^{\,k}
\;
\prod_{i=1}^{k}
\bigl( C_G \rho_G^{\,n_i} \bigr).
\]

Since the product over $i$ contains $k$ factors of $C_G$, we have
$\prod_{i=1}^{k} C_G = C_G^{\,k}$. Moreover,
$\prod_{i=1}^{k} \rho_G^{\,n_i} = \rho_G^{\,\sum_i n_i} = \rho_G^{\,n}$.
Thus,
\[
\|\partial_n^{\mathrm{spec}}(F \circ G)\|
\le
K_{\mathrm{pl}}^n C_F \rho_G^{\,n}
\sum_{k=1}^{n}
(C_G \rho_F)^k
\;
\sum_{\substack{n_1 + \cdots + n_k = n \\ n_i \ge 1}} 1.
\tag{2}
\]

\medskip

\noindent
\textit{Step 3: Counting compositions.}
The number of compositions of $n$ into $k$ positive parts is
\[
\sum_{\substack{n_1 + \cdots + n_k = n \\ n_i \ge 1}} 1 = \binom{n-1}{k-1}.
\]
Substituting this into (2) gives
\[
\|\partial_n^{\mathrm{spec}}(F \circ G)\|
\le
K_{\mathrm{pl}}^n C_F \rho_G^{\,n}
\sum_{k=1}^{n}
(C_G \rho_F)^k \binom{n-1}{k-1}.
\tag{3}
\]

\medskip

\noindent
\textit{Step 4: Evaluating the binomial sum.}
Let $j = k-1$. Then $k = j+1$ and the sum becomes
\[
\sum_{k=1}^{n} (C_G \rho_F)^k \binom{n-1}{k-1}
= C_G \rho_F \sum_{j=0}^{n-1} (C_G \rho_F)^j \binom{n-1}{j}.
\]
By the binomial theorem,
\[
\sum_{j=0}^{n-1} (C_G \rho_F)^j \binom{n-1}{j}
= (1 + C_G \rho_F)^{\,n-1}.
\]
Therefore,
\[
\sum_{k=1}^{n} (C_G \rho_F)^k \binom{n-1}{k-1}
= C_G \rho_F (1 + C_G \rho_F)^{\,n-1}.
\tag{4}
\]

\medskip

\noindent
\textit{Step 5: Final exponential bound and radius of convergence.}
Substituting (4) into (3) yields
\[
\|\partial_n^{\mathrm{spec}}(F \circ G)\|
\le
K_{\mathrm{pl}}^n C_F \rho_G^{\,n} \cdot C_G \rho_F (1 + C_G \rho_F)^{\,n-1}
= C_F C_G \rho_F K_{\mathrm{pl}}^n \rho_G^{\,n} (1 + C_G \rho_F)^{\,n-1}.
\]

Define
\[
\gamma := K_{\mathrm{pl}} \rho_G (1 + C_G \rho_F).
\]
Then for all $n \ge 1$,
\[
\|\partial_n^{\mathrm{spec}}(F \circ G)\|
\le
\frac{C_F C_G \rho_F}{1 + C_G \rho_F}
\cdot K_{\mathrm{pl}}^{-1} \gamma^{\,n}
\le C \gamma^{\,n},
\]
where $C$ is a sufficiently large constant (e.g., taking $C$ larger than the
right-hand side for all $n$, and also large enough to cover the $n = 0$ case).

\medskip

\noindent
\textit{Derivation of the convergence radius.}
From the exponential bound $\|\partial_n^{\mathrm{spec}}(F \circ G)\| \le C \gamma^{\,n}$, we have
\[
\limsup_{n \to \infty} \|\partial_n^{\mathrm{spec}}(F \circ G)\|^{1/n} \le \gamma.
\]

By Definition~\ref{def:spectral-radius}, the spectral radius of convergence
$R_{F \circ G}$ satisfies
\[
R_{F \circ G}^{-1} = \limsup_{n \to \infty} \|\partial_n^{\mathrm{spec}}(F \circ G)\|^{1/n} \le \gamma.
\]
Hence
\[
R_{F \circ G} \ge \frac{1}{\gamma} = \frac{1}{K_{\mathrm{pl}} \rho_G (1 + C_G \rho_F)} > 0.
\]

Since $\rho_F, \rho_G, C_G, K_{\mathrm{pl}}$ are finite positive constants,
$1/\gamma > 0$. Thus $F \circ G$ has a positive radius of convergence, i.e.,
$F \circ G$ is spectrally analytic. This completes the proof.
\end{proof}

\medskip

Theorem~\ref{thm:composition-stability} shows that Spectral Operadic Calculus is closed under composition, and thus forms a self-consistent calculus framework. In particular, spectrally analytic functors may be composed without leaving the analytic category. This is the operadic analogue of the classical fact that analytic functions are closed under composition.

\medskip

\begin{corollary}[Polynomials are closed under composition]
\label{cor:poly-composition-stable}
If $F$ and $G$ are spectral polynomials of degrees $\le m$ and $\le n$, respectively, then $F \circ G$ is a spectral polynomial of degree $\le mn$. In particular, the composition of polynomial functors is polynomial.
\end{corollary}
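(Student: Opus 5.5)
The plan is to read this off the operadic chain rule (Theorem~\ref{thm:chain-rule}) together with the vanishing of high spectral derivatives of spectral polynomials (Proposition~\ref{prop:derivative-properties}(2)). Since $F$ has degree $\le m$ and $G$ has degree $\le n$, we have $\partial_k^{\mathrm{spec}}F \cong 0$ for $k>m$ and $\partial_j^{\mathrm{spec}}G\cong 0$ for $j>n$. We work in the reduced setting, replacing $F$, $G$ by $F-F(0)$, $G-G(0)$ as in Step~2 of the proof of Theorem~\ref{thm:chain-rule}. This does not change the derivatives in positive arity. The constant term of $F\circ G$ contributes only in arity $0$, by Corollary~\ref{cor:chain-constant}.

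Next, fix $N>mn$ and expand $\partial_N^{\mathrm{spec}}(F\circ G)$ by the explicit partition formula of Theorem~\ref{thm:chain-rule}. It is a sum over $k$ and over unordered partitions $\pi=\{P_1,\dots,P_k\}$ of $[N]$ into $k$ nonempty blocks. Each summand is $\partial_k^{\mathrm{spec}}F$ applied to the objects $\partial_{|P_i|}^{\mathrm{spec}}G(A_{P_i})$. For a summand to be nonzero we need $k\le m$ and $|P_i|\le n$ for every $i$. But $\sum_i |P_i| = N > mn \ge kn$, so by pigeonhole some block has $|P_i|>n$.

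For that block the input $\partial_{|P_i|}^{\mathrm{spec}}G(A_{P_i})$ is the zero object. Multi-reducedness of cross-effects, that is, the vanishing of $\mathrm{cr}_k F$ when one argument is $0$ (recorded after Definition~\ref{def:cross-effects}), then kills the summand. Hence $\partial_N^{\mathrm{spec}}(F\circ G)\cong 0$, so in particular $\mathrm{cr}_{mn+1}(F\circ G)\cong 0$. Its operadic spectrum is then $\{0\}$, and Definition~\ref{def:spectral-polynomial} gives that $F\circ G$ is a spectral polynomial of degree $\le mn$.

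The main obstacle is a mismatch in the hypotheses. Proposition~\ref{prop:derivative-properties}(2) asserts $\partial_k^{\mathrm{spec}}F\cong0$, an honest vanishing, whereas spectral polynomiality only gives \emph{spectral negligibility} of the top cross-effects. If the cross-effects are only spectrally trivial rather than zero, the step "the summand is zero" must be replaced by "the summand has operadic spectrum $\{0\}$". Two facts would be needed to make that replacement work:
\begin{itemize}
\item $\sigma_P$ of a finite sum of spectrally trivial objects is trivial, which is the finite-coproduct compatibility used in Proposition~\ref{prop:spectral-poly-stability}(1);
\item $\partial_k^{\mathrm{spec}}F$ applied to a spectrally trivial argument is spectrally trivial.
\end{itemize}
For the second fact I would try the first part of Lemma~\ref{lem:spectral-scaling}: scale the offending argument by $t\to0$ and use multilinearity. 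If that fails, I would simply invoke Proposition~\ref{prop:derivative-properties}(2) as stated, which already gives strict vanishing.
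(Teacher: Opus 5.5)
Your argument is the paper's own: apply Theorem~\ref{thm:chain-rule}, use $\partial_k^{\mathrm{spec}}F\cong 0$ for $k>m$ and $\partial_j^{\mathrm{spec}}G\cong 0$ for $j>n$, and observe that a surviving term forces $n_1+\cdots+n_k\le kn\le mn$. Your multi-reducedness step only makes explicit why a summand with a zero block vanishes. Your fallback of citing Proposition~\ref{prop:derivative-properties}(2) is exactly what the paper does. That is the right call: the mismatch you flag with Definition~\ref{def:spectral-polynomial} is real, and the scaling idea cannot repair it. Rescaling an input only rescales the output, and the bilinear map $(X,Y)\mapsto XY$ sends the spectrally trivial $E_{12}$ and $E_{21}$ to $E_{11}$, which is not spectrally trivial.

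One correction to your preliminary reduction. Replacing $G$ by $G-G(0)$ does change the positive-arity derivatives of the composite: for $F(A)=A\otimes A$ and $G(A)=A\oplus C$, the composite picks up the linear term $A\otimes C\oplus C\otimes A$. The degree bound still survives if you instead allow arity-zero blocks $\partial_0^{\mathrm{spec}}G=G(0)$, as in the remark after Definition~\ref{def:operadic-plethysm}, since each surviving term still has $\sum_i n_i\le kn\le mn$.
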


\begin{proof}
By the operadic chain rule (Theorem~\ref{thm:chain-rule}),
\[
\partial^{\mathrm{spec}}(F \circ G)
\cong
\partial^{\mathrm{spec}}F
\circ_{\mathrm{op}}
\partial^{\mathrm{spec}}G .
\]
Since $F$ is a spectral polynomial of degree $\le m$, we have
\[
\partial_k^{\mathrm{spec}}F = 0 \qquad (k > m).
\]
Similarly, since $G$ is a spectral polynomial of degree $\le n$,
\[
\partial_r^{\mathrm{spec}}G = 0 \qquad (r > n).
\]

In the plethysm formula, a nonzero contribution to
$\partial_\ell^{\mathrm{spec}}(F \circ G)$ comes from terms of the form
\[
\partial_k^{\mathrm{spec}}F
\Bigl(
\partial_{n_1}^{\mathrm{spec}}G,\;\dots,\;
\partial_{n_k}^{\mathrm{spec}}G
\Bigr),
\]
where $k \le m$ and $n_i \le n$. Hence
\[
\ell = n_1 + \cdots + n_k \le k n \le m n.
\]
Therefore
\[
\partial_\ell^{\mathrm{spec}}(F \circ G) = 0 \qquad (\ell > m n),
\]
so $F \circ G$ is a spectral polynomial of degree $\le m n$.
\end{proof}

\begin{corollary}[Entire functors are closed under composition]
\label{cor:entire-composition-stable}
If $F$ and $G$ are entire (i.e., $R_F = R_G = \infty$), then $F \circ G$ is entire. Examples include the exponential functor $\exp$, the identity functor $\mathrm{Id}$, and any polynomial functor.
\end{corollary}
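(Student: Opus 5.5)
The plan is to reduce everything to the quantitative estimate of Theorem~\ref{thm:composition-stability}, taken in the limiting regime where all exponential rates may be chosen arbitrarily small. Recall that $R_F=\infty$ means $\limsup_m \|\partial_m^{\mathrm{spec}}F\|^{1/m}=0$. Hence for every $\varepsilon>0$ there is a constant $C_F(\varepsilon)$ with
\[
\|\partial_m^{\mathrm{spec}}F\|\le C_F(\varepsilon)\,\varepsilon^{m}\qquad(m\ge 0).
\]
We obtain this by choosing $N$ so that the bound holds for $m\ge N$, and then enlarging the constant to absorb the finitely many indices $m<N$. The same holds for $G$ with constants $C_G(\delta)$ for every $\delta>0$. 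So the hypotheses of Theorem~\ref{thm:composition-stability} hold with $\rho_F=\varepsilon$ and $\rho_G=\delta$, for arbitrary positive $\varepsilon,\delta$. We assume, as that theorem does, that $F$ and $G$ are admissible and operadically compatible and that the normed plethysm estimates hold.

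Applying the theorem gives $\limsup_n \|\partial_n^{\mathrm{spec}}(F\circ G)\|^{1/n}\le \gamma(\varepsilon,\delta)=K_{\mathrm{pl}}\,\delta\,(1+C_G(\delta)\,\varepsilon)$. The order of choices is the key point. First fix $\delta>0$, which fixes the constant $C_G(\delta)$. Then let $\varepsilon\to0$; this is allowed because the $F$-bound can be taken at any rate, and only the constant $C_F(\varepsilon)$, which does not enter $\gamma$, changes. The result is $\limsup_n \|\partial_n^{\mathrm{spec}}(F\circ G)\|^{1/n}\le K_{\mathrm{pl}}\delta$ for every $\delta>0$. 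Since $K_{\mathrm{pl}}$ depends only on the plethysm norms and not on $F$ or $G$, letting $\delta\to0$ gives a $\limsup$ of $0$. By Definition~\ref{def:spectral-radius} this means $R_{F\circ G}=\infty$.

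The main obstacle is making sure that $K_{\mathrm{pl}}$ and the implicit constant $C$ in Theorem~\ref{thm:composition-stability} do not secretly depend on $\rho_F,\rho_G$. If they did, the limiting argument would break down. I would re-read Step~1 of that proof to confirm that $K_{\mathrm{pl}}$ comes only from the induction and coinvariant operator norms. The constant $C$ is harmless, since it affects only the prefactor and not the $n$-th root. A secondary point is $n=0$ and the non-reduced case $G(0)\neq0$. Here I would reduce to reduced functors as in Step~2 of the proof of Theorem~\ref{thm:chain-rule}, which does not affect derivatives with $n\ge1$.

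For the examples, Example~\ref{ex:exp-radius} gives $R_{\exp}=\infty$. For $\mathrm{Id}$ and for any spectral polynomial, the derivatives vanish eventually (Example~\ref{ex:poly-radius}), so their radius is infinite as well. For polynomial compositions one can alternatively cite Corollary~\ref{cor:poly-composition-stable} directly.
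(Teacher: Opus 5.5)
This is correct and is the paper's own argument: the paper also just applies Theorem~\ref{thm:composition-stability} ``with arbitrarily small exponential growth parameters,'' and your ordering (fix $\rho_G=\delta$, send $\rho_F=\varepsilon\to 0$ so that $\gamma\to K_{\mathrm{pl}}\delta$, then send $\delta\to 0$) makes explicit a point the paper glosses over, namely that $C_G$ depends on $\rho_G$ (e.g.\ $\delta\,C_G(\delta)\ge\|\partial_1^{\mathrm{spec}}G\|$), so the order of the limits matters. Your aside on non-reduced $G$ is unnecessary, since you use that theorem as stated, and as phrased it is not right: replacing $G$ by $G-G(0)$ preserves the derivatives of $G$ but in general changes $F\circ G$ and its derivatives, so one would instead have to re-center $F$ at $G(0)$.
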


\begin{proof}
Since $F$ and $G$ are entire, their spectral radii of convergence are infinite:
\[
R_F = R_G = \infty.
\]
Equivalently,
\[
\limsup_{k \to \infty} \|\partial_k^{\mathrm{spec}}F\|^{1/k} = 0,
\qquad
\limsup_{k \to \infty} \|\partial_k^{\mathrm{spec}}G\|^{1/k} = 0.
\]

Applying Theorem~\ref{thm:composition-stability} with arbitrarily small
exponential growth parameters shows that, for every $\gamma > 0$, there exists
$C_\gamma > 0$ such that
\[
\|\partial_n^{\mathrm{spec}}(F \circ G)\|
\le C_\gamma \gamma^n .
\]
Hence
\[
\limsup_{n \to \infty} \|\partial_n^{\mathrm{spec}}(F \circ G)\|^{1/n} = 0,
\]
and therefore
\[
R_{F \circ G} = \infty.
\]
Thus $F \circ G$ is entire.
\end{proof}

\begin{corollary}[Composition with linear functors preserves analyticity]
\label{cor:linear-composition-stable}
If $L$ is a bounded linear functor and $F$ is spectrally analytic, then both $L \circ F$ and $F \circ L$ are spectrally analytic. Moreover,
\[
R_{L \circ F} \ge R_F,
\qquad
R_{F \circ L} \ge \frac{R_F}{\|L\|},
\]
with equality when $L$ is norm-preserving (e.g., an isometry).
\end{corollary}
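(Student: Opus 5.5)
The plan is to apply the operadic chain rule (Theorem~\ref{thm:chain-rule}) in the two degenerate cases where one factor is linear. Corollary~\ref{cor:chain-linear} already unwinds the linear case, and from it we read off derivative-norm bounds, then convert them to radius bounds through Definition~\ref{def:spectral-radius}. Throughout, $\|L\|$ denotes the operator (Lipschitz) norm of $L$. We assume it controls both $\|L(Y)\|\le\|L\|\,\|Y\|$ and the spectral size of outputs, in the sense that $\|\sigma_P(L(A))\|\le\|L\|\,\|\sigma_P(A)\|$.

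\emph{Case $L\circ F$.} Corollary~\ref{cor:chain-linear} gives $\partial_n^{\mathrm{spec}}(L\circ F)\cong L\otimes\partial_n^{\mathrm{spec}}F$, that is, $L$ applied to the $n$-th derivative. Taking the supremum over inputs with $\|\sigma_P(A_i)\|\le 1$ yields
\[
\|\partial_n^{\mathrm{spec}}(L\circ F)\|\le \|L\|\,\|\partial_n^{\mathrm{spec}}F\|.
\]
Since $\|L\|^{1/n}\to 1$, taking $n$-th roots and $\limsup$ gives $R_{L\circ F}^{-1}\le R_F^{-1}$, hence $R_{L\circ F}\ge R_F>0$. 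Spectral analyticity of $L\circ F$ then follows from Theorem~\ref{thm:quantitative-convergence}, because $L$ is bounded and therefore carries the convergent tail $F(A)-P_n^{\mathrm{spec}}F(A)$ to a tail bounded by $\|L\|$ times the original. If $L$ is an isometry, the displayed inequality is an equality, so the radii coincide.

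\emph{Case $F\circ L$.} Here the chain rule puts $\partial^{\mathrm{spec}}L=L[1]$ on the right of the plethysm. In $\partial^{\mathrm{spec}}F\circ_{\mathrm{op}}L[1]$ only partitions of $[n]$ into singletons contribute, so
\[
\partial_n^{\mathrm{spec}}(F\circ L)(A_1,\dots,A_n)\cong\partial_n^{\mathrm{spec}}F(LA_1,\dots,LA_n).
\]
If $\|\sigma_P(A_i)\|\le1$, then $\|\sigma_P(LA_i)\|\le\|L\|$. Multilinear rescaling, exactly as in the proof of Lemma~\ref{lem:homogeneous-estimate}, then gives
\[
\|\partial_n^{\mathrm{spec}}(F\circ L)\|\le\|\partial_n^{\mathrm{spec}}F\|\,\|L\|^n.
\]
Taking $n$-th roots and $\limsup$ yields $R_{F\circ L}^{-1}\le\|L\|\,R_F^{-1}$, i.e.\ $R_{F\circ L}\ge R_F/\|L\|>0$. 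Spectral analyticity follows as before. For equality when $L$ is an isometry, we would use that an isometric $L$ preserves spectral size, so the supremum defining $\|\partial_n^{\mathrm{spec}}(F\circ L)\|$ ranges over the same normalized set as for $F$.

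\emph{Main obstacle.} The step I expect to be delicate is the spectral-size transfer $\|\sigma_P(LA)\|\le\|L\|\,\|\sigma_P(A)\|$, together with the reverse inequality needed for the equality claims. The paper does not state this directly. I would try to derive it from the Operadic Spectral Mapping Theorem (as used in Lemma~\ref{lem:spectral-scaling}) combined with base-change compatibility of $\sigma_P$, or else from Lemma~\ref{lem:norm-spectral-comparison} on a spectrally controlled class. Failing that, I would state it explicitly as the meaning of ``bounded linear functor''. In that reading, the equality clause requires $L$ to be surjective onto the relevant normalized inputs, and I would add that assumption if needed.
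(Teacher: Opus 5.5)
Your proposal is correct and follows essentially the paper's own argument: the chain rule with $\partial^{\mathrm{spec}}L\cong L[1]$ gives $\partial_n^{\mathrm{spec}}(L\circ F)\cong L(\partial_n^{\mathrm{spec}}F)$ and $\partial_n^{\mathrm{spec}}(F\circ L)(A_1,\dots,A_n)\cong\partial_n^{\mathrm{spec}}F(LA_1,\dots,LA_n)$, which yield the bounds $\|L\|\,\|\partial_n^{\mathrm{spec}}F\|$ and $\|L\|^n\,\|\partial_n^{\mathrm{spec}}F\|$, and taking $\limsup$ of $n$-th roots gives the radius estimates. The paper simply asserts $\|\sigma_P(LA)\|\le\|L\|\,\|\sigma_P(A)\|$ from boundedness (routing it through Lemma~\ref{lem:norm-spectral-comparison}, as you suggest as a fallback, would cost a factor $C_{\mathrm{Id}}$ in the radius, so making it an explicit hypothesis is the right call), and it hedges equality for $F\circ L$ with unspecified ``nondegeneracy assumptions''; your surjectivity caveat and your pointwise isometry argument for $L\circ F$ are therefore refinements of the same proof rather than a different route, and the latter is needed because the paper's deduction of equality from $\|L\|=1$ alone does not go through.
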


\begin{proof}
Let $L$ be a bounded linear functor. Since $L$ is linear, its derivative
sequence is concentrated in arity one:
\[
\partial^{\mathrm{spec}}L \cong L[1].
\]

\medskip
\noindent
\textit{Case 1: $L \circ F$.}
By the chain rule (Theorem~\ref{thm:chain-rule}),
\[
\partial^{\mathrm{spec}}(L \circ F)
\cong
\partial^{\mathrm{spec}}L
\circ_{\mathrm{op}}
\partial^{\mathrm{spec}}F .
\]
Since $\partial^{\mathrm{spec}}L$ is concentrated in arity one, the plethysm
simplifies:
\[
\partial_n^{\mathrm{spec}}(L \circ F)
\cong
L\bigl(\partial_n^{\mathrm{spec}}F\bigr).
\]
Thus, by boundedness of $L$,
\[
\|\partial_n^{\mathrm{spec}}(L \circ F)\|
\le
\|L\| \, \|\partial_n^{\mathrm{spec}}F\|.
\]
Taking $n$-th roots and passing to the limsup gives
\[
\limsup_{n \to \infty} \|\partial_n^{\mathrm{spec}}(L \circ F)\|^{1/n}
\le
\limsup_{n \to \infty} \|\partial_n^{\mathrm{spec}}F\|^{1/n}
= R_F^{-1}.
\]
Hence $R_{L \circ F} \ge R_F$. If $L$ is norm-preserving (e.g., an isometry),
then $\|L\| = 1$ and the inequality becomes an equality.

\medskip
\noindent
\textit{Case 2: $F \circ L$.}
By the chain rule,
\[
\partial^{\mathrm{spec}}(F \circ L)
\cong
\partial^{\mathrm{spec}}F
\circ_{\mathrm{op}}
\partial^{\mathrm{spec}}L .
\]
Again, since $\partial^{\mathrm{spec}}L$ is concentrated in arity one,
\[
\partial_n^{\mathrm{spec}}(F \circ L)(A_1,\dots,A_n)
\cong
\partial_n^{\mathrm{spec}}F(L(A_1),\dots,L(A_n)).
\]
Using boundedness of $L$, we have $\|\sigma_P(L(A_i))\| \le \|L\| \cdot \|\sigma_P(A_i)\|$,
so the spectral radius scales. More directly, by the definition of the
spectral derivative norm,
\[
\|\partial_n^{\mathrm{spec}}(F \circ L)\|
\le
\|\partial_n^{\mathrm{spec}}F\| \cdot \|L\|^n.
\]
Taking $n$-th roots and passing to the limsup yields
\[
\limsup_{n \to \infty} \|\partial_n^{\mathrm{spec}}(F \circ L)\|^{1/n}
\le
\|L\| \cdot \limsup_{n \to \infty} \|\partial_n^{\mathrm{spec}}F\|^{1/n}
= \frac{\|L\|}{R_F}.
\]
Hence $R_{F \circ L} \ge R_F / \|L\|$. If $L$ is norm-preserving ($\|L\| = 1$),
then $R_{F \circ L} \ge R_F$; under suitable nondegeneracy assumptions,
equality holds.

Thus composition with bounded linear functors preserves spectral analyticity,
with the stated radius bounds.
\end{proof}

\medskip

\begin{example}[Exponential of a Spectral Polynomial]
\label{ex:exp-of-poly-stable}
Let $P$ be a spectral polynomial of degree $\le m$, and consider the formal
exponential
\[
\exp(P) = \sum_{k=0}^{\infty} \frac{1}{k!} P^{\otimes k}.
\]
Since $\exp$ is entire and $P$ is a spectral polynomial, the stability of
spectral analyticity under composition (Corollary~\ref{cor:entire-composition-stable})
implies that $\exp(P)$ is spectrally analytic. Under the normed estimates of
Section~\ref{sec:convergence}, it is entire.

By the operadic chain rule (Theorem~\ref{thm:chain-rule}),
\[
\partial^{\mathrm{spec}}(\exp(P))
\cong
\partial^{\mathrm{spec}}\exp
\circ_{\mathrm{op}}
\partial^{\mathrm{spec}}P.
\]
Equivalently, the $n$-th derivative is obtained from
\[
\partial_n^{\mathrm{spec}}(\exp(P))
\cong
\sum_{k=0}^{n}
\frac{1}{k!}
\Bigl(
\underbrace{
\partial^{\mathrm{spec}}P
\circ_{\mathrm{op}}\cdots\circ_{\mathrm{op}}
\partial^{\mathrm{spec}}P
}_{k\ \text{factors}}
\Bigr)_n,
\]
with the usual convention that the $k=0$ term contributes only in arity zero.
This is the operadic analogue of the classical fact that the exponential of a
polynomial is entire.
\end{example}

\begin{example}[Geometric Series of a Linear Functor]
\label{ex:geometric-linear-stable}
Let $L$ be a bounded linear functor and define the formal geometric series
\[
G(A) = \sum_{k=0}^{\infty} L(A)^{\otimes k}.
\]
Whenever $\|L(A)\| < 1$, this series converges, and the corresponding spectral
radius is governed by the operator norm of $L$. In particular, one has the
radius estimate
\[
R_G \ge \frac{1}{\|L\|},
\]
with equality in the usual sharp scalar or norm-attaining cases.

If $M$ is another bounded linear functor, then by the chain rule and the linear
composition estimate (Corollary~\ref{cor:linear-composition-stable}),
\[
R_{G \circ M}
\ge
\frac{R_G}{\|M\|}
\ge
\frac{1}{\|L\| \, \|M\|}.
\]
Thus $G \circ M$ is spectrally analytic whenever $M$ is bounded. In the sharp case,
for example when the norm estimates are attained, this gives
\[
R_{G \circ M}
=
\frac{1}{\|L\| \, \|M\|}.
\]
This reflects the classical identity
\[
\sum_{k=0}^{\infty} (L \circ M)(A)^{\otimes k},
\]
whose convergence is controlled by the size of the composed linear operator
$L \circ M$.
\end{example}

\medskip

\begin{remark}[Comparison with Goodwillie calculus]
\label{rem:goodwillie-composition}
In Goodwillie calculus, analytic functors are also closed under composition,
with the chain rule expressed via operadic composition of Taylor towers
(see~\cite{AroneChing2011}). The proof relies on homotopy-theoretic tools,
such as excisive approximations and convergence of the Taylor tower.

The present result (Theorem~\ref{thm:composition-stability}) can be viewed as a
norm-enriched analogue, where the same operadic structure is realized through
plethystic composition of spectral derivatives.
\end{remark}

\begin{remark}[Quantitative and constructive features]
\label{rem:composition-quantitative}
Compared to the homotopical setting, spectral operadic calculus provides:

\begin{itemize}
    \item \emph{Quantitative control:} the chain rule yields explicit growth
    bounds on derivatives, leading to concrete estimates of convergence radii
    (see Theorem~\ref{thm:quantitative-convergence}).

    \item \emph{Constructive formulas:} the derivatives of the composite are
    given explicitly by the plethysm formula (Definition~\ref{def:operadic-plethysm}),
    allowing direct computation where cross-effects are known.
\end{itemize}

In contrast, Goodwillie calculus typically provides equivalences up to homotopy,
with less direct access to explicit numerical bounds.
\end{remark}

\begin{remark}[Comparison summary]
\label{rem:composition-summary}
\[
\begin{array}{|c|c|}
\hline
\text{Goodwillie Calculus} & \text{Spectral Operadic Calculus} \\
\hline
\text{Homotopy-theoretic chain rule} & \text{Normed chain rule} \\
\text{Operadic composition (formal)} & \text{Plethysm with explicit bounds} \\
\text{Convergence via tower} & \text{Explicit convergence estimates} \\
\hline
\end{array}
\]
\end{remark}

\begin{remark}[On the radius of convergence]
\label{rem:composition-radius}
The radius of convergence of a composite is governed by the plethysm
interaction of derivative norms. In general,
\[
R_{F \circ G}^{-1}
=
\limsup_{n \to \infty}
\left(
\sum_{k=1}^n
\sum_{\substack{n_1+\cdots+n_k=n \\ n_i \ge 1}}
\|\partial_k^{\mathrm{spec}}F\|
\prod_{i=1}^k
\|\partial_{n_i}^{\mathrm{spec}}G\|
\right)^{1/n}.
\]

Consequently, simple bounds such as
\[
R_{F \circ G} \ge \min(R_F, R_G)
\]
do not hold in general. Instead, from Theorem~\ref{thm:composition-stability},
one obtains bounds of the form
\[
R_{F \circ G}
\;\ge\;
\frac{1}{K_{\mathrm{pl}} \rho_G (1 + C_G \rho_F)},
\]
where $\rho_F$, $\rho_G$ are exponential growth bounds for the derivatives,
$C_G$ is the admissibility constant of $G$, and $K_{\mathrm{pl}} \ge 1$ absorbs
the normed plethysm estimates.

This shows that spectral analyticity is preserved under composition,
although the precise radius depends on the interaction between $F$ and $G$.
\end{remark}

\begin{remark}[On optimality]
\label{rem:composition-optimality}
The above bounds are generally not sharp. The exact radius is determined by the
full plethystic interaction of derivative norms. Sharp formulas arise in special
cases, for example:
\begin{itemize}
    \item when one functor is linear (Corollary~\ref{cor:linear-composition-stable}),
    \item when derivative norms exhibit multiplicative behavior,
    \item or in scalar-valued settings.
\end{itemize}
Nevertheless, the existence of a positive radius is sufficient for most analytic
applications, and demonstrates the robustness of spectral analyticity under
composition.
\end{remark}

\medskip

Having established that spectral analyticity is closed under composition, we now have a complete analytic framework. The following section will prove the Reconstruction Theorem, which shows that a spectrally analytic functor is uniquely determined by its sequence of spectral derivatives together with the operadic composition structure encoded by the residue. This will complete the classification of spectrally analytic functors in terms of their derivative data.

\section{Reconstruction of Spectrally Analytic Functors}
\label{sec:reconstruction}

We now arrive at the capstone of Spectral Operadic Calculus: the reconstruction of spectrally analytic functors from their spectral derivatives. The results of this section show that these data are complete invariants, establishing a precise operadic analogue of the uniqueness of Taylor series.

We prove that two spectrally analytic functors \(F\) and \(G\) are isomorphic if and only if their spectral derivative sequences \(\{\partial_n^{\mathrm{spec}}F\}\) and \(\{\partial_n^{\mathrm{spec}}G\}\) are isomorphic as operadic modules over \(P\). Consequently, the spectral Taylor expansion is not merely an approximation tool but a full classification mechanism.

\medskip

\noindent
This yields an equivalence of categories
\[
\mathsf{SpecAn} \;\simeq\; \mathsf{DerAlg},
\]
where \(\mathsf{SpecAn}\) is the category of spectrally analytic functors and \(\mathsf{DerAlg}\) is the category of \emph{derivative algebras}: symmetric sequences equipped with a right \(P\)-module structure satisfying a convergence condition. Thus, analytic functors admit a purely algebraic model.

\medskip

This equivalence shows that:
\begin{itemize}
    \item The spectral Taylor tower captures \emph{all} information about a spectrally analytic functor;
    \item The algebraic structure of derivatives (symmetric sequences, operadic modules) is exactly the right language for describing analytic functors;
    \item The operadic residue \(\mathcal{O}_P^{\mathrm{res}}\) is essential—without it, the equivalence fails (Part~I, Theorem 0.1).
\end{itemize}

\medskip

While Goodwillie calculus establishes convergence results for analytic functors,
the present reconstruction is distinguished by several additional features.
First, it is \emph{constructive}, in that the functor can be explicitly recovered
from its spectral Taylor series. Second, it is \emph{quantitative}, providing
explicit bounds on the radius of convergence. Finally, it yields a full
\emph{categorical equivalence}, rather than merely a correspondence at the level
of objects.

\medskip

This section is organized as follows. In
Section~\ref{subsec:reconstruction-theorem}, we establish the Reconstruction
Theorem, showing that the spectral derivative data completely determines the
functor \(F\). In Section~\ref{subsec:equivalence}, we prove an equivalence of
categories \(\mathsf{SpecAn} \simeq \mathsf{DerAlg}\), identifying spectrally
analytic functors with their associated derivative algebraic structures.

\medskip

Throughout this section, all functors are assumed to be admissible
(Definition~\ref{def:admissible-functor}) and operadically compatible
(Definition~\ref{def:operadic-compatibility}). The operadic residue
\(\mathcal{O}_P^{\mathrm{res}}\) is the canonical object constructed in
Part~I~\cite{ChangSOC1}, and it plays a central role in ensuring the coherence
of the spectral derivative structure.

\subsection{Reconstruction Theorem}
\label{subsec:reconstruction-theorem}

We now establish the central reconstruction result of Spectral Operadic Calculus, showing that spectrally analytic functors are completely determined by their spectral derivatives.

\medskip

The following theorem shows that spectral derivatives form a complete invariant
of spectrally analytic functors. In other words, the entire functor can be
reconstructed from its derivative data together with the operadic structure.
This is the operadic analogue of the classical fact that an analytic function
is uniquely determined by its Taylor coefficients.

\medskip

\begin{theorem}[Reconstruction from Spectral Derivatives]
\label{thm:reconstruction}
Let $F, G : \mathcal{C} \to \mathcal{M}$ be admissible, operadically compatible,
and spectrally analytic functors. Then the following are equivalent:
\begin{enumerate}
    \item $F \cong G$ as functors;
    \item their spectral derivatives are isomorphic as right $P$-modules:
    \[
    \{\partial_n^{\mathrm{spec}}F\}_{n \geq 0}
    \;\cong\;
    \{\partial_n^{\mathrm{spec}}G\}_{n \geq 0}.
    \]
\end{enumerate}
\end{theorem}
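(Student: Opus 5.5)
The plan is to prove the two implications separately, with (1)$\Rightarrow$(2) essentially formal and (2)$\Rightarrow$(1) the substantive direction.

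For (1)$\Rightarrow$(2), I would take a natural isomorphism $\theta:F\xrightarrow{\ \sim\ }G$. By Remark~\ref{rem:derivative-functoriality}, applying the cross-effect construction gives maps $\mathrm{cr}_n(\theta):\partial_n^{\mathrm{spec}}F\to\partial_n^{\mathrm{spec}}G$, and $\mathrm{cr}_n(\theta^{-1})$ gives maps in the other direction. Since $\mathrm{cr}_n$ is functorial, these are mutually inverse and $\Sigma_n$-equivariant. It remains to check that they intertwine the right $P$-module structure maps $\rho_{n_1,\dots,n_k}$ of Theorem~\ref{thm:module-structure}. I would do this by naturality: those maps were obtained by applying $\partial^{\mathrm{spec}}$ to the operadic compatibility transformations $\Phi_{\gamma_1,\dots,\gamma_k}$, and I would impose (or read into Definition~\ref{def:operadic-compatibility}) that isomorphisms of compatible functors respect these data.

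For (2)$\Rightarrow$(1), I would fix an isomorphism of right $P$-modules $\phi=\{\phi_n\}$ with $\phi_n:\partial_n^{\mathrm{spec}}F\cong\partial_n^{\mathrm{spec}}G$. The argument then proceeds in four steps.
\begin{enumerate}
\item Build layer isomorphisms. Evaluate $\phi_n$ on the diagonal and symmetrize, using Definition~\ref{def:spectral-derivatives} and the $\Sigma_n$-equivariance of $\phi_n$. This gives natural isomorphisms $D_n^{\mathrm{spec}}F\cong D_n^{\mathrm{spec}}G$, including $n=0$, where it reads $F(0)\cong G(0)$.
\item Build tower isomorphisms. Using the split model of Corollary~\ref{cor:homogeneous-layers}, $P_n^{\mathrm{spec}}F\cong\bigoplus_{k\le n}D_k^{\mathrm{spec}}F$, assemble the layer isomorphisms into $\psi_n:P_n^{\mathrm{spec}}F\cong P_n^{\mathrm{spec}}G$. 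These should be compatible with the tower maps, because the tower maps are inclusions of partial sums.
\item Pass to the limit. Spectral analyticity (Definition~\ref{def:spectral-analyticity}) gives $F(A)\simeq\lim_n P_n^{\mathrm{spec}}F(A)$ in norm, and similarly for $G$. The compatible family $\psi_n$ then induces $\psi_\infty:F(A)\to G(A)$, which is natural in $A$ because each $\psi_n$ is.
\item Check that $\psi_\infty$ is a bounded isomorphism, with inverse $\lim_n\psi_n^{-1}$.
\end{enumerate}

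The main obstacle is step (4), the analytic passage to the limit. A levelwise isomorphism of partial sums need not converge to a bounded isomorphism: this requires $\sup_n\|\psi_n\|$ and $\sup_n\|\psi_n^{-1}\|$ to be finite. My first attempt would be to require the module isomorphism $\phi$ to be isometric, or at least uniformly bounded in $n$, in the normed sense of Definition~\ref{def:spectral-derivative-norm}. Then $\|\partial_n^{\mathrm{spec}}F\|=\|\partial_n^{\mathrm{spec}}G\|$ up to uniform constants, so $R_F=R_G$. Theorem~\ref{thm:quantitative-convergence} would then give Cauchy estimates for $\psi_n(P_n^{\mathrm{spec}}F(A))$ that are uniform on each spectral ball $\|\sigma_P(A)\|\le r<R_F$. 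If only analyticity on a radius is available, I would first obtain $F\cong G$ on the full subcategory $\|\sigma_P(A)\|<R_F$. I would then extend this as in Lemma~\ref{lem:analytic-continuation}, avoiding circularity by using only the tower argument above rather than the present theorem.

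A secondary point is where the $P$-module structure, rather than just the symmetric sequence, is used. I expect it is needed to make the diagonal symmetrization in step (1) and the residue factors $(\mathcal{O}_P^{\mathrm{res}})^{\otimes n}$ in $D_n^{\mathrm{spec}}$ match canonically. I would invoke Theorem~\ref{thm:residue-universality} to show that the $\phi_n$ commute with the unique residue maps.
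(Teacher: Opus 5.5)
Your proposal follows the paper's proof essentially step for step: (1)$\Rightarrow$(2) by functoriality of cross-effects, and (2)$\Rightarrow$(1) by descending the $\Sigma_n$-equivariant $\phi_n$ to natural isomorphisms $D_n^{\mathrm{spec}}F\cong D_n^{\mathrm{spec}}G$, identifying the two convergent spectral Taylor series termwise, and extending via Lemma~\ref{lem:analytic-continuation}. Your extra conditions sit exactly where the paper only asserts the step (``compatible with the series structure (they preserve the grading)''), so they are honest repairs rather than departures: uniform-in-$n$ bounds on $\phi_n$ and $\phi_n^{-1}$ so the termwise isomorphisms converge to a bounded isomorphism, compatibility of $\mathrm{cr}_n(\theta)$ with the chosen module structure maps, and a non-circular use of Lemma~\ref{lem:analytic-continuation}, whose proof in the paper cites this very theorem; note, however, that the paper's (2)$\Rightarrow$(1) uses only $\Sigma_n$-equivariance and needs no appeal to Theorem~\ref{thm:residue-universality}.
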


\begin{proof}
We prove both implications.

\medskip

\noindent
\textbf{(1) $\Rightarrow$ (2).}
If $F \cong G$ as functors, then by functoriality of cross-effects
(Proposition~\ref{prop:derivative-properties}), their cross-effects are naturally isomorphic:
\[
\mathrm{cr}_n F \cong \mathrm{cr}_n G
\quad \text{for all } n \geq 0.
\]
By Definition~\ref{def:spectral-derivatives}, $\partial_n^{\mathrm{spec}}F = \mathrm{cr}_nF$ and
$\partial_n^{\mathrm{spec}}G = \mathrm{cr}_nG$, so
\[
\partial_n^{\mathrm{spec}}F \cong \partial_n^{\mathrm{spec}}G
\]
for each $n$. Moreover, these isomorphisms are compatible with the symmetric
group actions and with the operadic composition maps
(Theorem~\ref{thm:module-structure}), hence define an isomorphism of right
$P$-modules.

\medskip

\noindent
\textbf{(2) $\Rightarrow$ (1).}
Assume that
\[
\partial^{\mathrm{spec}}F \cong \partial^{\mathrm{spec}}G
\]
as right $P$-modules. Let $\phi = \{\phi_n\}_{n \ge 0}$ denote the isomorphism,
where each $\phi_n : \partial_n^{\mathrm{spec}}F \Rightarrow \partial_n^{\mathrm{spec}}G$
is $S_n$-equivariant and commutes with the operadic module structure.

\medskip

\noindent
\emph{Step 1: Identification of homogeneous layers.}
The homogeneous layers are defined by symmetrization:
\[
D_n^{\mathrm{spec}}F(A)
=
\partial_n^{\mathrm{spec}}F(A,\dots,A)_{\mathrm{sym}},
\qquad
D_n^{\mathrm{spec}}G(A)
=
\partial_n^{\mathrm{spec}}G(A,\dots,A)_{\mathrm{sym}}.
\]
Since $\phi_n$ is $S_n$-equivariant, it descends to an isomorphism on
symmetrizations:
\[
\psi_n(A) : D_n^{\mathrm{spec}}F(A) \;\cong\; D_n^{\mathrm{spec}}G(A)
\quad \text{for all } n \ge 0,\; A \in \mathcal{C}.
\]
Each $\psi_n$ is natural in $A$ because $\phi_n$ is a natural transformation.

\medskip

\noindent
\emph{Step 2: Equality of Taylor expansions.}
Because $F$ and $G$ are spectrally analytic (Definition~\ref{def:spectral-analyticity}),
they admit convergent spectral Taylor expansions:
\[
F(A) = \sum_{n=0}^{\infty} D_n^{\mathrm{spec}}F(A),
\qquad
G(A) = \sum_{n=0}^{\infty} D_n^{\mathrm{spec}}G(A),
\]
where the series converge absolutely in norm for all $A$ with
$\|\sigma_P(A)\| < \min(R_F, R_G)$ (Theorem~\ref{thm:quantitative-convergence}).

By Step 1, the terms of these expansions are termwise isomorphic via $\psi_n(A)$:
\[
D_n^{\mathrm{spec}}F(A) \cong D_n^{\mathrm{spec}}G(A) \quad \text{for each } n.
\]
Since both series converge and the isomorphisms are compatible with the
series structure (they preserve the grading), the limits are identified:
\[
F(A) \cong \bigoplus_{n=0}^{\infty} D_n^{\mathrm{spec}}F(A)
\;\cong\; \bigoplus_{n=0}^{\infty} D_n^{\mathrm{spec}}G(A)
\cong G(A),
\]
for each $A \in \mathcal{C}$ with $\|\sigma_P(A)\| < \min(R_F, R_G)$.

\medskip

\noindent
\emph{Step 3: Naturality and extension.}
The isomorphisms $\psi_n(A)$ are natural in $A$, and the Taylor expansion
is functorial with respect to morphisms in $\mathcal{C}$ (each $D_n^{\mathrm{spec}}F$
is a functor). Therefore, the induced identification
\[
\Phi_A : F(A) \stackrel{\cong}{\longrightarrow} G(A)
\]
is a natural isomorphism on the full subcategory of objects with sufficiently
small spectral radius. By the analytic continuation principle
(Lemma~\ref{lem:analytic-continuation}), this isomorphism extends naturally
to the entire domain where both functors are defined and converge.

Thus, $F \cong G$ as functors.
\end{proof}

\medskip

\begin{remark}[Conceptual consequence]
\label{rem:reconstruction-conceptual}
Theorem~\ref{thm:reconstruction} provides a precise analogue of the uniqueness
of Taylor series in classical analysis: a spectrally analytic functor is uniquely
determined by its system of spectral derivatives. In this sense, the operadic
derivative data captures the full analytic and compositional behavior of the
functor.
\end{remark}

\begin{corollary}[Fully faithful embedding]
\label{cor:fully-faithful}
The assignment
\[
\Phi: \mathsf{SpecAn} \longrightarrow \mathsf{DerAlg}_{\mathrm{int}},
\qquad
\Phi(F) := \{\partial_n^{\mathrm{spec}}F\}_{n \geq 0}
\]
defines a fully faithful functor from the category of spectrally analytic
functors to the category of integrable derivative algebras.
\end{corollary}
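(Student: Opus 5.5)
The plan is to check the three things that make $\Phi$ a fully faithful functor: that $\Phi$ is well defined on objects and morphisms, that it is faithful, and that it is full. Throughout, I take $\mathsf{SpecAn}$ to be the category of admissible, operadically compatible, spectrally analytic functors with natural transformations as morphisms. I take $\mathsf{DerAlg}_{\mathrm{int}}$ to have right $P$-modules in symmetric sequences (satisfying the integrability/convergence condition) as objects, and $\Sigma$-equivariant right $P$-module maps as morphisms.

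\textbf{Objects and morphisms.} On objects, Theorem~\ref{thm:symmetric-sequence} and Theorem~\ref{thm:module-structure} give the right $P$-module structure on $\{\partial_n^{\mathrm{spec}}F\}$. The integrability condition will be read off from $R_F>0$ (Definition~\ref{def:spectral-radius}), which holds by spectral analyticity. On morphisms, I set $\Phi(\eta)=\{\mathrm{cr}_n(\eta)\}_n$, as in Remark~\ref{rem:derivative-functoriality}. This is $\Sigma_n$-equivariant. I will argue that it commutes with the structure maps $\rho_{n_1,\dots,n_k}$, because these are built from the natural maps $\Phi_{\gamma_1,\dots,\gamma_k}$ on cross-effects and $\mathrm{cr}_n$ is functorial in the functor variable. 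Functoriality of $\Phi$ (identities and composites) then follows from functoriality of cross-effects.

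\textbf{Faithfulness.} Suppose $\eta,\eta':F\to G$ satisfy $\mathrm{cr}_n(\eta)=\mathrm{cr}_n(\eta')$ for all $n$. Passing to symmetrized diagonal evaluations, the induced maps $D_n^{\mathrm{spec}}F\to D_n^{\mathrm{spec}}G$ agree. On the region $\|\sigma_P(A)\|<\min(R_F,R_G)$, spectral analyticity and Theorem~\ref{thm:quantitative-convergence} identify $F(A)$ and $G(A)$ with the norm-convergent sums of their layers. Naturality of $\eta$ with respect to the tower maps, using Proposition~\ref{prop:taylor-functorial}, gives $P_n^{\mathrm{spec}}\eta$ compatible with the grading. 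Hence $\eta_A$ is the limit of $\bigoplus_{k\le n}D_k^{\mathrm{spec}}\eta_A$, and so $\eta_A=\eta'_A$ on the small-spectrum subcategory. I then extend equality to all $A$ by the same continuation mechanism as in Lemma~\ref{lem:analytic-continuation}.

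\textbf{Fullness (main obstacle).} Given a module map $\phi:\partial^{\mathrm{spec}}F\to\partial^{\mathrm{spec}}G$, I would define $\psi_n(A):D_n^{\mathrm{spec}}F(A)\to D_n^{\mathrm{spec}}G(A)$ by descending $\phi_n$ to symmetrizations, exactly as in Step~1 of the proof of Theorem~\ref{thm:reconstruction}. I would then set $\eta_A=\sum_n\psi_n(A)$ through the Taylor decompositions. The hard step is showing that this series of maps converges in operator norm. I expect to need that $\|\phi_n\|$ grows at most exponentially, so that $\sum_n\|\phi_n\|\,\|\sigma_P(A)\|^n<\infty$ on a small spectral ball. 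I would try to build this bound into the morphisms of $\mathsf{DerAlg}_{\mathrm{int}}$ as part of integrability, and combine it with the geometric estimates of Corollary~\ref{cor:geometric-decay} and Lemma~\ref{lem:homogeneous-estimate}.

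Naturality of $\eta$ in $A$ follows from naturality of each $\psi_n$ together with norm continuity. By construction $\Phi(\eta)=\phi$: the cross-effect of a gradedwise sum of maps recovers the arity-$n$ component, using the inclusion--exclusion description in Definition~\ref{def:cross-effects} and multilinearity. Finally, I extend from the small-spectrum subcategory via Lemma~\ref{lem:analytic-continuation}, which gives fullness.
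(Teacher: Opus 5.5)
Your fullness argument follows the paper's route: descend $\phi_n$ to $\psi_n$ as in Step~1 of the proof of Theorem~\ref{thm:reconstruction}, then sum. Your faithfulness argument is genuinely different, and aimed at the right property. The paper only deduces from Theorem~\ref{thm:reconstruction} that $\Phi(F)\cong\Phi(G)$ forces $F\cong G$, i.e.\ injectivity on isomorphism classes, which is not faithfulness; you attack injectivity on $\mathrm{Nat}(F,G)$. As written, though, the step ``$P_n^{\mathrm{spec}}\eta$ is compatible with the grading'' is unsupported. Proposition~\ref{prop:taylor-functorial} gives only $P_n^{\mathrm{spec}}\eta\circ\iota_n^F=\iota_n^G\circ\eta$, which is a statement about the filtration. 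In the split model a filtration-compatible map can still have blocks $D_k^{\mathrm{spec}}F\to D_m^{\mathrm{spec}}G$ with $k\neq m$, and these are not among the diagonal data $D_k^{\mathrm{spec}}\eta$ you compare. You need an extra argument killing them. One option is a scaling argument based on Lemma~\ref{lem:homogeneity-layers}, but that requires multiplication by $\lambda$ to be a morphism of $\mathcal{C}$ on which $D_n^{\mathrm{spec}}F$ and $D_n^{\mathrm{spec}}G$ act by $\lambda^n$. You should also drop the appeals to Lemma~\ref{lem:analytic-continuation}. That lemma concerns natural isomorphisms $F\cong G$ (and its proof itself calls on Theorem~\ref{thm:reconstruction}), not equality or extension of transformations. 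It is also unnecessary, because Definition~\ref{def:spectral-analyticity} already gives convergence of the tower at every $A$.

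On fullness, you correctly isolate convergence, which the paper simply asserts, but your formulation of it is off. Lemma~\ref{lem:homogeneous-estimate} bounds the size of the layers. It does not bound the operator norm of a map between two layers of the same degree, and such a norm picks up no factor $\|\sigma_P(A)\|^n$. For $\phi=\mathrm{id}$ and $F=\exp$, every $\psi_n(A)$ is an identity of norm $1$, so $\sum_n\psi_n(A)$ never converges in operator norm, even though $\sum_n\|\phi_n\|\,\|\sigma_P(A)\|^n<\infty$ for $\|\sigma_P(A)\|<1$. On completed $\ell^1$-direct sums, the diagonal map $\bigoplus_n\psi_n(A)$ is bounded exactly when $\sup_n\|\psi_n(A)\|<\infty$, and the partial sums then converge strongly. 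So what belongs in the morphisms of $\mathsf{DerAlg}_{\mathrm{int}}$ is a uniform bound, with no small-ball restriction needed. Whatever bound you impose must then be verified for every $\{\mathrm{cr}_n(\eta)\}$, or $\Phi$ is no longer defined on morphisms. Finally, the claim that ``the cross-effect of a gradedwise sum recovers the arity-$n$ component'' needs $\mathrm{cr}_n$ of the degree-$k$ summand to vanish for $k\neq n$. This holds for $k<n$ but fails for $k>n$ under the literal definition $\partial_n^{\mathrm{spec}}F=\mathrm{cr}_nF$: by Definition~\ref{def:cross-effects}, $\mathrm{cr}_1$ of $A\mapsto A\otimes A$ is the functor itself. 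You must take $\partial_n^{\mathrm{spec}}F$ to be the multilinear top part of $\mathrm{cr}_nF$, as the paper's examples tacitly do. Then recover $\phi_n$ from $\psi_n$ by polarization, evaluating at $A_1\oplus\cdots\oplus A_n$.
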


\begin{proof}
\emph{Faithfulness:} Suppose $\Phi(F) \cong \Phi(G)$ as right $P$-modules.
By Theorem~\ref{thm:reconstruction}, this implies $F \cong G$ as functors.
Hence the map on isomorphism classes is injective, which is the categorical
notion of faithfulness (since all isomorphisms are preserved).

\emph{Fullness:} Let $\phi: \Phi(F) \to \Phi(G)$ be a morphism of right
$P$-modules. This consists of $S_n$-equivariant natural transformations
$\phi_n: \partial_n^{\mathrm{spec}}F \Rightarrow \partial_n^{\mathrm{spec}}G$
for each $n \ge 0$, compatible with the operadic module structure.

For each $n$, since $\phi_n$ is $S_n$-equivariant, it descends to a natural
transformation on homogeneous layers:
\[
\psi_n(A) : D_n^{\mathrm{spec}}F(A) \longrightarrow D_n^{\mathrm{spec}}G(A),
\]
where $D_n^{\mathrm{spec}}F(A) = \partial_n^{\mathrm{spec}}F(A,\dots,A)_{\mathrm{sym}}$
and similarly for $G$. The transformations $\psi_n$ are natural in $A$ because
$\phi_n$ is natural.

Since $F$ and $G$ are spectrally analytic, they admit convergent spectral
Taylor expansions:
\[
F(A) = \sum_{n=0}^{\infty} D_n^{\mathrm{spec}}F(A),
\qquad
G(A) = \sum_{n=0}^{\infty} D_n^{\mathrm{spec}}G(A).
\]
Define $\eta_A : F(A) \to G(A)$ by $\eta_A := \sum_{n=0}^{\infty} \psi_n(A)$.
The series converges in norm because each $\psi_n(A)$ is bounded (by the
norm estimates of Section~\ref{sec:convergence}) and the series of norms
converges. The collection $\{\eta_A\}_{A \in \mathcal{C}}$ is natural in $A$
because each $\psi_n$ is natural and the series converges absolutely.
Hence $\eta: F \Rightarrow G$ is a natural transformation, and by construction
$\Phi(\eta) = \phi$. Thus $\Phi$ is full.
\end{proof}

\begin{remark}[Yoneda-type perspective]
\label{rem:reconstruction-yoneda}
Corollary~\ref{cor:fully-faithful} shows that spectral derivatives provide a
coordinate system for spectrally analytic functors. In this sense, the
reconstruction theorem can be viewed as a Yoneda-type principle: the functor
is completely determined by its interactions with multilinear test objects
encoded in its derivative sequence. This perspective naturally leads to the
equivalence of categories $\mathsf{SpecAn} \simeq \mathsf{DerAlg}$ established
in Theorem~\ref{thm:equivalence}, where we also show that every derivative
algebra arises from a spectrally analytic functor (essential surjectivity).
\end{remark}

\begin{corollary}[Vanishing derivatives imply vanishing functor]
\label{cor:zero-derivatives}
Let $F : \mathcal{C} \to \mathcal{M}$ be a spectrally analytic functor.
If
\[
\partial_n^{\mathrm{spec}}F \cong 0
\quad \text{for all } n \ge 0,
\]
then $F \cong 0$ (the zero functor).
\end{corollary}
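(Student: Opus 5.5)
The natural approach is to deduce the statement from the Reconstruction Theorem (Theorem~\ref{thm:reconstruction}), applied with $G = 0$ the zero functor. I would first check that the zero functor meets the hypotheses of that theorem. It is admissible with $\Phi \equiv 0$ in Definition~\ref{def:admissible-functor}. It is operadically compatible, since all structure maps $\mu_\gamma$ in Definition~\ref{def:operadic-compatibility} can be taken to be zero maps between zero objects, and the equivariance, associativity and unit axioms then hold trivially. It is spectrally analytic, because every cross-effect of the zero functor vanishes, so it is a spectral polynomial of degree $\le 0$ and Example~\ref{ex:poly-spectral-analytic} applies. Its spectral derivatives are $\partial_0^{\mathrm{spec}}0 = 0(0) = 0$ and $\partial_n^{\mathrm{spec}}0 = \mathrm{cr}_n 0 = 0$ for $n\ge 1$, so $\partial^{\mathrm{spec}}0$ is the zero symmetric sequence with the zero right $P$-module structure.

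Next I would upgrade the hypothesis $\partial_n^{\mathrm{spec}}F \cong 0$ for each $n$ into an isomorphism $\partial^{\mathrm{spec}}F \cong \partial^{\mathrm{spec}}0$ of right $P$-modules. The levelwise isomorphisms are automatically $S_n$-equivariant, because every map into or out of a zero object is unique. The same uniqueness shows they commute with the module structure maps $\rho_{n_1,\dots,n_k}$ of Theorem~\ref{thm:module-structure}: each composite in the relevant square lands in a zero object, so both sides agree. The implication (2)~$\Rightarrow$~(1) of Theorem~\ref{thm:reconstruction} then gives $F \cong 0$.

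One caveat: the corollary as stated assumes only that $F$ is spectrally analytic, while Theorem~\ref{thm:reconstruction} also requires admissibility and operadic compatibility. The standing convention of Section~\ref{sec:reconstruction} supplies these, since all functors there are assumed admissible and operadically compatible, and I would invoke that convention explicitly.

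If a direct argument is preferred, or as a consistency check, I would instead use the Taylor expansion. Each homogeneous layer $D_n^{\mathrm{spec}}F(A)$ is the symmetrization of $\partial_n^{\mathrm{spec}}F(A,\dots,A) \cong 0$, so it vanishes. Then $P_n^{\mathrm{spec}}F(A) \cong \bigoplus_{k\le n} D_k^{\mathrm{spec}}F(A) = 0$ by Corollary~\ref{cor:homogeneous-layers}. Spectral analyticity (Definition~\ref{def:spectral-analyticity}) gives $\|F(A) - P_n^{\mathrm{spec}}F(A)\| = \|F(A)\| \to 0$, hence $\|F(A)\| = 0$.

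The main obstacle is the last step of this direct route. Passing from $\|F(A)\|=0$ to $F(A)\cong 0$ needs a faithful norm, in the sense of the remark distinguishing norm vanishing from object vanishing. The direct route also covers only inputs inside the convergence region, and extending beyond it needs Lemma~\ref{lem:analytic-continuation}. For these reasons I would present the Reconstruction route as the primary proof: it absorbs both issues, since Theorem~\ref{thm:reconstruction} already yields a genuine natural isomorphism on the full common domain.
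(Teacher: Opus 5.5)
Your primary argument is the paper's proof: take $G=0$, note that $\partial^{\mathrm{spec}}F$ and $\partial^{\mathrm{spec}}0$ are both the zero right $P$-module, and apply (2)$\Rightarrow$(1) of Theorem~\ref{thm:reconstruction}. Your extra checks (the zero functor meets the hypotheses, levelwise isomorphisms with zero are automatically equivariant module maps, and the conventions of Section~\ref{sec:reconstruction} supply admissibility and operadic compatibility) just make explicit what the paper leaves implicit, and you are right that the direct Taylor-series route alone only gives $\|F(A)\|=0$ unless the norm is faithful.
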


\begin{proof}
Let $0$ denote the zero functor. Its spectral derivatives vanish:
\[
\partial_n^{\mathrm{spec}}0 \cong 0
\quad \text{for all } n.
\]
By assumption, $\partial^{\mathrm{spec}}F \cong \partial^{\mathrm{spec}}0$
as right $P$-modules (both are the zero module). Applying the
Reconstruction Theorem (Theorem~\ref{thm:reconstruction}), we obtain
\[
F \cong 0.
\]
\end{proof}

\begin{corollary}[Recovery from spectral derivatives]
\label{cor:recovery-derivatives}
Let $F : \mathcal{C} \to \mathcal{M}$ be a spectrally analytic functor,
and let $A$ be a $P$-algebra such that $\|\sigma_P(A)\| < R_F$.
Then there is a natural isomorphism
\[
F(A)
\;\cong\;
\bigoplus_{n=0}^{\infty}
\partial_n^{\mathrm{spec}}F(A,\dots,A)_{\mathrm{sym}},
\]
where the series converges absolutely in norm.
\end{corollary}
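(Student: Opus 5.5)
The plan is to read the statement as a repackaging of the spectral Taylor expansion in the normal form supplied by Section~\ref{sec:derivatives-algebra}, and to take the convergence from Section~\ref{sec:convergence}. First, by spectral analyticity (Definition~\ref{def:spectral-analyticity}) together with the split decomposition of the tower, I write $P_n^{\mathrm{spec}}F(A)\cong\bigoplus_{k=0}^n D_k^{\mathrm{spec}}F(A)$. I then use Proposition~\ref{prop:derivative-cross-effect} and Remark~\ref{rem:homogeneous-layers} to identify each layer as $D_k^{\mathrm{spec}}F(A)=\partial_k^{\mathrm{spec}}F(A,\dots,A)_{\mathrm{sym}}$. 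For $k=0$ I use the convention $\partial_0^{\mathrm{spec}}F=F(0)$ from Definition~\ref{def:symmetric-sequence-derivatives}.

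Second, I show that the right-hand side converges absolutely in norm when $\|\sigma_P(A)\|<R_F$. Lemma~\ref{lem:homogeneous-estimate} gives the bound $\|D_k^{\mathrm{spec}}F(A)\|\le\|\partial_k^{\mathrm{spec}}F\|\,\|\sigma_P(A)\|^k$. Summing and applying the Cauchy--Hadamard argument of Theorem~\ref{thm:spectral-radius}(1), equivalently Proposition~\ref{prop:radius-from-derivatives}, shows that the majorant series converges. Because the hom-spaces are Banach (Definition~\ref{def:normed-category}), the completed $\ell^1$-direct sum $\bigoplus_n D_n^{\mathrm{spec}}F(A)$ is therefore a well-defined object.

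Third, I identify this completed sum with $F(A)$. The canonical maps $\iota_n:F(A)\to P_n^{\mathrm{spec}}F(A)$ are compatible with the tower, so they assemble into a map from $F(A)$ to the limit of the partial sums. Theorem~\ref{thm:quantitative-convergence} bounds the remainder by $C_s(\|\sigma_P(A)\|/s)^{n+1}$ for any $s$ with $\|\sigma_P(A)\|<s<R_F$, so this remainder tends to zero. Hence the comparison map is an isometric identification in the limit, and in particular an isomorphism. Naturality in $A$ follows because each $\iota_n$ is natural (Corollary~\ref{cor:tower-functorial-universal}) and each layer $D_n^{\mathrm{spec}}F$ is a functor, while absolute convergence lets morphisms pass through the sum termwise, exactly as in the fullness argument of Corollary~\ref{cor:fully-faithful}.

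The main obstacle is this third step, namely upgrading norm convergence $\|F(A)-P_n^{\mathrm{spec}}F(A)\|\to0$ to an actual isomorphism of objects. The remainder is defined as a cofiber, so its vanishing in norm needs a faithful norm, or the additive reading of Definition~\ref{def:spectral-analyticity}, to yield an isomorphism. I would first try to work in the additive setting where the splitting $P_n^{\mathrm{spec}}F\cong\bigoplus_{k\le n}D_k^{\mathrm{spec}}F$ holds (Corollary~\ref{cor:homogeneous-layers}). I would then apply the universal property of the completed direct sum, which is a colimit in the Banach-enriched sense. If that fails, I would fall back on assuming faithfulness of the norm, as in Lemma~\ref{lem:spectral-control}, and conclude that the cofiber of the limiting comparison map vanishes.
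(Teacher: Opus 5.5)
Your skeleton matches the paper's: spectral analyticity gives $F(A)$ as the sum of its homogeneous layers, each layer is $\partial_n^{\mathrm{spec}}F(A,\dots,A)_{\mathrm{sym}}$, and convergence comes from Section~\ref{sec:convergence}. Where you differ is in how you obtain the identification and its naturality. The paper asserts the expansion ``by spectral analyticity'', cites Theorem~\ref{thm:quantitative-convergence} for convergence, and gets naturality by appealing to Theorem~\ref{thm:reconstruction}. You instead build the comparison from the natural maps $\iota_n$ (Corollary~\ref{cor:tower-functorial-universal}) and the functoriality of each $D_n^{\mathrm{spec}}F$, and you take absolute convergence from Lemma~\ref{lem:homogeneous-estimate} plus Cauchy--Hadamard, which is the more direct source. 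Your route is the better-founded one. The proof of Theorem~\ref{thm:reconstruction} goes through Lemma~\ref{lem:analytic-continuation}, which in turn invokes Theorem~\ref{thm:reconstruction}, and Theorem~\ref{thm:equivalence} later rests on this corollary; deriving naturality directly avoids that circle, and the paper's version is shorter only because it outsources the work. Your ``main obstacle'' is not resolved in the paper either. The paper builds $F(A)\simeq\lim_n P_n^{\mathrm{spec}}F(A)$ into the meaning of spectral analyticity (see the gloss after Definition~\ref{def:spectral-analyticity} and Step~1 of Theorem~\ref{thm:quantitative-convergence}), so you may adopt that reading instead of adding a faithful-norm hypothesis. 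Three small fixes. First, the remainder tending to zero is the hypothesis itself, so do not derive it from Theorem~\ref{thm:quantitative-convergence}, whose proof presupposes the very expansion you are establishing. Second, vanishing remainders give (under that reading) an isomorphism, not an isometry. Third, a map from $F(A)$ into the completed sum must come from a limit-type property of the tower, since the $\iota_n$ point out of $F(A)$; the coproduct property of the $\ell^1$-sum only produces maps out of the sum.
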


\begin{proof}
By spectral analyticity (Definition~\ref{def:spectral-analyticity}),
the functor $F$ admits a convergent spectral Taylor expansion:
\[
F(A)
\;\simeq\;
\sum_{n=0}^{\infty} D_n^{\mathrm{spec}}F(A),
\]
where
\[
D_n^{\mathrm{spec}}F(A)
=
\partial_n^{\mathrm{spec}}F(A,\dots,A)_{\mathrm{sym}}
\]
is the $n$-th homogeneous layer (Definition~\ref{def:spectral-derivatives}).
This expansion converges absolutely in norm whenever
$\|\sigma_P(A)\| < R_F$ (Theorem~\ref{thm:quantitative-convergence}).

Moreover, by the Reconstruction Theorem (Theorem~\ref{thm:reconstruction}),
the derivative sequence $\{\partial_n^{\mathrm{spec}}F\}_{n \ge 0}$ completely
determines $F$, and hence determines its Taylor expansion functorially.
Therefore, the above series reconstructs $F(A)$ and the resulting isomorphism
is natural in $A$.
\end{proof}

\medskip

\begin{example}[Exponential functor]
\label{ex:exp-reconstruction}
Consider the exponential functor
\[
\exp(A) = \bigoplus_{k=0}^{\infty} \frac{1}{k!}\, A^{\otimes k},
\]
or its symmetric version using $\mathrm{Sym}^k(A)$.
Its spectral derivatives are given by
\[
\partial_n^{\mathrm{spec}}\exp
\cong
\frac{1}{n!}\,\mathrm{Sym}^n,
\]
up to the standard symmetrization conventions (see Example~\ref{ex:exp-derivative}).

By the Reconstruction Theorem (Theorem~\ref{thm:reconstruction}), any spectrally
analytic functor $F$ satisfying
\[
\partial_n^{\mathrm{spec}}F
\cong
\frac{1}{n!}\,\mathrm{Sym}^n
\quad \text{for all } n \ge 0
\]
is uniquely determined up to isomorphism. Hence
\[
F \cong \exp.
\]
Thus, the exponential functor is uniquely characterized by its spectral
derivative sequence, paralleling the classical fact that the exponential
function is determined by its Taylor coefficients.
\end{example}

\medskip

\begin{example}[Identity functor]
\label{ex:id-reconstruction}
The identity functor $\mathrm{Id}$ satisfies
\[
\partial_1^{\mathrm{spec}}\mathrm{Id} \cong \mathrm{Id},
\qquad
\partial_n^{\mathrm{spec}}\mathrm{Id} \cong 0
\quad (n \ne 1).
\]
Thus its derivative sequence is concentrated in arity one, corresponding to
a linear functor (see Example~\ref{ex:id-derivative}).

By the Reconstruction Theorem (Theorem~\ref{thm:reconstruction}), any spectrally
analytic functor $F$ with
\[
\partial_1^{\mathrm{spec}}F \cong \mathrm{Id},
\qquad
\partial_n^{\mathrm{spec}}F \cong 0 \quad (n \ne 1)
\]
must be isomorphic to the identity functor:
\[
F \cong \mathrm{Id}.
\]
\end{example}

\medskip

\begin{remark}[On analyticity and operadic structure]
\label{rem:reconstruction-necessity}
The analyticity hypothesis in Theorem~\ref{thm:reconstruction} is essential.
Without convergence of the spectral Taylor expansion, the derivative data
would not determine the functor uniquely. In particular, it is possible for
distinct functors to share the same formal derivative sequence when no
convergence condition is imposed, analogous to the role of flat functions in
classical analysis. Thus, spectral analyticity provides precisely the condition
under which reconstruction is valid.

\medskip

Equally important is the requirement that the derivatives agree as
\emph{right $P$-modules}, rather than merely as symmetric sequences.
The $P$-module structure encodes how derivatives of different orders interact
under operadic composition. Without this additional structure, one could have
derivative sequences that coincide arity-wise but fail to produce the same
compositional behavior, leading to non-isomorphic functors.
\end{remark}

\medskip

\noindent
This result shows that Spectral Operadic Calculus provides not only an
approximation theory, but a genuine classification framework for spectrally
analytic functors. Having established that the spectral derivative sequence,
together with its operadic module structure, forms a complete invariant, we now
upgrade this statement to a categorical equivalence between functors and their
derivative data (Theorem~\ref{thm:equivalence}).

\subsection{Equivalence of Categories}
\label{subsec:equivalence}

We now upgrade the reconstruction result to a full categorical equivalence, 
showing that spectrally analytic functors are completely classified by their 
spectral derivatives. This is the culmination of Spectral Operadic Calculus: 
the analytic theory admits a canonical algebraic model equipped with analytic 
control.

\medskip

\begin{definition}[Category of spectrally analytic functors]
\label{def:SpecAn}
Let $\mathsf{SpecAn}$ denote the category whose objects are admissible,
operadically compatible, and spectrally analytic functors
\[
F : \mathcal{C} \to \mathcal{M},
\]
and whose morphisms are natural transformations.
\end{definition}

\begin{definition}[Category of derivative algebras]
\label{def:DerAlg}
Let $\mathsf{DerAlg}$ denote the category whose objects are symmetric
sequences $\{X_n\}_{n \geq 0}$ in $\mathcal{M}$ equipped with:

\begin{itemize}
    \item a right $P$-module structure (Definition~\ref{def:operadic-compatibility}),
    \item a norm structure compatible with the monoidal structure of $\mathcal{M}$,
    \item a convergence condition: there exist constants $C > 0$, $\rho > 0$ such that
    \[
    \|X_n\| \le C \rho^n,
    \]
    ensuring that the associated formal power series has positive radius of convergence.
\end{itemize}

Morphisms in $\mathsf{DerAlg}$ are maps of right $P$-modules, i.e.,
$\Sigma_n$-equivariant morphisms commuting with the operadic action and
preserving the norm structure.
\end{definition}
\medskip

We now upgrade the reconstruction result to a full categorical equivalence. 
The key additional ingredient is the notion of an \emph{integrable} derivative 
algebra: one whose spectral Taylor series actually converges to a spectrally 
analytic functor.

\begin{definition}[Integrable Derivative Algebra]
\label{def:integrable-deralg}
A derivative algebra $\mathbf{X} = \{X_n\}_{n \ge 0} \in \mathsf{DerAlg}$ 
(Definition~\ref{def:DerAlg}) is called \emph{integrable} if the functor 
defined by its spectral Taylor series
\[
F_{\mathbf{X}}(A) := \bigoplus_{n=0}^{\infty} X_n(A,\dots,A)_{\mathrm{sym}}
\]
is:
\begin{enumerate}
    \item \textbf{Admissible:} there exists a non-decreasing function $\Phi$ 
          such that $\|F_{\mathbf{X}}(A)\| \le \Phi(\|\sigma_P(A)\|)$;
    \item \textbf{Operadically compatible:} the collection $\{X_n\}$ interacts 
          with the operad $P$ via the given right $P$-module structure;
    \item \textbf{Spectrally analytic:} the series converges absolutely in norm 
          for all $A$ with $\|\sigma_P(A)\| < R_{\mathbf{X}}$ for some $R_{\mathbf{X}} > 0$,
          and the limit $F_{\mathbf{X}}(A)$ satisfies the spectral analyticity 
          condition (Definition~\ref{def:spectral-analyticity}).
\end{enumerate}
Denote by $\mathsf{DerAlg}_{\mathrm{int}}$ the full subcategory of 
$\mathsf{DerAlg}$ consisting of integrable derivative algebras.
\end{definition}

\begin{theorem}[Equivalence of Categories]
\label{thm:equivalence}
The assignment
\[
\Phi: \mathsf{SpecAn} \longrightarrow \mathsf{DerAlg}_{\mathrm{int}},
\qquad
\Phi(F) := \{\partial_n^{\mathrm{spec}}F\}_{n \ge 0}
\]
is an equivalence of categories. Its quasi-inverse is the reconstruction functor
\[
\Psi: \mathsf{DerAlg}_{\mathrm{int}} \longrightarrow \mathsf{SpecAn},
\qquad
\Psi(\mathbf{X}) := F_{\mathbf{X}},
\]
where $F_{\mathbf{X}}$ is defined by the spectral Taylor series of $\mathbf{X}$.
\end{theorem}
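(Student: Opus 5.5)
The plan is to prove the equivalence in the standard way: first check that both $\Phi$ and $\Psi$ are well-defined functors, then use Corollary~\ref{cor:fully-faithful} for full faithfulness of $\Phi$, and finally prove essential surjectivity by showing that $\Phi\Psi(\mathbf{X})\cong\mathbf{X}$ naturally. That $\Psi\Phi(F)\cong F$ then follows from Corollary~\ref{cor:recovery-derivatives}, or formally from full faithfulness.

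\textbf{Step 1: well-definedness.} For $F\in\mathsf{SpecAn}$, Theorem~\ref{thm:symmetric-sequence} and Theorem~\ref{thm:module-structure} make $\Phi(F)$ a symmetric sequence carrying a right $P$-module structure. The convergence condition $\|\partial_n^{\mathrm{spec}}F\|\le C\rho^n$ follows from $R_F>0$: take any $\rho>R_F^{-1}$ and apply the limsup definition (Definition~\ref{def:spectral-radius}), enlarging $C$ to cover finitely many $n$. Integrability of $\Phi(F)$ holds because $F_{\Phi(F)}$ is the spectral Taylor series of $F$, which converges to $F$ by Corollary~\ref{cor:recovery-derivatives}. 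On morphisms, $\Phi(\eta)=\{\mathrm{cr}_n(\eta)\}$ (Remark~\ref{rem:derivative-functoriality}). For $\Psi$, integrability gives admissibility, operadic compatibility and spectral analyticity of $F_{\mathbf{X}}$ by definition. A module map $f:\mathbf{X}\to\mathbf{Y}$ induces $\sum_n f_n(A,\dots,A)_{\mathrm{sym}}$, and this converges by the same majorant argument as in the fullness proof of Corollary~\ref{cor:fully-faithful}.

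\textbf{Step 2: $\Phi\Psi\cong\mathrm{Id}$.} This is the main obstacle. We must show that $\mathrm{cr}_n F_{\mathbf{X}}\cong X_n$ as right $P$-modules. The approach is to compute the cross-effect of the series termwise, using the inclusion--exclusion formula of Definition~\ref{def:cross-effects}, which is justified by absolute convergence. The key point is that for a multilinear symmetric $X_m$, the diagonal term $X_m(\bigoplus_{i\in S}A_i,\dots)_{\mathrm{sym}}$ expands by multilinearity into a sum over functions $[m]\to S$. The alternating sum over $S\subseteq[n]$ then keeps exactly the terms whose image is all of $[n]$, just as in Step~5 of the proof of Theorem~\ref{thm:chain-rule}. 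For $m>n$ no such surjection survives in the reduced multilinear setting; that is, we use that $X_m$ is multilinear, so repeated inputs are not produced by cross-effects. This leaves $X_n(A_1,\dots,A_n)$. Under the $1/n!$ symmetrization convention of Definition~\ref{def:spectral-derivatives}, the $n!$ bijections cancel the normalization.

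The delicate points in Step~2 are twofold. First, the identification must be checked to respect the $\Sigma_n$-actions, which follows from symmetry of the expansion. Second, it must respect the module maps $\rho_{n_1,\dots,n_k}$; I would argue this from naturality of the inclusion--exclusion isomorphism together with condition~(2) in Definition~\ref{def:integrable-deralg}. The identification must also be natural in $\mathbf{X}$, which holds because every step is natural in the coefficients.

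\textbf{Step 3: conclusion.} With $\Phi\Psi\cong\mathrm{Id}$, the functor $\Phi$ is essentially surjective, and by Corollary~\ref{cor:fully-faithful} it is fully faithful, so it is an equivalence. The quasi-inverse is identified with $\Psi$ via the unit $F\cong F_{\Phi(F)}$ from Corollary~\ref{cor:recovery-derivatives}. That unit is natural in $F$ because the Taylor layers are functorial, by Proposition~\ref{prop:taylor-functorial}. The triangle identities follow from uniqueness in the fullness argument.
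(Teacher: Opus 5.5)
Your architecture (well-definedness, full faithfulness via Corollary~\ref{cor:fully-faithful}, essential surjectivity via $\Phi\Psi(\mathbf{X})\cong\mathbf{X}$) is a legitimate alternative to the paper's direct quasi-inverse argument. However, the computation you identify as the main obstacle fails at a specific line.

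After you expand $X_m\bigl(\bigoplus_{i\in S}A_i,\dots,\bigoplus_{i\in S}A_i\bigr)$ over functions $f:[m]\to S$ and interchange sums, the alternating sum over $S\subseteq[n]$ keeps exactly the surjective $f:[m]\to[n]$, as you say. But for $m>n$ surjections $[m]\twoheadrightarrow[n]$ exist, and nothing removes them. Multilinearity is precisely what \emph{produces} terms such as $X_3(A_1,A_1,A_2)$ inside $\mathrm{cr}_2F_{\mathbf{X}}(A_1,A_2)$. Multi-reducedness does not kill them either, because every $A_i$ occurs. So $\mathrm{cr}_nF_{\mathbf{X}}(A_1,\dots,A_n)$ is (after symmetrization) a sum over all $m\ge n$ and all surjections $f:[m]\twoheadrightarrow[n]$ of $X_m(A_{f(1)},\dots,A_{f(m)})$. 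It agrees with $X_n(A_1,\dots,A_n)$ only when $\mathbf{X}$ vanishes above degree $n$. The scalar case already shows this: for $f(x)=x^3$ one has $\mathrm{cr}_2f(a,b)=3a^2b+3ab^2\neq 0$ although the quadratic coefficient is zero, and for $\exp$ one gets $(e^a-1)(e^b-1)\neq ab$. Hence, with the literal definition $\partial_n^{\mathrm{spec}}F=\mathrm{cr}_nF$ of Definition~\ref{def:spectral-derivatives}, the isomorphism $\Phi\Psi(\mathbf{X})\cong\mathbf{X}$ is false in general.

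To repair Step~2 you must replace the raw cross-effect by its multilinear part. One version is the summand of multidegree $(1,\dots,1)$ in $(A_1,\dots,A_n)$, which the scaling $A_i\mapsto t_iA_i$ (Lemma~\ref{lem:scaling-homogeneous}) isolates as the coefficient of $t_1\cdots t_n$; another is $\mathrm{cr}_n\bigl(D_n^{\mathrm{spec}}F_{\mathbf{X}}\bigr)$. Then only $m=n$ and bijections contribute, and your count goes through. This still requires reading the symmetrization as passage to $\Sigma_n$-invariants (or coinvariants), so that the free $\Sigma_n$-object $\bigoplus_{\sigma\in\Sigma_n}X_n(A_{\sigma(1)},\dots,A_{\sigma(n)})$ collapses to a single copy of $X_n(A_1,\dots,A_n)$. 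A formal factor $1/n!$ does not by itself identify an $n!$-fold direct sum with one summand.

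This is effectively how the paper's Step~2 reads $\partial_n^{\mathrm{spec}}F_{\mathbf{X}}$. It never computes $\mathrm{cr}_nF_{\mathbf{X}}$; instead it takes the symmetric multilinear functor whose diagonal is the $n$-th layer $X_n(A,\dots,A)_{\mathrm{sym}}$ and appeals to uniqueness of the expansion.

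Whichever reading you adopt must then be used uniformly. With raw cross-effects, $\Phi(F)$ is not multilinear, and $F_{\Phi(F)}\cong F$ (Corollary~\ref{cor:recovery-derivatives}) fails. The fullness half of Corollary~\ref{cor:fully-faithful} asserts $\Phi\bigl(\sum_n\psi_n\bigr)=\phi$ ``by construction''. That rests on exactly the cross-effect identification above, so citing it does not let you bypass the problem.
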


\begin{proof}
We prove that $\Phi$ and $\Psi$ are quasi-inverse equivalences by verifying
four properties: $\Phi$ is well-defined, $\Psi$ is well-defined,
$\Psi \circ \Phi \cong \mathrm{id}_{\mathsf{SpecAn}}$, and
$\Phi \circ \Psi \cong \mathrm{id}_{\mathsf{DerAlg}_{\mathrm{int}}}$.

\medskip

\noindent
\textbf{Step 1: $\Phi$ is well-defined.}
For $F \in \mathsf{SpecAn}$, Theorem~\ref{thm:module-structure} shows that
$\Phi(F) = \{\partial_n^{\mathrm{spec}}F\}$ is a right $P$-module.
By spectral analyticity and the convergence results of Section~\ref{sec:convergence}
(Theorem~\ref{thm:quantitative-convergence}), the spectral derivatives satisfy
exponential bounds $\|\partial_n^{\mathrm{spec}}F\| \le C \rho^n$, so the associated
power series has positive radius of convergence. Moreover, by
Corollary~\ref{cor:recovery-derivatives}, the Taylor series of $F$ reconstructs
$F$ itself, so $F_{\Phi(F)} = F$. Hence $\Phi(F)$ satisfies the integrability
conditions (i)–(iii) of Definition~\ref{def:integrable-deralg}, with
$F_{\Phi(F)} = F \in \mathsf{SpecAn}$. Thus $\Phi(F) \in \mathsf{DerAlg}_{\mathrm{int}}$.

For a natural transformation $\eta: F \Rightarrow G$, define
$\Phi(\eta)_n := \partial_n^{\mathrm{spec}}\eta = \mathrm{cr}_n\eta$. These maps
are $\Sigma_n$-equivariant, preserve the right $P$-module structure (because
$\eta$ is natural), and respect the norm estimates. Hence $\Phi(\eta)$ is a
morphism in $\mathsf{DerAlg}_{\mathrm{int}}$, and $\Phi$ is a functor.

\medskip

\noindent
\textbf{Step 2: $\Psi$ is well-defined.}
Let $\mathbf{X} = \{X_n\} \in \mathsf{DerAlg}_{\mathrm{int}}$. By definition,
$F_{\mathbf{X}} = \Psi(\mathbf{X})$ is admissible, operadically compatible,
and spectrally analytic. Moreover, we must verify that
$\partial_n^{\mathrm{spec}}F_{\mathbf{X}} \cong X_n$ as right $P$-modules.
By construction, the homogeneous layers of $F_{\mathbf{X}}$ are
$D_n^{\mathrm{spec}}F_{\mathbf{X}}(A) = X_n(A,\dots,A)_{\mathrm{sym}}$.
By Proposition~\ref{prop:derivative-cross-effect}, the spectral derivatives
are the multilinear functors whose diagonal evaluation gives these homogeneous
layers. Since each $X_n$ is $S_n$-equivariant and the convergence condition
guarantees uniqueness of the expansion, we obtain
$\partial_n^{\mathrm{spec}}F_{\mathbf{X}} \cong X_n$ as symmetric sequences.
The compatibility with the right $P$-module structure follows from the
operadic compatibility of $F_{\mathbf{X}}$ (Definition~\ref{def:operadic-compatibility}).
Hence $\Psi(\mathbf{X}) \in \mathsf{SpecAn}$, and $\Psi(\Phi(F)) \cong F$.

For a morphism $\phi: \mathbf{X} \to \mathbf{Y}$ in $\mathsf{DerAlg}_{\mathrm{int}}$,
define $\Psi(\phi)_A := \bigoplus_n \phi_n(A,\dots,A)_{\mathrm{sym}}$. Because
each $\phi_n$ is $\Sigma_n$-equivariant and the series converge uniformly on
compact spectral balls, $\Psi(\phi)$ is a natural transformation
$F_{\mathbf{X}} \Rightarrow F_{\mathbf{Y}}$. Thus $\Psi$ is a functor.

\medskip

\noindent
\textbf{Step 3: $\Psi \circ \Phi \cong \mathrm{id}_{\mathsf{SpecAn}}$.}
For any $F \in \mathsf{SpecAn}$, Corollary~\ref{cor:recovery-derivatives} gives
\[
\Psi(\Phi(F))(A) = \bigoplus_{n=0}^{\infty} \partial_n^{\mathrm{spec}}F(A,\dots,A)_{\mathrm{sym}} = F(A)
\]
for all $A$ with $\|\sigma_P(A)\| < R_F$, and by analytic continuation
(Lemma~\ref{lem:analytic-continuation}) this holds on the entire domain.
The isomorphism is natural in $A$ because the spectral Taylor expansion is
functorial. Hence $\Psi \circ \Phi \cong \mathrm{id}_{\mathsf{SpecAn}}$.

\medskip

\noindent
\textbf{Step 4: $\Phi \circ \Psi \cong \mathrm{id}_{\mathsf{DerAlg}_{\mathrm{int}}}$.}
For any integrable derivative algebra $\mathbf{X} \in \mathsf{DerAlg}_{\mathrm{int}}$,
let $F = \Psi(\mathbf{X})$. By the computation in Step 2,
$\partial_n^{\mathrm{spec}}F \cong X_n$ for all $n$, and these isomorphisms
are compatible with the right $P$-module structure. Hence
$\Phi(\Psi(\mathbf{X})) \cong \mathbf{X}$ in $\mathsf{DerAlg}_{\mathrm{int}}$.
The isomorphism is natural in $\mathbf{X}$ because the reconstruction
is functorial.

\medskip

\noindent
\textbf{Conclusion.}
Since $\Phi$ and $\Psi$ are functors satisfying
$\Psi \circ \Phi \cong \mathrm{id}_{\mathsf{SpecAn}}$ and
$\Phi \circ \Psi \cong \mathrm{id}_{\mathsf{DerAlg}_{\mathrm{int}}}$, they are
quasi-inverse equivalences of categories. Thus
\[
\mathsf{SpecAn} \simeq \mathsf{DerAlg}_{\mathrm{int}}.
\]
\end{proof}

\medskip

\begin{remark}[Conceptual consequence and classification]
\label{rem:equivalence-summary}
Theorem~\ref{thm:equivalence} shows that Spectral Operadic Calculus 
admits a complete algebraic model: spectrally analytic functors are equivalent 
to their derivative data equipped with operadic structure. In this sense, the 
study of spectrally analytic functors can be reduced to the study of right 
$P$-modules satisfying appropriate analytic (convergence) conditions.

\medskip

From this perspective, the category $\mathsf{DerAlg}_{\mathrm{int}}$ of 
integrable derivative algebras may be viewed as a moduli-type category for 
spectrally analytic functors. In particular, all analytic and compositional 
properties of such functors are encoded in their spectral derivative modules, 
and functorial behavior is recovered via the operadic structure and Taylor 
reconstruction.
\end{remark}

\medskip

\begin{corollary}[Full faithfulness of the derivative functor]
\label{cor:full-faithfulness-derivative}
The derivative functor
\[
\Phi: \mathsf{SpecAn} \longrightarrow \mathsf{DerAlg}_{\mathrm{int}},
\qquad
\Phi(F) = \partial^{\mathrm{spec}}F,
\]
is fully faithful. Equivalently, for any spectrally analytic functors
$F, G \in \mathsf{SpecAn}$, there is a natural bijection
\[
\mathrm{Nat}(F, G)
\;\cong\;
\mathrm{Hom}_{\mathsf{DerAlg}_{\mathrm{int}}}
\bigl(\partial^{\mathrm{spec}}F, \partial^{\mathrm{spec}}G\bigr).
\]
\end{corollary}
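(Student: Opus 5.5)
The quickest route is to derive the corollary from Theorem~\ref{thm:equivalence}: any equivalence of categories is fully faithful. Write $\Psi$ for the quasi-inverse, with natural isomorphisms $\varepsilon:\Psi\circ\Phi\cong\mathrm{id}_{\mathsf{SpecAn}}$ and $\delta:\Phi\circ\Psi\cong\mathrm{id}_{\mathsf{DerAlg}_{\mathrm{int}}}$. The standard argument then goes in two steps. First, $\Phi$ is faithful: if $\Phi(\eta)=\Phi(\eta')$, then $\Psi\Phi(\eta)=\Psi\Phi(\eta')$, and conjugating by $\varepsilon$ gives $\eta=\eta'$. Second, $\Phi$ is full, at least after possibly adjusting $\delta$ so that $(\Phi,\Psi,\varepsilon,\delta)$ becomes an adjoint equivalence. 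Given $\phi:\partial^{\mathrm{spec}}F\to\partial^{\mathrm{spec}}G$, set $\eta:=\varepsilon_G\circ\Psi(\phi)\circ\varepsilon_F^{-1}$. The triangle identities then give $\Phi(\eta)=\phi$. The bijection $\mathrm{Nat}(F,G)\cong\mathrm{Hom}(\partial^{\mathrm{spec}}F,\partial^{\mathrm{spec}}G)$ is $\eta\mapsto\Phi(\eta)$, and it is natural in $F$ and $G$ because $\Phi$ is a functor.

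Since the corollary is presumably meant to be a concrete check of the equivalence, I would also spell out the direct argument alongside it, following the proof of Corollary~\ref{cor:fully-faithful}. For faithfulness, take $\eta,\eta'$ with $\mathrm{cr}_n\eta=\mathrm{cr}_n\eta'$ for all $n$. Symmetrizing along the diagonal gives equal maps $D_n^{\mathrm{spec}}\eta=D_n^{\mathrm{spec}}\eta'$ on every homogeneous layer. By Corollary~\ref{cor:recovery-derivatives}, $F(A)\cong\bigoplus_n D_n^{\mathrm{spec}}F(A)$ for $\|\sigma_P(A)\|<R_F$, so $\eta_A$ and $\eta'_A$ agree there as norm-convergent sums. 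Lemma~\ref{lem:analytic-continuation} then extends the agreement to the whole domain. For fullness, take $\eta_A:=\sum_n\psi_n(A)$, where $\psi_n$ is $\phi_n$ descended to the symmetrized diagonal. Absolute convergence follows from the layer estimate of Lemma~\ref{lem:homogeneous-estimate} together with the exponential bounds built into $\mathsf{DerAlg}$. Naturality in $A$ holds termwise.

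The main obstacle is checking that $\Phi(\eta)=\phi$ holds on the full multilinear cross-effects and not only on their diagonal symmetrizations. Taking the cross-effect of $\eta=\sum_n\psi_n$ requires the inclusion--exclusion formula of Definition~\ref{def:cross-effects} to commute with the convergent sum. It also requires that a symmetric multilinear map be recoverable from its diagonal. For the first point I would use absolute convergence, as in Step~4 of the proof of Theorem~\ref{thm:chain-rule}. For the second I would use a polarization identity in the $\mathbb{C}$-linear setting (Lemma~\ref{lem:homogeneity-layers}), which recovers $\partial_n^{\mathrm{spec}}$ from $D_n^{\mathrm{spec}}$ via $\partial_n(A_1,\dots,A_n)=\frac{1}{n!}\sum_{\epsilon}(\pm)D_n(\sum\epsilon_iA_i)$. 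This same identity is what justifies $\partial_n^{\mathrm{spec}}F_{\mathbf X}\cong X_n$ in Step~2 of Theorem~\ref{thm:equivalence}, so fullness reduces to that step, and I would cite it there.
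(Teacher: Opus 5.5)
Your primary argument is the paper's proof: the corollary is deduced formally from Theorem~\ref{thm:equivalence} using the quasi-inverse $\Psi$. Your fullness step, via $\eta=\varepsilon_G\circ\Psi(\phi)\circ\varepsilon_F^{-1}$ and the triangle identities of an adjoint equivalence, is more careful than the paper's, which asserts $\Phi(\eta)=\varphi$ directly from $\Phi\circ\Psi\cong\mathrm{id}$. The supplementary direct argument is not needed, since it essentially re-runs Corollary~\ref{cor:fully-faithful} and Step~2 of Theorem~\ref{thm:equivalence}; if you keep it, note that Lemma~\ref{lem:analytic-continuation} concerns isomorphisms of functors rather than equality of natural transformations, so it does not by itself extend $\eta_A=\eta'_A$ beyond the small-spectrum region.
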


\begin{proof}
By Theorem~\ref{thm:equivalence}, the derivative functor
\[
\Phi: \mathsf{SpecAn} \to \mathsf{DerAlg}_{\mathrm{int}}
\]
is an equivalence of categories, with quasi-inverse given by the spectral
Taylor reconstruction functor
\[
\Psi: \mathsf{DerAlg}_{\mathrm{int}} \to \mathsf{SpecAn}.
\]
In particular, there are natural isomorphisms
\[
\Psi \circ \Phi \cong \mathrm{id}_{\mathsf{SpecAn}},
\qquad
\Phi \circ \Psi \cong \mathrm{id}_{\mathsf{DerAlg}_{\mathrm{int}}}.
\]

We show that $\Phi$ is fully faithful. Let $F, G \in \mathsf{SpecAn}$.
The functor $\Phi$ induces a map
\[
\Phi_{F, G}:
\mathrm{Nat}(F, G)
\longrightarrow
\mathrm{Hom}_{\mathsf{DerAlg}_{\mathrm{int}}}
\bigl(\Phi(F), \Phi(G)\bigr),
\]
by sending a natural transformation $\eta: F \Rightarrow G$ to the induced
system of morphisms on spectral derivatives,
\[
\Phi(\eta) = \partial^{\mathrm{spec}}\eta = \{\partial_n^{\mathrm{spec}}\eta\}_{n \ge 0}.
\]

We prove that $\Phi_{F, G}$ is bijective.

\medskip
\noindent
\emph{Injectivity.}
Suppose $\eta, \theta: F \Rightarrow G$ satisfy
\[
\Phi(\eta) = \Phi(\theta).
\]
Applying the quasi-inverse $\Psi$, and using
$\Psi \circ \Phi \cong \mathrm{id}_{\mathsf{SpecAn}}$, we obtain
\[
\eta \cong \Psi(\Phi(\eta)) = \Psi(\Phi(\theta)) \cong \theta.
\]
Thus $\eta = \theta$ in $\mathrm{Nat}(F, G)$. Hence $\Phi_{F, G}$ is injective.

\medskip
\noindent
\emph{Surjectivity.}
Let
\[
\varphi: \Phi(F) \to \Phi(G)
\]
be a morphism in $\mathsf{DerAlg}_{\mathrm{int}}$. Applying the reconstruction
functor $\Psi$, we obtain a natural transformation
\[
\Psi(\varphi): \Psi(\Phi(F)) \to \Psi(\Phi(G)).
\]
Using the natural identifications
\[
\Psi(\Phi(F)) \cong F,
\qquad
\Psi(\Phi(G)) \cong G,
\]
this gives a natural transformation
\[
\eta: F \Rightarrow G.
\]
Applying $\Phi$ to $\eta$, and using
$\Phi \circ \Psi \cong \mathrm{id}_{\mathsf{DerAlg}_{\mathrm{int}}}$, we obtain
\[
\Phi(\eta) = \varphi.
\]
Therefore every morphism of integrable derivative algebras arises from a unique
natural transformation of spectrally analytic functors. Hence $\Phi_{F, G}$ is
surjective.

Thus $\Phi$ is fully faithful.
\end{proof}

\medskip

\begin{corollary}[Classification of spectrally analytic functors]
\label{cor:classification-analytic-functors}
Isomorphism classes of spectrally analytic functors are in natural bijection
with isomorphism classes of integrable derivative algebras:
\[
\pi_0(\mathsf{SpecAn})
\;\cong\;
\pi_0(\mathsf{DerAlg}_{\mathrm{int}}).
\]
Equivalently,
\[
\{F \in \mathsf{SpecAn}\} / \cong
\;\cong\;
\{\mathbf{D} \in \mathsf{DerAlg}_{\mathrm{int}}\} / \cong.
\]
Under this correspondence, a spectrally analytic functor $F$ is represented
by its derivative algebra
\[
\partial^{\mathrm{spec}}F = \{\partial_n^{\mathrm{spec}}F\}_{n \ge 0},
\]
equipped with its compatible right $P$-module structure.
\end{corollary}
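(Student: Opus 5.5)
This corollary is a formal consequence of Theorem~\ref{thm:equivalence}: any equivalence of categories induces a bijection on isomorphism classes. I would not re-run the analytic arguments. I would only check that the functors $\Phi$ and $\Psi$ of that theorem descend to well-defined, mutually inverse maps
\[
\pi_0(\mathsf{SpecAn}) \rightleftarrows \pi_0(\mathsf{DerAlg}_{\mathrm{int}}).
\]

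The steps are as follows.

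\textbf{Step 1 ($\Phi$ is well defined on classes).} Define $[F]\mapsto[\partial^{\mathrm{spec}}F]$. This respects isomorphism because functors preserve isomorphisms. Concretely, the implication (1)$\Rightarrow$(2) of Theorem~\ref{thm:reconstruction} already shows that $F\cong G$ gives $\partial^{\mathrm{spec}}F\cong\partial^{\mathrm{spec}}G$ as right $P$-modules. That $\partial^{\mathrm{spec}}F$ actually lies in $\mathsf{DerAlg}_{\mathrm{int}}$ is Step~1 of the proof of Theorem~\ref{thm:equivalence}.

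\textbf{Step 2 ($\Phi$ is injective on classes).} Injectivity is exactly the implication (2)$\Rightarrow$(1) of Theorem~\ref{thm:reconstruction}, or equivalently the full faithfulness in Corollary~\ref{cor:full-faithfulness-derivative}. A fully faithful functor reflects isomorphisms, so an isomorphism $\Phi(F)\cong\Phi(G)$ lifts to an isomorphism $F\cong G$.

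\textbf{Step 3 ($\Phi$ is surjective on classes).} Given $\mathbf{D}\in\mathsf{DerAlg}_{\mathrm{int}}$, take $F=\Psi(\mathbf{D})=F_{\mathbf{D}}$. Then the isomorphism $\Phi\circ\Psi\cong\mathrm{id}$ from Step~4 of Theorem~\ref{thm:equivalence} gives $\partial^{\mathrm{spec}}F_{\mathbf{D}}\cong\mathbf{D}$.

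The ``Equivalently'' reformulation is a restatement of the same bijection. The final sentence, that $F$ is represented by $\partial^{\mathrm{spec}}F$ with its right $P$-module structure, holds by the definition of $\Phi$ together with Theorem~\ref{thm:module-structure}.

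The only point that needs care is that isomorphism in $\mathsf{DerAlg}_{\mathrm{int}}$ means isomorphism of normed right $P$-modules, not merely of symmetric sequences. This is needed because Remark~\ref{rem:reconstruction-necessity} says that agreement arity by arity alone is insufficient. I would therefore check that the isomorphisms produced in Steps 1 and 3 are morphisms in $\mathsf{DerAlg}$: they must be $\Sigma_n$-equivariant, commute with the $P$-action, and preserve the norm structure. For Step~1 this follows from naturality of $\mathrm{cr}_n$. For Step~3 it follows from the module compatibility verified in Step~2 of Theorem~\ref{thm:equivalence}.

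Beyond this bookkeeping, the argument is purely categorical. I expect no genuine obstacle, since all the analytic content is already contained in Theorem~\ref{thm:equivalence}.
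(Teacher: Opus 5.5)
Your proposal is correct and is essentially the paper's argument: the corollary is read off from Theorem~\ref{thm:equivalence}, since an equivalence of categories induces a bijection on isomorphism classes. The assignments $F\mapsto\partial^{\mathrm{spec}}F$ and $\mathbf{D}\mapsto F_{\mathbf{D}}$ are mutually inverse up to the natural isomorphisms $\Psi\circ\Phi\cong\mathrm{id}$ and $\Phi\circ\Psi\cong\mathrm{id}$. The only cosmetic difference is that you obtain injectivity from Theorem~\ref{thm:reconstruction} (or from full faithfulness) rather than directly from $\Psi\circ\Phi\cong\mathrm{id}$, and these are interchangeable here.
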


\begin{proof}
By Theorem~\ref{thm:equivalence}, the derivative functor
\[
\Phi: \mathsf{SpecAn} \to \mathsf{DerAlg}_{\mathrm{int}}
\]
is an equivalence of categories. Any equivalence of categories induces a
bijection on isomorphism classes of objects. Therefore,
\[
\pi_0(\mathsf{SpecAn}) \cong \pi_0(\mathsf{DerAlg}_{\mathrm{int}}).
\]

Explicitly, the map sends a spectrally analytic functor $F$ to its spectral
derivative algebra
\[
F \longmapsto \partial^{\mathrm{spec}}F.
\]
The inverse map is given by the reconstruction functor
\[
\mathbf{D} \longmapsto \Psi(\mathbf{D}) = F_{\mathbf{D}},
\]
where
\[
F_{\mathbf{D}}(A) = \bigoplus_{n=0}^{\infty} D_n(A,\dots,A)_{\mathrm{sym}}
\]
is the convergent spectral Taylor reconstruction associated with the integrable
derivative algebra $\mathbf{D}$.

The two assignments are inverse up to natural isomorphism because
\[
\Psi \circ \Phi \cong \mathrm{id}_{\mathsf{SpecAn}},
\qquad
\Phi \circ \Psi \cong \mathrm{id}_{\mathsf{DerAlg}_{\mathrm{int}}}.
\]
Hence isomorphism classes of spectrally analytic functors are classified by
isomorphism classes of integrable derivative algebras.
\end{proof}

\medskip

\begin{example}[The exponential algebra]
\label{ex:exponential-algebra}
Define the derivative algebra $\mathcal{E} \in \mathsf{DerAlg}_{\mathrm{int}}$ by
\[
\mathcal{E}_n = \frac{1}{n!}\, \mathrm{Sym}^n, \qquad n \ge 0,
\]
where $\mathrm{Sym}^n$ denotes the $n$-fold symmetric power functor
(see Example~\ref{ex:exp-derivative}). The right $P$-module structure on
$\mathcal{E}$ is induced by the natural action of the operad $P$ on symmetric
powers via permutation of tensor factors.

By Theorem~\ref{thm:equivalence}, the reconstruction functor $\Psi$ sends
$\mathcal{E}$ to the exponential functor:
\[
\Psi(\mathcal{E}) \cong \exp.
\]
Moreover, any endomorphism of $\mathcal{E}$ as a derivative algebra corresponds
to a natural endomorphism of $\exp$. In particular, the automorphism group of
$\mathcal{E}$ is isomorphic to $\mathbb{C}^{\times}$ (scalar multiplication),
reflecting the fact that $\exp(\lambda A) = \lambda \exp(A)$ for $\lambda \in \mathbb{C}$.
\end{example}

\begin{example}[The linear algebra]
\label{ex:linear-algebra}
Define the derivative algebra $\mathcal{L} \in \mathsf{DerAlg}_{\mathrm{int}}$ by
\[
\mathcal{L}_1 = \mathrm{Id}, \qquad \mathcal{L}_n = 0 \quad (n \ne 1),
\]
where $\mathrm{Id}$ denotes the identity functor on $\mathcal{C}$, which in the
enriched setting corresponds to the unit object $\mathbf{1}_{\mathcal{M}}$ after
evaluation. The right $P$-module structure is given by the canonical action of
$P$ on the identity functor via the structure maps of $P$-algebras.

Applying the reconstruction functor $\Psi$, we obtain
\[
\Psi(\mathcal{L}) \cong \mathrm{Id},
\]
the identity functor. The automorphism group of $\mathcal{L}$ is
$\mathrm{Aut}(\mathrm{Id})$, which in the $\mathbb{C}$-enriched setting consists
of scalar multiples of the identity (i.e., $\mathbb{C}^{\times}$), since any
natural endomorphism of the identity functor is determined by a scalar
(see Example~\ref{ex:id-reconstruction}).
\end{example}

\begin{remark}[Analogy with classical theory]
\label{rem:classical-analogy}
In classical analysis, analytic functions are determined by their Taylor
coefficients. Here, the role of coefficients is played by the spectral
derivatives, and the operadic module structure encodes their interaction.
Thus, the equivalence
\[
\mathsf{SpecAn} \simeq \mathsf{DerAlg}_{\mathrm{int}}
\]
may be viewed as a categorical version of the classification of analytic
functions by their Taylor series.

\[
\begin{array}{|c|c|}
\hline
\text{Classical Analysis} & \text{Spectral Operadic Calculus} \\
\hline
\text{Taylor coefficients } a_n
& \text{Spectral derivatives } \partial_n^{\mathrm{spec}}F \\
\text{Power series } \sum a_n z^n
& \text{Spectral Taylor series } \sum D_n^{\mathrm{spec}}F(A) \\
\text{Analytic function } f(z)
& \text{Spectrally analytic functor } F(A) \\
\text{Classification by coefficients}
& \mathsf{SpecAn} \simeq \mathsf{DerAlg}_{\mathrm{int}} \\
\hline
\end{array}
\]
\end{remark}

\begin{remark}[Role of the operadic residue]
\label{rem:equivalence-residue}
The equivalence depends critically on the operadic residue
$\mathcal{O}_P^{\mathrm{res}}$.
The right $P$-module structure on derivative algebras is defined using
the residue, which provides the unit and composition maps.
Without $\mathcal{O}_P^{\mathrm{res}}$, the category
$\mathsf{DerAlg}_{\mathrm{int}}$ would not be well-defined, and the
equivalence would fail.
\end{remark}

\begin{remark}[Outlook and applications]
\label{rem:equivalence-outlook}
The equivalence
\[
\mathsf{SpecAn} \simeq \mathsf{DerAlg}_{\mathrm{int}}
\]
opens the door to several applications:
\begin{enumerate}
\item \textbf{Deformation theory:}
Deformations of analytic functors correspond to deformations of
derivative algebras, which are controlled by cohomology
(Section~\ref{sec:moduli}).

\item \textbf{Moduli spaces:}
The moduli space of analytic functors can be studied via the algebraic
moduli of derivative algebras.

\item \textbf{Operadic calculus:}
The chain rule (Theorem~\ref{thm:chain-rule}) can be interpreted as the
statement that $\partial^{\mathrm{spec}}$ is a monoidal functor
between suitable monoidal categories.
\end{enumerate}
\end{remark}

\medskip

\noindent
This completes the reconstruction framework of Spectral Operadic Calculus.
Together with the convergence theory (Section~\ref{sec:convergence}) and the
chain rule (Section~\ref{sec:chain-rule}), the equivalence
\[
\mathsf{SpecAn} \simeq \mathsf{DerAlg}_{\mathrm{int}}
\]
shows that the calculus is not only analytic but also algebraic, providing
a classification of spectrally analytic functors in terms of their
derivative data.

\section{Moduli and Classification of Functors}
\label{sec:moduli}

We conclude by placing Spectral Operadic Calculus within a broader geometric
and deformation-theoretic perspective. While the previous section established
an equivalence between spectrally analytic functors and their derivative data,
we now interpret this equivalence as suggesting a moduli-theoretic framework
for analytic functors.

\medskip

\noindent
\textbf{Guiding perspective.}
The equivalence
\[
\mathsf{SpecAn} \;\simeq\; \mathsf{DerAlg}_{\mathrm{int}}
\]
suggests that spectrally analytic functors may be viewed as points in a space
parameterized by their spectral derivatives. In this interpretation, the
operadic module $\{\partial_n^{\mathrm{spec}}F\}$ serves as coordinate data,
encoding both the analytic structure and the compositional behavior of $F$.

\medskip

\noindent
This viewpoint naturally leads to questions of deformation and variation:
how do functors change under perturbations of their derivative data, and what
structures govern these variations? The algebraic framework developed in
previous sections suggests that deformation theory of spectrally analytic
functors is closely related to the algebraic structure of their derivative
modules.

\medskip

\noindent
\textbf{From algebra to geometry.}
Interpreting derivative modules as algebraic objects with operadic structure,
the classification result admits a geometric refinement: the collection of
spectrally analytic functors admits a moduli interpretation, in which local
behavior is governed by variations of the derivative algebra.

This perspective suggests that spectrally analytic functors may be organized
into a geometric object whose local and global properties are influenced by
operadic and cohomological structures.

\medskip

\noindent
The purpose of this section is therefore threefold:
\begin{itemize}
    \item to describe a moduli-theoretic interpretation of spectrally analytic
          functors (\S\ref{subsec:moduli-space});
    \item to analyze first-order variations in terms of spectral derivatives
          (\S\ref{subsec:deformation});
    \item to outline a possible extension toward derived and higher structures
          (\S\ref{subsec:outlook}).
\end{itemize}

\medskip

\noindent
\textbf{Relation to classical deformation theory.}
Classical deformation theory (Gerstenhaber~\cite{Gerstenhaber1964}) studies
deformations of associative algebras via Hochschild cohomology. In the present
setting, the role of the algebra is played by the derivative algebra
$\partial^{\mathrm{spec}}F$, while the relevant invariants are expected to be
given by operadic cohomology.

This analogy suggests that, just as Hochschild cohomology governs deformations
of associative algebras, suitable operadic cohomology theories may govern
deformations of spectrally analytic functors.

\[
\begin{array}{|c|c|}
\hline
\text{Classical Deformation Theory} & \text{Spectral Operadic Calculus} \\
\hline
\text{Algebra } A & \text{Derivative algebra } \partial^{\mathrm{spec}}F \\
\text{Hochschild cohomology } HH^*(A) & \text{Operadic cohomology (expected)} \\
\text{Deformations of } A & \text{Deformations of } F \\
\hline
\end{array}
\]

\medskip

\noindent
\textbf{Remark on the role of the operadic residue.}
The deformation-theoretic picture depends critically on the operadic residue
$\mathcal{O}_P^{\mathrm{res}}$, which provides the unit and composition maps
defining the $P$-module structure of $\partial^{\mathrm{spec}}F$. Without
this structure, neither the algebraic nor the cohomological framework would
be well-defined.

\medskip

\noindent
These considerations indicate that Spectral Operadic Calculus extends beyond
analytic approximation toward a geometric framework, connecting operadic
algebra, deformation theory, and moduli-theoretic ideas.

\medskip

\subsection{The Moduli Set of Analytic Functors}
\label{subsec:moduli-space}


We now formalize the geometric interpretation of Spectral Operadic Calculus by introducing the moduli space of spectrally analytic functors. This provides a global perspective on the classification results obtained in the reconstruction theorem and serves as the foundation for deformation theory.

\medskip

\noindent
\textbf{Definition of the moduli space.}
We define the \emph{moduli moduli set of spectrally analytic functors} by
\[
\mathcal{M}_{\mathrm{spec}}
:=
\left\{
F : \mathsf{Alg}_P(\mathcal{M}) \to \mathcal{M}
;\middle|;
F \text{ is admissible, operadically compatible, and spectrally analytic}
\right\}
\big/ \cong,
\]
where the equivalence relation is given by natural isomorphism of functors.

Thus, $\mathcal{M}_{\mathrm{spec}}$ parametrizes isomorphism classes of
spectrally analytic functors, i.e., functors admitting a convergent spectral
Taylor expansion together with a compatible operadic structure.

\begin{remark}[Geometric language disclaimer]
\label{rem:moduli-disclaimer}
Throughout this section, we use geometric terminology (``moduli space,''
``tangent space,'' ``coordinates'') heuristically to suggest structural
analogies. Strictly speaking, $\mathcal{M}_{\mathrm{spec}}$ is a set (or
groupoid) of isomorphism classes; we have not equipped it with a topology,
smooth structure, or stack structure. The ``tangent space'' $T_{[F]}\mathcal{M}_{\mathrm{spec}}$
refers to the formal tangent space in the sense of deformation theory
(i.e., the space of first-order deformations modulo equivalence), not a
geometric tangent space in the absence of a smooth structure.
\end{remark}

\medskip

\begin{definition}[Derivative algebra]
\label{def:derivative-algebra-moduli}
A \emph{derivative algebra} is a symmetric sequence
\[
X = {X_n}_{n \geq 0}
\]
in $\mathcal{M}$ equipped with the structure of a right $P$-module
(Definition~\ref{def:operadic-compatibility}), together with a convergence condition
ensuring that the associated spectral Taylor series converges in the sense
of Section~\ref{sec:convergence}.

We denote by
\[
\mathsf{DerAlg}_{\mathrm{int}}
\]
the category of derivative algebras, with morphisms given by maps of right
$P$-modules compatible with the convergence structure.
\end{definition}

\medskip

\noindent
\textbf{From functors to derivative algebras.}
Given a spectrally analytic functor $F$, we associate to it its spectral derivative sequence
\[
\partial^{\mathrm{spec}}F
=
\{\partial_n^{\mathrm{spec}}F\}_{n \geq 0}.
\]
By Theorem~\ref{thm:module-structure}, this sequence carries a natural right
$P$-module structure. Moreover, since $F$ is spectrally analytic
(Definition~\ref{def:spectral-analyticity}), its spectral Taylor series converges
in norm and reconstructs $F$; consequently, the derivative sequence satisfies
the integrability condition of Definition~\ref{def:integrable-deralg}.

\begin{lemma}[Functor-to-Derivative Correspondence]
\label{lem:functor-to-derivative-moduli}
The assignment
\[
F \;\longmapsto\; \partial^{\mathrm{spec}}F
\]
defines a functor
\[
\Phi : \mathsf{SpecAn} \longrightarrow \mathsf{DerAlg}_{\mathrm{int}}.
\]
\end{lemma}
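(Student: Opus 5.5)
The plan is to verify the two ingredients of a functor: well-definedness on objects and on morphisms. After that, we check that identities and composition are preserved.

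\emph{Objects.} For $F\in\mathsf{SpecAn}$, Theorem~\ref{thm:symmetric-sequence} makes $\{\partial_n^{\mathrm{spec}}F\}_{n\ge0}$ a symmetric sequence. Since $F$ is operadically compatible, Theorem~\ref{thm:module-structure} upgrades it to a right $P$-module. Integrability (Definition~\ref{def:integrable-deralg}) comes from the earlier analytic results:
\begin{itemize}
\item the exponential bound $\|\partial_n^{\mathrm{spec}}F\|\le C\rho^n$ follows from $R_F>0$ via Definition~\ref{def:spectral-radius}, exactly as in Step~3 of Theorem~\ref{thm:quantitative-convergence};
\item the reconstructed functor $F_{\partial^{\mathrm{spec}}F}$ coincides with $F$ by Corollary~\ref{cor:recovery-derivatives};
\item admissibility, operadic compatibility and spectral analyticity of $F_{\partial^{\mathrm{spec}}F}$ are therefore inherited from $F$.
\end{itemize}
This is essentially Step~1 of the proof of Theorem~\ref{thm:equivalence}, which I will reproduce.

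\emph{Morphisms.} For a natural transformation $\eta:F\Rightarrow G$, set $\Phi(\eta)_n:=\mathrm{cr}_n(\eta)$, as in Remark~\ref{rem:derivative-functoriality}. This is a map of symmetric sequences, since the total homotopy fiber construction is natural in the cube and the $\Sigma_n$-action is induced by permuting the cube coordinates. Two further properties must be checked.
\begin{itemize}
\item \emph{Compatibility with the right $P$-module maps.} The structure maps $\rho_{n_1,\dots,n_k}$ of Theorem~\ref{thm:module-structure} are built from the natural transformations $\Phi_{\gamma_1,\dots,\gamma_k}$ provided by operadic compatibility. Naturality of those transformations with respect to $\eta$ gives the required commuting squares. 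I expect this to be the main obstacle: Definition~\ref{def:operadic-compatibility} does not explicitly require the $\mu_\gamma$ to be natural in the functor. I would resolve this by stating it as the standing convention that morphisms in $\mathsf{SpecAn}$ respect the chosen compatibility data, or equivalently by noting that for functors built from $P$-structure maps the $\mu_\gamma$ are themselves induced by cross-effects and hence natural.
\item \emph{Compatibility with norms.} Here $\|\mathrm{cr}_n\eta\|$ is bounded because the inclusion--exclusion expression in Definition~\ref{def:cross-effects} is a finite alternating sum of components of $\eta$, each bounded.
\end{itemize}

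\emph{Functoriality.} This reduces to functoriality of $\mathrm{cr}_n$ in the underlying functor. We have $\mathrm{cr}_n(\mathrm{id}_F)=\mathrm{id}$ and $\mathrm{cr}_n(\theta\circ\eta)=\mathrm{cr}_n\theta\circ\mathrm{cr}_n\eta$, since total fibers are functorial. Hence $\Phi$ is a functor $\mathsf{SpecAn}\to\mathsf{DerAlg}_{\mathrm{int}}$.
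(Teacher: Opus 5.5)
Your proposal follows the paper's proof essentially step for step: on objects you combine Theorem~\ref{thm:module-structure} with the integrability check (which the paper asserts in one line here and spells out in Step~1 of Theorem~\ref{thm:equivalence}), on morphisms you set $\Phi(\eta)_n=\mathrm{cr}_n\eta$, and the identity and composition axioms come from functoriality of cross-effects. You are more careful than the paper on compatibility of $\mathrm{cr}_n\eta$ with the module maps: the paper attributes it to naturality of cross-effects, but, as you rightly note, Definition~\ref{def:operadic-compatibility} does not force the chosen $\mu_\gamma$ to be natural in $F$, so a convention of the kind you propose is genuinely needed.
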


\begin{proof}
We verify that $\Phi$ is well-defined on objects and morphisms.

\medskip

\noindent
\textit{Objects.}
For any $F \in \mathsf{SpecAn}$, Theorem~\ref{thm:module-structure} shows that
$\partial^{\mathrm{spec}}F$ is a right $P$-module. Because $F$ is spectrally
analytic, its spectral Taylor expansion converges absolutely in norm for all
$A$ with $\|\sigma_P(A)\| < R_F$ (Theorem~\ref{thm:quantitative-convergence}).
Hence $\partial^{\mathrm{spec}}F$ satisfies the integrability condition of
Definition~\ref{def:integrable-deralg}, so $\Phi(F) \in \mathsf{DerAlg}_{\mathrm{int}}$.

\medskip

\noindent
\textit{Morphisms.}
Let $\eta: F \Rightarrow G$ be a natural transformation. For each $n \ge 0$,
define
\[
\Phi(\eta)_n := \partial_n^{\mathrm{spec}}\eta = \mathrm{cr}_n\eta.
\]
By Proposition~\ref{prop:derivative-properties}(4), each $\Phi(\eta)_n$ is
$\Sigma_n$-equivariant. Naturality of cross-effects further implies that
$\Phi(\eta)_n$ commutes with the right $P$-module structure maps
(Theorem~\ref{thm:module-structure}). Moreover, since $\eta$ is bounded
(admissibility is inherited), the collection $\{\Phi(\eta)_n\}$ preserves the
norm estimates required for integrability. Hence $\Phi(\eta)$ is a morphism in
$\mathsf{DerAlg}_{\mathrm{int}}$.

\medskip

\noindent
\textit{Functoriality axioms.}
For the identity natural transformation $\mathrm{id}_F$, we have
$\Phi(\mathrm{id}_F)_n = \partial_n^{\mathrm{spec}}\mathrm{id}_F = \mathrm{id}_{\partial_n^{\mathrm{spec}}F}$,
so $\Phi(\mathrm{id}_F) = \mathrm{id}_{\Phi(F)}$. For composable
$\eta: F \Rightarrow G$ and $\theta: G \Rightarrow H$, the equality
$\Phi(\theta \circ \eta) = \Phi(\theta) \circ \Phi(\eta)$ follows from the
functoriality of cross-effects. Therefore $\Phi$ is a functor.
\end{proof}

\medskip

\noindent
\textbf{Classification via derivatives.}
The reconstruction theorem (Theorem~\ref{thm:reconstruction}) implies that this correspondence is in fact an equivalence, and hence provides a complete classification of spectrally analytic functors.

\medskip

\noindent
Before stating the classification theorem, recall the moduli space notation:
\[
\mathcal{M}_{\mathrm{spec}}
\;:=\;
\pi_0(\mathsf{SpecAn}),
\]
i.e., the set of isomorphism classes of spectrally analytic functors.
Similarly, $\pi_0(\mathsf{DerAlg}_{\mathrm{int}})$ denotes isomorphism classes
of integrable derivative algebras.

\begin{theorem}[Spectral Classification Theorem]
\label{thm:spectral-classification-moduli}
The derivative functor
\[
\Phi : \mathsf{SpecAn} \longrightarrow \mathsf{DerAlg}_{\mathrm{int}}
\]
induces a bijection
\[
\mathcal{M}_{\mathrm{spec}}
\;\cong\;
\pi_0\!\bigl(\mathsf{DerAlg}_{\mathrm{int}}\bigr).
\]

Equivalently, $\Phi$ is an equivalence of categories:
\[
\mathsf{SpecAn} \;\simeq\; \mathsf{DerAlg}_{\mathrm{int}}.
\]
\end{theorem}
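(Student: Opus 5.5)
The plan is to deduce Theorem~\ref{thm:spectral-classification-moduli} directly from the equivalence of categories already established in Theorem~\ref{thm:equivalence}, together with Lemma~\ref{lem:functor-to-derivative-moduli}, which identifies the functor $\Phi$ appearing here with the one in Theorem~\ref{thm:equivalence}. The second assertion (that $\Phi$ is an equivalence) is then literally Theorem~\ref{thm:equivalence}. The first assertion (the bijection on isomorphism classes) follows from the general fact that any equivalence of categories induces a bijection on $\pi_0$. This is exactly the argument of Corollary~\ref{cor:classification-analytic-functors}, rephrased in the notation $\mathcal{M}_{\mathrm{spec}} = \pi_0(\mathsf{SpecAn})$.

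The concrete steps are as follows. First, check that the set $\mathcal{M}_{\mathrm{spec}}$ defined at the start of \S\ref{subsec:moduli-space} coincides with $\pi_0(\mathsf{SpecAn})$. Both are quotients of the same class of admissible, operadically compatible, spectrally analytic functors by natural isomorphism, matching Definition~\ref{def:SpecAn}.

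Second, define the induced map $[F]\mapsto[\partial^{\mathrm{spec}}F]$. It is well defined because $\Phi$ is a functor (Lemma~\ref{lem:functor-to-derivative-moduli}), so it sends isomorphisms to isomorphisms.

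Third, establish injectivity and surjectivity:
\begin{itemize}
\item \emph{Injectivity.} If $\partial^{\mathrm{spec}}F\cong\partial^{\mathrm{spec}}G$ as right $P$-modules, then Theorem~\ref{thm:reconstruction} gives $F\cong G$.
\item \emph{Surjectivity.} For $\mathbf{X}\in\mathsf{DerAlg}_{\mathrm{int}}$, the reconstructed functor $F_{\mathbf{X}}=\Psi(\mathbf{X})$ lies in $\mathsf{SpecAn}$. Step~4 of the proof of Theorem~\ref{thm:equivalence} gives $\Phi(\Psi(\mathbf{X}))\cong\mathbf{X}$.
\end{itemize}

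The only step with real content is surjectivity, i.e.\ essential surjectivity of $\Phi$. It rests on the identification $\partial_n^{\mathrm{spec}}F_{\mathbf{X}}\cong X_n$ compatibly with the right $P$-module structure. I would not reprove this but cite Step~2 of Theorem~\ref{thm:equivalence}, where integrability (Definition~\ref{def:integrable-deralg}) and uniqueness of the convergent expansion yield this identification. The expected obstacle is therefore bookkeeping: one must confirm that the $\mathsf{DerAlg}_{\mathrm{int}}$ of Definition~\ref{def:derivative-algebra-moduli} is the same category as in Definition~\ref{def:integrable-deralg}, so that the earlier theorem applies verbatim. I would state this identification explicitly before invoking Theorem~\ref{thm:equivalence}.
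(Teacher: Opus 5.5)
Your proposal is correct and follows the paper's own proof. The paper simply invokes Theorem~\ref{thm:equivalence} for the equivalence $\mathsf{SpecAn}\simeq\mathsf{DerAlg}_{\mathrm{int}}$ and passes to isomorphism classes, using $\mathcal{M}_{\mathrm{spec}}=\pi_0(\mathsf{SpecAn})$. Your explicit unpacking of injectivity (via Theorem~\ref{thm:reconstruction}) and surjectivity (via Step~4 of Theorem~\ref{thm:equivalence}), together with your check that the two descriptions of $\mathsf{DerAlg}_{\mathrm{int}}$ agree, is a more detailed rendering of that same argument.
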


\begin{proof}
Theorem~\ref{thm:equivalence} already establishes the equivalence of categories
$\mathsf{SpecAn} \simeq \mathsf{DerAlg}_{\mathrm{int}}$. Passing to isomorphism
classes (i.e., applying $\pi_0$) immediately yields the bijection
$\mathcal{M}_{\mathrm{spec}} \cong \pi_0(\mathsf{DerAlg}_{\mathrm{int}})$.
\end{proof}

\medskip

\begin{remark}
This theorem is essentially a reformulation of
Corollary~\ref{cor:classification-analytic-functors} in the language of moduli
spaces. It highlights that the classification of spectrally analytic functors
up to isomorphism is equivalent to the classification of integrable derivative
algebras up to isomorphism. No additional proof is required beyond citing
Theorem~\ref{thm:equivalence}.
\end{remark}

\noindent
\textbf{Geometric interpretation.}
The above theorem identifies the set of isomorphism classes of spectrally
analytic functors with the set of isomorphism classes of integrable derivative
algebras. In this sense, spectral derivatives provide algebraic coordinate data
for the moduli problem of spectrally analytic functors.

More precisely, the assignment
\[
F \;\longmapsto\; \{\partial_n^{\mathrm{spec}}F\}_{n \ge 0}
\]
identifies each functor, up to natural isomorphism, with its compatible
operadic derivative data. Conversely, an integrable derivative algebra
\(\{\partial_n\}_{n\ge 0}\) reconstructs a spectrally analytic functor by the
spectral Taylor formula
\[
F_{\partial}(A)
=
\bigoplus_{n=0}^{\infty}
\partial_n(A,\dots,A),
\]
whenever the series converges in norm.

\medskip

\begin{corollary}[Derivative Classification of the Moduli Set]
\label{cor:moduli-coordinates-v2}
The moduli set
\[
\mathcal{M}_{\mathrm{spec}}
:=
\pi_0(\mathsf{SpecAn})
\]
admits a coordinate description by integrable derivative algebras:
\[
\mathcal{M}_{\mathrm{spec}}
\;\cong\;
\pi_0(\mathsf{DerAlg}_{\mathrm{int}}).
\]
Under this identification, the class of a spectrally analytic functor $F$
is represented by its spectral derivative sequence
\[
\partial^{\mathrm{spec}}F
=
\{\partial_n^{\mathrm{spec}}F\}_{n\ge 0},
\]
equipped with its compatible right $P$-module structure and convergence data.
Thus spectral derivatives provide algebraic classification data for the
classification problem of spectrally analytic functors.
\end{corollary}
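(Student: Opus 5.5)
The plan is to deduce Corollary~\ref{cor:moduli-coordinates-v2} formally from Theorem~\ref{thm:spectral-classification-moduli}, which in turn rests on the equivalence $\Phi:\mathsf{SpecAn}\simeq\mathsf{DerAlg}_{\mathrm{int}}$ of Theorem~\ref{thm:equivalence}. The first step is to recall the definition $\mathcal{M}_{\mathrm{spec}}:=\pi_0(\mathsf{SpecAn})$. We then observe that any functor $\Phi$ sends isomorphic objects to isomorphic objects, so it induces a well-defined map $\pi_0(\Phi):\mathcal{M}_{\mathrm{spec}}\to\pi_0(\mathsf{DerAlg}_{\mathrm{int}})$, given by $[F]\mapsto[\partial^{\mathrm{spec}}F]$. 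Well-definedness can also be read off from the implication (1)$\Rightarrow$(2) of Theorem~\ref{thm:reconstruction}.

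Second, we show that $\pi_0(\Phi)$ is bijective.
\begin{itemize}
\item \emph{Injectivity.} If $\partial^{\mathrm{spec}}F\cong\partial^{\mathrm{spec}}G$ as right $P$-modules, then the implication (2)$\Rightarrow$(1) of Theorem~\ref{thm:reconstruction} gives $F\cong G$.
\item \emph{Surjectivity.} Given $\mathbf{X}\in\mathsf{DerAlg}_{\mathrm{int}}$, the reconstruction functor $\Psi$ produces $F_{\mathbf{X}}\in\mathsf{SpecAn}$. Step~4 of the proof of Theorem~\ref{thm:equivalence} gives $\Phi(\Psi(\mathbf{X}))\cong\mathbf{X}$, so $[\mathbf{X}]$ lies in the image.
\end{itemize}
Alternatively, both facts follow at once because an equivalence of categories is essentially surjective and conservative.

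Third, we make the representative statement explicit: under the bijection, the class of $F$ corresponds to $\partial^{\mathrm{spec}}F$ carrying the right $P$-module structure of Theorem~\ref{thm:module-structure}. Its convergence data consist of the exponential bounds $\|\partial_n^{\mathrm{spec}}F\|\le C\rho^n$, which follow from $R_F>0$, together with the integrability conditions verified in Lemma~\ref{lem:functor-to-derivative-moduli}.

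The only point requiring care is that the bijection really lands in $\pi_0(\mathsf{DerAlg}_{\mathrm{int}})$, so that convergence data are part of the isomorphism class. Concretely, this means morphisms in $\mathsf{DerAlg}_{\mathrm{int}}$ must preserve norm structure, as stipulated in Definition~\ref{def:DerAlg}. I would handle this by noting that $\Phi$ takes values in the full subcategory $\mathsf{DerAlg}_{\mathrm{int}}$, and that its isomorphisms are by definition those of $\mathsf{DerAlg}$. Beyond this bookkeeping the corollary is a formal consequence of the cited results, and no further analytic estimate is needed.
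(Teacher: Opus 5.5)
Your proposal is correct and takes essentially the paper's route: the paper simply applies $\pi_0$ to the equivalence $\mathsf{SpecAn}\simeq\mathsf{DerAlg}_{\mathrm{int}}$ of Theorem~\ref{thm:equivalence}, and your injectivity/surjectivity split via Theorem~\ref{thm:reconstruction} and Step~4 is that same argument unpacked. One small slip in your ``alternatively'' remark: injectivity on isomorphism classes comes from full faithfulness, or directly from $F\cong\Psi\Phi F\cong\Psi\Phi G\cong G$, not from conservativity alone, because conservativity only reflects isomorphisms that are already of the form $\Phi(f)$.
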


\begin{proof}
By Theorem~\ref{thm:equivalence}, the derivative functor induces an equivalence
of categories
\[
\mathsf{SpecAn}
\;\simeq\;
\mathsf{DerAlg}_{\mathrm{int}}.
\]
Passing to isomorphism classes gives a bijection
\[
\pi_0(\mathsf{SpecAn})
\;\cong\;
\pi_0(\mathsf{DerAlg}_{\mathrm{int}}).
\]
By definition, $\mathcal{M}_{\mathrm{spec}} = \pi_0(\mathsf{SpecAn})$, so the
claim follows. The coordinate description is given explicitly by the derivative
sequence $\partial^{\mathrm{spec}}F$, while the inverse construction is the
Taylor reconstruction functor.
\end{proof}

\medskip

\begin{remark}[Geometric and Structural Interpretation]
\label{rem:moduli-interpretation}
\leavevmode\newline
\noindent\textbf{Analogy with classical moduli problems.}
The classification theorem exhibits a direct analogy with classical Taylor theory:

\[
\begin{array}{|c|c|}
\hline
\text{Classical Setting} & \text{Spectral Operadic Calculus} \\
\hline
\text{Smooth function } f(x) & \text{Spectrally analytic functor } F \\
\text{Taylor coefficients } f^{(n)}(0)/n! & \text{Spectral derivatives } \partial_n^{\mathrm{spec}}F \\
\text{Function determined by coefficients} & \text{Functor determined by derivatives} \\
\text{Space of functions} & \text{Moduli set } \mathcal{M}_{\mathrm{spec}} \\
\hline
\end{array}
\]

This correspondence shows that spectral derivatives play the role of
\emph{coordinate data} for the classification of spectrally analytic functors.

\medskip

\noindent
\textbf{Role of the operadic residue.}
The classification depends essentially on the operadic residue
$\mathcal{O}_P^{\mathrm{res}}$ (Part~I, Theorem~0.2), which provides the unit
and composition maps endowing $\partial^{\mathrm{spec}}F$ with a right
$P$-module structure. This structure is necessary to define the plethystic
composition underlying derivative algebras. Without the residue, such a
classification fails, as indicated by the No-Go Theorem (Part~I, Theorem~0.1).

\medskip

\noindent
\textbf{Higher-structure perspective (informal).}
A more refined formulation would treat $\mathcal{M}_{\mathrm{spec}}$ as a
groupoid or higher moduli object, incorporating automorphisms of functors.
From this viewpoint, the category $\mathsf{DerAlg}_{\mathrm{int}}$ provides
an algebraic model for such a moduli problem. A systematic development of this
perspective, potentially involving derived or higher-categorical structures,
is left for future work.

\end{remark}

This moduli interpretation suggests a deformation-theoretic framework in which
derivative algebras control infinitesimal variations of functors. In particular,
it is natural to expect that tangent directions and obstructions are governed
by suitable cohomology theories of $\partial^{\mathrm{spec}}F$. Making this
precise is an interesting direction for future work.

\subsection{Deformation Theory}
\label{subsec:deformation}


We now develop the deformation theory of spectrally analytic functors,
interpreting the moduli space $\mathcal{M}_{\mathrm{spec}}$ in terms of
infinitesimal variations of derivative data. This provides a cohomological
description of first-order deformations, linking the analytic theory to
algebraic and geometric structures.

\medskip

Since spectrally analytic functors are classified by their derivative algebras
(Theorem~\ref{thm:spectral-classification-moduli}), deformations of functors
correspond to deformations of their associated right $P$-modules. Consequently,
infinitesimal deformations are governed by infinitesimal endomorphisms of these
modules, and one expects the tangent space to the moduli space to be controlled
by the first cohomology group of an appropriate deformation complex.

\medskip

\noindent
\textbf{Setup.}
Throughout this subsection, we assume that $\mathcal{M}$ is a Banach category
over $\mathbb{C}$ with a well-behaved notion of derivations. Let $F$ be a
spectrally analytic functor with derivative algebra
$\partial := \partial^{\mathrm{spec}}F = \{\partial_n^{\mathrm{spec}}F\}_{n\ge 0}$,
equipped with its right $P$-module structure coming from
Theorem~\ref{thm:module-structure}.

\medskip

\begin{definition}[Infinitesimal Endomorphism of a Derivative Algebra]
\label{def:derivation}
Let $X = \{X_n\}_{n \ge 0}$ be a derivative algebra, regarded as a right
$P$-module. An \emph{infinitesimal endomorphism} of $X$ is a collection of
$\Sigma_n$-equivariant maps
\[
\delta_n : X_n \longrightarrow X_n, \qquad n \ge 0,
\]
such that the collection $\delta = \{\delta_n\}_{n\ge 0}$ is compatible with the
right $P$-module structure. That is, for every right $P$-module structure map
\[
\mu_X : X_k \otimes P(n_1,\dots,n_k;n) \longrightarrow X_n,
\]
one has
\[
\delta_n\bigl( \mu_X(x, \gamma) \bigr) = \mu_X(\delta_k x, \gamma).
\]
The space of such infinitesimal endomorphisms is denoted
\[
\operatorname{Der}_P(X).
\]
\end{definition}

\begin{remark}[Leibniz-type deformations]
If one wishes to consider deformations that also vary the operad $P$ itself,
a more general notion of derivation is required. In that setting, a derivation
of $X$ relative to a derivation $D_P$ of the operad $P$ is a collection
$\delta = \{\delta_n\}_{n\ge 0}$ satisfying
\[
\delta(x \cdot \gamma) = \delta(x) \cdot \gamma + x \cdot D_P(\gamma).
\]
When $D_P = 0$, this reduces to the infinitesimal endomorphism defined above.
A systematic treatment of such relative deformations is left for future work.
\end{remark}
\medskip

\noindent
\textbf{Deformation complex.}
We now introduce a formal deformation complex associated with a derivative
algebra \(X\). Since \(X\) is a right \(P\)-module, its infinitesimal
deformations are measured by the failure of perturbations of the structure maps
to satisfy the right \(P\)-module compatibility identities.

\begin{definition}[Deformation Complex]
\label{def:deformation-complex}
Let \(X=\{X_n\}_{n\ge 0}\) be a derivative algebra, regarded as a right
\(P\)-module. We denote by
\[
C^\bullet_P(X)
\]
the operadic deformation complex whose cochains encode multilinear
perturbations of the right \(P\)-module structure maps
\[
\mu_X:
X_k\otimes P(n_1,\ldots,n_k;n)
\longrightarrow
X_n .
\]
The differential
\[
d:C^q_P(X)\longrightarrow C^{q+1}_P(X)
\]
is defined by linearizing the right \(P\)-module associativity and equivariance
relations. Its first cohomology is denoted by
\[
H^1_P(X).
\]
\end{definition}

\begin{remark}[Interpretation of the deformation complex]
\label{rem:deformation-complex-interpretation}
In the deformation complex $C^\bullet_P(X)$ (Definition~\ref{def:deformation-complex}):
\begin{itemize}
    \item The first cocycle space $Z^1_P(X) = \ker(d: C^1_P(X) \to C^2_P(X))$
          consists of infinitesimal deformations of the right $P$-module structure on $X$.
    \item The first coboundary space $B^1_P(X) = \operatorname{im}(d: C^0_P(X) \to C^1_P(X))$
          consists of infinitesimally trivial deformations (induced by coordinate changes).
    \item Hence $H^1_P(X) = Z^1_P(X)/B^1_P(X)$ classifies first-order deformations modulo equivalence.
\end{itemize}
This interpretation is justified in the proof of Theorem~\ref{thm:infinitesimal-deformations}.
\end{remark}

\medskip

\begin{lemma}[Trivial Infinitesimal Deformations]
\label{lem:trivial-infinitesimal-deformations}
Let $X$ be a derivative algebra (i.e., a right $P$-module). In the deformation
complex $C^\bullet_P(X)$ (Definition~\ref{def:deformation-complex}), every
degree-zero cochain
\[
h \in C^0_P(X)
\]
induces a first-order deformation
\[
dh \in C^1_P(X).
\]
Such deformations are infinitesimally trivial: they arise from the change of
coordinates
\[
T_\varepsilon = \mathrm{id}_X + \varepsilon h, \qquad \varepsilon^2 = 0,
\]
via transport of structure:
\[
\mu_X^\varepsilon = T_\varepsilon^{-1} \circ \mu_X \circ (T_\varepsilon \otimes \mathrm{id}_P).
\]
Consequently, the space
\[
B^1_P(X) = \operatorname{im}\bigl(d : C^0_P(X) \to C^1_P(X)\bigr)
\]
is precisely the space of infinitesimally trivial deformations of $X$.
\end{lemma}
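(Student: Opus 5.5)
The plan is to compute the transported structure to first order in $\varepsilon$ and then match the linear term with the formula for $dh$. First, fix conventions. A degree-zero cochain $h \in C^0_P(X)$ is a family of $\Sigma_n$-equivariant maps $h_n : X_n \to X_n$, with no compatibility required. Degree-one cochains are families of perturbations $\nu : X_k \otimes P(n_1,\dots,n_k;n) \to X_n$ of the structure maps $\mu_X$. Since $\varepsilon^2=0$, the map $T_\varepsilon = \mathrm{id}_X + \varepsilon h$ is invertible, with $T_\varepsilon^{-1} = \mathrm{id}_X - \varepsilon h$. This holds levelwise, and it is bounded because each $h_n$ is bounded in the Banach-enriched setting.

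Next, expand the transported structure:
\[
\mu_X^\varepsilon = (\mathrm{id} - \varepsilon h_n)\circ \mu_X \circ \bigl((\mathrm{id}+\varepsilon h_k)\otimes \mathrm{id}_P\bigr)
= \mu_X + \varepsilon\bigl(\mu_X\circ(h_k\otimes\mathrm{id}_P) - h_n\circ\mu_X\bigr).
\]
The linear coefficient $\nu_h := \mu_X(h_k \otimes \mathrm{id}) - h_n \mu_X$ is exactly the failure of $h$ to satisfy the compatibility condition of Definition~\ref{def:derivation}. I would then invoke Definition~\ref{def:deformation-complex}, where $d$ is the linearization of the module relations. On $C^0_P(X)$ this linearization is precisely $h \mapsto \nu_h$ (up to a global sign convention), so $dh = \nu_h$. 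Because $\mu_X^\varepsilon$ is obtained by transport of structure along an isomorphism, it automatically satisfies the right $P$-module associativity, unit and equivariance axioms modulo $\varepsilon^2$. Hence $dh$ is a cocycle, consistent with $d^2=0$, and the deformed module $(X,\mu_X^\varepsilon)$ is isomorphic to the trivial deformation via $T_\varepsilon$. So every element of $B^1_P(X)$ is infinitesimally trivial.

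For the converse inclusion, suppose a first-order deformation $\mu_X + \varepsilon\nu$ is infinitesimally trivial, meaning there is an isomorphism of deformed modules reducing to the identity mod $\varepsilon$. Any such isomorphism has the form $\mathrm{id} + \varepsilon h$ for some $h \in C^0_P(X)$. Equivariance of the isomorphism forces $h$ to be $\Sigma_n$-equivariant. Comparing coefficients of $\varepsilon$ in the intertwining identity then gives $\nu = \nu_h = dh$. This yields $B^1_P(X)$ equal to the space of infinitesimally trivial deformations. As a by-product, $\ker(d|_{C^0})$ is exactly $\operatorname{Der}_P(X)$.

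The main obstacle is the step identifying $d|_{C^0}$ with $h\mapsto \nu_h$. Definition~\ref{def:deformation-complex} describes $d$ only as ``linearizing'' the relations, so I would first make degree zero explicit by defining $d$ there by this formula. I would then check that it matches the linearization of the equivariance relation; the fact that $h$ is $\Sigma$-equivariant handles this. A secondary point is the sign convention, which fixes whether one writes $T_\varepsilon$ or $T_\varepsilon^{-1}$ on the left and changes $dh$ only by an overall sign.
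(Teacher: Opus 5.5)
Your proposal is correct and follows the same route as the paper. Both arguments expand the transported structure $T_\varepsilon^{-1}\circ\mu_X\circ(T_\varepsilon\otimes\mathrm{id}_P)$ using $T_\varepsilon^{-1}=\mathrm{id}_X-\varepsilon h$ and read off the $\varepsilon$-coefficient as $dh$. You also do two things the paper's proof does not spell out: you make the degree-zero differential $h\mapsto\mu_X\circ(h\otimes\mathrm{id}_P)-h\circ\mu_X$ explicit, and you prove the converse inclusion, namely that every isomorphism of deformed modules reducing to the identity mod $\varepsilon$ has the form $\mathrm{id}_X+\varepsilon h$ with $h$ equivariant.
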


\begin{proof}
Let $\mu_X$ denote the right $P$-module structure maps of $X$. For a
first-order infinitesimal automorphism
\[
T_\varepsilon = \mathrm{id}_X + \varepsilon h, \qquad \varepsilon^2 = 0,
\]
with $h \in C^0_P(X)$, we transport the right $P$-module structure along
$T_\varepsilon$ to obtain a new structure $\mu_X^\varepsilon$ defined by
\[
\mu_X^\varepsilon(x \otimes \gamma)
:= T_\varepsilon^{-1}\Bigl( \mu_X\bigl( T_\varepsilon(x) \otimes \gamma \bigr) \Bigr),
\]
where $x \in X_k$ and $\gamma \in P(n_1,\ldots,n_k; n)$. Here $T_\varepsilon$ is
applied to the $X_k$ input, while the $P$-operations $\gamma$ are unchanged
(since the operad $P$ is fixed).

Because $T_\varepsilon = \mathrm{id}_X + \varepsilon h$, its inverse to first
order is $T_\varepsilon^{-1} = \mathrm{id}_X - \varepsilon h + O(\varepsilon^2)$.
Substituting into the definition of $\mu_X^\varepsilon$ and expanding to first
order in $\varepsilon$ yields
\[
\mu_X^\varepsilon(x \otimes \gamma)
= \mu_X(x \otimes \gamma) + \varepsilon \bigl( d h \bigr)(x \otimes \gamma) + O(\varepsilon^2).
\]
Thus the first-order variation is $dh \in C^1_P(X)$, and by construction it
arises from the infinitesimal coordinate change $h$. Hence every element of
$B^1_P(X)$ represents a trivial infinitesimal deformation.
\end{proof}

\begin{remark}[Inner derivations]
\label{rem:inner-derivations}
In settings where the derivative algebra carries an additional internal
pre-Lie or Lie bracket, the trivial infinitesimal deformations above may be
realized as inner derivations of the form $\operatorname{ad}_a$. In the
present right $P$-module setting, however, the intrinsic notion of trivial
deformation is given by the coboundaries $B^1_P(X)$.
\end{remark}

\medskip

\begin{theorem}[Infinitesimal Deformations of Spectrally Analytic Functors]
\label{thm:infinitesimal-deformations}
Let $F$ be a spectrally analytic functor, and let
\[
X = \partial^{\mathrm{spec}}F
\]
be its associated integrable derivative algebra. Assume that deformations of
$F$ are transported through the equivalence
\[
\mathsf{SpecAn} \simeq \mathsf{DerAlg}_{\mathrm{int}}
\]
to deformations of the right $P$-module structure on $X$. Then:

\begin{enumerate}
    \item First-order deformations of $F$ over
    $\mathbb{C}[\varepsilon]/(\varepsilon^2)$ are represented by first
    cocycles:
    \[
    \operatorname{Def}_{\varepsilon}(F) \cong Z^1_P(X).
    \]

    \item Two first-order deformations are equivalent precisely when their
    difference is a first coboundary. Hence the moduli of infinitesimal
    deformations is
    \[
    \operatorname{Def}_{\varepsilon}(F)/\!\sim \;\cong\; H^1_P(X).
    \]

    \item Consequently, the formal tangent space to the moduli problem at
    $[F]$ is
    \[
    T_{[F]}\mathcal{M}_{\mathrm{spec}} \;\cong\; H^1_P(\partial^{\mathrm{spec}}F).
    \]
\end{enumerate}
\end{theorem}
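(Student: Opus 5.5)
The plan is to reduce everything to standard deformation theory of the right $P$-module $X=\partial^{\mathrm{spec}}F$. We use the standing assumption that deformations of $F$ are transported along the equivalence $\Phi:\mathsf{SpecAn}\simeq\mathsf{DerAlg}_{\mathrm{int}}$ of Theorem~\ref{thm:equivalence}, with quasi-inverse $\Psi$. A first-order deformation of $F$ over $\mathbb{C}[\varepsilon]/(\varepsilon^2)$ is therefore identified with a first-order deformation of the module structure maps $\mu_X$. Concretely, this means a family
\[
\mu_X^\varepsilon=\mu_X+\varepsilon\,c,\qquad c\in C^1_P(X),
\]
with the underlying symmetric sequence $\{X_n\}$ held fixed. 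One must also check that integrability survives the perturbation. The convergence condition $\|X_n\|\le C\rho^n$ in Definition~\ref{def:DerAlg} does not involve $\mu_X$, and the Taylor reconstruction $F_{\mathbf X}(A)=\bigoplus_n X_n(A,\dots,A)_{\mathrm{sym}}$ only uses the underlying sequence. So $\Psi$ applies verbatim to $(X,\mu_X^\varepsilon)$ and produces a deformation of $F$.

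For part (1), I would expand the right $P$-module axioms (associativity, unit, equivariance from Definition~\ref{def:operadic-compatibility}) for $\mu_X^\varepsilon$ and keep terms to order $\varepsilon$. The order-zero terms are exactly the axioms for $\mu_X$, which hold. The order-$\varepsilon$ terms are, by construction of the differential in Definition~\ref{def:deformation-complex}, the linearized identities, namely the condition $dc=0$. This gives a bijection between first-order deformations and $Z^1_P(X)$. Composing with $\Phi,\Psi$ yields $\operatorname{Def}_\varepsilon(F)\cong Z^1_P(X)$.

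For part (2), two deformations $c,c'$ are equivalent when some isomorphism $T_\varepsilon=\mathrm{id}+\varepsilon h$ reducing to the identity mod $\varepsilon$ intertwines them. By full faithfulness (Corollary~\ref{cor:full-faithfulness-derivative}), such isomorphisms of deformed functors correspond to isomorphisms of deformed modules. Lemma~\ref{lem:trivial-infinitesimal-deformations} computes the transported structure as
\[
T_\varepsilon^{-1}\circ\mu_X^{\varepsilon}\circ(T_\varepsilon\otimes\mathrm{id})=\mu_X+\varepsilon(c+dh),
\]
so $c\sim c'$ if and only if $c-c'\in B^1_P(X)$, and the quotient is $H^1_P(X)$. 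Part (3) then follows formally: the formal tangent space at $[F]$ is, in the sense of Remark~\ref{rem:moduli-disclaimer}, the set of first-order deformations modulo equivalence.

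The main obstacle is the equivalence step in (2). One needs every isomorphism of deformed functors, on the derivative side, to be of the form $\mathrm{id}+\varepsilon h$ with $h\in C^0_P(X)$, rather than an arbitrary bounded family. Full faithfulness is stated only over $\mathbb{C}$, not over $\mathbb{C}[\varepsilon]/(\varepsilon^2)$. I would first try base-changing Theorem~\ref{thm:equivalence} to $\mathbb{C}[\varepsilon]/(\varepsilon^2)$-linear objects, i.e.\ pairs with a square-zero endomorphism. The argument of Theorem~\ref{thm:equivalence} is linear and termwise in $n$, so it should carry over unchanged. Next I would check that $C^0_P(X)$ is precisely the space of $\Sigma_n$-equivariant bounded collections $\{h_n\}$. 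With both in hand, the identification of isomorphisms needed in (2) follows.
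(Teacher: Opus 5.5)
Your core argument is the paper's. The transport hypothesis replaces deformations of $F$ by deformations $\mu_X+\varepsilon c$ of the right $P$-module structure on the fixed sequence $\{X_n\}$. Linearizing the module identities gives $dc=0$. Lemma~\ref{lem:trivial-infinitesimal-deformations} shows that $\mathrm{id}_X+\varepsilon h$ shifts $c$ by $dh$. The tangent space is then read off as deformations modulo equivalence.

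For part (2) the paper does nothing more. It reads the transport hypothesis as carrying equivalences too, so ``equivalent'' simply means related by the gauge action of $C^0_P(X)$: the Lemma gives one direction and the converse is tautological. No version of Theorem~\ref{thm:equivalence} over $\mathbb{C}[\varepsilon]/(\varepsilon^2)$ is used, and what you call the main obstacle is absorbed into the hypothesis.

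If you do pursue the dual-number route, be aware that as written it contradicts your integrability remark. Suppose $\Psi$ only reads the underlying sequence, as you say. Then every $(X,\mu_X+\varepsilon c)$ reconstructs the same functor $F\otimes\mathbb{C}[\varepsilon]/(\varepsilon^2)$. Its identity transformation is then an isomorphism reducing to the identity between the deformations attached to any two cocycles $c,c'$. A base-changed Corollary~\ref{cor:full-faithfulness-derivative} would pull it back to a module isomorphism $\mathrm{id}_X+\varepsilon h$ from $(X,\mu_X+\varepsilon c)$ to $(X,\mu_X+\varepsilon c')$; its reduction mod $\varepsilon$ is $\mathrm{id}_X$ by faithfulness over $\mathbb{C}$. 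That would make all first-order deformations equivalent and force $H^1_P(X)=0$, so the base-changed equivalence cannot hold in that reading.

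The fix is to treat the compatibility maps $\mu_\gamma$ of Definition~\ref{def:operadic-compatibility} as structure carried by objects of $\mathsf{SpecAn}$, with morphisms required to intertwine them. The deformation then lives in that structure rather than in the underlying functor, and the identity is no longer a morphism between distinct deformations. Your remaining check, that the $h$ obtained this way are exactly $C^0_P(X)$, is then the right one.
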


\begin{proof}
Let $X = \partial^{\mathrm{spec}}F$ be the integrable derivative algebra
associated with $F$. By the spectral classification theorem
(Theorem~\ref{thm:equivalence}), spectrally analytic functors are classified,
up to natural isomorphism, by their integrable derivative algebras. Therefore,
to study infinitesimal deformations of $F$, it is enough to study infinitesimal
deformations of the right $P$-module structure on $X$.

Let $\mu_X : X_k \otimes P(n_1,\ldots,n_k;n) \to X_n$ denote the structure maps
of the right $P$-module $X$. A first-order deformation of this structure over
$\mathbb{C}[\varepsilon]/(\varepsilon^2)$ is a family of maps
\[
\mu_X^{\varepsilon} = \mu_X + \varepsilon \phi, \qquad \varepsilon^2 = 0,
\]
where $\phi \in C^1_P(X)$ is a degree-one cochain in the deformation complex
(Definition~\ref{def:deformation-complex}).

Substituting $\mu_X^{\varepsilon}$ into the right $P$-module associativity
identities and expanding to first order in $\varepsilon$ yields the condition
$d\phi = 0$, where $d : C^1_P(X) \to C^2_P(X)$ is the deformation differential.
Hence first-order deformations are represented by
\[
Z^1_P(X) = \ker(d : C^1_P(X) \to C^2_P(X)).
\]

For equivalence of deformations, let $\mu_X^{\varepsilon} = \mu_X + \varepsilon \phi$
and $\widetilde{\mu}_X^{\varepsilon} = \mu_X + \varepsilon \widetilde{\phi}$ be
two first-order deformations. By Lemma~\ref{lem:trivial-infinitesimal-deformations},
every $h \in C^0_P(X)$ induces an infinitesimal coordinate change
$T_{\varepsilon} = \mathrm{id}_X + \varepsilon h$ that transforms a deformation
according to $\widetilde{\phi} - \phi = dh$. Conversely, any deformation
differing by a coboundary arises from such a coordinate change. Therefore
equivalent deformations are precisely those whose difference lies in
$B^1_P(X) = \operatorname{im}(d : C^0_P(X) \to C^1_P(X))$.

Thus the equivalence classes of first-order deformations are exactly
\[
Z^1_P(X) / B^1_P(X) = H^1_P(X).
\]

Finally, the formal tangent space to the moduli problem at $[F]$ is, by
definition, the space of first-order deformations of $F$ modulo infinitesimal
equivalence. By the classification theorem and the preceding argument, this
space is naturally identified with $H^1_P(X) = H^1_P(\partial^{\mathrm{spec}}F)$.
This proves the theorem.
\end{proof}

\medskip

\begin{corollary}[First-Order Deformations and Cohomology]
\label{cor:first-order-deformations}
Let $F$ be a spectrally analytic functor, and let
\[
X = \partial^{\mathrm{spec}}F
\]
be its associated integrable derivative algebra. Then the space of first-order
deformations of $F$, modulo infinitesimal equivalence, is naturally
isomorphic to
\[
H^1_P(X) = H^1_P(\partial^{\mathrm{spec}}F).
\]
Consequently, if
\[
H^1_P(\partial^{\mathrm{spec}}F) = 0,
\]
then $F$ is infinitesimally rigid; that is, every first-order deformation of
$F$ is equivalent to the trivial deformation.
\end{corollary}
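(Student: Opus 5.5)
The corollary should follow almost formally from Theorem~\ref{thm:infinitesimal-deformations}. I would split the argument into two parts: the identification of first-order deformations modulo equivalence, and the rigidity consequence.

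\textbf{The identification.} By Theorem~\ref{thm:infinitesimal-deformations}(2), transporting deformations of $F$ through the equivalence $\mathsf{SpecAn}\simeq\mathsf{DerAlg}_{\mathrm{int}}$ of Theorem~\ref{thm:equivalence} gives
\[
\operatorname{Def}_\varepsilon(F)/\!\sim\;\cong\;Z^1_P(X)/B^1_P(X)=H^1_P(X),
\qquad X=\partial^{\mathrm{spec}}F .
\]
For naturality I would check that an isomorphism $F\cong F'$ of spectrally analytic functors induces compatible isomorphisms on both sides. The functor $\Phi$ sends such an isomorphism to an isomorphism of right $P$-modules $X\cong X'$ (Lemma~\ref{lem:functor-to-derivative-moduli}). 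Conjugating cochains by this isomorphism commutes with the differential $d$, so it induces an isomorphism $H^1_P(X)\cong H^1_P(X')$. The same conjugation transports deformations of $F$ to deformations of $F'$, so the two identifications are compatible.

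\textbf{Rigidity.} Assume $H^1_P(\partial^{\mathrm{spec}}F)=0$ and let $\mu_X^\varepsilon=\mu_X+\varepsilon\phi$ be a first-order deformation with $\phi\in Z^1_P(X)$. Vanishing of $H^1_P(X)$ gives $\phi=dh$ for some $h\in C^0_P(X)$. Lemma~\ref{lem:trivial-infinitesimal-deformations} then shows that $\mu_X^\varepsilon$ is obtained from the undeformed structure $\mu_X$ by transport along $T_\varepsilon=\mathrm{id}_X+\varepsilon h$. So the deformation of $X$ is equivalent to the trivial one.

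Applying the quasi-inverse $\Psi$ (the reconstruction functor) over $\mathbb{C}[\varepsilon]/(\varepsilon^2)$ carries $T_\varepsilon$ to a natural isomorphism between the corresponding deformation $F_\varepsilon$ and the trivial deformation $F\otimes\mathbb{C}[\varepsilon]/(\varepsilon^2)$. This is exactly the statement that $F$ is infinitesimally rigid.

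\textbf{Main obstacle.} The hard step is justifying that $\Psi$ extends to $\varepsilon$-linear families. Concretely, the reconstruction series
\[
\textstyle\sum_n (X_n+\varepsilon\,\delta_n)(A,\dots,A)_{\mathrm{sym}}
\]
must still converge. I would get this from the observation that the $\varepsilon$-component is bounded by the same exponential estimates $\|X_n\|\le C\rho^n$, because $h$ is a bounded degree-zero cochain. With that bound, integrability is preserved and the transported isomorphism remains natural.
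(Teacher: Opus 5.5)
Your proposal is correct and follows the paper's own proof: the identification is read off from Theorem~\ref{thm:infinitesimal-deformations} ($\operatorname{Def}_\varepsilon(F)\cong Z^1_P(X)$, with equivalence given by $B^1_P(X)$), and rigidity follows because $H^1_P(X)=0$ makes every cocycle a coboundary $dh$, which Lemma~\ref{lem:trivial-infinitesimal-deformations} identifies as an infinitesimal change of coordinates. The $\Psi$-extension step you flag as the main obstacle is not needed, since the theorem's standing hypothesis already transports deformations of $F$ to deformations of $\mu_X$, so a bijection with the zero group leaves only the trivial class. As written that step is also slightly off: the deformation changes the structure maps $\mu_X$, not the objects $X_n$, and convergence of $\sum_n h_n(A,\dots,A)_{\mathrm{sym}}$ would require control of $h$ uniform in $n$, not just boundedness in each degree.
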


\begin{proof}
By Theorem~\ref{thm:infinitesimal-deformations}, first-order deformations of
$F$ correspond to first cocycles in the deformation complex of its derivative
algebra:
\[
\operatorname{Def}_{\varepsilon}(F) \cong Z^1_P(X).
\]
Two such deformations are equivalent precisely when their difference is a
coboundary:
\[
\phi \sim \phi' \quad\Longleftrightarrow\quad \phi' - \phi \in B^1_P(X).
\]
Therefore the set of equivalence classes of first-order deformations is
\[
Z^1_P(X) / B^1_P(X) = H^1_P(X).
\]
Since $X = \partial^{\mathrm{spec}}F$, this gives
\[
\operatorname{Def}_{\varepsilon}(F) / \!\sim \;\cong\; H^1_P(\partial^{\mathrm{spec}}F).
\]

If $H^1_P(\partial^{\mathrm{spec}}F) = 0$, then every first cocycle is a
coboundary. Hence every first-order deformation is equivalent to one induced
by an infinitesimal change of coordinates. Such a deformation is trivial in
the moduli problem. Therefore $F$ has no nontrivial infinitesimal deformations.
\end{proof}

\begin{corollary}[Obstruction Theory]
\label{cor:obstruction-theory}
Let $F$ be a spectrally analytic functor, and let
\[
X = \partial^{\mathrm{spec}}F
\]
be its associated integrable derivative algebra. Higher obstructions to extending an infinitesimal deformation of $F$ to a formal deformation are represented by classes in
\[
H^2_P(X) = H^2_P(\partial^{\mathrm{spec}}F).
\]
In particular, if
\[
H^2_P(\partial^{\mathrm{spec}}F) = 0,
\]
then every first-order deformation is unobstructed; that is, it admits an extension to a formal deformation, subject to the usual convergence or integrability conditions required to remain in $\mathsf{DerAlg}_{\mathrm{int}}$. In particular, the deformation problem is formally unobstructed.
\end{corollary}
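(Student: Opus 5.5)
The plan is to run the standard Gerstenhaber-type obstruction argument inside the deformation complex $C^\bullet_P(X)$ of Definition~\ref{def:deformation-complex}, and then transport the conclusion back to $F$ through the equivalence $\mathsf{SpecAn}\simeq\mathsf{DerAlg}_{\mathrm{int}}$ (Theorem~\ref{thm:equivalence}). This is exactly how Theorem~\ref{thm:infinitesimal-deformations} handled first-order deformations. First I would reduce to the module side: a formal deformation of $F$ means a family $\mu_X^{t}=\mu_X+\sum_{j\ge1}t^j\phi_j$ of right $P$-module structures on $X=\partial^{\mathrm{spec}}F$ over $\mathbb{C}[[t]]$. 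An $N$-th order deformation is its truncation modulo $t^{N+1}$. By Theorem~\ref{thm:infinitesimal-deformations} a first-order deformation is a cocycle $\phi_1\in Z^1_P(X)$.

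Next comes the inductive step. Suppose $\phi_1,\dots,\phi_N$ have been found so that the module associativity (and equivariance) identities hold modulo $t^{N+1}$. I expand these identities at order $t^{N+1}$. To do this I write the associativity relation as a quadratic expression $Q(\mu,\mu)=0$ in the structure maps, bilinear in the two copies of $\mu$. Linearizing $Q$ at $\mu_X$ is exactly the differential $d$ of $C^\bullet_P(X)$. The order-$(N+1)$ part of the identity then reads
\[
d\phi_{N+1}=\mathrm{Ob}_{N+1}:=-\sum_{i+j=N+1,\ i,j\ge1}Q(\phi_i,\phi_j),
\]
with $\mathrm{Ob}_{N+1}\in C^2_P(X)$. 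The key lemma to prove is that $d\,\mathrm{Ob}_{N+1}=0$ whenever the lower-order identities hold. Once that is shown, $[\mathrm{Ob}_{N+1}]\in H^2_P(X)$ is a well-defined class. A solution $\phi_{N+1}$ exists exactly when this class vanishes, so if $H^2_P(X)=0$ the induction never stops. Changing $\phi_{N+1}$ by a cocycle does not affect the argument, so the choice is irrelevant for existence.

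I expect the cocycle property $d\,\mathrm{Ob}_{N+1}=0$ to be the main obstacle. Definition~\ref{def:deformation-complex} only specifies $d$ as the linearization of the module relations, and says nothing in degree $\ge2$. To get around this, I would make the complex explicit as the Chevalley--Eilenberg-type complex of the dg Lie (or pre-Lie) algebra controlling right $P$-module structures on the symmetric sequence $X$. Module structures are Maurer--Cartan elements there, $Q$ is half the bracket, and $d=[\mu_X,-]$. With that setup the cocycle identity is the usual consequence of the Jacobi identity together with the inductive hypothesis. If a full Lie model is not available, I would instead verify $d\,\mathrm{Ob}=0$ directly by comparing the two ways of associating a triple composite in the associativity diagram of Definition~\ref{def:operadic-compatibility}.

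Finally, the resulting formal deformation must be an honest element of $\mathsf{DerAlg}_{\mathrm{int}}$ so that it can be pulled back to $F$ via $\Psi$. As in the statement, I would treat this as a hypothesis rather than prove it: it requires the bounds $\|X_n\|\le C\rho^n$ and convergence of the reconstructed Taylor series uniformly in $t$. This is what "subject to the usual convergence or integrability conditions" refers to. Granting it, applying $\Psi$ to the formal family gives the formal deformation of $F$ extending the given first-order one.
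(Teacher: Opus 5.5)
Your proposal follows the paper's proof step by step. You reduce to formal deformations $\mu_X+\sum_j t^j\phi_j$ of the right $P$-module structure on $X=\partial^{\mathrm{spec}}F$, and extract the order-$t^{N+1}$ equation $d\phi_{N+1}=-\mathrm{Ob}_{N+1}$ with a quadratic obstruction cochain. You show this cochain is a $2$-cocycle, so that solvability is decided by its class in $H^2_P(X)$, and you transport the result back to $F$ via Theorem~\ref{thm:equivalence} with integrability taken as a hypothesis. The only difference works in your favour: the paper justifies $d\,\mathrm{Ob}_{N+1}=0$ only by appeal to $d^2=0$ and the lower-order equations, whereas your Maurer--Cartan (dg Lie) model makes explicit the Jacobi identity that this step actually requires.
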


\begin{proof}
Let $\mu_X$ denote the right $P$-module structure maps of the derivative algebra $X = \partial^{\mathrm{spec}}F$. A formal deformation of $X$ is given by a formal family
\[
\mu_X^t = \mu_X + t\phi_1 + t^2\phi_2 + t^3\phi_3 + \cdots,
\]
where each $\phi_i$ is a degree-one cochain in the operadic deformation complex $C^\bullet_P(X)$ (Definition~\ref{def:deformation-complex}), equipped with the standard differential induced by the right $P$-module structure.

The condition that $\mu_X^t$ defines a right $P$-module structure is expressed by the operadic associativity identities, imposed order by order in the formal parameter $t$. At order $t^0$, this condition holds since $\mu_X$ is already a valid right $P$-module structure.

At order $t^1$, the linearized associativity condition yields
\[
d\phi_1 = 0,
\]
so $\phi_1$ is a cocycle. Its cohomology class $[\phi_1] \in H^1_P(X)$ represents the corresponding infinitesimal deformation.

Suppose now that the deformation has been constructed up to order $N$:
\[
\mu_X^{(N)} = \mu_X + t\phi_1 + \cdots + t^N\phi_N.
\]
To extend it to order $N+1$, one seeks a cochain $\phi_{N+1}$ such that
\[
\mu_X^{(N+1)} = \mu_X^{(N)} + t^{N+1}\phi_{N+1}
\]
satisfies the associativity identities modulo $t^{N+2}$.

Substituting into the operadic associativity relations and collecting terms of order $t^{N+1}$ yields an equation of the form
\[
d\phi_{N+1} = -\operatorname{Ob}_{N+1}(\phi_1,\ldots,\phi_N),
\]
where $\operatorname{Ob}_{N+1}(\phi_1,\ldots,\phi_N) \in C^2_P(X)$ is a degree-two cochain determined by the lower-order deformation data. This obstruction term is a universal polynomial (in fact, quadratic) expression in the $\phi_i$, arising from the operadic composition structure.

The associativity identities at lower orders imply that
\[
d\,\operatorname{Ob}_{N+1}(\phi_1,\ldots,\phi_N) = 0,
\]
which follows from the fact that the deformation complex satisfies $d^2 = 0$ together with the previously imposed deformation equations. Thus
\[
\operatorname{Ob}_{N+1}(\phi_1,\ldots,\phi_N) \in Z^2_P(X),
\]
and defines a cohomology class
\[
[\operatorname{Ob}_{N+1}] \in H^2_P(X).
\]

The equation $d\phi_{N+1} = -\operatorname{Ob}_{N+1}$ admits a solution if and only if the obstruction class vanishes, i.e., if and only if $[\operatorname{Ob}_{N+1}] = 0$ in $H^2_P(X)$. Therefore, the obstruction to extending the deformation from order $N$ to order $N+1$ lies precisely in $H^2_P(X)$.

Since $X = \partial^{\mathrm{spec}}F$, these obstruction classes lie in $H^2_P(\partial^{\mathrm{spec}}F)$. If this cohomology group vanishes, then all obstruction classes vanish, and the deformation extends inductively to all orders.

Finally, by the reconstruction equivalence (Theorem~D), such deformations of the derivative algebra correspond to deformations of the underlying functor $F$, subject to the convergence or integrability conditions required to remain inside $\mathsf{DerAlg}_{\mathrm{int}}$. This proves the claim.
\end{proof}

\medskip

\begin{remark}[Conceptual Consequence: Deformation-Theoretic Interpretation]
\label{rem:deformation-dictionary}
The preceding results suggest that the moduli problem of spectrally analytic
functors admits a natural deformation-theoretic interpretation, in which the
cohomology of the derivative algebra governs infinitesimal variations.

In particular, spectral derivatives control not only the structure of functors
via classification, but also their first-order deformations and obstructions.
This leads to the following conceptual dictionary between classical deformation
theory and spectral operadic calculus:

\[
\begin{array}{|c|c|}
\hline
\text{Classical Deformation Theory} & \text{Spectral Operadic Calculus} \\
\hline
\text{Algebra } A
&
\text{Derivative algebra } \partial^{\mathrm{spec}}F
\\
\text{Hochschild cohomology } HH^*(A)
&
\text{Operadic cohomology } H^*_P(\partial^{\mathrm{spec}}F)
\\
\text{Deformations of } A
&
\text{Deformations of } F
\\
\text{Tangent space } T_{[A]}
&
\text{(formally) controlled by } H^1_P(\partial^{\mathrm{spec}}F)
\\
\text{Obstruction space } HH^2(A)
&
\text{controlled by } H^2_P(\partial^{\mathrm{spec}}F)
\\
\hline
\end{array}
\]

This analogy indicates that spectral operadic calculus provides a natural
extension of classical deformation theory to the setting of functorial and
operadic structures. A fully developed deformation theory, including higher
structures and obstruction calculus, will be pursued in future work.
\end{remark}

\medskip

\begin{example}[Identity functor]
\label{ex:deformation-id}
For the identity functor $\mathrm{Id}$, the spectral derivative sequence is
expected to be concentrated in degree one:
\[
\partial_1^{\mathrm{spec}}\mathrm{Id} \cong \mathrm{Id},
\qquad
\partial_n^{\mathrm{spec}}\mathrm{Id} \cong 0
\quad\text{for } n \ge 2.
\]
Thus the associated derivative algebra has the simplest possible right
$P$-module structure. In this case, the deformation complex reduces to the
subcomplex controlling infinitesimal endomorphisms of the degree-one part.

Consequently, first-order deformations of $\mathrm{Id}$, modulo infinitesimal
changes of coordinates, are measured by
\[
H^1_P(\partial^{\mathrm{spec}}\mathrm{Id}).
\]
In many standard linear settings, natural endomorphisms of the identity functor
are scalar multiples of the identity, but whether these determine nontrivial
deformation classes depends on the coboundary map in the chosen deformation
complex.
\end{example}

\medskip

\begin{example}[Exponential functor]
\label{ex:deformation-exp}
Consider the formal exponential functor
\[
\exp(A) = \bigoplus_{k=0}^{\infty} \frac{1}{k!} A^{\otimes k},
\]
whenever the series converges in the relevant norm. Formally, its spectral
derivative sequence has nonzero components in all degrees, with the $k$-th
component corresponding to the symmetric $k$-linear part:
\[
\partial_k^{\mathrm{spec}}\exp \sim \frac{1}{k!} \mathrm{Sym}^k .
\]
Thus, unlike the identity functor, the derivative algebra of $\exp$ is spread
over infinitely many degrees and carries a richer plethystic structure.

Its first-order deformations are therefore controlled by
\[
H^1_P(\partial^{\mathrm{spec}}\exp),
\]
while possible obstructions to extending such deformations are represented by
classes in
\[
H^2_P(\partial^{\mathrm{spec}}\exp).
\]
A precise computation of these cohomology groups depends on the chosen category,
the operad $P$, and the normalization of the spectral derivative.
\end{example}
\medskip

\begin{remark}[Role of the Operadic Residue and Higher Deformation Structure]
\label{rem:residue-deformation}
\leavevmode\newline
\noindent
\textbf{Role of the operadic residue.}
The deformation-theoretic framework developed above relies essentially on the
operadic residue $\mathcal{O}_P^{\mathrm{res}}$, which provides the unit and
composition maps endowing $\partial^{\mathrm{spec}}F$ with a right $P$-module
structure. This structure is necessary to formulate the deformation complex and
its associated cohomology groups in a consistent way. In particular, the residue
ensures that infinitesimal perturbations of the module structure can be
systematically organized into a cochain complex.

\medskip

\noindent
\textbf{Higher obstructions and formal moduli.}
A full deformation theory of spectrally analytic functors requires a systematic
development of an operadic cohomology theory for derivative algebras. Within
this framework, higher cohomology groups $H^k_P(\partial^{\mathrm{spec}}F)$,
for $k \ge 2$, are expected to encode obstructions to extending
infinitesimal deformations to higher order.

When these obstruction classes vanish, the deformation problem is expected to
be formally unobstructed, in the sense that first-order deformations can be
extended to formal deformations. This suggests that the moduli problem
$\mathcal{M}_{\mathrm{spec}}$ admits a formal neighborhood at $[F]$
controlled by $H^1_P(\partial^{\mathrm{spec}}F)$.

A precise formulation of this deformation theory, potentially involving formal
moduli problems and higher-categorical structures, will be developed in future
work.

\end{remark}

\medskip

This deformation-theoretic perspective provides a bridge between Spectral Operadic Calculus and geometric frameworks. In the next subsection, we outline a future extension toward derived geometry, where $\mathcal{M}_{\mathrm{spec}}$ is promoted to a derived moduli stack with shifted symplectic structures.

\subsection{Outlook: Derived Geometry}
\label{subsec:outlook}

We conclude by outlining a geometric and higher-categorical perspective on
Spectral Operadic Calculus, which we view as a foundation for a future theory
of \emph{Spectral Operadic Geometry} (SOC III).

\medskip

\noindent
\textbf{Guiding perspective.}
The results of this work show that spectrally analytic functors are classified
by their derivative algebras (Theorem~\ref{thm:spectral-classification-moduli}),
and that their infinitesimal deformations are controlled by cohomological
invariants (Theorem~\ref{thm:infinitesimal-deformations}). This suggests that
the moduli problem $\mathcal{M}_{\mathrm{spec}}$ admits a geometric
interpretation, whose local and global features are governed by operadic and
homological data.

From this viewpoint, Spectral Operadic Calculus may serve as a bridge between
analytic functional calculus and derived algebraic geometry.

\medskip

\noindent
\textbf{Synthesis of structural features.}
The preceding results suggest the following correspondences:

\begin{enumerate}
    \item \textbf{Coordinate data:} Spectral derivatives
    $\{\partial_n^{\mathrm{spec}}F\}$ encode classification data for
    $\mathcal{M}_{\mathrm{spec}}$
    (Corollary~\ref{cor:moduli-coordinates-v2}).

    \item \textbf{Tangent directions:} Infinitesimal deformations are formally
    controlled by
    $H^1_P(\partial^{\mathrm{spec}}F)$
    (Theorem~\ref{thm:infinitesimal-deformations}).

    \item \textbf{Obstruction classes:} Higher obstructions are represented by
    cohomology classes in
    $H^2_P(\partial^{\mathrm{spec}}F)$
    (Corollary~\ref{cor:obstruction-theory}).

    \item \textbf{Composition law:} Operadic plethysm governs functor
    composition
    (Theorem~\ref{thm:chain-rule}).

    \item \textbf{Analytic structure:} Convergent spectral expansions are
    controlled by the spectral size $\|\sigma_P(A)\|$
    (Theorem~\ref{thm:quantitative-convergence}).
\end{enumerate}

These properties motivate the following informal synthesis.

\begin{theorem}[Formal Derived Structure of the Spectral Moduli Problem]
\label{thm:spectral-operadic-geometry}
Assume that, for every spectrally analytic functor $F$, the operadic
deformation complex
\[
C^\bullet_P(\partial^{\mathrm{spec}}F)
\]
admits a formal moduli interpretation, in the sense that its
Maurer--Cartan theory controls deformations of the integrable derivative
algebra $\partial^{\mathrm{spec}}F$. Then the moduli problem
$\mathcal{M}_{\mathrm{spec}}$ of spectrally analytic functors carries a
formal derived geometric structure with the following properties:
\begin{enumerate}
    \item its classical points correspond to spectrally analytic functors;
    \item the formal tangent space at $[F]$ is identified with
    \[
    T_{[F]}\mathcal{M}_{\mathrm{spec}}
    \cong
    H^1_P(\partial^{\mathrm{spec}}F);
    \]
    \item primary obstruction classes to extending deformations lie in
    \[
    H^2_P(\partial^{\mathrm{spec}}F);
    \]
    \item the higher cohomology groups
    \[
    H^k_P(\partial^{\mathrm{spec}}F), \qquad k\ge 2,
    \]
    govern higher coherence and iterated obstruction operations in the
    associated formal moduli problem;
    \item the formal moduli structure is compatible with functor composition:
    under the spectral chain rule, composition corresponds to plethystic
    composition of derivative algebras.
\end{enumerate}
\end{theorem}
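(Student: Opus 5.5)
The plan is to assemble the formal derived structure from the standard dictionary between formal moduli problems and differential graded Lie algebras, applied to the hypothesis that $C^\bullet_P(\partial^{\mathrm{spec}}F)$ has a Maurer--Cartan theory controlling deformations. Concretely, for each $F$ I would take $\mathfrak{g}_F := C^{\bullet+1}_P(\partial^{\mathrm{spec}}F)$, a shifted complex with its bracket induced by the operadic composition of cochains (the Gerstenhaber-type bracket whose quadratic part is the obstruction term $\operatorname{Ob}_{N+1}$ appearing in Corollary~\ref{cor:obstruction-theory}). The formal moduli problem at $[F]$ is then the functor $\operatorname{Def}_F(R) := \mathrm{MC}(\mathfrak{g}_F \otimes \mathfrak{m}_R)/\!\sim$ on Artinian local $\mathbb{C}$-algebras $(R,\mathfrak{m}_R)$, with gauge equivalence generated by degree-zero cochains as in Lemma~\ref{lem:trivial-infinitesimal-deformations}. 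By hypothesis this functor computes deformations of the integrable derivative algebra. The equivalence $\mathsf{SpecAn}\simeq\mathsf{DerAlg}_{\mathrm{int}}$ of Theorem~\ref{thm:equivalence} transports it to deformations of $F$, and gluing these local formal neighbourhoods over $\pi_0$ gives the structure on $\mathcal{M}_{\mathrm{spec}}$.

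The items would then be checked in order. Item (1) is Theorem~\ref{thm:spectral-classification-moduli}: the $R=\mathbb{C}$ points of the construction are $\pi_0(\mathsf{DerAlg}_{\mathrm{int}})\cong\mathcal{M}_{\mathrm{spec}}$. Item (2) comes from evaluating at $R=\mathbb{C}[\varepsilon]/(\varepsilon^2)$, where the Maurer--Cartan equation linearizes to $d\phi=0$ and gauge equivalence becomes the coboundary relation; this is exactly Theorem~\ref{thm:infinitesimal-deformations}, giving $H^1_P$. Item (3) is Corollary~\ref{cor:obstruction-theory} applied along small extensions $0\to I\to R'\to R\to 0$. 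The obstruction to lifting a Maurer--Cartan element is the class of its curvature in $H^2_P\otimes I$, and its closedness follows from the Bianchi identity $d^2=0$ together with the Jacobi identity. Item (4) uses homotopy transfer: I would transfer the dg Lie structure to an $L_\infty$ structure on $H^\bullet_P(\partial^{\mathrm{spec}}F)$. The higher brackets $\ell_k$, maps from $(H^1)^{\otimes k}$ to $H^2$ and more generally into higher $H^k$, are the Massey-type iterated obstructions, which gives the "higher coherence" clause. Item (5) would be derived from Theorem~\ref{thm:chain-rule} and Corollary~\ref{cor:plethysm-assoc}. Plethysm induces a map of deformation complexes $C^\bullet_P(\partial F)\oplus C^\bullet_P(\partial G)\to C^\bullet_P(\partial F\circ_{\mathrm{op}}\partial G)$, and the associativity isomorphism makes the composition of formal moduli problems strictly compatible.

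The main obstacle is the bracket itself. Definition~\ref{def:deformation-complex} only specifies a differential, so it must first be shown that the operadic composition of cochains gives a genuine dg Lie (or pre-Lie) structure whose Maurer--Cartan elements are exactly right $P$-module structures. My first attempt would be to write right $P$-module structures on $X$ as twisting morphisms in a convolution algebra $\mathrm{Hom}_{\Sigma}(X\circ_{\mathrm{op}} P, X)$ and read the bracket off the plethysm associator. A second difficulty is the analytic one. The integrability condition of Definition~\ref{def:integrable-deralg} is an open condition, with exponential bounds on $\|X_n\|$. Formal deformations over Artinian $R$ preserve these bounds only after rescaling constants, so I would restrict to $R$ finite-dimensional and note that nilpotence of $\mathfrak{m}_R$ keeps each order bounded. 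Item (5) is likely the most delicate, because a map of formal moduli problems along composition requires plethysm to be functorial on cochains, not just on objects. If this cannot be established in general, I would state it only at the level of tangent and obstruction spaces, via the induced maps on $H^1_P$ and $H^2_P$.
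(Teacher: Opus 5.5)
Your route matches the paper's in substance for items (1)--(3) and (5). The paper also passes through $\mathsf{SpecAn}\simeq\mathsf{DerAlg}_{\mathrm{int}}$ and takes the Maurer--Cartan interpretation of $C^\bullet_P(\partial^{\mathrm{spec}}F)$ as given. It then linearizes $\mu_X+\varepsilon\phi$ to $d\phi=0$ modulo the coboundaries of Lemma~\ref{lem:trivial-infinitesimal-deformations}, and reruns the order-by-order argument of Corollary~\ref{cor:obstruction-theory} to place obstructions in $H^2_P$. For (5) it only invokes Theorem~\ref{thm:chain-rule}. You add two things. First, you package everything as a deformation functor on Artinian rings with small extensions. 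Second, for item (4) you supply an actual mechanism: homotopy transfer to an $L_\infty$ structure on $H^\bullet_P$, whose brackets realize the iterated (Massey-type) obstructions. The paper merely asserts that ``the same Maurer--Cartan formalism'' organizes the higher coherences, so this is a real gain in content. Two of your worries are unnecessary. Building the bracket is not part of the statement, because the hypothesis that $C^\bullet_P$ has a Maurer--Cartan theory controlling deformations already supplies it; the paper never constructs one explicitly. No gluing over $\pi_0$ is needed, or even available, since $\mathcal{M}_{\mathrm{spec}}$ is only a set (Remark~\ref{rem:moduli-disclaimer}) and every claim is local at $[F]$.

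There is, however, a concrete grading error you must fix. In this paper $C^\bullet_P(X)$ is already graded as a module-deformation complex. Gauge parameters $h$ lie in $C^0_P$, deformation cochains $\phi$ of $\mu_X$ lie in $C^1_P$, and obstructions $\operatorname{Ob}$ lie in $C^2_P$ (Lemma~\ref{lem:trivial-infinitesimal-deformations}, Theorem~\ref{thm:infinitesimal-deformations}, Corollary~\ref{cor:obstruction-theory}). With your choice $\mathfrak{g}_F:=C^{\bullet+1}_P$, a Maurer--Cartan element over $\mathbb{C}[\varepsilon]/(\varepsilon^2)$ is $\varepsilon\phi$ with $\phi\in\mathfrak{g}_F^1=C^2_P$, gauged by $\mathfrak{g}_F^0=C^1_P$. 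Your construction would therefore output tangent space $H^2_P$ and obstructions in $H^3_P$, contradicting what you then read off in (2)--(4). The $C^{\bullet+1}$ shift is the Hochschild convention for deforming an algebra multiplication, which lives in $C^2$. Here you should use $\mathfrak{g}_F=C^\bullet_P$ unshifted; after that, your statement $\ell_k\colon (H^1_P)^{\otimes k}\to H^2_P$ in (4) also becomes consistent.

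Finally, for (5), write $\partial$ for $\partial^{\mathrm{spec}}$. The right $P$-action on a plethysm is $(\partial F\circ_{\mathrm{op}}\partial G)\circ_{\mathrm{op}}P\cong\partial F\circ_{\mathrm{op}}(\partial G\circ_{\mathrm{op}}P)\to\partial F\circ_{\mathrm{op}}\partial G$, which uses only the module structure of $\partial G$. So your proposed map $C^\bullet_P(\partial F)\oplus C^\bullet_P(\partial G)\to C^\bullet_P(\partial F\circ_{\mathrm{op}}\partial G)$ has no natural component on the first summand. What item (5) actually requires is your fallback, or simply the chain-rule identification, which is all the paper uses.
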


\begin{proof}
By Theorem~\ref{thm:spectral-classification-moduli}, the derivative
construction induces an equivalence
\[
\mathsf{SpecAn}
\simeq
\mathsf{DerAlg}_{\mathrm{int}}.
\]
Thus the moduli problem of spectrally analytic functors can be studied,
formally and locally, through the corresponding moduli problem of
integrable derivative algebras.

Let
\[
X=\partial^{\mathrm{spec}}F
\]
be the integrable derivative algebra associated with $F$. Under the above
equivalence, a deformation of the functor $F$ corresponds to a deformation
of the integrable derivative algebra $X$, subject to the convergence and
integrability conditions defining $\mathsf{DerAlg}_{\mathrm{int}}$.

By assumption, the operadic deformation complex
\[
C^\bullet_P(X)
\]
has a Maurer--Cartan interpretation controlling such deformations. This
means that formal deformations of $X$ are governed by solutions of the
Maurer--Cartan equation in the deformation complex, modulo the natural
gauge equivalence. Equivalently, the formal neighborhood of $[X]$ in the
moduli problem of derivative algebras is modeled by this deformation
complex.

The tangent space is obtained by linearizing the Maurer--Cartan equation.
Writing a first-order deformation as
\[
\mu_X^\varepsilon=\mu_X+\varepsilon\phi,
\qquad \varepsilon^2=0,
\]
and substituting into the right $P$-module associativity and equivariance
relations, the linear terms give
\[
d\phi=0.
\]
Thus first-order deformations are represented by cocycles
\[
\phi\in Z^1_P(X).
\]

Two first-order deformations are equivalent if they differ by an
infinitesimal change of coordinates
\[
\mathrm{id}_X+\varepsilon h.
\]
By Lemma~\ref{lem:trivial-infinitesimal-deformations}, this changes the
deformation cochain by a coboundary $dh$. Hence the space of first-order
deformations modulo equivalence is
\[
Z^1_P(X)/B^1_P(X)
=
H^1_P(X).
\]
Since $X=\partial^{\mathrm{spec}}F$, this gives
\[
T_{[F]}\mathcal{M}_{\mathrm{spec}}
\cong
H^1_P(X)
=
H^1_P(\partial^{\mathrm{spec}}F).
\]

For higher-order deformations, consider a formal perturbation
\[
\mu_X^t
=
\mu_X+t\phi_1+t^2\phi_2+\cdots .
\]
Imposing the right $P$-module identities order by order gives equations of
the form
\[
d\phi_m
=
-\operatorname{Ob}_m(\phi_1,\ldots,\phi_{m-1}),
\]
where $\operatorname{Ob}_m(\phi_1,\ldots,\phi_{m-1})$ is a degree-two
cochain determined by the lower-order deformation data. The lower-order
identities imply
\[
d\,\operatorname{Ob}_m(\phi_1,\ldots,\phi_{m-1})=0,
\]
so
\[
\operatorname{Ob}_m(\phi_1,\ldots,\phi_{m-1})\in Z^2_P(X).
\]
The corresponding obstruction class
\[
[\operatorname{Ob}_m]\in H^2_P(X)
\]
vanishes if and only if the deformation can be extended to the next order.
Therefore primary obstruction classes lie in
\[
H^2_P(X)=H^2_P(\partial^{\mathrm{spec}}F).
\]

The same Maurer--Cartan formalism organizes higher compatibility conditions,
higher homotopies, and iterated obstruction operations through the higher
cohomology groups of the deformation complex. Thus the groups
\[
H^k_P(X),\qquad k\ge 2,
\]
govern the higher derived structure of the formal moduli problem.

Finally, by the spectral chain rule (Theorem~\ref{thm:chain-rule}), the
derivative algebra of a composite functor is identified with the
plethystic composition of derivative algebras:
\[
\partial^{\mathrm{spec}}(F\circ G)
\cong
\partial^{\mathrm{spec}}F\circ_{\mathrm{op}}
\partial^{\mathrm{spec}}G.
\]
Hence the formal moduli structure is compatible with functor composition
through plethystic composition. Combining this compatibility with the
formal moduli interpretation assumed above proves the stated properties.
\end{proof}

\medskip

\noindent
\textbf{Geometric interpretation.}
Under the assumption of Theorem~\ref{thm:spectral-operadic-geometry} (i.e.,
that the deformation complex admits a derived moduli interpretation), the
theorem suggests that $\mathcal{M}_{\mathrm{spec}}$ behaves as a derived
moduli space, in which the derivative algebra $\partial^{\mathrm{spec}}F$
plays the role of a cotangent or tangent complex. In particular, spectral
derivatives may be interpreted as algebraic classification data for this
space, while their cohomology formally governs its local deformation theory.
From this viewpoint, Spectral Operadic Calculus provides not only an analytic
approximation theory but also a geometric framework in which functors are
treated as points in a structured space with well-defined deformation theory.

\medskip

\noindent
\textbf{Relation to existing work.}
The proposed derived enhancement of SOC would naturally interface with several
existing frameworks. Lurie's $\infty$-operads would provide the
higher-categorical foundation for operadic composition and plethysm, offering a
natural setting for the homotopy-coherent version of spectral derivatives.
Pantev–Toën–Vaquié–Vezzosi's derived geometry would supply the necessary
machinery for shifted symplectic structures and derived moduli stacks, allowing
$\mathcal{M}_{\mathrm{spec}}$ to be promoted to a derived moduli stack.
Kontsevich's formality theorem suggests a homological algebraic foundation for
a potential deformation quantization of functors within the SOC framework.
Finally, Arone–Ching's operadic calculus provides a homotopy-theoretic
counterpart to the derived spectral calculus, suggesting a possible
stabilization or homotopical refinement of the present theory. Together, these
connections indicate that current work naturally sits at the intersection of operadic
algebra, derived geometry, and homotopy theory, with clear pathways for future
development.

\medskip

\section{Examples and Structural Illustrations}
\label{sec:examples}

This section provides representative examples illustrating the structural
features of spectral operadic calculus. Beyond serving as illustrations,
these examples reveal a systematic relationship between the spectral support
of derivative algebras and the complexity of deformation behavior. In
particular, they motivate a general principle linking spectral structure to
rigidity and deformation in the associated moduli problem.

\medskip

\subsection{Canonical examples}

\begin{example}[Identity functor]
\label{ex:example-id}
The identity functor $\mathrm{Id}$ has spectral derivatives concentrated in
degree one:
\[
\partial_1^{\mathrm{spec}}\mathrm{Id} \cong \mathrm{Id},
\qquad
\partial_n^{\mathrm{spec}}\mathrm{Id} \cong 0 \quad (n \ge 2).
\]
Thus its derivative algebra is supported in a single degree and carries the
simplest possible right $P$-module structure (see
Examples~\ref{ex:id-derivative} and~\ref{ex:id-module}).

Consequently, the associated deformation complex is concentrated in low
degrees, leading to a restricted cohomological structure. In particular,
both the space of infinitesimal deformations and the space of obstruction
classes are constrained by the minimal spectral support of the derivative
data.
\end{example}

\begin{example}[Exponential functor]
\label{ex:example-exp}
Consider the exponential functor
\[
\exp(A) = \bigoplus_{k=0}^{\infty} \frac{1}{k!} A^{\otimes k}.
\]
Its spectral derivative sequence is supported in all degrees, with each
component corresponding to symmetric multilinear contributions (see
Examples~\ref{ex:exp-derivative} and~\ref{ex:exp-module}).

This yields a derivative algebra with full degree support and a rich
plethystic structure. As a consequence, the associated deformation complex
receives contributions across all degrees, giving rise to a substantially
richer cohomological and deformation-theoretic structure.
\end{example}

\medskip

\subsection{Structural principle}

The above examples suggest the following general structural principle:

\medskip

\noindent
\textbf{Spectral support versus deformation complexity.}
\begin{itemize}
    \item \emph{Low spectral support:} When the derivative algebra is
    concentrated in low degrees, the associated deformation complex has
    limited cohomology, leading to rigid or highly constrained deformation
    behavior.

    \item \emph{Broad spectral support:} When the derivative algebra is
    distributed across many degrees, the deformation complex acquires
    richer cohomological structure, resulting in larger deformation spaces
    and more intricate obstruction phenomena.
\end{itemize}

\medskip

This principle reflects a direct correspondence between spectral support
and deformation complexity, positioning spectral derivatives as quantitative
measures of structural richness.

\medskip

\subsection{Deformation-theoretic interpretation}

From the deformation-theoretic perspective developed in
Section~\ref{sec:moduli}, these examples are consistent with the general
identification
\[
T_{[F]}\mathcal{M}_{\mathrm{spec}}
\cong
H^1_P(\partial^{\mathrm{spec}}F),
\]
together with the obstruction theory governed by
\[
H^2_P(\partial^{\mathrm{spec}}F).
\]

Under this correspondence:
\begin{itemize}
    \item Concentration of derivative data in low degrees typically leads
    to smaller cohomology groups $H^1_P$, corresponding to fewer
    infinitesimal deformation directions.

    \item Broad spectral support produces larger and more structured
    cohomology groups, reflecting richer deformation spaces.

    \item Higher-degree interactions contribute to nontrivial obstruction
    classes in $H^2_P$, governing the extension of deformations.
\end{itemize}

While explicit computation of these cohomology groups depends on the
underlying category and operad, the examples above demonstrate that spectral
operadic calculus organizes deformation behavior through derivative data,
providing a concrete bridge between spectral invariants and derived
geometric structures.

\end{document}